\documentclass[11pt]{amsart}%
\usepackage{amsmath}%
\usepackage{amssymb}%
\usepackage{amscd}%
\usepackage[british,french]{babel}
\usepackage{array}
\usepackage{color}%
\usepackage{mathrsfs}%
\usepackage[all]{xy}%
\usepackage{stmaryrd}
\usepackage{hyperref}
\usepackage[T1]{fontenc}

\setlength{\parindent}{8pt}
\setlength{\textwidth}{38pc}
\setlength{\textheight}{50pc}
\setlength{\oddsidemargin}{0pc}
\setlength{\evensidemargin}{0pc}

\def\ol#1{\overline{#1}}
\def\wh#1{\widehat{#1}}
\def\wt#1{\widetilde{#1}}

\theoremstyle{plain}
    \newtheorem{theorem}{Theorem}[section]
    
    \newtheorem{proposition}[theorem]{Proposition}
    \newtheorem{lemma}[theorem]{Lemma}
    \newtheorem{corollary}[theorem]{Corollary}
      
      \newtheorem{proposition-definition}[theorem]{Proposition-Definition}
      
\theoremstyle{definition}
    \newtheorem{definition}[theorem]{Definition}

    \newtheorem{remark}[theorem]{Remark}
    \newtheorem*{acknowledgments}{Acknowledgements}
    
\renewcommand\labelenumi{(\roman{enumi})}
\renewcommand\theenumi\labelenumi

\def\Alphabet{A,B,C,D,E,F,G,H,I,J,K,L,M,N,O,P,Q,R,S,T,U,V,W,X,Y,Z}
\def\alphabet{a,b,c,d,e,f,g,h,i,j,k,l,m,n,o,p,q,r,s,t,u,v,w,x,y,z}
\def\endpiece{xxx}
\def\makeAlphabet[#1]{\expandafter\makeA#1,xxx,}
\def\makealphabet[#1]{\expandafter\makea#1,xxx,}
\def\makeA#1,{\def\temp{#1}\ifx\temp\endpiece\else%
\mkbb{#1}\mkfrak{#1}\mkbf{#1}\mkcal{#1}\mkscr{#1}\mkbs{#1}\expandafter\makeA\fi}%
\def\makea#1,{\def\temp{#1}\ifx\temp\endpiece\else\mkfrak{#1}\mkbf{#1}\mkbs{#1}\expandafter\makea\fi}%
\def\mkbb#1{\expandafter\def\csname bb#1\endcsname{\mathbb{#1}}}
\def\mkfrak#1{\expandafter\def\csname fr#1\endcsname{\mathfrak{#1}}}
\def\mkbf#1{\expandafter\def\csname b#1\endcsname{\mathbf{#1}}}
\def\mkcal#1{\expandafter\def\csname c#1\endcsname{\mathcal{#1}}}
\def\mkscr#1{\expandafter\def\csname s#1\endcsname{\mathscr{#1}}}
\def\mkbs#1{\expandafter\def\csname bs#1\endcsname{{\boldsymbol{#1}}}}
\def\makeop[#1]{\xmakeop#1,xxx,}
\def\mkop#1{\expandafter\def\csname #1\endcsname{{\mathrm{#1}}}} %
\def\xmakeop#1,{\def\temp{#1}\ifx\temp\endpiece\else\mkop{#1}\expandafter\xmakeop\fi}%
\def\makeup[#1]{\xmakeup#1,xxx,}
\def\mkup#1{\expandafter\def\csname #1\endcsname{{\mathrm{#1}\,}}} %
\def\xmakeup#1,{\def\temp{#1}\ifx\temp\endpiece\else\mkup{#1}\expandafter\xmakeup\fi}%
\makeAlphabet[\Alphabet]
\makealphabet[\alphabet]
\makeop[id,Alt,End,Ext,Hom,Mod,Sym,Tor,dR,rig,cris,conv,HK,an,unip,Li,Cone,Tot,res,Res,pol,Gr,Ker,Coker,Re,Vec,Rep,GL,abs,holim,hocolim,dim,Mod,PD,conv,fin,zar,Fil,isoc,Tr,Ber,fp,tor,nil,ord]
\makeup[Spec,Proj,Spwf,Sp,Im,Isoc,MIC,OC,Str,Conv,Spf]

  \makeatletter
    
    \@addtoreset{equation}{section}
  \makeatother

\newcommand{\et}{\mathrm{\acute{e}t}}
\newcommand*{\sheafhom}{\cH\kern -.5pt om}


\begin{document}
\selectlanguage{british}

\title{Poincar\'e duality for rigid analytic Hyodo--Kato theory}
\author{Veronika Ertl and Kazuki Yamada}

\date{\today}

\begin{abstract}
	The purpose of this paper is to establish Hyodo--Kato theory with compact support for semistable schemes through rigid analytic methods.
	To this end we introduce several types of log rigid cohomology with compact support.
	Moreover we show that the additional structures on the (rigid) Hyodo--Kato cohomology and the Hyodo--Kato map introduced in our previous paper are compatible with Poincar\'e duality.
	Compared to the crystalline approach, the constructions are explicit yet versatile, and hence suitable for computations.
\end{abstract}

\thanks{The first named author's research was supported in part by the EPSRC grant  EP/R014604/1 and by the DFG grant SFB 1085.
The second named author was supported by KAKENHI Grant Numbers 18H05233, 22K13899, and 23KJ0332.}
\maketitle

\tableofcontents


%
\section*{Introduction}\label{sec: introduction}
%

Let $p$ be a prime number, $V$ a complete discrete valuation ring of mixed characteristic $(0,p)$ with fraction field $K$ and perfect residue field $k$, $W:=W(k)$ the ring of Witt vectors of $k$, and $F$ the fraction field of $W$.
Let $V^\sharp$ be the formal scheme $\Spf V$ equipped with the canonical log structure  $\cN_{V^\sharp}$. 
Let $W^0$ be the formal scheme $\Spf W$ equipped with the log structure associated to the monoid homomorphism $(\bbN\rightarrow W;\ 1\mapsto 0)$, and let $k^0=(k^0,\cN_{k^0}):=W^0\otimes_Wk$.
Furthermore, for a choice of uniformiser $\pi$ of $V$, let $i_\pi: k^0\hookrightarrow V^\sharp$ be the morphism sending $\pi\in V\setminus\{0\}=\Gamma(V^\sharp,\cN_{V^\sharp})$ to $(1,1)\in\bbN\oplus k^\times=\Gamma(k^0,\cN_{k^0})$.

Consider a fine log scheme $X$ which is proper log smooth over $V^\sharp$, and assume that $Y_\pi:=X\otimes_{V^\sharp}k^0$ is of Cartier type over $k^0$.
In their celebrated paper, Hyodo and Kato \cite{HyodoKato1994} constructed a morphism called the (crystalline) Hyodo--Kato map
	\[\Psi_\pi^\cris\colon R\Gamma_\cris(Y_\pi/W^0)\rightarrow R\Gamma_\dR(X_K/K).\]
It depends on the choice of a uniformiser $\pi\in V$ and induces an isomorphism after tensoring with $K$.

The domain $R\Gamma_\cris(Y_\pi/W^0)$ is often called the (crystalline) Hyodo--Kato cohomology.
It is endowed with two endomorphisms called Frobenius operator and monodromy operator.
The codomain $R\Gamma_\dR(X_K/K)$ is the log de~Rham cohomology and comes with a descending filtration called Hodge filtration.
Under the identification via $\Psi_\pi^\cris$, the associated cohomology groups give rise to a filtered $(\varphi,N)$-module.
When $X$ is semistable, this object contains information as rich as the $p$-adic \'{e}tale cohomology of $X_{\overline{K}}$ (where $\overline{K}$ is the algebraic closure of $K$) as a Galois representation, by the semistable conjecture which is now a theorem (\cite{Tsuji1999-a}, \cite{Faltings2002}, \cite{Niziol2008}, \cite{Beilinson2013}).

Among the generalisations of the semistable conjecture one should mention Yamashita--Yasuda's work  (based on an unpublished paper by Yamashita) on the potentially semistable conjecture for non-proper varieties  and its compatibility with the product structures \cite{YamashitaYasuda2014}.
They crucially make use of log crystalline cohomology with (partial) compact support.
In the proof, they also show the compatibility of the Hyodo--Kato map with the product structures (and hence with Poincar\'{e} duality).

Poincar\'{e} duality for crystalline Hyodo--Kato cohomology was proved by Tsuji \cite{Tsuji1999}.
Note that he more generally proved Poincar\'{e} duality of log crystalline cohomology over $W^0$ and $W^\varnothing$ for a fine saturated log scheme which is log smooth and universally saturated over $k^0$ and $k^\varnothing$, respectively, where $W^\varnothing$ and $k^\varnothing$ are affine (formal) schemes equipped with the trivial log structure.
(However the compatibility with the additional structures and the Hyodo--Kato map was not addressed. )

The role which Hyodo--Kato theory plays in the (potentially) semistable conjecture is an indicator of its importance in arithmetic geometry, including the research on special values of $p$-adic $L$-functions.
To advance prominent conjectures in this field, such as the Bloch--Kato conjecture, it is of advantage to be able to describe explicitly certain cohomology classes, both in the usual Hyodo--Kato theory and the compactly supported Hyodo--Kato theory, which are related to Euler systems.
For instance, this was carried out for $4$-dimensional spin Galois representations
arising from Siegel modular forms for the group $\Sp_4(\bbZ)$ by Loeffler and Zerbes in \cite{LoefflerZerbes}.

Such explicit computations are possible through an approach to Hyodo--Kato theory 
 for strictly semistable log schemes based on log rigid cohomology instead of log crystalline cohomology provided by the authors in \cite{ErtlYamada}.
It allows for a definition of the Hyodo--Kato map
	\[\Psi_{\pi,\log}\colon R\Gamma_\rig(Y_\pi/W^0)\rightarrow R\Gamma_\rig(Y_\pi/V^\sharp)\cong R\Gamma_\dR(X_K/K)\]
in a straight forward and intuitive way which is independent of the choice of a uniformiser $\pi$ in the sense that for two uniformiser $\pi,\pi'$ there is a canonical quasi-isomorphism 
$R\Gamma_\rig(Y_\pi/W^0)\xrightarrow{\sim}R\Gamma_\rig(Y_{\pi'}/W^0)$ 
and we have $\Psi_{\pi,\log}=\Psi_{\pi',\log}$. (It rather depends on the choice of a branch $\log\colon K^\times\rightarrow K$ of the $p$-adic logarithm.)
Therefore, it is suitable not only for explicit computation but also for generalisations.
For example, the second author studied coefficients for rigid analytic Hyodo--Kato theory in \cite{Yamada2020}, and generalised the additional structures on the rigid Hyodo--Kato cohomology and the Hyodo--Kato map to cohomology with coefficients.

The construction of rigid Hyodo--Kato theory with compact support in the present article can be seen as an application of \cite{Yamada2020}.
More precisely, we introduce several types of log rigid cohomology with compact support as log rigid cohomology with coefficients in certain types of log overconvergent isocrystals. 
We compare them with Yamashita--Yasuda's crystalline constructions in \cite{YamashitaYasuda2014} and prove Poincar\'{e} duality.
Because of the simplicity of our construction, the duality is clearly compatible with the additional structures and the Hyodo--Kato map.

This simplicity together with the rigid analytic nature of our construction makes it a useful candidate to advance some of the most difficult conjectures in arithmetic geometry as demonstrated in \cite{LoefflerZerbes}.

\subsubsection*{Outline of this paper}

In \S \ref{sec: isoc}, we recall notions concerning log overconvergent ($F$-)isocrystals which were introduced in \cite{Yamada2020}.
In particular, we recall definitions of the log rigid cohomology and the rigid Hyodo--Kato cohomology with coefficients in log overconvergent ($F$-)isocrystals.

In \S \ref{section : log substructures}, we introduce a general construction of log overconvergent isocrystals associated to monogenic log substructures.
Let $Y=(Y,\cN_Y)$ be a fine log scheme over a base fine weak formal log scheme $\cT$.
A log substructure $\cM\subset\cN_Y$ is called \textit{monogenic} if  locally it has a chart of the form $\bbN_Y\rightarrow\cM$.
Then we may define a log overconvergent isocrystal $\sO_{Y/\cT}(\cM)$ on $Y$ over $\cT$ as a locally free sheaf of rank $1$ generated by local lifts of generators of $\cM$.

As a special case, in \S \ref{sec: c rig} we consider a proper strictly semistable log scheme $Y$ over $k^0$ with horizontal divisor $D$, and several weak formal log schemes as base: $W^0$, $W^\varnothing$, $V^\sharp$, and $\cS$, where $\cS$ is $\Spwf W\llbracket s\rrbracket$ equipped with the log structure associated to $(\bbN\rightarrow W\llbracket s\rrbracket;\ 1\mapsto s)$.
To the horizontal divisor $D$ we associate on $Y$ a certain monogenic log substructure $\cM_D\subset\cN_Y$.
We define the log rigid cohomology (resp.\,rigid Hyodo--Kato cohomology) with compact support as the log rigid cohomology (resp.\,rigid Hyodo--Kato cohomology) with coefficients in $\sO_{Y/\cT}(\cM_D)$.
Since the cohomology with compact support is computed by usual cohomology via a spectral sequence, one can generalise several results to cohomology with compact support as follows.

\begin{theorem}[Corollary \ref{cor: compare rigid}]
	Let $Y$ be a proper strictly semistable log scheme over $k^0$ and $\sE$ a unipotent log overconvergent isocrystal on $Y$ over $W^\varnothing$.
	 There exists a diagram of natural morphisms
		\begin{equation}\label{eq: diag intro}
	\xymatrix{
	\ar@{}[rd]|>>>>\circlearrowright& R\Gamma_{\rig,c}(Y/W^\varnothing,\sE)\ar[ld]\ar[d]\ar[rd] &\ar@{}[ld]|>>>>\circlearrowright \\
	R\Gamma_{\rig,c}(Y/W^0,\sE) & R\Gamma^\rig_{\HK,c}(Y,\sE)\ar[l]\ar[r]^-{\Psi_{\pi,\log}} \ar[d]& R\Gamma_{\rig,c}(Y/V^\sharp,\sE)_\pi,\\
	\ar@{}[ru]|>>>>\circlearrowright& R\Gamma_{\rig,c}(Y/\cS,\sE)\ar[lu]\ar[ru] &\ar@{}[lu]|>>>>{(*)}
	}\end{equation}
	where all triangles except for $(*)$ commute.
	The triangle $(*)$ commutes if $\log$ is taken to be the branch of the $p$-adic logarithm with $\log(\pi)= 0$.
	These morphisms induce isomorphisms
	\begin{align*}
	&R\Gamma_{\rig,c}(Y/W^\varnothing,\sE)\xrightarrow{\cong}\Cone(R\Gamma^\rig_{\HK,c}(Y,\sE)\xrightarrow{N}R\Gamma^\rig_{\HK,c}(Y,\sE)),\\
	&R\Gamma^\rig_{\HK,c}(Y,\sE)\xrightarrow{\cong} R\Gamma_{\rig,c}(Y/W^0,\sE),\\
	&R\Gamma^\rig_{\HK,c}(Y,\sE)\otimes_FF\{s\}\xrightarrow{\cong}R\Gamma_{\rig,c}(Y/\cS,\sE),\\
	&R\Gamma^\rig_{\HK,c}(Y,\sE)\otimes_FK\xrightarrow[\Psi_{\pi,\log,K}]{\cong}R\Gamma_{\rig,c}(Y/V^\sharp,\sE)_\pi.
	\end{align*}
	Here $F\{s\}$ denotes the global section of the generic fiber of $\cS$, that is the ring of functions which converge on the open unit disk, and $\Psi_{\pi,\log,K}$ is induced from $\Psi_{\pi,\log}$ by tensoring with $K$.
	The associated cohomology groups are finite over appropriate base rings.
	When we consider a Frobenius structure $\Phi$ on $\sE$, then the induced Frobenius operator on $H^{\rig,i}_{\HK,c}(Y,(\sE,\Phi))$ is bijective.
\end{theorem}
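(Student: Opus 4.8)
The plan is to reduce every assertion to the corresponding statement for usual (non-compactly-supported) log rigid cohomology with coefficients in a unipotent isocrystal, which is available from \cite{Yamada2020} (building on \cite{ErtlYamada}), and then to transport those statements along the spectral sequence that computes compact-support cohomology. By definition $R\Gamma_{\rig,c}(Y/\cT,\sE)=R\Gamma_\rig(Y/\cT,\sE\otimes\sO_{Y/\cT}(\cM_D))$, and the monogenic isocrystal $\sO_{Y/\cT}(\cM_D)$ attached to the horizontal divisor $D=\bigcup_j D_j$ fits into a Poincar\'e-residue resolution whose terms are the structure isocrystals of $Y$ and of the closed strata $D_J=\bigcap_{j\in J}D_j$. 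First I would record the resulting spectral sequence, whose $E_1$-page is assembled from the usual log rigid cohomology groups $H^q_\rig(D_J/\cT,\sE|_{D_J})$ (with suitable Tate twists) and which abuts to $H^{p+q}_{\rig,c}(Y/\cT,\sE)$, noting that each stratum $D_J$ is again proper strictly semistable over $k^0$ and each restriction $\sE|_{D_J}$ is again unipotent, so that all $E_1$-terms fall within the scope of the non-compact theory.

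Second, to produce the diagram \eqref{eq: diag intro} I would realise every arrow as the map induced on abutments by a morphism of the above spectral sequences. The five structure morphisms --- the base-change comparisons to $W^0$, $W^\varnothing$, $V^\sharp$ and $\cS$, together with the Hyodo--Kato map $\Psi_{\pi,\log}$ --- are functorial in $\cT$ and compatible with the Poincar\'e-residue resolution, hence they act on $E_1$ termwise as the already-constructed arrows for the usual cohomology of the strata. Commutativity of each triangle other than $(*)$, and commutativity of $(*)$ precisely when $\log(\pi)=0$, then follows from the corresponding commutativities established strata by strata in \cite{Yamada2020}, since a relation holding on the $E_1$-page of a convergent spectral sequence holds on the abutment; in particular the delicate dependence of $(*)$ on the chosen branch of the $p$-adic logarithm is inherited verbatim from the non-compact case.

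Third, the four displayed isomorphisms are each induced by a morphism of spectral sequences that is an isomorphism on $E_1$: for the $W^0$-, $\cS$- and $V^\sharp$-comparisons this is the non-compact comparison result applied to every stratum $D_J$, while for the first isomorphism it is the termwise description of $W^\varnothing$-cohomology as the cone of the monodromy operator $N$. An isomorphism on $E_1$ is an isomorphism on the abutment. Finiteness of the cohomology groups over the appropriate base ring follows because the spectral sequence is bounded with $E_1$-terms finite by the known finiteness for the proper strata; likewise the Frobenius structure $\Phi$ acts as a morphism of spectral sequences, inducing on each $E_1$-term the bijective Frobenius of $H^{\rig,q}_{\HK}(D_J,(\sE,\Phi)|_{D_J})$, whence the bijectivity of $\Phi$ on $H^{\rig,i}_{\HK,c}(Y,(\sE,\Phi))$.

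The main obstacle I anticipate is verifying that the monodromy operator $N$ and the Frobenius structure $\Phi$ are genuinely compatible with the Poincar\'e-residue resolution, i.e.\ that they are true morphisms of the compact-support spectral sequence rather than merely termwise operators. Since $N$ and $\Phi$ originate from the log structure and the Frobenius on the base $\cT$, one must check that the twist by $\sO_{Y/\cT}(\cM_D)$ does not disturb them --- concretely, that the monogenic generators of $\cM_D$ are compatible with the residue maps along the $D_j$ and with the chosen Frobenius lift. Granting this compatibility, which is local in nature and should follow from the explicit construction of $\sO_{Y/\cT}(\cM_D)$ in \S\ref{section : log substructures}, the reduction to \cite{Yamada2020} is clean and the remaining verifications amount to bookkeeping on the $E_1$-page.
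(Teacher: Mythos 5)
Your overall strategy --- resolve $\sO_{Y/\cT}(\cM_D)$ by the structure sheaves of the divisor strata, take the resulting spectral sequence, and transport the known non-compact statements stratum by stratum --- is exactly the paper's: this is Lemma \ref{lem: simplicial resolutions}, Proposition \ref{prop: D} and the spectral sequences \eqref{eq: spectral seq}, combined with the results of \cite{ErtlYamada} applied to $D^{(\star),\flat}$. The only organisational difference is that the paper first uses unipotence of $\sE$ (and, for the Frobenius statement, \cite[Prop.\ 3.12]{Yamada2020}) to reduce to $\sE=\sO_{Y/W^\varnothing}$ and runs the spectral sequence without coefficients, whereas you carry $\sE$ through; both work, since $\sE_{\cZ_m}$ is locally free and tensoring preserves the exactness in Lemma \ref{lem: simplicial resolutions}. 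Two small corrections: no Tate twists appear on the $E_1$-page of this residue sequence (it is a stratification sequence, not a Gysin sequence), and the strata must carry the pull-back log structure from $Y^\flat$, i.e.\ one must use $D^{(r),\flat}$ rather than the bare $D_J$, for the non-compact results to apply.

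The genuine gap is in your second paragraph, concerning the diagram and its commutativity. First, an arrow of \eqref{eq: diag intro} cannot be ``realised as the map induced on abutments by a morphism of the spectral sequences'': a morphism of spectral sequences does not determine a morphism of abutments; one needs a map of (filtered) complexes to begin with. Second, and more seriously, the principle you invoke --- that a relation holding on the $E_1$-page holds on the abutment --- is false. Two morphisms of filtered complexes agreeing on $E_1$ agree on every later page and hence on $E_\infty$, i.e.\ on the associated graded of the abutment, but their difference on the abutment itself can be a nonzero map that strictly lowers the filtration; so commutativity of the triangles, and in particular the delicate claim that $(*)$ commutes exactly when $\log(\pi)=0$, cannot be inherited from the strata this way. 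The paper needs no spectral sequence here: by definition each complex in \eqref{eq: diag crig} is the corresponding non-compact complex (Kim--Hain, respectively log de Rham over $W^\varnothing$, $W^0$, $\cS$, $V^\sharp$) tensored with the invertible twist $\cO_{\cZ_\bbQ}(\cM_D)$, and each arrow is the non-compact arrow tensored with the identity on this twist, so the chain-level commutativity verifications of \cite{ErtlYamada}, including the dependence of $(*)$ on the branch of the logarithm, carry over verbatim. The spectral sequence enters only afterwards, for the quasi-isomorphism, finiteness, and Frobenius-bijectivity assertions --- where your reasoning (isomorphism, finiteness, bijectivity on a bounded $E_1$ pass to the abutment) is indeed valid.
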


In \S \ref{sec: crys}, we recall and reformulate the definitions of log crystalline cohomology with compact support and the crystalline Hyodo--Kato map of \cite{YamashitaYasuda2014}.
As in \cite{ErtlYamada}, we introduce two affine formal schemes $\cS'_{\PD}$ and $\cS_{\PD}$ as bases of log crystalline cohomology, and compare them.
The log crystalline cohomology over $\cS'_{\PD}$ is used for the crystalline Hyodo--Kato map, and that over $\cS_{\PD}$ is related with the log rigid cohomology over $\cS$.

In \S \ref{sec: comparison}, we compare the log rigid and log crystalline cohomology with compact support with each other, and prove that the crystalline Hyodo--Kato map $\Psi_\pi^\cris$ and the rigid Hyodo--Kato map $\Psi_{\pi,\log}^\rig$ are compatible when $\log$ is taken to be the branch of $p$-adic logarithm with $\log(\pi)=0$.

In \S \ref{sec: dual}, we deduce Poincar\'{e} duality for log rigid cohomology from that for log crystalline cohomology given by Tsuji \cite{Tsuji1999}.
As an advantage of our construction, we clearly have the compatibility of  Poincar\'{e} duality with all additional structures and comparison maps.
Our main result is as follows.

\begin{theorem}[Theorem \ref{thm: dual}]
	Let $Y$ be a proper strictly semistable log scheme over $k^0$ of dimension $d$.
	Let $(\sE,\Phi)$ be a unipotent log overconvergent $F$-isocrystal, and denote the dual $F$-isocrystal by $(\sE^\vee,\Phi^\vee):=\sheafhom((\sE,\Phi),\sO_{Y/W^\varnothing})$.
	Then there exist canonical isomorphisms
	\begin{align}
	\nonumber &H^i_\rig(Y/W^\varnothing,(\sE,\Phi)) \xrightarrow{\cong}H^{2d-i+1}_{\rig,c}(Y/W^\varnothing,(\sE^\vee,\Phi^\vee))^*(-d-1)&&\text{in }\Mod_F(\varphi),\\
	\nonumber&H^{\rig,i}_\HK(Y,(\sE,\Phi)) \xrightarrow{\cong}H^{\rig,2d-i}_{\HK,c}(Y,(\sE^\vee,\Phi^\vee))^*(-d)&&\text{in }\Mod_F(\varphi,N),\\
	\nonumber & H^i_\rig(Y/W^0,(\sE,\Phi)) \xrightarrow{\cong}H^{2d-i}_{\rig,c}(Y/W^0,(\sE^\vee,\Phi^\vee))^*(-d)&&\text{in }\Mod_F(\varphi),\\
	\nonumber & H^i_\rig(Y/V^\sharp,\sE)_\pi \xrightarrow{\cong}H^{2d-i}_{\rig,c}(Y/V^\sharp,\sE^\vee)_\pi^*&&\text{in }\Mod_K,
	\end{align}
	where for an integer $r\in\bbZ$ we denote by $(r)$ the Tate twist defined by multiplying $\varphi$ by $p^{-r}$.
	The Hyodo--Kato maps are compatible with Poincar\'{e} duality in the sense that after tensoring with $K$ we have $\Psi_{\pi,\log,K}^*=\Psi_{\pi,\log,K}^{-1}$.
\end{theorem}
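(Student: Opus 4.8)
The plan is to bootstrap all four dualities from Tsuji's Poincar\'{e} duality for log crystalline cohomology over $W^0$ and $W^\varnothing$ \cite{Tsuji1999}, transport it to the rigid side through the comparison isomorphisms of \S\ref{sec: comparison}, and then propagate it across the diagram of Corollary \ref{cor: compare rigid} to the Hyodo--Kato and $V^\sharp$ incarnations. First I would make the pairings explicit. For each base $\cT\in\{W^\varnothing,W^0,V^\sharp\}$ and for the Hyodo--Kato complex, the compact-support theory is by construction the cohomology with coefficients twisted by the monogenic log substructure $\sO_{Y/\cT}(\cM_D)$, so the duality pairing is simply the cup product $R\Gamma_\rig(Y/\cT,\sE)\otimes R\Gamma_{\rig,c}(Y/\cT,\sE^\vee)\to R\Gamma_{\rig,c}(Y/\cT,\sO_{Y/\cT})$ induced by the evaluation $\sE\otimes\sE^\vee\to\sO_{Y/\cT}$, followed by a trace map to the base ring with the appropriate shift and Tate twist. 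The entire point of defining compact support via $\sO_{Y/\cT}(\cM_D)$ is that this cup product is formal and manifestly compatible with Frobenius and monodromy.

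Next I would prove the $W^\varnothing$ and $W^0$ cases directly. Tsuji's theorem gives a perfect crystalline pairing over each of these bases; since the comparison isomorphisms of \S\ref{sec: comparison} are compatible with cup products and with the trace maps, the corresponding rigid pairings are perfect, and the Frobenius-equivariance together with the Tate twists $(-d-1)$, resp.\ $(-d)$, is inherited verbatim from the crystalline statement. This yields the first and third isomorphisms in $\Mod_F(\varphi)$. For the Hyodo--Kato case I would use the identification $R\Gamma^\rig_{\HK}(Y,\sE)\xrightarrow{\cong}R\Gamma_\rig(Y/W^0,\sE)$ of Corollary \ref{cor: compare rigid} to transport the $W^0$ duality to the Hyodo--Kato complex, obtaining the pairing in degrees $i$ and $2d-i$ with twist $(-d)$. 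The extra content here is the compatibility with the monodromy operator $N$, that is, promoting the isomorphism to $\Mod_F(\varphi,N)$: this I would extract from the relation, also recorded in Corollary \ref{cor: compare rigid}, that the $W^\varnothing$ theory is $\Cone(N)$ on the Hyodo--Kato theory. Since $N$ raises the Tate weight by one, the cone accounts precisely for the passage from $(2d-i,\,-d)$ to $(2d-i+1,\,-d-1)$ between the Hyodo--Kato and $W^\varnothing$ pairings; comparing the two dualities on the cone then forces the Hyodo--Kato pairing to respect $N$ up to the expected transpose, which is exactly the assertion that the isomorphism lives in $\Mod_F(\varphi,N)$.

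For the fourth isomorphism I would base change the Hyodo--Kato duality along $F\to K$ using the Hyodo--Kato map $\Psi_{\pi,\log}$, which by Corollary \ref{cor: compare rigid} identifies $R\Gamma^\rig_{\HK}(Y,\sE)\otimes_FK$ with $R\Gamma_\rig(Y/V^\sharp,\sE)_\pi$; forgetting $\varphi$ and $N$ leaves a perfect pairing in $\Mod_K$ in degrees $i$ and $2d-i$ with trivial twist. The compatibility $\Psi_{\pi,\log,K}^*=\Psi_{\pi,\log,K}^{-1}$ is then essentially tautological: both the Hyodo--Kato and the $V^\sharp$ duality isomorphisms are induced from a single pairing via the identification $\Psi_{\pi,\log}$, so the transpose of $\Psi_{\pi,\log,K}$ computed against these two pairings is forced to be its inverse.

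I expect the main obstacle to be the verification, underlying the $W^\varnothing$ and $W^0$ steps, that the rigid trace map coincides with Tsuji's crystalline trace map under the comparison of \S\ref{sec: comparison}. Compatibility of cup products with the comparison is reasonably formal, but matching the normalisations of the two trace maps on top-degree cohomology---including the precise Tate twist and the behaviour along the horizontal divisor $D$ encoded by $\sO_{Y/\cT}(\cM_D)$---requires care, since any scalar discrepancy there would propagate into every statement. A secondary difficulty is the bookkeeping in the monodromy step: one must check that the cone description of the $W^\varnothing$ theory is compatible with duality on the nose, and not merely up to sign or twist, so that the induced involution on the Hyodo--Kato complex is genuinely the expected transpose of $N$.
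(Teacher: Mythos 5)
Your overall strategy---pairings via the $\cM_D$-twist, Tsuji plus the comparison of \S\ref{sec: comparison} for the base cases, then propagation through Corollary \ref{cor: compare rigid}---matches the paper for the $W^0$, Hyodo--Kato and $V^\sharp$ statements, but your treatment of the $W^\varnothing$ case contains a genuine gap that also undermines your monodromy step. You propose to prove the $W^\varnothing$ duality ``directly'' from Tsuji's theorem through the comparison isomorphisms. This cannot work: Tsuji's duality over $W^\varnothing$ applies to log schemes which are log smooth and universally saturated over $k^\varnothing$, whereas the semistable $Y$ is log smooth over $k^0$ and \emph{not} over $k^\varnothing$, so his theorem simply does not apply to $H^i_\rig(Y/W^\varnothing,\sE)$. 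Consistently with this, Corollary \ref{cor: rig cris} provides rigid--crystalline comparison isomorphisms only over $W^0$ and $V^\sharp$; the paper has no comparison over $W^\varnothing$ for $Y$, so the trace-matching you plan to perform there has no crystalline counterpart to match against. The shape of the statement already signals the problem: the first isomorphism pairs $H^i$ with $H^{2d-i+1}$ with twist $(-d-1)$, i.e.\ the $W^\varnothing$-cohomology lives in degrees up to $2d+1$, which can never arise from a perfect pairing into a top degree $2d$ of the kind Tsuji's theorem produces. Since your step establishing $N$-compatibility of the Hyodo--Kato duality is obtained by playing the (unproved) $W^\varnothing$ duality against the cone description, it collapses together with this step.

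The repair is to run your cone argument in the opposite direction, which is what the paper does. The Hyodo--Kato pairing is compatible with $N$ and $\varphi$ \emph{by construction}: on the Kim--Hain complex one has $N(u^{[i]}\wedge u^{[j]})=N(u^{[i]})\wedge u^{[j]}+u^{[i]}\wedge N(u^{[j]})$, so the pairing \eqref{eq: pairing dual F} is a morphism in $D^+(\Mod_F^{\fin}(\varphi,N))$ from the start; there is nothing to ``force''. Perfectness is then transported from the $W^0$ case (Tsuji plus Corollary \ref{cor: rig cris}) through the quasi-isomorphism $R\Gamma^\rig_{\HK,c}(Y,\sE^\vee)\xrightarrow{\cong}R\Gamma_{\rig,c}(Y/W^0,\sE^\vee)$ of Corollary \ref{cor: compare rigid}, which yields the Hyodo--Kato duality in $\Mod_F(\varphi,N)$. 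The $W^\varnothing$ duality is then \emph{deduced}, not assumed: $R\Gamma_\rig(Y/W^\varnothing,\sE)$ and $R\Gamma_{\rig,c}(Y/W^\varnothing,\sE^\vee)$ are the cones of $N$ on the respective Hyodo--Kato complexes, and dualizing $\Cone\bigl(N\colon M\rightarrow M(-1)\bigr)$ produces exactly the degree shift by one and the extra Tate twist $(-d-1)$. Finally, your claim that $\Psi_{\pi,\log,K}^*=\Psi_{\pi,\log,K}^{-1}$ is ``tautological'' hides the one computation the paper does make, namely that $\Psi_{\pi,\log}$ is multiplicative, $\Psi_{\pi,\log}(u^{[i]}\wedge u^{[j]})=\Psi_{\pi,\log}(u^{[i]})\wedge\Psi_{\pi,\log}(u^{[j]})$; without it the two duality isomorphisms are not known to be induced by matching pairings.
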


\subsubsection*{Notation}
In this paper we use the convention that $\bbN$ contains $0$.

Let $V$ be a complete discrete valuation ring of mixed characteristic $(0,p)$ with 
residue field $k$ and fraction field $K$.
Assume that $k$ is algebraic over $\bbF_p$.
Denote by $W:= W(k)$ the ring of Witt vectors of $k$ and by $F$ the fraction field of $W$.

In accordance with \cite{ErtlYamada}, for any (fine) log scheme $Y$ over $k$, we always assume that its underlying scheme is separated, locally of finite type over $k$, and admits an affine covering indexed by a countable set.

Weak formal schemes as defined in \cite[Def.\,1.9]{ErtlYamada} are omnipresent in the constructions of this paper.
Locally, they are weak formal spectra of pseudo-weakly complete finitely generated (pseudo-wcfg) algebras over a base $(R,I)$ where $R$ is a noetherian ring and $I\subset R$ and ideal.  
The weak formal schemes and pseudo-wcfg algebras in this paper are considered over $(R,I)=(W,pW)$.
We freely use results from \cite[\S\,1]{ErtlYamada} where they were studied in detail and generality.

For a weak formal scheme $\cX$, 
we denote by $\cX_\bbQ$ the generic fibre of $\cX$, which is a dagger space over $F$.
For a morphism $f\colon\cX'\rightarrow\cX$, we denote again by $f\colon\cX'_\bbQ\rightarrow\cX_\bbQ$ the morphism on dagger spaces induced by $f$.

For a pseudo-wcfg algebra \cite[Def.\, 1.3]{ErtlYamada} $A$, we denote by $A^\varnothing$ the weak formal scheme $\Spwf A$ endowed with the trivial log structure.

For any finitely generated monoid $M$, we denote $W[M]^\dagger$ the weak completion of the monoid ring $W[M]$ with respect to $(W,pW)$.
In particular we have $W[\bbN^n]^\dagger\cong W[x_1,\ldots,x_n]^\dagger$.

Consider the weak formal log scheme $\cS= \Spwf W\llbracket s\rrbracket$ endowed with the log structure associated to the map $(\bbN  \rightarrow  W\llbracket s\rrbracket;\ 1\mapsto s)$.
Let $W^0$ and $k^0$ be the exact closed weak formal log scheme of $\cS$ defined by the ideal $(s)$ and $(s,p)$, respectively.
Furthermore, let $V^\sharp$ be the weak formal  log scheme with underlying scheme  $\Spwf V$ endowed with the canonical log structure, whose global section is $V\setminus\{0\}$.
Let $i_0\colon k^0 \hookrightarrow W^0$, $j_0\colon W^0 \hookrightarrow\cS$ be the canonical embeddings, and set $\tau:= j_0\circ i_0$.
For a uniformiser $\pi\in V$, let $j_\pi\colon V^\sharp \hookrightarrow\cS$ be the exact closed immersion defined by $s\mapsto\pi$, and $i_\pi\colon k^0 \hookrightarrow V^\sharp$ the unique morphism such that $\tau= j_\pi\circ i_\pi$.

We denote by $\sigma$ the absolute Frobenius on $k^0$, $k^\varnothing$, and by abuse of notation the morphism on $W^0$ and $W^\varnothing$ induced by the Witt vector Frobenius, and likewise its extension to $\cS$ defined by $s\mapsto s^p$.

For a (weak formal) log scheme $\cZ$, we denote its log structure by $\cN_\cZ$.
We often denote the underlying (weak formal) scheme by the same symbol $\cZ$.

For an abelian category $\cM$, we denote by $D^b(\cM)$ (resp.\,$D^+(\cM)$, $D^-(\cM)$)  the bounded (resp.\,bounded below, bounded above) classical derived category of $\cM$.

\begin{acknowledgments}
We are particularly grateful towards Seidai Yasuda for giving us access to the newest version of his article with Go Yamashita.
We would like to thank Wies\l{}awa Nizio\l{} for helpful discussions related to the subject of this paper.
We thank David Loeffler and Sarah Zerbes for their ongoing interest in this work, inspiring discussions and helpful explanations of their work.
We would like to thank Kei Hagihara and Shuji Yamamoto for helpful discussions concerning extension groups of $(\varphi,N)$-modules. 
We thank Kenichi Bannai and Yoshinosuke Hirakawa for their helpful comments to the draft of this paper. 
We warmly thank the referees for their careful reading and thoughtful suggestions that helped to improve the manuscript substantially.

The first named author would like to thank the Isaac Newton Institute for Mathematical Sciences for support and hospitality during the programme ``$K$-theory, algebraic cycles and motivic homotopy theory'' when work on this paper was undertaken.
\end{acknowledgments}

%
\section{Log overconvergent isocrystals}\label{sec: isoc}
%

In this section we recall the definition of the log overconvergent site and log overconvergent isocrystals from \cite{Yamada2020}.

Until the end of Section \ref{sec: c rig}, a log structure on a (weak formal) scheme is defined as a sheaf of monoids with respect to the Zariski topology.

\begin{definition}\label{def: widening}
	\begin{enumerate}
	\item A {\it widening} is a triple $(Z,\cZ,i)$ where $Z$ is a fine log scheme over $k$, $\cZ$ is a weak formal log scheme, and $i\colon Z \hookrightarrow\cZ$ is a homeomorphic exact closed immersion over $W$.
		A morphism of widenings $f\colon(Z',\cZ',i') \rightarrow(Z,\cZ,i)$ is a pair $f= (f_k,f_W)$ of morphisms $f_k\colon Z' \rightarrow Z$ over $k^\varnothing$ and $f_W\colon\cZ' \rightarrow\cZ$ over $W^\varnothing$, such that $f_W\circ i'= i\circ f_k$.
	\item An {\it $F$-widening} is a quadruple $(Z,\cZ,i,\phi)$ where $(Z,\cZ,i)$ is a widening and $\phi\colon\cZ \rightarrow\cZ$ is a lift of the absolute Frobenius on $\cZ\times_{W^\varnothing}k^\varnothing$.
		For an $F$-widening $(Z,\cZ,i,\phi)$, the absolute Frobenius $F_Z$ on $Z$ and $\phi$ on $\cZ$ define a morphism $(F_Y,\phi)\colon(Z,\cZ,i) \rightarrow(Z,\cZ,i)$, which we denote again by $\phi$.
		A morphism of $F$-widenings $f\colon(Z',\cZ',i',\phi') \rightarrow(Z,\cZ,i,\phi)$ is a morphism $f\colon(Z',\cZ',i') \rightarrow(Z,\cZ,i)$ of widenings such that $f\circ\phi'= \phi\circ f$.	
	\item A morphism $f$ of widenings (resp.\, $F$-widenings) is said to be {\it log smooth} if $f_W\colon\cZ' \rightarrow\cZ$ is log smooth \cite[Def.\,1.52]{ErtlYamada}.
	\end{enumerate}
\end{definition}

In the following we consider a widening $(T,\cT,\iota)$ or an $F$-widening $(T,\cT,\iota,\sigma)$ as a base.
For a morphism $f\colon(Z',\cZ',i') \rightarrow(Z,\cZ,i)$ of widenings, or $f\colon(Z',\cZ',i',\phi') \rightarrow(Z,\cZ,i,\phi)$ of $F$-widenings, we often denote the morphisms $f_W\colon\cZ' \rightarrow\cZ$ again by $f$, if no confusion can arise.

\begin{definition}\label{def: log overconvergent site}
	Let $Y$ be a fine log scheme over $T$.
	We define the {\it log overconvergent site} $\OC(Y/\cT)= \OC(Y/(T,\cT,\iota))$ of $Y$ over $(T,\cT,\iota)$ as follows:
	\begin{enumerate}
	\item An object of $\OC(Y/\cT)$ is a quintuple $(Z,\cZ,i,h,\theta)$ where $h\colon(Z,\cZ,i) \rightarrow(T,\cT,\iota)$ is a morphism of widenings, and $\theta\colon Z \rightarrow Y$ is a morphism over $T$.
	\item A morphism $f\colon(Z',\cZ',i',h',\theta') \rightarrow(Z,\cZ,i,h,\theta)$ in $\OC(Y/\cT)$ is a morphism $f\colon(Z',\cZ',i') \rightarrow(Z,\cZ,i)$ of widenings over $(T,\cT,\iota)$ such that $\theta\circ f_k= \theta'$.
	\item A covering family in $\OC(Y/\cT)$ is a family of morphisms $\{f_\lambda\colon (Z_\lambda,\cZ_\lambda,i_\lambda,h_\lambda,\theta_\lambda) \rightarrow(Z,\cZ,i,h,\theta)\}_\lambda$ such that
		\begin{itemize}
		\item $f_{\lambda,W}^*\cN_\cZ= \cN_{\cZ_\lambda}$ for any $\lambda$, 
		\item the family $\{f_{\lambda,W}\colon\cZ_{\lambda,\bbQ} \rightarrow\cZ_\bbQ\}_\lambda$ is an admissible covering, 
		\item the morphism $(f_{\lambda,k},i_\lambda)\colon Z_\lambda \rightarrow Z\times_\cZ\cZ_\lambda$ is an isomorphism for any $\lambda$.
		\end{itemize}
	\end{enumerate}
\end{definition}

\begin{definition}\label{def: univ enl}
	Let $(Z,\cZ,i)$ be a widening.
	Let $\cJ$ be the ideal of $Z$ in $\cZ$.
	Let $\cU=\Spwf A\subset\cZ$ be an affine open subset. 
	For $n\geq 0$, take generators $f_1,\ldots,f_m\in A$ of the ideal $\Gamma(\cU,\cJ^n)\subset A$ and define
	\[A[n]:=\left(A[x_1,\ldots,x_m]^\dagger/(px_1-f_1,\ldots,px_m-f_m)\right)/(\text{$p$-torsion}).\]
	Then $\cU[n]:=\Spwf A[n]$ for all affine open subsets $\cU$ glue to each other by \cite[Lem.\,1.28]{ErtlYamada}, and define a $p$-adic weak formal scheme $\cZ[n]$.
	We endow $\cZ[n]$ with the pull-back log structure of $\cZ$, and let $i[n]\colon Z[n]:=\cZ[n]\times_\cZ Z\hookrightarrow\cZ[n]$ be the canonical exact closed immersion. 
	Then the family $\{(Z[n],\cZ[n],i[n])\}_n$ forms a direct system of widenings over $(Z,\cZ,i)$, which we call the \textit{universal enlargements} of $(Z,\cZ,i)$.
	
	If $(Z,\cZ,i,h,\theta)$ is an object of $\OC(Y/\cT)$, then universal enlargements define a direct system $\{(Z[n],\cZ[n],i[n],h[n],\theta[n])\}_n$ in $\OC(Y/\cT)$.
\end{definition}

For a weak formal scheme $\cZ$ and an $\cO_{\cZ}$-module $\cF$, we let $\cF_\bbQ$ be $\mathrm{sp}^*\cF=\mathrm{sp}^{-1}\cF\otimes_{\mathrm{sp}^{-1}\cO_{\cZ}}\cO_{\cZ_\bbQ}$.
If $f\colon\cZ'\rightarrow\cZ$ is a morphism of fine weak formal log schemes, then we have $\omega^m_{\cZ'/\cZ,\bbQ}|_{\cZ'[n]}\cong \omega^m_{\cZ'[n]/\cZ}\otimes_{\bbZ_p}\bbQ_p$, where $\cZ'[n]$ is endowed with the pull-back log structure from $\cZ'$.
Thus the differentials $\omega^m_{\cZ'[n]/\cZ}\rightarrow\omega^{m+1}_{\cZ'[n]/\cZ}$ induce the differentials $\omega^m_{\cZ'[n]/\cZ,\bbQ}\rightarrow\omega^{m+1}_{\cZ'[n]/\cZ,\bbQ}$. They for all $n$ glue each other and we obtain a complex $\omega^\bullet_{\cZ'/\cZ,\bbQ}$ of sheaves on $\cZ'_\bbQ$.

To a sheaf $\sE$  on $\OC(Y/\cT)$ and an object $(Z,\cZ,i,h,\theta)\in\OC(Y/\cT)$, one can associate a sheaf $\sE_\cZ= \sE_{(Z,\cZ,i,h,\theta)}$ on $\cZ_\bbQ$ as follows:
Since $\cZ_\bbQ=\bigcup_{n}\cZ[n]_\bbQ$, it suffices to give a compatible family of sheaves $\sE_{\cZ[n]}$ on $\cZ[n]_\bbQ$.
Any admissible open subset $\frU\subset\cZ[n]_\bbQ$ can be written as $\frU=\cU_\bbQ$ for some admissible blow-up $\cU\rightarrow\cZ[n]$.
We endow $\cU$ with the pull-back log structure from $\cZ[n]$ and let $U:=\cU\times_{\cZ[n]}Z[n]$.
Let $i_\cU\colon U\hookrightarrow\cU$, $h_\cU\colon(U,\cU,i_\cU)\rightarrow(T,\cT,\iota)$, and $\theta_\cU\colon U\rightarrow Y$ be the morphisms obviously induced.
We define a sheaf $\sE_{\cZ[n]}$ by
	\[\Gamma(\frU,\sE_{\cZ[n]}):=\Gamma((U,\cU,i_\cU,h_\cU,\theta_\cU),\sE).\]
	
The structure sheaf $\sO_{Y/\cT}$ on $\OC(Y/\cT)$ is defined by $\Gamma((Z,\cZ,i,h,\theta),\sO_{Y/\cT}):= \Gamma(\cZ_\bbQ,\cO_{\cZ_\bbQ})$.

\begin{definition}\label{def: isocrystal}
	A {\it log overconvergent isocrystal} on $Y$ over $(T,\cT,\iota)$ is an $\sO_{Y/\cT}$-module $\sE$ on $\OC(Y/\cT)$, such that
	\begin{itemize}
	\item For any $(Z,\cZ,i,h,\theta)\in\OC(Y/\cT)$, the sheaf $\sE_\cZ$ on $\cZ_\bbQ$ is a coherent locally free $\cO_{\cZ_\bbQ}$-module,
	\item For any morphism $f\colon(Z',\cZ',i',h',\theta') \rightarrow(Z,\cZ,i,h,\theta)$, the natural morphism $f^*\sE_{\cZ} \rightarrow\sE_{\cZ'}$ is an isomorphism.
	\end{itemize}
	A morphism $f\colon\sE' \rightarrow\sE$ of log overconvergent isocrystals is an $\sO_{Y/\cT}$-linear homomorphism.
	We denote by
		\[\Isoc^\dagger(Y/\cT)= \Isoc^\dagger(Y/(T,\cT,\iota))\]
	the category of log overconvergent isocrystals on $Y$ over $(T,\cT,\iota)$.
\end{definition}

Let $\varrho\colon (T',\cT',\iota') \rightarrow(T,\cT,\iota)$ be a morphism of widenings and consider a commutative diagram
	\[\xymatrix{
	Y'\ar[r] ^\rho \ar[d] & Y \ar[d]\\
	T'\ar[r]^{\varrho_k} &T.
	}\]
Then we may regard an object of $\OC(Y'/\cT')$ as an object of $\OC(Y/\cT)$ via $\rho$ and $\varrho$.
This defines a co-continuous functor $\OC(Y'/\cT') \rightarrow\OC(Y/\cT)$, which induces a functor
	\begin{equation}\label{eq: pull back isoc}
	(\rho,\varrho)^*\colon\Isoc^\dagger(Y/\cT) \rightarrow\Isoc^\dagger(Y'/\cT').
	\end{equation}
	
In particular, a morphism $f\colon Y' \rightarrow Y$ over $T$ induces a functor
	\begin{equation}
	f^*:= (f,\id_{(T,\cT,\iota)})^*\colon\Isoc^\dagger(Y/\cT) \rightarrow\Isoc^\dagger(Y'/\cT).
	\end{equation}

The endomorphism $\sigma$ on $(T,\cT,\iota)$ defined by  the absolute Frobenius $F_T$ on $T$ and a Frobenius lift $\sigma$ on $\cT$ together with the absolute Frobenius $F_Y$ on $Y$   induces a functor
	\begin{equation}\label{eq: Frob of isoc}
	\sigma^\ast_\isoc := (F_Y,\sigma)^*\colon\Isoc^\dagger(Y/\cT) \rightarrow\Isoc^\dagger(Y/\cT).
	\end{equation}

\begin{definition}
	A {\it log overconvergent $F$-isocrystal} on $Y$ over $(T,\cT,\iota,\sigma)$ is a pair $(\sE,\Phi)$ of a log overconvergent isocrystal $\sE\in\Isoc^\dagger(Y/\cT)$ and an isomorphism $\sigma^\ast_\isoc \sE \xrightarrow{\cong}\sE$, which we call a {\it Frobenius structure} on $\sE$.
	A morphism $f\colon(\sE',\Phi') \rightarrow(\sE,\Phi)$ of log overconvergent $F$-isocrystals is an $\sO_{Y/\cT}$-linear homomorphism $f\colon\sE' \rightarrow\sE$  is compatible with Frobenius structures.
	We denote by
		\[F\Isoc^\dagger(Y/\cT) =  F\Isoc^\dagger(Y/(T,\cT,\iota,\sigma))\]
	the category of log overconvergent $F$-isocrystals on $Y$ over $(T,\cT,\iota,\sigma)$.
\end{definition}

The structure sheaf $\sO_{Y/\cT}$ with a Frobenius structure given by the natural identification $\sigma^\ast_\isoc \sO_{Y/\cT}= \sO_{Y/\cT}$ provides a log overconvergent $F$-isocrystal, which we denote again by $\sO_{Y/\cT}$.

\begin{definition}\label{def: tensor and hom}
	Let $Y$ be a fine log scheme over $T$.
	For $\sE,\sE'\in\Isoc^\dagger(Y/\cT)$, the tensor product and internal $\Hom$ as $\sO_{Y/\cT}$-modules define the tensor product and internal $\Hom$ in the category of overconvergent isocrystals. 
	They are denoted by $\sE \otimes\sE'$ and $\sheafhom(\sE,\sE')$.
	
	For $(\sE,\Phi),(\sE',\Phi')\in F\Isoc^\dagger(Y/\cT)$, we define $(\sE,\Phi) \otimes(\sE',\Phi')$ by
	\begin{equation*}\label{eq: tensor phi}\sigma^\ast_\isoc (\sE \otimes\sE') :=  \sigma^\ast_\isoc \sE \otimes\sigma^\ast_\isoc \sE' \xrightarrow{\Phi \otimes\Phi'}\sE \otimes\sE',
	\end{equation*}
and $\sheafhom((\sE',\Phi'),(\sE,\Phi))$ by
	\begin{eqnarray*}
	\label{eq: hom phi}
	\sigma^\ast_\isoc \sheafhom(\sE',\sE)= \sheafhom(\sigma^\ast_\isoc \sE',\sigma^\ast_\isoc \sE)& \rightarrow&\sheafhom(\sE',\sE)\\
	\nonumber f&\mapsto&\Phi\circ f\circ\Phi'^{-1}.
	\end{eqnarray*}	
We call $\sE^\vee:= \sheafhom(\sE,\sO_{Y/\cT})$  and $(\sE,\Phi)^\vee:= \sheafhom((\sE,\Phi),\sO_{Y/\cT})$ the dual of $\sE$ and  $(\sE,\Phi)$ respectively.
\end{definition}

Log overconvergent ($F$-)isocrystals can be interpreted as locally free sheaves with log connections on dagger spaces.

\begin{definition}
	Let $Y$ be a fine log scheme over $T$.
	A {\it local embedding datum} for $Y$ over $(T,\cT,\iota)$ is a a finite family $(Z_\lambda,\cZ_\lambda,i_\lambda,h_\lambda,\theta_\lambda)_{\lambda\in\Lambda}$ of objects in $\OC(Y/\cT)$ such that $\{\theta_\lambda\colon Z_\lambda \rightarrow Y\}_\lambda$ is a Zariski open covering with $\theta_\lambda^*\cN_Y= \cN_{Z_\lambda}$ and the morphisms $h_\lambda\colon\cZ_\lambda \rightarrow\cT$ are log smooth.
	
	A {\it local embedding $F$-datum} for $Y$ over $(T,\cT,\iota,\sigma)$ is a finite family $(Z_\lambda,\cZ_\lambda,i_\lambda,h_\lambda,\theta_\lambda,\phi_\lambda)_{\lambda\in\Lambda}$ consisting of a local embedding datum $(Z_\lambda,\cZ_\lambda,i_\lambda,h_\lambda,\theta_\lambda)_{\lambda\in\Lambda}$ and Frobenius lifts $\phi_\lambda$ on $\cZ_\lambda$ which are compatible with $\sigma$.
\end{definition}

Note that local embedding ($F$-)data exist for any fine log scheme over $T$ \cite[Prop.\,2.34]{Yamada2020}.

\begin{definition}
	Let $\cZ$ be a weak formal log scheme over $\cT$. 
	\begin{enumerate}
	\item For a coherent locally free sheaf $\cE$  on $\cZ_\bbQ$
		an {\it integrable log connection}  over $\cT$ is an $\cO_{\cT_\bbQ}$-linear map
			\[\nabla\colon \cE \rightarrow\cE \otimes_{\cO_{\cZ_\bbQ}}\omega^1_{\cZ/\cT,\bbQ}\]
		such that
		\begin{itemize}
		\item $\nabla(f\alpha)= f\nabla(\alpha)+\alpha \otimes df$ for local sections $\alpha\in\cE$ and $f\in\cO_{\cZ_\bbQ}$,
		\item $\nabla^1\circ\nabla= 0$, where $\nabla^1\colon\cE \otimes\omega^1_{\cZ/\cT,\bbQ} \rightarrow\cE \otimes\omega^2_{\cZ/\cT,\bbQ}$ is induced from $\nabla$ by $\nabla(\alpha \otimes\eta):= \nabla(\alpha)\wedge\eta+\alpha \otimes d\eta$ for local sections $\alpha\in\cE$ and $\eta\in\omega^1_{\cZ/\cT,\bbQ}$.
		\end{itemize}	
	\item We define the category $\MIC(\cZ/\cT)$ as follows:
		\begin{itemize}
		\item An object of $\MIC(\cZ/\cT)$ is a pair $(\cE,\nabla)$ of a coherent locally free sheaf $\cE$ on $\cZ_\bbQ$ and an integrable log connection $\nabla$ on $\cE$ over $\cT$, 
		\item A morphism $f\colon(\cE',\nabla') \rightarrow(\cE,\nabla)$ in $\MIC(\cZ/\cT)$ is an $\cO_{\cZ_\bbQ}$-linear homomorphism $f\colon\cE' \rightarrow\cE$   is compatible with the log connections.
		\end{itemize}
	\item Assume that there is an endomorphism $\phi$ on $\cZ$ which lifts the absolute Frobenius on $\cZ\times_{W^\varnothing}k^\varnothing$.
		Then $\phi$ on $\cZ$ and $\sigma$ on $\cT$ together induce a functor $\phi^*\colon\MIC(\cZ/\cT) \rightarrow\MIC(\cZ/\cT)$.
		Let $(\cE,\nabla)\in\MIC(\cZ/\cT)$. 
		A {\it Frobenius structure} on $(\sE,\nabla)$ is an isomorphism $\Phi\colon\phi^*(\cE,\nabla) \xrightarrow{\cong}(\cE,\nabla)$ in $\MIC(\cZ/\cT)$.
		We denote  the category of such triples $(\cE,\nabla,\Phi)$ by $F\MIC(\cZ/\cT)$.
		For an object $(\cE,\nabla,\Phi)\in F\MIC((\cZ,\phi)/(\cT,\sigma))$, the composition $\cE \xrightarrow{\phi^*}\phi^*\cE \xrightarrow{\Phi}\cE$ together with the action of $\phi$ on $\omega^\bullet_{\cZ/\cT,\bbQ}$  induces a $\phi$-semilinear endomorphism on $\cE \otimes\omega^\bullet_{\cZ/\cT,\bbQ}$  often denoted by $\varphi$. 
	\item An object $(\cE,\nabla)\in\MIC(\cZ/\cT)$ (resp.\,$(\cE,\nabla,\Phi)\in F\MIC(\cZ/\cT)$) is said to be {\it overconvergent} if $\nabla$ is induced from a Taylor isomorphism.
		(See \cite[Def.\,2.19]{Yamada2020} and the construction before Prop.\,2.12 in {\it loc.\,cit.})
		We denote by $\MIC^\dagger(\cZ/\cT)\subset\MIC(\cZ/\cT)$ (resp.\,$F\MIC^\dagger(\cZ/\cT)\subset F\MIC(\cZ/\cT)$) the full subcategory of overconvergent objects.
	\end{enumerate}
\end{definition}

Let $Y$ be a fine log scheme over $T$, and $(Z_\lambda,\cZ_\lambda,i_\lambda,h_\lambda,\theta_\lambda)_{\lambda\in\Lambda}$  a local embedding datum for $Y$.
For $m\in\bbN$ and $\underline{\lambda}= (\lambda_0,\ldots,\lambda_m)\in\Lambda^{m+1}$, set
	\begin{equation}\label{eq: product in OC}
	(Z_{\underline{\lambda}},\cZ_{\underline{\lambda}},i_{\underline{\lambda}},h_{\underline{\lambda}},\theta_{\underline{\lambda}}):= (Z_{\lambda_0},\cZ_{\lambda_0},i_{\lambda_0},h_{\lambda_0},\theta_{\lambda_0})\times\cdots\times(Z_{\lambda_m},\cZ_{\lambda_m},i_{\lambda_m},h_{\lambda_m},\theta_{\lambda_m}),
	\end{equation}
where the product is taken in $\OC(Y/\cT)$ 
(see \cite[Prop.\,2.25]{Yamada2020} for products in the log overconvergent site).
If Frobenius lifts $\phi_\lambda$ are given, let $\phi_{\underline{\lambda}}$ be the Frobenius lift on $\cZ_{\underline{\lambda}}$ induced by $\phi_{\lambda_0}\times\cdots\times\phi_{\lambda_m}$ on $\cZ_{\lambda_0}\times_\cT\cdots\times_\cT\cZ_{\lambda_m}$.
We denote by
	\begin{align*}
	&\mathrm{pr}_j\colon(Z_{\underline{\lambda}},\cZ_{\underline{\lambda}},i_{\underline{\lambda}},h_{\underline{\lambda}},\theta_{\underline{\lambda}}) \rightarrow(Z_{\lambda_j},\cZ_{\lambda_j},i_{\lambda_j},h_{\lambda_j},\theta_{\lambda_j})&&\text{for }\ 0\leq j\leq m\\
	&\mathrm{pr}_{j,k}\colon(Z_{\underline{\lambda}},\cZ_{\underline{\lambda}},i_{\underline{\lambda}},h_{\underline{\lambda}},\theta_{\underline{\lambda}}) \rightarrow(Z_{\lambda_j},\cZ_{\lambda_j},i_{\lambda_j},h_{\lambda_j},\theta_{\lambda_j})\times(Z_{\lambda_k},\cZ_{\lambda_k},i_{\lambda_k},h_{\lambda_k},\theta_{\lambda_k})&&\text{for }\ 0\leq j<k\leq m,
	\end{align*}
the canonical projections.

\begin{definition}\label{def: MIC descent}
	Let $Y$ be a fine log scheme over $T$, and  $(Z_\lambda,\cZ_\lambda,i_\lambda,h_\lambda,\theta_\lambda)_{\lambda\in\Lambda}$ a local embedding datum for $Y$.
	We define the category $\MIC^\dagger((\cZ_\lambda)_{\lambda\in\Lambda}/\cT)$ as follows:
	\begin{itemize}
	\item An object of $\MIC^\dagger((\cZ_\lambda)_{\lambda\in\Lambda}/\cT)$ is a family $\{(\cE_\lambda,\nabla_\lambda),\rho_{\lambda_0,\lambda_1}\}$  of $(\cE_\lambda,\nabla_\lambda)\in\MIC^\dagger(\cZ_\lambda/\cT)$ for $\lambda\in\Lambda$ and isomorphisms $\rho_{\lambda_0,\lambda_1}\colon \mathrm{pr}_1^*(\cE_{\lambda_1},\nabla_{\lambda_1}) \xrightarrow{\cong}\mathrm{pr}_0^*(\cE_{\lambda_0},\nabla_{\lambda_0})$ for $(\lambda_0,\lambda_1)\in\Lambda^2$, such that for any $(\lambda_0,\lambda_1,\lambda_2)\in\Lambda^3$ the diagram
			\[\xymatrix{
			\mathrm{pr}_2^*(\cE_{\lambda_2},\nabla_{\lambda_2})\ar[rr]^{\mathrm{pr}^*_{0,2}(\rho_{\lambda_0,\lambda_2})}\ar[rd]_{\mathrm{pr}^*_{1,2}(\rho_{\lambda_1,\lambda_2})} && \mathrm{pr}_0^*(\cE_{\lambda_0},\nabla_{\lambda_0})\\
			& \mathrm{pr}_1^*(\cE_{\lambda_1},\nabla_{\lambda_1})\ar[ru]_{\mathrm{pr}_{0,1}^*(\rho_{\lambda_0,\lambda_1})}&
			}\]
		commutes.
		
	\item A morphism $f\colon \{(\cE'_\lambda,\nabla'_\lambda),\rho'_{\lambda_0,\lambda_1}\} \rightarrow\{(\cE_\lambda,\nabla_\lambda),\rho_{\lambda_0,\lambda_1}\}$ in $\MIC^\dagger((\cZ_\lambda)_{\lambda\in\Lambda}/\cT)$ is a family $f= \{f_\lambda\}_\lambda$ of morphisms $f_\lambda\colon(\cE'_\lambda,\nabla'_\lambda) \rightarrow(\cE_\lambda,\nabla_\lambda)$ in $\MIC^\dagger(\cZ_\lambda/\cT)$ which are compatible with $\rho'_{\lambda_0,\lambda_1}$ and $\rho_{\lambda_0,\lambda_1}$.
	\end{itemize}
	
	Similarly we define the category  $F\MIC^\dagger((\cZ_\lambda,\phi_\lambda)_{\lambda\in\Lambda}/(\cT,\sigma))$ for a local embedding $F$-datum $(Z_\lambda,\cZ_\lambda,i_\lambda,h_\lambda,\theta_\lambda,\phi_\lambda)_{\lambda\in\Lambda}$ for $Y$.
\end{definition}

\begin{proposition}[{\cite[Cor.\,2.36]{Yamada2020}}]\label{cor: descent realization}
	Let $Y$ be a fine log scheme over $T$.
	For a local embedding datum $(Z_\lambda,\cZ_\lambda,i_\lambda,h_\lambda,\theta_\lambda)_{\lambda\in\Lambda}$ for $Y$, there exists a canonical equivalence
		\begin{equation}
		\Isoc^\dagger(Y/\cT) \xrightarrow{\cong}\MIC^\dagger((\cZ_\lambda)_{\lambda\in\Lambda}/\cT).
		\end{equation}
	For a local embedding $F$-datum $(Z_\lambda,\cZ_\lambda,i_\lambda,h_\lambda,\theta_\lambda,\phi_\lambda)_{\lambda\in\Lambda}$, there exist canonical equivalences
		\begin{equation}\label{eq: descent realization}
		F\Isoc^\dagger(Y/\cT) \xrightarrow{\cong}F\MIC^\dagger((\cZ_\lambda,\phi_\lambda)_{\lambda\in\Lambda}/(\cT,\sigma)).
		\end{equation} 
\end{proposition}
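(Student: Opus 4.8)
The plan is to realize the equivalence concretely by evaluation at the members of the local embedding datum, producing the connections, the gluing isomorphisms and the cocycle condition from the crystal property of isocrystals applied to the products in $\OC(Y/\cT)$ constructed in \cite[Prop.\ 2.20]{Yamada2020}. First I would build the functor $\Isoc^\dagger(Y/\cT)\to\MIC^\dagger((\cZ_\lambda)_{\lambda\in\Lambda}/\cT)$. Given $\sE$, evaluation at $(Z_\lambda,\cZ_\lambda,i_\lambda,h_\lambda,\theta_\lambda)$ yields the coherent locally free $\cO_{\cZ_{\lambda,\bbQ}}$-module $\sE_{\cZ_\lambda}$. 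To equip it with an integrable log connection I would apply the second defining property of an isocrystal (that $f^*\sE_\cZ\to\sE_{\cZ'}$ is an isomorphism) to the two projections out of the self-product $\cZ_\lambda\times\cZ_\lambda$ taken in $\OC(Y/\cT)$; since $h_\lambda$ is log smooth this product is an overconvergent log thickening of the diagonal $\cZ_\lambda$, and the resulting stratification is precisely the datum of an overconvergent log connection $\nabla_\lambda$ in the sense of \cite[Def.\ 2.14]{Yamada2020} and the construction preceding Prop.\ 2.10 there. The same property applied to the two projections from the product $\cZ_{(\lambda_0,\lambda_1)}$ produces the transition isomorphisms $\rho_{\lambda_0,\lambda_1}$, and applied to the triple product $\cZ_{(\lambda_0,\lambda_1,\lambda_2)}$ yields the cocycle condition; compatibility with morphisms of isocrystals is immediate, so this assignment is functorial.

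Next I would verify full faithfulness and essential surjectivity. A morphism of isocrystals is a global section of the internal-$\Hom$ sheaf on $\OC(Y/\cT)$; since the $\theta_\lambda$ cover $Y$ with $\theta_\lambda^*\cN_Y=\cN_{Z_\lambda}$, every object of the site is covered by objects admitting a morphism to some $\cZ_\lambda$, so by the crystal property and the sheaf axiom a morphism is the same as a compatible family of $\cO_{\cZ_{\lambda,\bbQ}}$-linear maps $f_\lambda$ intertwining the $\rho_{\lambda_0,\lambda_1}$, which is exactly a morphism of descent data. For essential surjectivity I would reconstruct an isocrystal from a descent datum: for an arbitrary object $(Z,\cZ,i,h,\theta)$ I would cover $Z$ by the fibre products $Z\times_Y Z_\lambda$, form the products $(Z,\cZ,\dots)\times(Z_\lambda,\cZ_\lambda,\dots)$ in $\OC(Y/\cT)$, pull back $(\cE_\lambda,\nabla_\lambda)$ along the projections, and glue the pullbacks over the overlaps by means of the $\rho_{\lambda_0,\lambda_1}$; the cocycle condition guarantees that the glued module $\sE_\cZ$ is well defined, and overconvergence ensures the gluing takes place on the dagger generic fibres independently of the auxiliary choices. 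Functoriality in $(Z,\cZ,i,h,\theta)$ and the crystal property then follow by construction, and composing the two functors in either order returns the identity.

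The $F$-isocrystal statement follows formally. Under evaluation the Frobenius pullback $\sigma^\ast_\isoc\sE$ is identified at $\cZ_\lambda$ with $\phi_\lambda^*\sE_{\cZ_\lambda}$, because $\phi_\lambda$ is compatible with $\sigma$; hence a Frobenius structure $\Phi$ on $\sE$ corresponds exactly to a family of isomorphisms $\phi_\lambda^*(\cE_\lambda,\nabla_\lambda)\xrightarrow{\cong}(\cE_\lambda,\nabla_\lambda)$ in $\MIC^\dagger(\cZ_\lambda/\cT)$ compatible with the $\rho_{\lambda_0,\lambda_1}$, that is, to a lift of the descent datum to $F\MIC^\dagger((\cZ_\lambda,\phi_\lambda)_{\lambda\in\Lambda}/(\cT,\sigma))$. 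Thus the equivalence \eqref{eq: descent realization} is deduced from the one for isocrystals together with this compatibility.

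I expect the main obstacle to be essential surjectivity, and more precisely the control of overconvergence under the gluing. One must check that the connection and the transition isomorphisms, which are a priori defined only after passing to the universal enlargements $\cZ[n]$ and their dagger generic fibres, genuinely descend to an overconvergent object on every widening in the site, uniformly in $n$, so that $\sE_\cZ=\bigcup_n\sE_{\cZ[n]}$ is coherent locally free and the crystal property holds rather than merely on each enlargement separately. This weak-completion bookkeeping, which is absent in the convergent or crystalline settings, is the delicate point of the argument.
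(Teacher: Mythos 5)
First, a caveat on the comparison itself: the paper does not prove this proposition, but recalls it verbatim from \cite[Cor.\ 2.31]{Yamada2020}, so your attempt can only be measured against the argument given there, which is the standard ``crystals are modules with connection plus descent data'' proof. Your first half follows it faithfully: evaluating $\sE$ at the members of the datum and extracting $\nabla_\lambda$, the $\rho_{\lambda_0,\lambda_1}$ and the cocycle condition from the crystal property applied to the self-products, pairwise products and triple products of \cite[Prop.\ 2.20]{Yamada2020} is exactly how the functor is built, and your reduction of the $F$-statement to the plain one is the correct formal argument.

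The genuine gap is in the quasi-inverse direction, and it is not the one you flag. For an arbitrary object $(Z,\cZ,i,h,\theta)\in\OC(Y/\cT)$ you claim it is ``covered by objects admitting a morphism to some $\cZ_\lambda$'', and you propose to build $\sE_\cZ$ by pulling $(\cE_\lambda,\nabla_\lambda)$ back to the products $(Z,\cZ,i,h,\theta)\times(Z_\lambda,\cZ_\lambda,i_\lambda,h_\lambda,\theta_\lambda)$ and gluing via $\rho$. Neither step is available for free. The opens $Z'_\lambda:=\theta^{-1}(Z_\lambda)$, together with the corresponding open weak formal subschemes $\cZ'_\lambda\subset\cZ$, do form a covering family in the site, but a morphism $(Z'_\lambda,\cZ'_\lambda)\rightarrow(Z_\lambda,\cZ_\lambda)$ requires extending $i_\lambda\circ\theta|_{Z'_\lambda}$ to a morphism $w_\lambda\colon\cZ'_\lambda\rightarrow\cZ_\lambda$ of weak formal log schemes over $\cT$; such a lift exists only Zariski-locally, and only because $h_\lambda$ is log smooth (infinitesimal lifting along the topologically nilpotent ideal of the widening, in its dagger form). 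This is the place where the log smoothness hypothesis actually enters the reconstruction, and you never invoke it for this purpose. The product route does not circumvent the problem: gluing the $\mathrm{pr}_1$-pullbacks of the $\cE_\lambda$ over varying $\lambda$ via $\rho$ only produces a module on the products, whose generic fibres are tubes fibred over $\cZ'_{\lambda,\bbQ}$ in positive relative dimension. The projections $\mathrm{pr}_0$ back to $(Z'_\lambda,\cZ'_\lambda)$ are not covering morphisms of the site (their generic fibres are not admissible coverings), so neither the sheaf axiom nor the crystal property lets you descend along them; descending along $\mathrm{pr}_0$ is the same problem as producing a section of it, i.e.\ a lift $w_\lambda$.

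Moreover, once local lifts exist, setting $\sE_{\cZ'_\lambda}:=w_\lambda^*\cE_\lambda$ must be shown independent of the lift and compatible on overlaps: two lifts $w_\lambda$, $w'_\lambda$ define a map into the self-product of $(Z_\lambda,\cZ_\lambda)$, and the comparison $w_\lambda^*\cE_\lambda\cong(w'_\lambda)^*\cE_\lambda$ is the Taylor isomorphism furnished by the overconvergent stratification attached to $\nabla_\lambda$ --- its convergence on the dagger generic fibres is precisely what overconvergence buys --- while the cocycle condition glues these local realizations into a well-defined $\sE_\cZ$. This lifting-plus-Taylor-expansion mechanism is the engine of the whole equivalence; it is also needed for your full-faithfulness argument, since fullness requires descending a compatible family $\{f_\lambda\}$ along the same projections. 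It is absent from your proposal, whereas the uniformity in the universal enlargements $\cZ[n]$ that you single out as the delicate point is routine bookkeeping once this mechanism is in place.
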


\begin{definition}\label{def: cohomology}
	Let $Y$ be a fine log scheme over $T$, and let $\sE\in\Isoc^\dagger(Y/\cT)$.
	Let $(Z_\lambda,\cZ_\lambda,i_\lambda,h_\lambda,\theta_\lambda)_{\lambda\in\Lambda}$ be a local embedding datum for $Y$ over $\cT$, and for $m\geqslant 0$ and $\underline{\lambda}=(\lambda_0,\ldots,\lambda_m)\in\Lambda^{m+1}$ let $(Z_{\underline{\lambda}},\cZ_{\underline{\lambda}},i_{\underline{\lambda}},h_{\underline{\lambda}},\theta_{\underline{\lambda}})$ be the product in $\OC(Y/\cT)$ as in \eqref{eq: product in OC}.
	Then
	\[(Z_m,\cZ_m,i_m,h_m,\theta_m):=\coprod_{\underline{\lambda}\in\Lambda^{m+1}}(Z_{\underline{\lambda}},\cZ_{\underline{\lambda}},i_{\underline{\lambda}},h_{\underline{\lambda}},\theta_{\underline{\lambda}})\]
	for $m\geqslant 0$ form a simplicial object $(Z_\bullet,\cZ_\bullet,i_\bullet,h_\bullet,\theta_\bullet)$ of $\OC(Y/\cT)$.
	We define the log rigid cohomology
		\[R\Gamma_\rig(Y/\cT,\sE)=R\Gamma_\rig(Y/(T,\cT,\iota),\sE)\]
	of $Y$ over $\cT$ with coefficients in $\sE$ to be the complex associated to the co-simplicial complex $R\Gamma(\cZ_{\bullet,\bbQ},\sE_{\cZ_\bullet} \otimes\omega^\star_{\cZ_\bullet/\cT,\bbQ})$.
	This is in fact independent of the choice of a local embedding datum up to canonical quasi-isomorphisms \cite[Prop.\,2.38]{Yamada2020}, as an object of the derived category.
	
	For $(\sE,\Phi)\in F\Isoc^\dagger(Y/\cT)$, we may define
		\[R\Gamma_\rig(Y/\cT,(\sE,\Phi))=R\Gamma_\rig(Y/(T,\cT,\iota),(\sE,\Phi))\]
	in a similar manner by choosing a local embedding $F$-datum.
	The Frobenius structure defines a $\sigma$-semilinear endomorphism $\varphi$ on $R\Gamma_\rig(Y/\cT,(\sE,\Phi))$.
 \end{definition}

\begin{definition}\label{def: modules}
	\begin{enumerate}
	\item	We define a category $\Mod_F(N)$ as follows:
		An object of $\Mod_F(N)$ is an $F$-vector space equipped with an $F$-linear endomorphism $N$ called {\it monodromy operator}.
		A morphism in $\Mod_F(N)$ is an $F$-linear map compatible with $N$.
		
		We denote by $\Mod_F^\fin(N)$ the full subcategory of $\Mod_F(N)$ consisting of objects of finite dimension with nilpotent monodromy.
	\item We define a category $\Mod_F(\varphi,N)$ as follows:
		An object of $\Mod_F(\varphi,N)$ is an object of $\Mod_F(N)$ equipped with a $\sigma$-semilinear endomorphism $\varphi$ called {\it Frobenius operator}, such that $N\varphi= p\varphi N$.
		A morphism in $\Mod_F(\varphi,N)$ is an $F$-linear map compatible with $\varphi$ and $N$.
		
		We say a $(\varphi,N)$-module $M$ is \textit{finite} if it has finite dimension and its Frobeinus operator is bijective.
		We denote by $\Mod_F^\fin(\varphi,N)$ the full subcategory consisting of finite $(\varphi,N)$-modules.
		Note that the monodromy operator of a finite $(\varphi,N)$-module is automatically nilpotent.
	\end{enumerate}
\end{definition}

\begin{definition}\label{def: HK coh}
	\begin{enumerate}
	\item For any weak formal log scheme $\cZ$ over $\cS$, we define the \textit{Kim--Hain complex} $\omega^\bullet_{\cZ/W^\varnothing,\bbQ}[u]$ to be the CDGA on $\cZ_\bbQ$ generated by $\omega^\bullet_{\cZ/W^\varnothing,\bbQ}$ and degree zero elements $u^{[i]}$ for $i\geqslant 0$ with relations
		\begin{align*}
		u^{[0]}= 1,&&du^{[i+1]}= -d\log s\cdot u^{[i]},&&u^{[i]}\wedge u^{[j]}= \frac{(i+j)!}{i!j!}u^{[i+j]}.
		\end{align*}
		Let $N\colon \omega^\bullet_{\cZ/W^\varnothing,\bbQ}[u]\rightarrow\omega^\bullet_{\cZ/W^\varnothing,\bbQ}[u]$ be the $\omega^\bullet_{\cZ/W^\varnothing,\bbQ}$-linear endomorphism defined by $u^{[i+1]}\mapsto u^{[i]}$.
		When a Frobenius lift $\phi$ on $\cZ$ which is compatible with $\sigma$ on $\cS$ is given, we extend its action on $\omega^\bullet_{\cZ/W^\varnothing,\bbQ}$ to $\omega^\bullet_{\cZ/W^\varnothing,\bbQ}[u]$ by $u^{[i]}\mapsto p^iu^{[i]}$.
		
		For $m\geqslant 0$, we let $\omega^\bullet_{\cZ/W^\varnothing,\bbQ}[u]_m$ be the subcomplex of $\omega^\bullet_{\cZ/W^\varnothing,\bbQ}[u]$ whose degree $n$ component consists of sections of the form $\sum_{i=0}^m\eta_iu^{[i]}$ with $\eta_i\in\omega^\bullet_{\cZ/W^\varnothing,\bbQ}$.
		Then for any $(\cE,\nabla)\in\MIC^\dagger(\cZ/W^\varnothing)$, $\cE\otimes\omega^\bullet_{\cZ/W^\varnothing,\bbQ}[u]_m$ forms a complex, which is stable under $N$ (and $\varphi$ if we consider a Frobenius structure).
		
	\item\label{item: HK coh def} Let $Y$ be a fine log scheme over $k^0$, and let $\sE\in\Isoc^\dagger(Y/W^\varnothing)$.
		To a choice of local embedding datum for $Y$ over $\cS$, we may associate a simplicial object $(Z_\bullet,\cZ_\bullet,i_\bullet,h_\bullet,\theta_\bullet)$ of $\OC(Y/\cS)$ as in Definition \ref{def: cohomology}.
	Note that we may regard $(Z_\bullet,\cZ_\bullet,i_\bullet,h_\bullet,\theta_\bullet)$ as a simplicial object of $\OC(Y/W^\varnothing)$, and this can be used to compute log rigid cohomology of $Y$ over $W^\varnothing$, since $\cS$ is log smooth over $W^\varnothing$.
		The complex
		\[R\Gamma^\rig_\HK(Y,\sE):=\holim_n\hocolim_mR\Gamma(\cZ_{n,\bbQ},\sE_{\cZ_n}\otimes\omega^\bullet_{\cZ_n/W^\varnothing,\bbQ}[u]_m),\]
		where $\holim$ is taken for the face maps and $\hocolim$ is indexed by $\bbN$,
		 is independent of the choice of a local embedding datum up to canonical quasi-isomorphisms \cite[Prop.\,4.6]{Yamada2020}, hence defines an object of $D^+(\Mod_F(N))$.
		We call it the \textit{rigid Hyodo--Kato cohomology} of $Y$ with coefficients in $\sE$.
		
		For $(\sE,\Phi)\in F\Isoc^\dagger(Y/\cT)$, we may define $R\Gamma_\HK^\rig(Y,(\sE,\Phi))$ in a similar manner by choosing a local embedding $F$-datum.
		The Frobenius structure defines a $\sigma$-semilinear endomorphism $\varphi$ on $R\Gamma^\rig_\HK(Y,(\sE,\Phi))$.
		Therefore 	$R\Gamma^\rig_\HK(Y,(\sE,\Phi))$ gives an object of $D^+(\Mod_F(\varphi,N))$.
	\end{enumerate}
\end{definition}

\section{Log overconvergent isocrystals associated to log substructures}\label{section : log substructures}

In this section, we discuss how to associate log overconvergent isocrystals to certain log substructures of a given log structure.

\begin{definition}
	Let $X$ be a (weak formal) scheme and $\cM$ a log structure on $X$.
	We say that $\cM$ is {\it monogenic} if locally on $X$ there exists a chart of the form $\psi\colon\bbN_X\rightarrow\cM$.
	We call $\psi(1)$ a {\it (local) generator} of $\cM$. 
\end{definition}

\begin{definition}
	Let $f\colon \cZ'\rightarrow\cZ$ be a morphism of weak formal log schemes.
	For any log substructure $\cM\subset\cN_{\cZ'}$, we denote by $f_+\cM$ the push forward as a sheaf of sets (not as a log structure).
	Moreover, let $f_\star\cM$ be the preimage of $f_+\cM$ under the morphism $\cN_\cZ\rightarrow f_+\cN_{\cZ'}$ induced by $f$. 
	This is a log substructure of $\cN_\cZ$.
\end{definition}

\begin{proposition}\label{prop: monogenic}
	Let $(Z,\cZ,i)$ be a widening and $\cM\subset\cN_Z$ a monogenic log substructure.
	Then $i_\star\cM$ is monogenic.
	For any point $z\in\cZ$ and local generators $a,b\in(i_\star\cM)_z$, there exists a unique element $x\in\cO_{\cZ,z}^\times$ such that $a=xb$.
\end{proposition}

For the proof we will need the following lemma:

\begin{lemma}\label{lem: units}
	Let $R$ be a Noetherian ring with ideal $I$.
	Let $A$ be a pseudo-wcfg algebra with respect to $(R,I)$ and $J\subset A$ an ideal of definition.
	Then an element $x\in A$ is invertible if and only if its image in $A/J$ is invertible.
	In particular, the natural projection $A^\times \rightarrow (A/J)^\times$ is surjective.
\end{lemma}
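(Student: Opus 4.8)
The plan is to reduce the whole statement to the single claim that $1-j\in A^\times$ for every $j\in J$, i.e.\ that the ideal of definition $J$ is contained in the Jacobson radical $\mathrm{rad}(A)$; everything else is then formal. The ``only if'' direction is immediate, since the projection $A\to A/J$ is a ring homomorphism and hence sends units to units. For the ``if'' direction, suppose $\bar x\in(A/J)^\times$ and lift an inverse of $\bar x$ to an element $y\in A$. As $A$ is commutative, $xy$ reduces to $1$ modulo $J$, so $xy=1-j$ for some $j\in J$. Granting the claim, $1-j$ is a unit and $x\cdot\bigl(y(1-j)^{-1}\bigr)=1$ exhibits a right, hence two-sided, inverse of $x$, so $x\in A^\times$. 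The surjectivity of $A^\times\to(A/J)^\times$ then follows at once: any lift to $A$ of a prescribed unit of $A/J$ is itself a unit by what was just shown.

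It remains to establish the claim, and this is where the weak completeness of $A$ along $J$ (part of the definition of a pseudo-wcfg algebra, see \cite{ErtlYamada}) enters. I would show directly that the geometric series $\sum_{n\ge 0}j^n$ converges in $A$ and provides the inverse of $1-j$. Since $j\in J$ we have $j^n\in J^n$, so the sequence of partial sums forms a weakly convergent series for the $J$-adic filtration; by weak completeness it has a limit $z\in A$, and passing to the limit in the telescoping identities $(1-j)\sum_{n=0}^{m}j^n=1-j^{m+1}$ gives $(1-j)z=1$. The only point needing attention is that this series genuinely meets the overconvergence condition required for a weakly convergent sum; this is readily checked because the $n$-th term lies in $J^n$ and $J$ is an ideal of definition, so the terms decay at a geometric rate relative to their degrees.

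This convergence step is the main obstacle: for a \emph{genuinely} $J$-adically complete ring the convergence of $\sum_n j^n$ is automatic, whereas for a merely weakly complete algebra one must verify the series against the overconvergence estimate built into the definition. Once that is secured we obtain $J\subseteq\mathrm{rad}(A)$, and the desired equivalence together with the surjectivity assertion follow formally as in the first paragraph. I would therefore organise the write-up in three steps: (i) the formal reduction to $1-j\in A^\times$; (ii) the convergence of the geometric series from weak completeness, where all the analytic content sits; and (iii) the two formal consequences.
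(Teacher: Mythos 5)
Your proof is correct and takes essentially the same route as the paper's: the paper's proof is precisely your steps (i) and (iii) — lift the inverse $\ol{b}=\ol{a}^{-1}$, note $ab\in 1+J$, and conclude — with the key containment $1+J\subset A^\times$ simply asserted rather than proved. Your step (ii) supplies exactly the justification the paper leaves implicit, and the overconvergence worry you flag is resolved the way you indicate: writing $j$ in terms of finitely many generators of the ideal of definition exhibits $j^n$ as a degree-$n$ polynomial whose coefficients lie in the $n$-th power of the relevant ideal (merely knowing $j^n\in J^n$ would only invoke honest completeness), so the geometric series satisfies the linear growth condition of degree against order that weak completeness requires, and $(1-j)^{-1}=\sum_{n\ge 0}j^n$ indeed lies in $A$.
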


\begin{proof}
	For any $x\in A$, we denote by $\ol{x}$ the image of $x$ in $A/J$.
	Let $a\in A$ and assume that $\ol{a}\in(A/J)^\times$.
	Take an element $b\in A$ such that $\ol{b}= \overline{a}^{-1}$.
	Since $\ol{ab}= 1$, we have $ab\in 1+J$. As $1+J\subset A^\times$ by \cite[Thm.\,1.6]{MonskyWashnitzer}, we have $a\in A^\times$.
\end{proof}

\begin{proof}[Proof of Proposition \ref{prop: monogenic}]
	By Lemma \ref{lem: units}, the natural morphism $\cO_{\cZ}^\times\rightarrow\cO_Z^\times$ is surjective, hence the morphism $\cN_{\cZ}\rightarrow\cN_\cZ\oplus_{\cO_\cZ^\times}\cO_Z^\times=\cN_Z$ is also surjective.
	Thus, by definition of $i_\star\cM$, the morphism $i_\star\cM\rightarrow\cM$ is surjective.
	So, for a local generator $a$ of $\cM$, there exists a local lift $\wt{a}$ of $a$ to $i_\star\cM$, which we may assume to be an element of $\Gamma(\cZ,i_\star\cM)$ by shrinking $\cZ$.
	To prove the first assertion, it suffices to show that the homomorphism $\psi\colon\bbN_\cZ\rightarrow i_\star\cM;\ 1\mapsto\wt{a}$ is a chart of $i_\star\cM$, that is, the log structure $\cM'$ associated to the composite $\bbN_\cZ\xrightarrow{\psi}i_\star\cM\rightarrow\cO_\cZ$ is isomorphic to $i_\star\cM$.
	For this, it suffices to prove that the induced homomorphism $\ol\psi\colon\cM'/\cO_\cZ^\times\rightarrow i_\star\cM/\cO_\cZ^\times$ is an isomorphism.
	Since $i^*\cM'=\cM$, we have a canonical isomorphism $\cM'/\cO_\cZ^\times\cong\cM/\cO_Z^\times$.
	On the other hand, $i_\star\cM/\cO_\cZ^\times$ is equal to the image of $i_\star\cM$ by the composite $\cN_\cZ\rightarrow\cN_\cZ/\cO_\cZ^\times\cong\cN_Z/\cO_Z^\times$, which is equal to the composite $\cN_\cZ\rightarrow\cN_Z\rightarrow\cN_Z/\cO_Z^\times$.
	Therefore, by definition of $i_\star\cM$ and the surjectivity of $\cN_\cZ\rightarrow\cN_Z$, we see that $i_\star\cM/\cO_\cZ^\times$ is canonically isomorphic to $\cM/\cO_Z^\times$. Thus $\ol\psi$ is an isomorphism, as required.
	
	Next we prove the second assertion.
	The uniqueness follows from the fact that the monoid $(i_\star\cM)_z$ is integral.
	We show the existence.
	There exist $m,n\in\bbN$ and $u,v\in\cO_{\cZ,z}^\times$ with $a=ub^m$ and $b=va^n$.
	Then we have $b=u^nvb^{mn}$.
	Since $(i_\star\cM)_z$ is integral, it follows that $mn=1$ or $b\in\cO_{\cZ,z}^\times$.
	In the former case, we obtain $m=1$ and hence $a=ub$, as required.
	In the latter case, the assertion holds because $ub^{m-1}\in\cO_{\cZ,z}^\times$.
\end{proof}

We have now established everything to associate an overconvergent isocrystal to a monogenic substructure of a log structure.
For this consider again a widening $(T,\cT,\iota)$ as a base.

\begin{definition}
	Let $Y$ be a fine log scheme over $T$, and $\cM\subset\cN_Y$ a monogenic log substructure.
	For any object $(Z,\cZ,i,h,\theta)\in\OC(Y/\cT)$, the log substructure $i_\star\theta^*\cM\subset\cN_\cZ$ is monogenic by Proposition \ref{prop: monogenic}.
	Let $\cO_\cZ(\cM)$ be the locally free $\cO_{\cZ}$-module locally generated by a generator of $i_\star\theta^*\cM$.
	More precisely, we take an open covering $\{\cU_\lambda\}_{\lambda\in\Lambda}$ of $\cZ$ such that $(i_\star\theta^*\cM)|_{\cU_\lambda}$ is generated by a section $a_\lambda\in\Gamma(\cU_\lambda,i_\star\theta^*\cM)$.
	To this we may associate an $\cO_{\cU_\lambda}$-module $e_{a_\lambda}\cO_{\cU_\lambda}$ where $e_{a_\lambda}$ is a free generator.
	If we take another generator $b_\lambda$ of $(i_\star\theta^*\cM)|_{\cU_\lambda}$, by Proposition \ref{prop: monogenic} there exists a unique element $x\in\Gamma(\cU_\lambda,\cO_\cZ^\times)$ such that $a_\lambda=xb_\lambda$, hence $e_{a_\lambda}\mapsto xe_{b_\lambda}$ induces a canonical isomorphism
		\begin{equation}\label{eq: basis}	e_{a_\lambda}\cO_{\cU_\lambda}\xrightarrow{\cong}e_{b_\lambda}\cO_{\cU_\lambda}.
		\end{equation}
	Let $\alpha\colon\cN_\cZ\rightarrow\cO_\cZ$ be the structure morphism of the log structure.
	There is a canonical $\cO_{\cU_\lambda}$-linear homomorphism
		\begin{equation}\label{eq: basis3}
		e_{a_\lambda}\cO_{\cU_\lambda}\rightarrow\cO_{\cU_\lambda},\ e_{a_\lambda}\mapsto\alpha(a_\lambda).
		\end{equation}
	For $\lambda,\lambda'\in\Lambda$, both of $a_\lambda$ and $a_{\lambda'}$ generate $(i_\star\theta^*\cM)|_{\cU_\lambda\cap\cU_{\lambda'}}$.
	Thus there is a canonical isomorphism
		\begin{equation}\label{eq: basis2}
		e_{a_\lambda}\cO_{\cU_\lambda\cap\cU_{\lambda'}}\cong e_{a_{\lambda'}}\cO_{\cU_\lambda\cap\cU_{\lambda'}}
		\end{equation}
	as in \eqref{eq: basis}.
	Therefore we may glue the modules $e_{a_\lambda}\cO_{\cU_\lambda}$, $\lambda\in\Lambda$, via the isomorphisms \eqref{eq: basis2}.
	The resulting locally free $\cO_\cZ$-module is independent of the choice of generators $a_\lambda$ up to canonical isomorphisms  \eqref{eq: basis}, and we denote it by $\cO_\cZ(\cM)$.
	It is clear that the homomorphisms \eqref{eq: basis3} glue as well resulting in a canonical $\cO_\cZ$-linear homomorphism
		\begin{equation}\label{eq: basis4}
		\cO_\cZ(\cM)\rightarrow\cO_\cZ.
		\end{equation}
	We denote by $\cO_{\cZ_\bbQ}(\cM)$ the locally free $\cO_{\cZ_\bbQ}$-module induced by $\cO_\cZ(\cM)$.
	
	For a morphism $f\colon (Z',\cZ',i',h',\theta')\rightarrow(Z,\cZ,i,h,\theta)$ in $\OC(Y/\cT)$, there exist isomorphisms
	\begin{align}\label{eq: pull-back OM}
	f^*\cO_{\cZ}(\cM)\xrightarrow{\cong}\cO_{\cZ'}(\cM),&&f^*\cO_{\cZ_\bbQ}(\cM)\xrightarrow{\cong}\cO_{\cZ'_\bbQ}(\cM)
	\end{align}
	induced by the isomorphisms
	\[f_\lambda^*(e_{a_\lambda}\cO_{\cU_\lambda})\xrightarrow{\cong}e_{f_\lambda^*a_\lambda}\cO_{f^{-1}\cU_\lambda},\]
	where $f_\lambda\colon f^{-1}(\cU_\lambda)\rightarrow\cU_\lambda$ denotes the restriction of $f$.
	Under the identification \eqref{eq: pull-back OM}, we define $\sO_{Y/\cT}(\cM)$ to be the sheaf on $\OC(Y/\cT)$ such that $\sO_{Y/\cT}(\cM)_\cZ= \cO_{\cZ_\bbQ}(\cM)$.
	By construction $\sO_{Y/\cT}(\cM)$ is in fact a log overconvergent isocrystal on $Y$ over $(T,\cT,\iota)$.
	The homomorphism \eqref{eq: basis4} induces a canonical $\sO_{Y/\cT}$-linear homomorphism
		\begin{equation}\label{eq: basis5}
		\sO_{Y/\cT}(\cM)\rightarrow\sO_{Y/\cT}.
		\end{equation}		
For any object $\sE\in\Isoc^\dagger(Y/\cT)$, we define
	\[\sE(\cM):=\sE\otimes\sO_{Y/\cT}(\cM).\]
\end{definition}

Let $d\colon\cO_{\cZ_\bbQ}(\cM)\rightarrow\cO_{\cZ_\bbQ}(\cM) \otimes\omega^1_{\cZ/\cT,\bbQ}$ be the log connection corresponding to $\sO_{Y/\cT}(\cM)$.
Note that this is compatible with the differential $d$ on $\cO_{\cZ_\bbQ}$ through the homomorphism \eqref{eq: basis4}, so there is no risk of confusion. 
Locally, $d$ is given by
\begin{equation}\label{eq: description of connection}
d(e_a)= e_a\otimes d\log a,
\end{equation}
where $a$ is a local generator of $i_\star\theta^*\cM$.
Indeed, suppose that $a\in\Gamma(\cZ,i_\star\theta^*\cM)$ is a generator, and let $\cZ(1)$ be the exactification of the diagonal embedding $Z\hookrightarrow\cZ\times\cZ$ and $p_i\colon \cZ(1)_\bbQ\rightarrow\cZ_\bbQ$ for $i=1,2$ the natural projections.
Let $u(a)\in\Gamma(\cZ(1),\cO_{\cZ(1)}^\times)\subset\Gamma(\cZ(1),\cN_{\cZ(1)})$ be the unique element satisfying $p_2^*(a)=p_1^*(a) u(a)$.
Then the composite
\[\cO_{\cZ(1)_\bbQ}\otimes p_2^{-1}(e_a\cO_{\cZ_\bbQ})\xrightarrow[\eqref{eq: pull-back OM}]{\cong} e_{p_2^*a}\cO_{\cZ(1)_\bbQ}\xrightarrow[\eqref{eq: basis}]{\cong} e_{p_1^*a}\cO_{\cZ(1)_\bbQ}\xleftarrow[\eqref{eq: pull-back OM}]{\cong} p_1^{-1}(e_a\cO_{\cZ_\bbQ})\otimes\cO_{\cZ(1)_\bbQ}\]
maps $1\otimes e_a$ to $e_a\otimes u(a)$.
Thus \eqref{eq: description of connection} follows from the description of the log connection associated to a log stratification given in \cite[(2.13)]{Yamada2020}.

Next we study the action of Frobenius in this context.

\begin{lemma}
Consider an $F$-widening $(T,\cT,\iota,\sigma)$ as a base, and let $Y$ be a fine log scheme over $T$.
There exists a canonical $\sO_{Y/\cT}$-linear homomorphism
	\begin{equation}\label{eq: pseudo Frobenius}
	\sigma^\ast_\isoc (\sO_{Y/\cT}(\cM))\rightarrow \sO_{Y/\cT}(\cM).
	\end{equation}
which is compatible with the canonical Frobenius structure  $\sigma^\ast_\isoc \sO_{Y/\cT}\rightarrow\sO_{Y/\cT}$ on $\sO_{Y/\cT}$.
\end{lemma}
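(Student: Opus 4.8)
The plan is to construct the homomorphism \eqref{eq: pseudo Frobenius} realisation-by-realisation and to descend it by Proposition \ref{cor: descent realization}. Working locally on $\cT$ we may assume that $\cT$ is affine and admits a chart, so that a local embedding $F$-datum $(Z_\lambda,\cZ_\lambda,i_\lambda,h_\lambda,\theta_\lambda,\phi_\lambda)_{\lambda\in\Lambda}$ for $Y$ over $(T,\cT,\iota,\sigma)$ exists; since the construction will be canonical it glues back over $\cT$. Fix $\cZ=\cZ_\lambda$ with Frobenius lift $\phi=\phi_\lambda$. As $\phi$ is compatible with $\sigma$, the realisation of $\sigma^\ast_\isoc\sO_{Y/\cT}(\cM)$ on $\cZ$ is identified with $\phi^*\cO_{\cZ_\bbQ}(\cM)$, locally generated by $\phi^* e_a$ for a local generator $a$ of $i_\star\theta^*\cM$. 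I will produce $\cO_{\cZ_\bbQ}$-linear maps $F_\cZ\colon\phi^*\cO_{\cZ_\bbQ}(\cM)\rightarrow\cO_{\cZ_\bbQ}(\cM)$, check horizontality and compatibility with the gluing data, and assemble them.

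The crux is the local identity $\phi^\flat(a)=u\,a^p$ in $\cN_\cZ$ for a unique unit $u\in\cO_\cZ^\times$. To see this, note that the exact closed immersion $i$ induces an isomorphism $\ol{\cN}_\cZ\cong\ol{\cN}_Z$ on characteristic monoids, and that by definition of an $F$-widening the absolute Frobenius $F_Z$ on $Z$ is compatible with $\phi$ through $i$; since $F_Z$ acts as multiplication by $p$ on $\ol{\cN}_Z$, the endomorphism $\phi^\flat$ acts as multiplication by $p$ on $\ol{\cN}_\cZ$. Hence $\phi^\flat(a)$ and $a^p$ have the same image in $\ol{\cN}_\cZ=\cN_\cZ/\cO_\cZ^\times$, and as $\cN_\cZ$ is integral there is a unique $u\in\cO_\cZ^\times$ with $\phi^\flat(a)=u\,a^p$; that $u$ is a genuine unit is the content of Lemma \ref{lem: units}. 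I then define $F_\cZ(\phi^* e_a):=u\,\alpha(a)^{p-1}e_a$. This is the unique $\cO_{\cZ_\bbQ}$-linear map making the square formed with \eqref{eq: basis4} and $\phi^\flat$ commute, because $u\,\alpha(a)^{p-1}\cdot\alpha(a)=u\,\alpha(a)^p=\alpha(\phi^\flat(a))$; this is precisely the asserted compatibility with the canonical Frobenius structure on $\sO_{Y/\cT}$ through \eqref{eq: basis5}.

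It remains to verify three compatibilities. \emph{Independence of the generator}: if $a=w b$ with $w\in\cO_\cZ^\times$, then $e_a=w e_b$ and $\phi^\flat(a)=\phi^\flat(w)\,\phi^\flat(b)$, and tracking the units shows that $F_\cZ(\phi^* e_a)$ computed from $a$ and from $b$ agree, so the local maps glue over the covering $\{\cU_\lambda\}$ to a map on all of $\cZ$. \emph{Horizontality}: using $\nabla(e_a)=e_a\otimes d\log a$ and $\phi^*(d\log a)=d\log\phi^\flat(a)=d\log u+p\,d\log a$, one computes that both $\nabla\circ F_\cZ$ and $(F_\cZ\otimes\id)\circ\phi^*\nabla$ send $\phi^* e_a$ to $u\,\alpha(a)^{p-1}e_a\otimes(d\log u+p\,d\log a)$, so $F_\cZ$ lies in $\MIC^\dagger(\cZ/\cT)$. \emph{Compatibility with transition data}: since $F_\cZ$ is characterised intrinsically by its compatibility with \eqref{eq: basis4}, it commutes with the canonical isomorphisms $\rho_{\lambda_0,\lambda_1}$; the family $\{F_{\cZ_\lambda}\}$ is thus a morphism in $\MIC^\dagger((\cZ_\lambda)_{\lambda\in\Lambda}/\cT)$, and Proposition \ref{cor: descent realization} yields the desired morphism \eqref{eq: pseudo Frobenius}, whose independence of the chosen datum and compatibility with $\sO_{Y/\cT}$ are built into the construction.

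The step I expect to be the \textbf{main obstacle} is the identity $\phi^\flat(a)=u\,a^p$: one must argue cleanly that a Frobenius lift acts as multiplication by $p$ on the characteristic monoid and that the residual ambiguity is exactly a unit of $\cO_\cZ$. This is where exactness of $i$ and the unit-lifting of Lemma \ref{lem: units} do the real work; once it is in place, the well-definedness and horizontality reduce to the unit bookkeeping sketched above.
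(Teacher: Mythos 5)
Your construction is correct and produces the same map as the paper, but by a genuinely different route. The paper's proof never chooses Frobenius lifts or a local embedding datum: it observes that by definition $\sigma^\ast_\isoc(\sO_{Y/\cT}(\cM))=\sO_{Y/\cT}(F_Y^*\cM)$, that $F_Y^*\cM$ is locally generated by $a^p$ for a local generator $a$ of $i_\star\theta^*\cM$, and then defines, on \emph{every} object $(Z,\cZ,i,h,\theta)$ of $\OC(Y/\cT)$ at once, the map $e_{a^p}\mapsto\alpha(a)^{p-1}e_a$; the only thing to check is independence of the generator, after which the maps glue into an $\sO_{Y/\cT}$-linear morphism \eqref{eq: pseudo Frobenius on Z}. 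Your route (realise $\sigma^\ast_\isoc$ as $\phi^*$ via a lift, prove $\phi^\flat(a)=u\,a^p$, set $F_\cZ(\phi^*e_a)=u\,\alpha(a)^{p-1}e_a$, descend by Proposition \ref{cor: descent realization}) in effect merges this lemma with the paper's subsequent definition \eqref{eq: frob M}, where the lift-dependent identification $\phi^*e_a\mapsto e_{\phi(a)}$ is introduced: composing \eqref{eq: frob M} with \eqref{eq: pseudo Frobenius on Z} gives exactly your formula, since $e_{\phi(a)}=e_{ua^p}=u\,e_{a^p}$ by \eqref{eq: basis}. The paper's separation of the two steps makes canonicity visible for free and avoids the reduction to an affine charted base, the horizontality computation, and the cocycle verification, all of which your approach must supply.

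One of those verifications is justified incorrectly. You assert that $F_\cZ$ ``is characterised intrinsically by its compatibility with \eqref{eq: basis4}'' and deduce from this the commutation with the gluing isomorphisms $\rho_{\lambda_0,\lambda_1}$. But \eqref{eq: basis4} need not be injective: the lemma allows an arbitrary monogenic $\cM\subset\cN_Y$, and the generator may satisfy $\alpha(a)=0$ (already for the pullback of the log structure of $k^0$, the case most relevant to Hyodo--Kato theory), in which case \eqref{eq: basis4} is the zero map and compatibility with it is vacuous, so it characterises nothing. The compatibility is nonetheless true, and follows from the same unit bookkeeping you used for generator-independence: since $\mathrm{pr}_j\circ\phi_{\underline{\lambda}}=\phi_{\lambda_j}\circ\mathrm{pr}_j$, applying $\mathrm{pr}_j^\flat$ to $\phi_{\lambda_j}^\flat(a)=u\,a^p$ gives $\phi_{\underline{\lambda}}^\flat(\mathrm{pr}_j^\flat a)=\mathrm{pr}_j^\#(u)\,(\mathrm{pr}_j^\flat a)^p$, so the unit attached to the pulled-back generator is the pullback of the unit, whence $\mathrm{pr}_j^*F_{\cZ_{\lambda_j}}=F_{\cZ_{\underline{\lambda}}}$ under the canonical identifications. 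A minor point in the same vein: the unit $u$ exists and is unique simply because two sections of the integral log structure $\cN_\cZ$ with equal image in $\cN_\cZ/\cO_\cZ^\times$ differ by a unique section of $\cO_\cZ^\times$; Lemma \ref{lem: units} is not what produces it --- that lemma enters only through Proposition \ref{prop: monogenic}, i.e.\ in guaranteeing that local generators of $i_\star\theta^*\cM$ exist at all.
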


\begin{proof}
Note that by definition we have
	\[\sigma^\ast_\isoc (\sO_{Y/\cT}(\cM))= \sO_{Y/\cT}(F_Y^*\cM)\]
where $F_Y$ is the absolute Frobenius of $Y$.
Since $F_Y^*\cM$ is locally generated by the $p$-th power of a local generator of $\cM$, we may write for any object $(Z,\cZ,i,h,\theta)\in\OC(Y/\cT)$  
	\begin{align}\label{eq: local gen}
	\cO_\cZ(F_Y^*\cM)|_\cU= e_{a^p}\cO_\cU,&&\cO_\cZ(\cM)|_\cU= e_a\cO_\cU
	\end{align}
by taking a generator $a$ of $(i_\star\theta^*\cM)|_\cU$ for sufficiently small open subsets $\cU\subset\cZ$.
Then the morphisms for varying $\cU$
	\[e_{a^p}\cO_\cU\rightarrow e_a\cO_\cU; \ e_{a^p}\mapsto \alpha(a)^{p-1}e_a\]
are independent of the generator $a$.
Therefore they glue to each other and define for any $(Z,\cZ,i,h,\theta)$ an $\cO_\cZ$-linear homomorphism
	\begin{equation}\label{eq: pseudo Frobenius on Z}
	\cO_\cZ(F_Y^*\cM)\rightarrow\cO_\cZ(\cM)
	\end{equation}
and hence an $\sO_{Y/\cT}$-linear homomorphism
	$$
	\sigma^\ast_\isoc (\sO_{Y/\cT}(\cM))\rightarrow \sO_{Y/\cT}(\cM).
	$$
By construction this is compatible with the canonical Frobenius structure  $\sigma^\ast_\isoc \sO_{Y/\cT}\rightarrow\sO_{Y/\cT}$ on $\sO_{Y/\cT}$, via the natural homomorphism \eqref{eq: basis5}.
\end{proof}

\begin{remark}
Note that \eqref{eq: pseudo Frobenius} is not a Frobenius structure in general, since it is not an isomorphism if $\cM$ is non-trivial.
\end{remark}

\begin{definition}
Let again $(T,\cT,\iota,\sigma)$ be an $F$-widening, 
let $Y$ be a fine log scheme over $T$.
For an object $(Z,\cZ,i,h,\theta)\in\OC(Y/\cT)$ such that $\cZ$ admits a Frobenius lift $\phi$,  we may define an $\cO_\cZ$-linear isomorphism
	\begin{equation}\label{eq: frob M}
	\phi^*(\cO_\cZ(\cM))\xrightarrow{\cong}\cO_\cZ(F_Y^*\cM)
	\end{equation}
by $\phi^*e_a\mapsto e_{\phi(a)}$ under the local identities \eqref{eq: local gen}, where we note that $\phi(a)$ is a local generator of  $i_\star \theta^*F_Y^*\cM$ as well.
Note that $\phi^*e_a$ denotes $1\otimes e_a\in\cO_\cZ\otimes_{\phi^{-1}\cO_\cZ}\phi^{-1}\cO_\cZ(\cM)$.
Hence  \eqref{eq: frob M} is well-defined because $\phi^*e_{xa}= \phi(x)e_a$ for any $x\in\cO_\cZ^\times$.
For any $(\sE,\Phi)\in F\Isoc^\dagger(Y/\cT)$, we denote by
	\begin{equation}\label{eq: phi on M}
	\varphi\colon\sE(\cM)_\cZ\rightarrow\sE(\cM)_\cZ
	\end{equation}
the composition of the natural $\phi$-semilinear homomorphism $\sE(\cM)_\cZ\rightarrow\phi^*(\sE(\cM)_\cZ)$ and the $\cO_\cZ$-linear homomorphism
	\[\phi^*(\sE(\cM)_\cZ) =  \phi^*\sE_\cZ\otimes\phi^*(\cO_\cZ(\cM))\rightarrow\sE_\cZ\otimes\cO_\cZ(\cM) = \sE(\cM)_\cZ\]
defined by \eqref{eq: frob M}, \eqref{eq: pseudo Frobenius on Z}, and $\Phi$ on $\sE_\cZ$.
Note that \eqref{eq: phi on M} depends on $\phi$.
\end{definition}

The following definition will lay the groundwork to define compactly supported log rigid cohomology. We loosely call it ``log rigid cohomology supported towards $\cM$''.

\begin{definition}\label{def: coh for monogenic}
	Let $(T,\cT,\iota)$ be a widening, 
	$Y$ a fine log scheme over $T$, 
	and $\cM\subset\cN_Y$ a monogenic log substructure.
	\begin{enumerate}
	\item For any $\sE\in\Isoc^\dagger(Y/\cT)$, we set $\sE(\cM):= \sE \otimes\sO_{Y/\cT}(\cM)$ and define the \textit{log rigid cohomology with compact support towards $\cM$} of $Y$ over $\cT$ with coefficients in $\sE$ to be
			\[R\Gamma_{\rig,\cM}(Y/\cT,\sE)=R\Gamma_{\rig,\cM}(Y/(T,\cT,\iota),\sE):= R\Gamma_\rig(Y/(T,\cT,\iota),\sE(\cM)).\]
		For the case $\sE= \sO_{Y/\cT}$, we simply write
			\[R\Gamma_{\rig,\cM}(Y/\cT):= R\Gamma_{\rig,\cM}(Y/\cT,\sO_{Y/\cT}).\]
			
		Suppose that a Frobenius lift $\sigma$ on $\cT$ is given.
		Let $(\sE,\Phi)\in F\Isoc^\dagger(Y/\cT)$, and take a local embedding $F$-datum $(Z_\lambda,\cZ_\lambda,i_\lambda,h_\lambda,\theta_\lambda,\phi_\lambda)_\lambda$ for $Y$ over $\cT$.
		Let $(Z_\bullet,\cZ_\bullet,i_\bullet,h_\bullet,\theta_\bullet)$ and $\phi_\bullet$ be the associated simplicial object of $\OC(Y/\cT)$ and the Frobenius lift on it, as in Definition \ref{def: cohomology}.
		Then $\varphi\colon\sE(\cM)_{\cZ_m}\rightarrow\sE(\cM)_{\cZ_m}$ of \eqref{eq: phi on M} and the endomorphisms on $\omega^\bullet_{\cZ_m/\cT,\bbQ}$ induced by $\phi_m$ for $m\geqslant 0$ together provide a $\sigma$-semilinear endomorphism 
			\[\varphi\colon R\Gamma_{\rig,\cM}(Y/\cT,\sE)\rightarrow R\Gamma_{\rig,\cM}(Y/\cT,\sE),\]
		which is independent of the choice of a local embedding $F$-datum.
		We denote by 
			\[R\Gamma_{\rig,\cM}(Y/\cT,(\sE,\Phi))=R\Gamma_{\rig,\cM}(Y/(T,\cT,\iota),(\sE,\Phi))\]
		the complex $R\Gamma_{\rig,\cM}(Y/\cT,\sE)$ equipped with the endomorphism $\varphi$.
	\item Assume that $(T,\cT,\iota,\sigma)= (k^0,W^\varnothing,\tau,\sigma)$.
		For any $\sE\in\Isoc^\dagger(Y/W^\varnothing)$, we define the \textit{Hyodo--Kato cohomology with compact support towards $\cM$} of $Y$ with coefficients in $\sE$ to be
			\[R\Gamma_{\HK,\cM}^{\rig}(Y,\sE):= R\Gamma_\HK^\rig(Y,\sE(\cM)),\]
		which is an object of $D^+(\Mod_F(N))$.
		
		For any $(\sE,\Phi)\in F\Isoc^\dagger(Y/W^\varnothing)$, \eqref{eq: phi on M} and the association $u^{[i]}\mapsto p^iu^{[i]}$ together induce a $\sigma$-semilinear endomorphism
			\[\varphi\colon R\Gamma_{\HK,\cM}^{\rig}(Y,\sE)\rightarrow R\Gamma_{\HK,\cM}^{\rig}(Y,\sE).\]
		We denote by $R\Gamma_{\HK,\cM}^{\rig}(Y,(\sE,\Phi))$ the complex $R\Gamma_{\HK,\cM}^{\rig}(Y,\sE)$ equipped with the endomorphism $\varphi$.
		This is an object of $D^+(\Mod_F(\varphi,N))$.
	\end{enumerate}
\end{definition}

%
\section{Log rigid cohomology with compact support}\label{sec: c rig}
%

In this section we will define and study log rigid cohomology with compact support for strictly semistable log schemes.
Here we recall the definition of a strictly semistable log scheme.

\begin{definition}\label{def: monogenic D}
\begin{enumerate}
	\item For integers $n \geqslant 1$ and $m \geqslant 0$, let $k^0(n,m)  \rightarrow  k^0$ be the morphism of fine log schemes induced by the commutative diagram
		\[\xymatrix{
		\bbN^n\oplus\bbN^m\ar[d]_-\alpha & \bbN\ar[d]^-{\alpha_0}\ar[l]_-\beta\\
		k[\bbN^n/\Delta(\bbN)\oplus\bbN^m] & k\ar[l],
		}\]
		where $\Delta\colon\bbN\rightarrow\bbN^n$ is the diagonal map, $\alpha_0$ is induced by the canonical chart of $k^0$, $\alpha$ is the composition of the natural morphisms $\bbN^n\oplus\bbN^m\rightarrow\bbN^n/\Delta(\bbN)\oplus\bbN^m\rightarrow k[\bbN^n/\Delta(\bbN)\oplus\bbN^m]$,
		and $\beta$ is the composition of $\Delta$ and the canonical injection $\bbN^n  \rightarrow \bbN^n\oplus\bbN^m$.
	
	\item A log scheme $Y$ over $k^0$ is called {\it strictly semistable}, if Zariski locally on $Y$ there exists a strict log smooth morphism $Y\rightarrow k^0(n,m)$ over $k^0$.
		We call the closed subset
		\[
		D:=\{y\in Y\mid\text{$\exists a\in\cN_{Y,y}$ $\forall b\in\cN_{Y,y}$ $ab$ is not contained in the image of $\Gamma(k^0,\cN_{k^0})\xrightarrow{f^\sharp} \cN_{Y,y}$ }\},\]
		where $f^\sharp$ is induced by the structure morphism $f:Y \rightarrow k^0$, 
		with the reduced structure the \textit{horizontal divisor} of $Y$.
		The horizontal divisor can be empty because we allow the case $m= 0$.

		We let $\cM_D\subset\cN_Y$ be the monogenic log substructure locally generated by the equation defining $D$. 
		In other words, $\cM_D$ is locally generated by the image of $(1,\ldots,1)\in\bbN^m$ under the homomorphism $\bbN^m_Y\rightarrow\cN_Y$.
		
		A {\it horizontal component} of $Y$ is a Cartier divisor on $Y$ whose support is contained in $D$ and which can not be written as a sum of two non-trivial Cartier divisors.
		Namely, a horizontal component is locally defined by the image of $1_i$ under the homomorphism $\bbN^m_Y\rightarrow\cN_Y\rightarrow\cO_Y$ for some $i=1,\ldots,m$, where $1_i$ is the element of $\bbN^m$ whose $i$-th component is $1$ and other components are $0$.
	\end{enumerate}\label{def: st over k}
\end{definition}

\begin{remark}
Let $Y/k^0$ be a strictly semistable log scheme. 
In the literature, the log structure on the underlying scheme of $Y$ associated to the horizontal divisor $D$, 
which we shall denote by 
$\cM_D^{\mathrm{classical}}$, is locally generated by the equations of the horizontal components of $D$.
On the other hand, the log structure $\cM_D$ introduced above is generated by the equation of $D$.
If $Y=k^0(n,m)$, we have a sequence $\cM_D\hookrightarrow \cM_D^{\mathrm{classical}} \hookrightarrow \cN_Y$ of log substructures on $Y$ induced from the injections between their charts $\bbN\xrightarrow{\Delta}\bbN^m\rightarrow\bbN^n\oplus\bbN^m$, where $\Delta$ denotes the diagonal map.
This applies also in the smooth case which we address at the end of this section. 
See \cite[Ex.\,2.5]{Kato1989} for the description of $\cM_D^{\mathrm{classical}}$ in this case.
\end{remark}

\begin{definition}\label{def: coh with compact supp}
	Let $Y$ be a proper strictly semistable log scheme over $k^0$.
	Let $(T,\cT,\iota)$ (resp.\,$(T,\cT,\iota,\sigma)$) be a widening (resp.\,an $F$-widening) equipped with a morphism $Y\rightarrow T$.
	(For example we consider $(T,\cT,\iota)= (k^\varnothing,W^\varnothing,\iota),(k^0,\cS,\tau),(k^0,W^0,i_0),(k^0,V^\sharp,i_\pi)$.)
	For any $\sE\in\Isoc^\dagger(Y/\cT)$ (resp.\,$(\sE,\Phi)\in F\Isoc^\dagger(Y/\cT)$), we define the \textit{log rigid cohomology with compact support} of $Y$ over $\cT$ with coefficients in $\sE$ (resp.\,$(\sE,\Phi)$) to be
		\begin{eqnarray}
		R\Gamma_{\rig,c}(Y/\cT,\sE)=R\Gamma_{\rig,c}(Y/(T,\cT,\iota),\sE)&:= &R\Gamma_{\rig,\cM_D}(Y/(T,\cT,\iota),\sE)\\
		\nonumber(\text{resp.\,}R\Gamma_{\rig,c}(Y/\cT,(\sE,\Phi))=R\Gamma_{\rig,c}(Y/(T,\cT,\iota),(\sE,\Phi))&:= &R\Gamma_{\rig,\cM_D}(Y/(T,\cT,\iota),(\sE,\Phi))).
		\end{eqnarray}
\end{definition}

\begin{definition}
	Let $Y$ be a proper strictly semistable log scheme over $k^0$.
	For any $\sE\in\Isoc^\dagger(Y/W^\varnothing)$ (resp.\,$(\sE,\Phi)\in F\Isoc^\dagger(Y/W^\varnothing)$), we define the \textit{rigid Hyodo--Kato cohomology with compact support} of $Y$ with coefficients in $\sE$ (resp.\,$(\sE,\Phi)$) to be
		\begin{eqnarray}
		R\Gamma_{\HK,c}^{\rig}(Y,\sE)&:= &R\Gamma_{\HK,\cM_D}^{\rig}(Y,\sE)\\
			\nonumber(\text{resp.\,}R\Gamma_{\HK,c}^{\rig}(Y,(\sE,\Phi))&:= &R\Gamma_{\HK,\cM_D}^{\rig}(Y,(\sE,\Phi))).
		\end{eqnarray}
\end{definition}

For a choice of a uniformiser $\pi\in V$, we use notations
	\[\Isoc^\dagger(Y/V^\sharp)_\pi:=\Isoc^\dagger(Y/(k^0,V^\sharp,i_\pi))\]
for a fine log scheme $Y$ over $k^0$ and
	\begin{align*}
	R\Gamma_\rig(Y/V^\sharp,\sE)_\pi:=R\Gamma_\rig(Y/(k^0,V^\sharp,i_\pi),\sE)
	\end{align*}
for $\sE\in\Isoc^\dagger(Y/V^\sharp)_\pi$.

We recall that a {\it branch of $p$-adic logarithm} over $K$ is a group homomorphism $\log\colon K^\times\rightarrow K$ whose restriction to $1+\frm$ is given by the Taylor expansion of the logarithm function.

\begin{definition}
	Let $Y$ be a fine log scheme over $k^0$, 
	$\pi\in V$ a uniformiser, 
	and $\log\colon K^\times\rightarrow K$ a branch of $p$-adic logarithm over $K$.
	Let $\sE\in\Isoc^\dagger(Y/W^\varnothing)$ and denote its inverse image to $\Isoc^\dagger(Y/V^\sharp)_\pi$ again by $\sE$. 
	The association $u^{[i]}\mapsto \frac{(-\log \pi)^i}{i!}$ induces a morphism	
		\[\Psi_{\pi,\log}\colon R\Gamma^\rig_\HK(Y,\sE)\rightarrow R\Gamma_\rig(Y/V^\sharp,\sE)_\pi.\]
	If $Y$ is proper and strictly semistable, we moreover obtain
		\[\Psi_{\pi,\log}\colon R\Gamma_{\HK,c}^{\rig}(Y,\sE)\rightarrow R\Gamma_{\rig,c}(Y/V^\sharp,\sE).\]
	We call these morphisms the {\it rigid Hyodo--Kato map}.
\end{definition}

As in the case without compact support, there exists a diagram
	\begin{equation}\label{eq: diag crig}
	\xymatrix{
	\ar@{}[rd]|>>>>\circlearrowright& R\Gamma_{\rig,c}(Y/W^\varnothing,\sE)\ar[ld]\ar[d]\ar[rd] &\ar@{}[ld]|>>>>\circlearrowright \\
	R\Gamma_{\rig,c}(Y/W^0,\sE) & R\Gamma^\rig_{\HK,c}(Y,\sE)\ar[l]\ar[r]^-{\Psi_{\pi,\log}} \ar[d]^\psi& R\Gamma_{\rig,c}(Y/V^\sharp,\sE)_\pi.\\
	\ar@{}[ru]|>>>>\circlearrowright& 
	R\Gamma_{\rig,c}(Y/\cS,\sE)\ar[lu]_{j_0^\ast}\ar[ru]^{j_\pi^\ast} &
	\ar@{}[lu]|>>>>{(*)}
	}\end{equation}
Here the morphisms between log rigid cohomologies with compact support are naturally induced as in \cite[(2.53)]{Yamada2020} 
(in particular the lower two morphisms are induced 
from the natural maps of weak formal log schemes 
$W^0\xrightarrow{j_0} \cS \xleftarrow{j_\pi} V^\sharp$), 
and the morphism $\psi:R\Gamma^\rig_{\HK,c}(Y,\sE)\rightarrow R\Gamma_{\rig,c}(Y/\cS,\sE)$ is induced from the morphism $\sE_{\cZ_n}\otimes\omega^\bullet_{\cZ_n/W^\varnothing,\bbQ}[u]\rightarrow\sE_{\cZ_n}\otimes\omega^\bullet_{\cZ_n/\cS,\bbQ}$ sending $u^{[i]}$ for $i>0$ to $0$, where $\cZ_n$ is taken as in Definition \ref{def: HK coh} \ref{item: HK coh def}.
All triangles in \eqref{eq: diag crig} except  $(*)$ commute.
The triangle $(*)$ commutes if the branch $\log$ is taken to be $\log(\pi)= 0$.

In order to show that $\Psi_{\pi,\log}$ induces a quasi-isomorphism after tensoring with $K$, we compute the cohomology with compact support of a strictly semistable log scheme by cohomology without compact support of horizontal divisors.

Let $Y$ be a strictly semistable log scheme over $k^0$ with the horizontal divisor $D$.
Let $Y= \bigcup_{i\in\Upsilon_Y}Y_i$ and $D= \bigcup_{j\in \Upsilon_D}D_j$ be the decompositions into irreducible components and horizontal components, respectively.
Let $(Z_\lambda,\cZ_\lambda,i_\lambda,h_\lambda,\theta_\lambda,\phi_\lambda)_{\lambda\in\Lambda}$ be a local embedding $F$-datum for $Y$ over $(k^0,\cS,\tau,\sigma)$, and $(Z_\bullet,\cZ_\bullet,i_\bullet,h_\bullet,\theta_\bullet)$ the associated simplicial object of $\OC(Y/\cS)$ with Frobenius lifts $\phi_\bullet$.
To any $m\in\bbN$ and $i\in\Upsilon_Y$, we may naturally associate a divisor $\cY_{m,i}$ of $\cZ_m$ lifting $Y_i$.
Indeed, each irreducible component defines a monogenic log substructure $\cM_i\subset\cN_Y$, and hence $i_{m,\star}\theta_m^*\cM_i$ is also monogenic, and the image of its local generators in $\cO_{\cZ_m}$ define a divisor $\cY_{m,i}$.
Similarly, to any $j\in\Upsilon_D$ we may associate a divisor $\cD_{m,j}$ of $\cZ_m$.
Note that $\cY_{m,i}$ and $\cD_{m,j}$ can be empty.

We denote by $Y^\flat$ the log scheme whose underlying scheme is that of $Y$ and whose log structure is generated by log substructures corresponding to all $i\in\Upsilon_Y$.
Similarly, we denote by $\cZ_m^\flat$ the weak formal log scheme whose underlying weak formal scheme is that of $\cZ_m$ and whose log structure is associated to $\cY_m:= \bigcup_{i\in\Upsilon_Y}\cY_{m,i}$.
For $J\subset\Upsilon_D$, let
	\begin{align*}
	D_J:= \bigcap_{j\in J}D_j,&&\cD_{m,J}:= \bigcap_{j\in J}\cD_{m,j}.
	\end{align*}
Endow $D_J$ with the pull-back log structure from $Y^\flat$, denoted by $D_J^\flat$.
Similarly, we denote by $\cD^\flat_{m,J}$ the weak formal log scheme given by $\cD_{m,J}$ equipped with the pull-back log structure of $\cZ^\flat_m$.
For $r\in\bbN$, let 
	\begin{align*}
	D^{(r),\flat}:= \coprod_{J\subset\Upsilon_D,\lvert J\rvert= r}D_J^\flat,&&\cD_m^{(r),\flat}:= \coprod_{J\subset\Upsilon_D,\lvert J\rvert= r}\cD_{m,J}^\flat,
	\end{align*}
and $\iota_m^{(r)}\colon\cD_{m,\bbQ}^{(r),\flat}\rightarrow\cZ_{m,\bbQ}^\flat$ be the canonical morphism.
Note that we define $D^{(0),\flat}:= Y^\flat$ and $\cD_m^{(0),\flat}:= \cZ_m^\flat$ for $r= 0$.
Then the complexes $\iota^{(r)}_{m,*}\omega^\bullet_{\cD_m^{(r),\flat}/W^\varnothing,\bbQ}$ and $\iota^{(r)}_{m,*}\omega^\bullet_{\cD_m^{(r),\flat}/\cS,\bbQ}$ for $r\in\bbN$ form co-simplicial complexes $\iota^{(\bullet)}_{m,*}\omega^\star_{\cD_m^{(\bullet),\flat}/W^\varnothing,\bbQ}$ and $\iota^{(\bullet)}_{m,*}\omega^\star_{\cD_m^{(\bullet),\flat}/\cS,\bbQ}$ of sheaves on $\cZ_{m,\bbQ}$.

Moreover, we let
	\begin{align*}
	&\cZ_{m,0}:= \cZ_m\times_\cS W^0,&&\cD_{m,0}^{(r),\flat}:= \cD_m^{(r),\flat}\times_\cS W^0,\\
	&\cZ_{m,\pi}:= \cZ_m\times_{\cS,j_\pi}V^\sharp,&&\cD_{m,\pi}^{(r),\flat}:= \cD_m^{(r),\flat}\times_{\cS,j_\pi}V^\sharp,
	\end{align*}
and let $\iota_{m,0}^{(r)}\colon\cD_{m,0,\bbQ}^{(r),\flat}\rightarrow\cZ_{m,0,\bbQ}$ and $\iota_{m,\pi}^{(r)}\colon\cD_{m,\pi,\bbQ}^{(r),\flat}\rightarrow\cZ_{m,\pi,\bbQ}$ be the canonical morphisms.
Then we obtain co-simplicial complexes $\iota^{(\bullet)}_{m,0,*}\omega^\star_{\cD_{m,0}^{(\bullet),\flat}/W^0,\bbQ}$ on $\cZ_{m,0,\bbQ}$ and $\iota^{(\bullet)}_{m,\pi,*}\omega^\star_{\cD_{m,\pi}^{(\bullet),\flat}/V^\sharp,\bbQ}$ on $\cZ_{m,\pi,\bbQ}$.

\begin{proposition}\label{prop: D}
	The composition of natural morphisms
		\begin{equation*}
		\cO_{\cZ_{m,\bbQ}}(\cM_D)\otimes\omega^\star_{\cZ_m/W^\varnothing,\bbQ}  \rightarrow \omega^\star_{\cZ_m/W^\varnothing,\bbQ}\rightarrow\iota_{m,*}^{(\bullet)}\omega^\star_{\cD_m^{(\bullet),\flat}/W^\varnothing,\bbQ},
		\end{equation*}
	where the first morphism is induced by \eqref{eq: basis4},
	induces a quasi-isomorphism
	\begin{equation}\label{eq: sr1}R\Gamma_{\rig,c}(Y/W^\varnothing) \xrightarrow{\sim} R\Gamma_\rig(D^{(\bullet),\flat}/W^\varnothing).\
	\end{equation}
	
	Similarly we have quasi-isomorphisms
	\begin{align}
		\label{eq: sr2}&R\Gamma_{\HK,c}^{\rig}(Y) \xrightarrow{\sim} R\Gamma_{\HK}^\rig(D^{(\bullet),\flat}),\\
		\label{eq: sr3}&R\Gamma_{\rig,c}(Y/\cS) \xrightarrow{\sim} R\Gamma_\rig(D^{(\bullet),\flat}/\cS),\\
		\label{eq: sr4}&R\Gamma_{\rig,c}(Y/W^0)  \xrightarrow{\sim} R\Gamma_\rig(D^{(\bullet),\flat}/W^0),\\
		\label{eq: sr5}&R\Gamma_{\rig,c}(Y/V^\sharp)_\pi \xrightarrow{\sim} R\Gamma_\rig(D^{(\bullet),\flat}/V^\sharp)_\pi.
		\end{align}
\end{proposition}

\begin{proof}
	We only prove the statement for \eqref{eq: sr1} as an example.
	The statement for \eqref{eq: sr3}, \eqref{eq: sr4}, and \eqref{eq: sr5} can be proved similarly.
	The statement for \eqref{eq: sr2} follows from \eqref{eq: sr1} by \cite[Lem.\,3.9 and Lem.\,3.14]{ErtlYamada}.
	
	Since the $\cD_m^{(r),\flat}$ for $m\geqslant 0$ give a local embedding datum for $D^{(r),\flat}$ over $\cS$, we have
		\[R\Gamma_\rig(D^{(r),\flat}/W^\varnothing)= R\Gamma(\cD^{(r),\flat}_{\bullet,\bbQ},\omega^\star_{\cD^{(r),\flat}_{(\bullet)}/W^\varnothing,\bbQ})= R\Gamma(\cZ_{\bullet,\bbQ},\iota^{(r)}_{\bullet,*}\omega^\star_{\cD^{(r),\flat}_\bullet/W^\varnothing,\bbQ}).\]
	Thus it is enough to show that the sequence of $\cO_{\cZ_{m,\bbQ}}$-modules
		\[0  \rightarrow \cO_{\cZ_{m,\bbQ}}(\cM_D)\otimes\omega^k_{\cZ_m/W^\varnothing,\bbQ}  \rightarrow \iota_{m,*}^{(0)}\omega^k_{\cD_m^{(0),\flat}/W^\varnothing,\bbQ}  \rightarrow \iota_{m,*}^{(1)}\omega^k_{\cD_m^{(1),\flat}/W^\varnothing,\bbQ}  \rightarrow \iota_{m,*}^{(2)}\omega^k_{\cD_m^{(2),\flat}/W^\varnothing,\bbQ}  \rightarrow \cdots\]
	is exact for any $k \geqslant 0$.
	For $n\geqslant 1$ and $n'\geqslant 0$, let $\cS(n,n')\rightarrow\cS$ be the morphism of fine weak formal log schemes induced by the commutative diagram
	\[\xymatrix{
	\bbN^n\oplus\bbN^{n'}\ar[d]^-\alpha&\bbN\ar[l]^-\beta\ar[d]^-{\alpha_0}\\
	W\llbracket s\rrbracket[\bbN^n,\bbN^{n'}]^\dagger/(s-\alpha\circ\beta(1))&W\llbracket s\rrbracket\ar[l]
	}\]
	where $\alpha_0$ is induced by the canonical chart of $\cS$, $\alpha$ is the natural inclusion, and $\beta$ is the composition of the diagonal map $\bbN\rightarrow\bbN^n$ and the canonical inclusion $\bbN^n\rightarrow\bbN^n\oplus\bbN^{n'}$.
	Applying the construction in \cite[\S 5.1]{GrosseKlonne2005}, we take a local embedding $F$-datum so that, for any $\lambda\in\Lambda$, there exists an integer $r_\lambda\geqslant 0$ and a strict \'{e}tale morphism
	$\cZ_\lambda \rightarrow \cS(n,n')\times_{W^\varnothing}W[\bbN^r]^{\dagger,\varnothing}$ over $\cS$, such that the image of $1$ under the composite
	\[\bbN\rightarrow\bbN^{n'}\rightarrow\Gamma(\cS(n,n'),\cN_{\cS(n,n')})\rightarrow\Gamma(\cZ_\lambda,\cN_{\cZ_\lambda})\rightarrow\Gamma(\cZ_\lambda,\cO_{\cZ_\lambda})\]
	lifts the equation of $D\cap Z_\lambda$.
	For $\underline{\lambda}=(\lambda_0,\ldots,\lambda_m)\in\Lambda^{m+1}$, 
	the natural projection $\cZ_{\underline{\lambda}}\rightarrow\cZ_{\lambda_0}$ is log smooth by construction of $\cZ_{\underline{\lambda}}$ and \cite[Prop.\,1.53(3) and Prop.\,1.56]{ErtlYamada}.
	Let $i'_{\lambda_0}\colon Z_{\underline{\lambda}}\hookrightarrow\cZ'_{\lambda_0}$ be the exactification of the composite $Z_{\underline{\lambda}}\hookrightarrow Z_{\lambda_0}\hookrightarrow\cZ_{\lambda_0}$.
	Then by the universality of the exactification, the projection induces a morphism $g\colon\cZ_{\underline{\lambda}}\rightarrow\cZ'_{\lambda_0}$, which is log smooth by \cite[Prop.\,1.53(4)]{ErtlYamada}.
	As $i_{\underline{\lambda}}$ and $i'_{\lambda_0}$ are homeomorphic exact closed immersions, $g$ is strcit, and hence the morphism between underlying weak formal schemes is smooth by \cite[Lem.\,1.59]{ErtlYamada}.
	By the strong fibration theorem \cite[Prop.\,1.38]{ErtlYamada}, there locally exists an integer $d\geqslant 0$ and an isomorphism $\cZ_{\underline{\lambda}}\xrightarrow{\cong}\cZ'_{\lambda_0}\times\Spwf W\llbracket x_1,\ldots,x_d\rrbracket$.
	As $\cZ_m=\coprod_{\underline{\lambda}\in\Lambda^{m+1}}\cZ_{\underline{\lambda}}$ and $\Spwf W\llbracket x_1,\ldots,x_d\rrbracket$ is \'{e}tale over $\Spwf W[x_1,\ldots,x_d]^\dagger$, we conclude that, locally on $\cZ_m$ there exists a strict \'{e}tale morphism $\cZ_m\rightarrow\cS(n,n')\times_{W^\varnothing}W[\bbN^\ell]^{\dagger,\varnothing}$ over $\cS$, where we put $\ell:=r_{\lambda_0}+d$, and $\cD_m$ is defined by the image of $1$ under the composite 
	\[\bbN\rightarrow\bbN^{n'}\rightarrow\Gamma(\cS(n,n'),\cN_{\cS(n,n')})\rightarrow\Gamma(\cZ_m,\cN_{\cZ_m})\rightarrow\Gamma(\cZ_m,\cO_{\cZ_m}),\]
	where the first map is the diagonal map.
	Then by \cite[Prop.\,1.54(2)(a)]{ErtlYamada} the sheaves $\omega^k_{\cD_m^{(r),\flat}/W^\varnothing}$ coincide with the pull-back of the corresponding sheaves on $\cS(n,n')\times_{W^\varnothing}W[\bbN^\ell]^{\dagger,\varnothing}$.
	
	Thus we may assume that
		\[\cZ_m= \cS(n,n')\times_{W^\varnothing}W[\bbN^\ell]^{\dagger,\varnothing}= \cS(n)\times_{W^\varnothing}\cV_{n'} \times_{W^\varnothing}W[\bbN^\ell]^{\dagger,\varnothing},\]
	where $\cS(n):= \cS(n,0)$ and $\cV_{n'}$ is $\Spwf W[\bbN^{n'}]^\dagger$ endowed with the log structure associated to the natural map $\bbN^{n'}  \rightarrow  W[\bbN^{n'}]^\dagger$.
	Then $v:= (1,\ldots,1)\in\bbN^{n'}$ generates $i_{m,\star}\theta_m^*\cM_D$.
	Let $\cV \subset\cV_{n'}$ be the closed weak formal subscheme defined by $v$, and for $r\geqslant 0$ let $\cV^{(r)}$ be the disjoint union of all intersections of $r$ irreducible components of $\cV$ endowed with the trivial log structure.
	Then we have
		\[\cD^{(r),\flat}_m= \cS(n)\times_{W^\varnothing}\cV^{(r)}\times_{W^\varnothing}W[\bbN^\ell]^{\dagger,\varnothing}.\]
	If we set $\xi_{i,1},\ldots,\xi_{i,a_i}$ be a free basis of $\omega^i_{\cS(n)\times W[\bbN^\ell]^{\dagger,\varnothing}/W^\varnothing}$, then we have
		\begin{eqnarray*}
		\cO_{\cZ_{m,\bbQ}}(\cM_D)\otimes\omega^k_{\cZ_m/W^\varnothing,\bbQ}
		&= &\bigoplus_{i,j\in\bbN}\xi_{i,j}\cO_{\cZ_m,\bbQ}\otimes_{\cO_{\cV_{n',\bbQ}}}v\omega^{k-i}_{\cV_{n'}/W^\varnothing,\bbQ},\\
		\omega^k_{\cD^{(r),\flat}/W^\varnothing,\bbQ}
		&= &\bigoplus_{i,j\in\bbN}\xi_{i,j}\cO_{\cZ_m,\bbQ}\otimes_{\cO_{\cV_{n',\bbQ}}}\omega^{k-i}_{\cV^{(r)}/W^\varnothing,\bbQ}.
		\end{eqnarray*}
	Thus it suffices to show that
		\begin{equation}\label{eq: Koszul}
		0  \rightarrow v\cdot\omega^j_{\cV_{n'}/W^\varnothing,\bbQ}  \rightarrow \omega^j_{\cV^{(0)}/W^\varnothing,\bbQ}  \rightarrow \omega^j_{\cV^{(1)}/W^\varnothing,\bbQ}  \rightarrow \omega^j_{\cV^{(2)}/W^\varnothing,\bbQ}  \rightarrow \cdots
		\end{equation}
	is exact for any $j\in\bbN$.
For this set $\cV_{n'}^{\mathrm{int}}:=\Spec F[\bbN^{n'}]$ endowed with the log structure associated to the natural map $\bbN^{n'}\rightarrow F[\bbN^{n'}]$.  
Let $\cV^{\mathrm{int}}$ be the closed subscheme defined by $v=(1,\ldots,1)\in\bbN^{n'}$ 
and for any integer $r\geq 0$ let $\cV^{(r),\mathrm{int}}$ be the disjoint union of all intersections of $r$ irreducible components of $\cV^{\mathrm{int}}$ endowed with the trivial log structure. 
In this situation the exactness of the sequence
\[
0\rightarrow v\cdot \omega^j_{\cV_{n'}^{\mathrm{int}}/F}\rightarrow \omega^j_{\cV^{(0),\mathrm{int}}/F}\rightarrow\omega^j_{\cV^{(1),\mathrm{int}}/F}\rightarrow\omega^j_{\cV^{(2),\mathrm{int}}/F}\rightarrow\cdots
\]
is well-known (see for example \cite[Ex.\,7.23 (1)]{PetersSteenbrink2007}).
By tensoring this with $F[\bbN^{n'}]^\dagger$ over $F[\bbN^{n'}]$ we obtain the exact sequence \eqref{eq: Koszul} because $F[\bbN^{n'}]^\dagger$ is flat over $F[\bbN^{n'}]$. 
\end{proof}	

In particular there exists the following spectral sequences:
		\begin{align}
		\nonumber&E_1^{i,j}= H^j_\rig(D^{(i),\flat}/W^\varnothing) \Longrightarrow H^{i+j}_{\rig,c}(Y/W^\varnothing), \\
		\nonumber&E_1^{i,j}= H^{\rig,j}_\HK(D^{(i),\flat}) \Longrightarrow H^{\rig,i+j}_{\HK,c}(Y),\\
		\label{eq: spectral seq}& E_1^{i,j}= H^j_\rig(D^{(i),\flat}/\cS) \Longrightarrow  H^{i+j}_{\rig,c}(Y/\cS),\\
		\nonumber& E_1^{i,j}= H^j_\rig(D^{(i),\flat}/W^0) \Longrightarrow  H^{i+j}_{\rig,c}(Y/W^0),\\
		\nonumber& E_1^{i,j}= H^j_\rig(D^{(i),\flat}/V^\sharp)_\pi  \Longrightarrow H^{i+j}_{\rig,c}(Y/V^\sharp)_\pi.
		\end{align}

Before we state the consequences of Proposition \ref{prop: D}, we introduce the notion of unipotent isocrystals as a reasonable class of coefficients.
\begin{definition}
	Let $(T,\cT,\iota)$ be a widening and let $Y$ be a fine log scheme over $T$.
	A log overconvergent isocrystal $\sE\in\Isoc^\dagger(Y/\cT)$ is said to be \textit{unipotent} if it is written as an iterated extension of $\sO_{Y/\cT}$.
	When we consider a Frobenius lift on $\cT$, a log overconvergent $F$-isocrystal $(\sE,\Phi)\in F\Isoc^\dagger(Y/\cT)$ is said to be unipotent if the underlying isocrystal $\sE$ is so.
	We denote by $\Isoc^\dagger(Y/\cT)^\unip$ (resp.\,$F\Isoc^\dagger(Y/\cT)^\unip$) the category of unipotent log overconvergent isocrystals (resp.\,$F$-isocrystals).
\end{definition}

\begin{corollary}\label{cor: compare rigid}
	Let $Y$ be a proper strictly semistable log scheme over $k^0$ and $\sE\in\Isoc^\dagger(Y/W^\varnothing)^\unip$.
	By abuse of notation we denote by $\sE$ also the unipotent log overconvergent isocrystal obtained via any base change.
	\begin{enumerate}
	\item The map $R\Gamma^\rig_{\HK,c}(Y,\sE)  \rightarrow  R\Gamma_{\rig,c}(Y/W^0,\sE)$ is a quasi-isomorphism.
	\item The morphisms in (\ref{eq: diag crig}) induce quasi-isomorphisms
		\begin{eqnarray*}
		&&R\Gamma_{\rig,c}(Y/\cS,\sE)  \otimes_{F\{s\},s\mapsto 0}F \xrightarrow{\cong}R\Gamma_{\rig,c}(Y/W^0,\sE),\\
		&&R\Gamma_{\rig,c}(Y/\cS,\sE)  \otimes_{F\{s\},s\mapsto \pi}K \xrightarrow{\cong}R\Gamma_{\rig,c}(Y/V^\sharp,\sE)_\pi,\\
		&&R\Gamma^\rig_{\HK,c}(Y,\sE)  \otimes_FF\{s\} \xrightarrow{\cong}R\Gamma_{\rig,c}(Y/\cS,\sE).
		\end{eqnarray*}
	\item For any branch of the $p$-adic logarithm $\log$ over $K$, the map $\Psi_{\pi,\log}$ induces a quasi-isomorphism $\Psi_{\pi,\log,K}\colon R\Gamma^\rig_{\HK,c}(Y,\sE)  \otimes_FK  \xrightarrow{\cong}  R\Gamma_{\rig,c}(Y/V^\sharp,\sE)_\pi$.
	\item The cohomology groups $H^i_{\rig,c}(Y/W^\varnothing,\sE)$, $H^{\rig,i}_{\HK,c}(Y,\sE)$, $H^i_{\rig,c}(Y/W^0,\sE)$ are finite dimensional $F$-vector spaces, $H^i_{\rig,c}(Y/\cS,\sE)$ is a free $F\{s\}$-module of finite rank, and $H^i_{\rig,c}(Y/V^\sharp,\sE)_\pi$ is a finite dimensional $K$-vector space.
		The monodromy operator on $H^{\rig,i}_{\HK,c}(Y,\sE)$ is nilpotent.
	\item When we consider a Frobenius structure $\Phi$ on $\sE$, the induced Frobenius operator $\varphi$ on $H^{\rig,i}_{\HK,c}(Y,(\sE,\Phi))$ is bijective.
	\end{enumerate}
\end{corollary}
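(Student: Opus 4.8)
The plan is to reduce every assertion to the corresponding statement \emph{without} compact support applied to the strata $D^{(r),\flat}$ of the horizontal divisor. The bridge is Proposition \ref{prop: D}, which for each base $\cT\in\{W^\varnothing,\cS,W^0,V^\sharp\}$ identifies the compactly supported cohomology of $Y$ with the totalisation of the co-simplicial complex $r\mapsto R\Gamma_\rig(D^{(r),\flat}/\cT,\sE)$, and likewise identifies $R\Gamma^\rig_{\HK,c}(Y,\sE)$ with $R\Gamma^\rig_\HK(D^{(\star),\flat},\sE)$. The first point I would record is that, by the local computation in the proof of Lemma \ref{lem: simplicial resolutions} (where locally $\cD^{(r),\flat}=\cS(n)\times_{W^\varnothing}\cV^{(r)}\times_{W^\varnothing}W[\bbN^\ell]^{\dagger,\varnothing}$ with $\cV^{(r)}$ and $W[\bbN^\ell]^{\dagger,\varnothing}$ carrying the trivial log structure), each $D^{(r),\flat}$ is again a proper strictly semistable log scheme over $k^0$, now with \emph{empty} horizontal divisor, and that $\sE$ restricts to a unipotent log overconvergent isocrystal on it. Hence all the comparison, finiteness and Frobenius assertions established for such schemes in \cite{ErtlYamada} (and, with unipotent coefficients, in \cite{Yamada2020}) apply to every $D^{(r),\flat}$.

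I would then verify that the morphisms in \eqref{eq: diag crig} are compatible with the identifications of Proposition \ref{prop: D}, so that each is induced levelwise by the corresponding morphism for the $D^{(r),\flat}$. This is a naturality check on the quasi-isomorphisms \eqref{eq: sr1}--\eqref{eq: sr5}: the projection $u^{[i]}\mapsto 0$ producing $R\Gamma_{\rig,c}(Y/W^0,\sE)$, the Hyodo--Kato specialisation $u^{[i]}\mapsto(-\log\pi)^i/i!$, and the base-change maps all commute with these quasi-isomorphisms because the latter are $\cO_{\cZ_\bbQ}$-linear and natural in both the $u$-structure and the base. Granting this, part (i) and the quasi-isomorphism $\Psi_{\pi,\log,K}$ of part (iii) are immediate: they hold levelwise for each $D^{(r),\flat}$ by the no-compact-support comparison, $-\otimes_F K$ is exact, and a levelwise quasi-isomorphism of co-simplicial complexes induces a quasi-isomorphism on totalisations.

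For the finiteness in part (iv) I would run the spectral sequences \eqref{eq: spectral seq}. As $Y$ is proper of finite dimension, only finitely many strata $D^{(i),\flat}$ are non-empty and each has bounded cohomology, so every $E_1$-page has finitely many non-zero entries; since each entry $H^j_\rig(D^{(i),\flat}/\cT,\sE)$ is finite-dimensional over $F$ (resp.\ free of finite rank over $F\{s\}$, resp.\ finite over $K$) by the no-compact-support finiteness, the abutment is finite over the appropriate ring. The $F\{s\}$-freeness I would obtain not by a torsion argument but by first securing the third isomorphism of part (ii): because $F\to F\{s\}$ is flat, the functor $-\otimes_F F\{s\}$ commutes with cohomology and the levelwise no-support isomorphism yields $R\Gamma^\rig_{\HK,c}(Y,\sE)\otimes_F F\{s\}\xrightarrow{\cong}R\Gamma_{\rig,c}(Y/\cS,\sE)$; combined with the already established finite-dimensionality of $H^{\rig,i}_{\HK,c}(Y,\sE)$ this exhibits $H^i_{\rig,c}(Y/\cS,\sE)\cong H^{\rig,i}_{\HK,c}(Y,\sE)\otimes_F F\{s\}$ as free of finite rank. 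Knowing this freeness then makes the remaining two base-change isomorphisms of part (ii), along $s\mapsto 0$ and $s\mapsto\pi$, formal: the vanishing of higher $\Tor$ over $F\{s\}$ lets $-\otimes_{F\{s\}}F$ and $-\otimes_{F\{s\}}K$ commute with taking cohomology, so they agree with the levelwise no-support base-change isomorphisms.

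Finally, for part (v) I would use that the spectral sequence $E_1^{i,j}=H^{\rig,j}_\HK(D^{(i),\flat},(\sE,\Phi))\Rightarrow H^{\rig,i+j}_{\HK,c}(Y,(\sE,\Phi))$ is one of $\varphi$-modules, which follows once one checks that \eqref{eq: sr2} is $\varphi$-equivariant; concretely, ignoring the $\sE$-factor which carries $\Phi$ on both sides, the operator $\varphi$ multiplies a basis element $e_a$ attached to a local generator $a$ of $i_{m,\star}\theta_m^*\cM_D$ by $\phi(\alpha(a))/\alpha(a)$, and \eqref{eq: basis4} (which sends $e_a$ to $\alpha(a)$) carries this to $\phi(\alpha(a))$, matching the usual Frobenius on $R\Gamma^\rig_\HK(D^{(\star),\flat})$. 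Since $\varphi$ is bijective on each $E_1$-term by the no-compact-support statement, it is bijective on every $E_r$-page and hence on the finitely filtered abutment, giving the bijectivity of $\varphi$ on $H^{\rig,i}_{\HK,c}(Y,(\sE,\Phi))$. The main obstacle I anticipate is the compatibility verification of the second and fourth paragraphs together with the $\Tor$-vanishing over $F\{s\}$: the whole argument hinges on the cohomology over $F\{s\}$ being free, which is exactly why the third isomorphism of part (ii) must be secured before the base changes along $s\mapsto 0$ and $s\mapsto\pi$.
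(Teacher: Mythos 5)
Your overall strategy---identify the compactly supported theories with the non-compactly supported theories of the strata $D^{(r),\flat}$ via Proposition \ref{prop: D} and conclude via the spectral sequences \eqref{eq: spectral seq}---is the same as the paper's, but the order in which you treat the coefficients creates a genuine gap. Everything in your argument rests on the claim that ``$\sE$ restricts to a unipotent log overconvergent isocrystal on $D^{(r),\flat}$'', so that Proposition \ref{prop: D}, Lemma \ref{lem: simplicial resolutions} and the stratum-wise results can be applied with coefficients. That restriction does not exist in general. There is no morphism of log schemes $D^{(r),\flat}\rightarrow Y$ through which to apply the pullback functor \eqref{eq: pull back isoc}: the log structure of $D_J^\flat$ retains only the vertical components, and the horizontal generators of $\cN_Y$ have nowhere natural to go. The decisive obstruction is visible on realizations: a unipotent isocrystal on $Y$ over $W^\varnothing$ may have non-vanishing residues along the horizontal divisor. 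Concretely, take $Y=\bbP^1_k$ over $k^0$ with horizontal divisor $D=\{0\}\cup\{\infty\}$ and let $\sE$ be the extension of $\sO_{Y/W^\varnothing}$ by $\sO_{Y/W^\varnothing}$ with class $[d\log t]\in H^1_\rig(Y/W^\varnothing)$ (non-zero, as it maps to the generator of $H^1_\dR(\bbG_{m,K})$ under the comparison maps); its realization has connection determined by $\nabla e_1=e_2\otimes d\log t$, and since $d\log t$ has no image in $\omega^1_{\cD^{(1),\flat}_m/\cT,\bbQ}$, the sheaves $(\iota^{(r)}_m)^*\sE_{\cZ_m}\otimes\omega^\star_{\cD^{(r),\flat}_m/\cT,\bbQ}$ you want to totalise carry no well-defined differential. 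In particular the co-simplicial object $r\mapsto R\Gamma_\rig(D^{(r),\flat}/\cT,\sE)$ does not exist, Lemma \ref{lem: simplicial resolutions} does not extend coefficient-wise in the naive way, and this is precisely why Proposition \ref{prop: D} and the spectral sequences \eqref{eq: spectral seq} are stated for $\sO_{Y/\cT}$ only.

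The repair is to interchange your two reductions, which is what the paper does: first reduce to $\sE=\sO_{Y/W^\varnothing}$ using unipotence (a short exact sequence of isocrystals induces compatible distinguished triangles for every theory in \eqref{eq: diag crig}, so (i)--(iv) follow by induction on the length of the unipotent filtration and the five lemma; for (v) this d\'{e}vissage is not formal, because the unipotent filtration of $\sE$ need not be stable under the Frobenius structure $\Phi$, and the paper invokes \cite[Prop.\ 3.12]{Yamada2020} exactly for this step). Only after that does one apply the coefficient-free spectral sequences \eqref{eq: spectral seq} and quote the non-compactly supported statements of \cite{ErtlYamada} for the strata, which are indeed proper strictly semistable over $k^0$ with empty horizontal divisor, as you observe. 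With this ordering, the remaining components of your write-up---the compatibility of \eqref{eq: diag crig} with Proposition \ref{prop: D}, deducing $F\{s\}$-freeness from the base change $-\otimes_FF\{s\}$ before treating $s\mapsto 0$ and $s\mapsto\pi$, and propagating bijectivity of the semilinear $\varphi$ through a bounded $\varphi$-equivariant spectral sequence---are sound, and usefully spell out details the paper leaves implicit.
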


\begin{proof}
	Since $\sE$ is unipotent, (i)--(iv) can be reduced to the case $\sE=\sO_{Y/W^\varnothing}$.
	Moreover by \cite[Prop.\,3.16]{Yamada2020}, (v) is also reduced to the case $\sE=\sO_{Y/W^\varnothing}$.
	In this case the assertions follow from the corresponding ones for non-compact supported cohomology \cite[Cor. 3.35, Lem. 3.36, Prop. 4.9]{ErtlYamada} via the spectral sequences (\ref{eq: spectral seq}).
\end{proof}

\begin{remark}
	By Theorem \ref{thm: ext compare}, the derived category $D^b(\Mod_F^\fin(\varphi,N))$ is equivalent to the full subcategory of $D^b(\Mod_F(\varphi,N))$ consisting objects whose cohomology groups belong to $\Mod_F^\fin(\varphi,N)$.
	Therefore by the corollary we may regard $R\Gamma_{\HK,c}(Y,(\sE,\Phi))$ as an object of $D^b(\Mod_F^\fin(\varphi,N))$.
	Similarly, we may regard $R\Gamma_{\rig,c}(Y/W^0,(\sE,\Phi))$ and  $R\Gamma_{\rig,c}(Y/W^\varnothing,(\sE,\Phi))$ as objects of $D^b(\Mod_F^\fin(\varphi))$.
\end{remark}

The following proposition can be interpreted as the contravariant functoriality for proper morphisms and the covariant functoriality for open immersions.

\begin{proposition}\label{prop: functoriality}
	Let $(T,\cT,\iota)$ be a widening equipped with a morphism $k^0\rightarrow T$.
	Let $f\colon Y'\rightarrow Y$ be a morphism between proper strictly semistable log schemes over $k^0$ with horizontal divisors $D'$ and $D$.
	Let $\sE\in\Isoc^\dagger(Y/\cT)$.
	\begin{enumerate}
	\item\label{item: contravariant} If $D'=f^*D$, then $f$ induces a canonical morphism
		\begin{equation}\label{eq: contravariant}
		R\Gamma_{\rig,c}(Y/\cT,\sE)\rightarrow R\Gamma_{\rig,c}(Y'/\cT,f^*\sE).
		\end{equation}
		When $(T,\cT,\iota)=(k^\varnothing,W^\varnothing,\iota)$ we also have
		\begin{equation}\label{eq: contravariant HK}
		R\Gamma_{\HK,c}^\rig(Y,\sE)\rightarrow R\Gamma_{\HK,c}^\rig(Y',f^*\sE).
		\end{equation}
	\item\label{item: covariant} Suppose that $f$ is identity as a morphism of underlying schemes and that $D$ is a union of horizontal components of $D'$.
		Then $f$ induces a canonical morphism
		\begin{equation}\label{eq: covariant}
		R\Gamma_{\rig,c}(Y'/\cT,f^*\sE)\rightarrow R\Gamma_{\rig,c}(Y/\cT,\sE).
		\end{equation}
		When $(T,\cT,\iota)=(k^\varnothing,W^\varnothing,\iota)$ we also have			\begin{equation}\label{eq: covariant HK}
		R\Gamma_{\HK,c}^\rig(Y',f^*\sE)\rightarrow R\Gamma_{\HK,c}^\rig(Y,\sE).
		\end{equation}
	\end{enumerate}
\end{proposition}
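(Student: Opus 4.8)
I would treat the two parts separately, since they come from genuinely different mechanisms: part \ref{item: contravariant} is pure pullback functoriality, while part \ref{item: covariant} needs the simplicial resolution of Proposition \ref{prop: D}. As a guide, the remark preceding the statement already identifies these with, respectively, $f^*$ for the proper map $Y'\setminus D'\to Y\setminus D$ and $j_!$ for the open immersion $Y\setminus D'\hookrightarrow Y\setminus D$.

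For the contravariant map \ref{item: contravariant}, the plan is to start from the pullback morphism supplied by the co-continuous functor $\OC(Y'/\cT)\rightarrow\OC(Y/\cT)$ of \S\ref{sec: isoc}, which for any $\sG\in\Isoc^\dagger(Y/\cT)$ gives $R\Gamma_\rig(Y/\cT,\sG)\rightarrow R\Gamma_\rig(Y'/\cT,f^*\sG)$, and to apply it with $\sG=\sE(\cM_D)$. The one extra ingredient is the canonical identification $f^*\sO_{Y/\cT}(\cM_D)\cong\sO_{Y'/\cT}(\cM_{D'})$: under the hypothesis $D'=f^*D$ a local generator of $\cM_D$ pulls back to a local generator of $\cM_{D'}$, so the defining rank-one sheaves agree compatibly with their log connections and with \eqref{eq: basis4}. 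Hence $f^*(\sE(\cM_D))\cong(f^*\sE)(\cM_{D'})$, turning the pullback morphism into \eqref{eq: contravariant}. For \eqref{eq: contravariant HK} I would rerun this over $\cS$ with the Kim--Hain complex; the identification respects the $u$-adjunction, so the resulting map commutes with $N$ (and with $\varphi$ when a Frobenius structure is present).

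For the covariant map \ref{item: covariant}, recall that $Y$ and $Y'$ share their underlying scheme, that ${Y}^\flat={Y'}^\flat$ (the vertical log structure is unchanged), and that $\Upsilon_D\subseteq\Upsilon_{D'}$, whence $\cM_D\subseteq\cM_{D'}$ inside $\cN_{Y'}$. My plan is to apply Proposition \ref{prop: D} to both sides, with the locally free coefficient $\sE$ (this is harmless by the projection formula for the closed immersions $\iota^{(r)}_m$), obtaining $R\Gamma_{\rig,c}(Y/\cT,\sE)\xrightarrow{\sim}R\Gamma_\rig(D^{(\star),\flat}/\cT,\sE)$ and $R\Gamma_{\rig,c}(Y'/\cT,f^*\sE)\xrightarrow{\sim}R\Gamma_\rig({D'}^{(\star),\flat}/\cT,\sE)$; on the $\flat$-strata the horizontal part is discarded, so $f^*\sE$ and $\sE$ restrict to the same isocrystal and the coefficients match. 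Since each stratum $D_J$ with $J\subseteq\Upsilon_D$ is literally a stratum of $D'$, the strata of $D^{(\star),\flat}$ are exactly those of ${D'}^{(\star),\flat}$ indexed by subsets of $\Upsilon_D$. The projection discarding every summand with $J\not\subseteq\Upsilon_D$ is then a chain map, because the differential only enlarges $J$ and is computed by the same restriction maps on both sides; it induces \eqref{eq: covariant}. At the level of coefficients this projection corresponds to the isocrystal map $\sO_{Y'/\cT}(\cM_{D'})\rightarrow\sO_{Y'/\cT}(\cM_D)$ given locally, as in \eqref{eq: basis4}, by multiplication by the equation of $D'\setminus D$. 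For \eqref{eq: covariant HK} I adjoin the Kim--Hain factor $[u]$; the projection leaves $u$ untouched and hence commutes with $N$ (and $\varphi$).

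The main obstacle I anticipate is in the covariant case: one must verify that the strata projection is a chain map for the full signed Čech differential and that, under the quasi-isomorphisms of Proposition \ref{prop: D}, it really matches the coefficient map $\sO(\cM_{D'})\rightarrow\sO(\cM_D)$ and does not hide a residue correction. This is precisely where the equality ${Y}^\flat={Y'}^\flat$ and the explicit local computation in the proof of Lemma \ref{lem: simplicial resolutions} are needed. A secondary, more bookkeeping, point is that Proposition \ref{prop: D} and Lemma \ref{lem: simplicial resolutions} are recorded only for the distinguished bases and trivial coefficients; since both are local statements about the log de Rham complexes $\omega^\bullet_{\cZ/\cT,\bbQ}$, I would note that they extend verbatim to an arbitrary widening $(T,\cT,\iota)$ equipped with $k^0\rightarrow T$ and to locally free $\sE$. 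Compatibility with Frobenius throughout is automatic, as every map used is induced by a morphism of $(F\text{-})$isocrystals or of the defining simplicial objects.
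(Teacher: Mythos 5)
Your part \ref{item: contravariant} is exactly the paper's argument: the identification $\sO_{Y'/\cT}(\cM_{D'})=f^*\sO_{Y/\cT}(\cM_D)$ combined with the functoriality of log rigid cohomology with coefficients \cite[(2.32)]{Yamada2020}, so there is nothing to add there. For part \ref{item: covariant}, however, you take a genuinely different route: the paper never invokes Proposition \ref{prop: D}. Instead it exploits the hypothesis that $f$ is the identity on underlying schemes to turn a local embedding datum $(Z'_\bullet,\cZ'_\bullet,i'_\bullet,h'_\bullet,\theta'_\bullet)$ for $Y'$ into one for $Y$ on the \emph{same} underlying weak formal schemes, by replacing the log structure of $\cZ'_m$ with $i'_{m,\star}f^*\cN_Y$; the natural morphism
\[
(f^*\sE)_{\cZ'_m}\otimes\cO_{\cZ'_m}(\cM_{D'})\otimes\omega^\bullet_{\cZ'_m/\cT,\bbQ}\rightarrow \sE_{\cZ_m}\otimes\cO_{\cZ_m}(\cM_D)\otimes\omega^\bullet_{\cZ_m/\cT,\bbQ}
\]
on the common dagger space $\cZ'_{m,\bbQ}=\cZ_{m,\bbQ}$ (multiplication by the equation of $D'\setminus D$, which cancels the extra log poles) then induces \eqref{eq: covariant} directly at the chain level. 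This buys two things your strata-projection argument must work for: it is valid verbatim for an arbitrary widening $(T,\cT,\iota)$ with $k^0\rightarrow T$ and arbitrary $\sE$, whereas your route needs Proposition \ref{prop: D} and Lemma \ref{lem: simplicial resolutions} with coefficients (harmless, by flatness of locally free sheaves) and over a general base, and the latter extension is not quite ``verbatim'' since the proof of Lemma \ref{lem: simplicial resolutions} rests on the explicit local models $\cS(n,n')\times_{W^\varnothing}W[\bbN^\ell]^{\dagger,\varnothing}$ available only over the distinguished bases; and it realizes $f_*$ as a concrete chain-level (indeed inclusion-type) map, which is precisely what the final proposition of \S\ref{sec: dual} on the duality of $f^*$ and $f_*$ later exploits. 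Your projection argument itself is sound: the summands with $J\not\subseteq\Upsilon_D$ form a subcomplex because the differential only enlarges $J$, so the projection onto the remaining summands is a chain map, and since $Y^\flat={Y'}^\flat$ those summands with the induced differential are literally the strata complex of $D$. One imprecision in your closing compatibility remark: the covariant map cannot be encoded by the coefficient map $\sO_{Y'/\cT}(\cM_{D'})\rightarrow\sO_{Y'/\cT}(\cM_D)$ on $Y'$ alone, because the target of \eqref{eq: covariant} also carries a smaller log de Rham complex (poles along $D$ only, not $D'$); the log structure of the ambient space must shrink as well, which is exactly what the paper's change of log structure on $\cZ'_m$ accomplishes, and what your $\flat$-strata achieve implicitly.
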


\begin{proof}
	 The point \ref{item: contravariant} follows from $\sO_{Y'/\cT}(\cM_{D'})=f^*\sO_{Y/\cT}(\cM_D)$ by the functoriality of log rigid cohomology with coefficients \cite[(2.53)]{Yamada2020}.
	 
	 We will prove \ref{item: covariant}.
	 Let $(Z'_\lambda,\cZ'_\lambda,i'_\lambda,h'_\lambda,\theta'_\lambda)$ be a local embedding datum for $Y'$ over $\cT$, and $(Z'_\bullet,\cZ'_\bullet,i'_\bullet,h'_\bullet,\theta'_\bullet)$ the associated simplicial object.
	For $m\geqslant 0$ we denote by $\cZ_m$ the underlying weak formal scheme of $\cZ'_m$ equipped with the log structure $\cN_{\cZ_m}:=i'_{m,\star}f^*\cN_Y$.
	Then we may compute the log rigid cohomology of $Y$ by $\cZ_\bullet$.
	
	For any $\sE\in\Isoc^\dagger(Y/\cT)$, we have $(f^*\sE)_{\cZ'_m}=\sE_{\cZ_m}$ and there exists a natural morphism $(f^*\sE)_{\cZ'_m}\otimes\cO_{\cZ'_{m,\bbQ}}(\cM_{D'})\otimes\omega^\bullet_{\cZ'_m/\cT,\bbQ}\rightarrow \sE_{\cZ_m}\otimes\cO_{\cZ_{m,\bbQ}}(\cM_D)\otimes\omega^\bullet_{\cZ_m/\cT,\bbQ}$ on $\cZ'_{m,\bbQ}=\cZ_{m,\bbQ}$.
	This induces the desired morphism.
\end{proof}

Next we summarize properties of the cup product on log rigid cohomology and Hyodo--Kato cohomology.
In all the cases mentioned below, the cup product is naturally induced from the wedge product of log differential forms, however the precise description is quite complicated, because we need to take into account simplicial construction and homotopy limit at the same time.
The details of the definition of the cup product and necessary proofs of its properties are given in Appendix \ref{app: cup product}.

For a fine log scheme $Y$ over $k^0$, let $(\cI(Y),\cR(Y,-),\cD)$ be one of the following:
\begin{itemize}
\item $\cI(Y)=\Isoc^\dagger(Y/\cT)$, $\cR(Y,\sE)=R\Gamma_\rig(Y/\cT,\sE)$ for $\sE\in\cI(Y)$, and $\cD=D^+(\Mod_F)$ where $\cT=W^\varnothing$ or $W^0$,
\item $\cI(Y)=F\Isoc^\dagger(Y/\cT)$, $\cR(Y,\sE)=R\Gamma_\rig(Y/\cT,(\sE,\Phi))$ for $\sE=(\sE,\Phi)\in\cI(Y)$, and $\cD=D^+(\Mod_F(\varphi))$ where $\cT=W^\varnothing$ or $W^0$,
\item $\cI(Y)=\Isoc^\dagger(Y/V^\sharp)_\pi$, $\cR(Y,\sE)=R\Gamma_\rig(Y/V^\sharp,\sE)_\pi$ for $\sE\in\cI(Y)$, and $\cD=D^+(\Mod_K)$,
\item $\cI(Y)=\Isoc^\dagger(Y/W^\varnothing)$, $\cR(Y,\sE)=R\Gamma^\rig_\HK(Y,\sE)$ for $\sE\in\cI(Y)$, and $\cD=D^+(\Mod_F(N))$,
\item $\cI(Y)=F\Isoc^\dagger(Y/W^\varnothing)$, $\cR(Y,\sE)=R\Gamma^\rig_\HK(Y,(\sE,\Phi))$ for $\sE=(\sE,\Phi)\in\cI(Y)$, and $\cD=D^+(\Mod_F(\varphi,N))$.
\end{itemize}

\begin{proposition}\label{prop: cup product HK}
	For any fine log scheme $Y$ over $k^0$ and objects $\sE,\sE'\in \cI(Y)$, there exists a morphism which we will call \emph{the cup product}
	\begin{equation}\label{eq: pairing general form}
	\cup\colon \cR(Y,\sE)\otimes^L\cR(Y,\sE')\rightarrow \cR(Y,\sE\otimes\sE')
	\end{equation}
	in $\cD$ satisfying the following properties:
	\begin{enumerate}
	\item\label{item: comm ass} (Anti)commutativity and associativity: The following diagrams commute:
			\begin{align*}
			&\xymatrix{\cR(Y,\sE)\otimes^L \cR(Y,\sE')\ar[r]^-\cup\ar[d]_-\theta^-\cong&\cR(Y,\sE\otimes\sE')\\
			\cR(Y,\sE')\otimes^L \cR(Y,\sE)\ar[ur]_-\cup&}\\
			&\xymatrix{
			\cR(Y,\sE)\otimes^L \cR(Y,\sE')\otimes \cR(Y,\sE'')\ar[r]^-{\cup\otimes\id}\ar[d]_-{\id\otimes\cup}&
			\cR(Y,\sE\otimes\sE')\otimes^L \cR(Y,\sE'')\ar[d]^-\cup\\
			\cR(Y,\sE)\otimes^L \cR(Y,\sE'\otimes\sE'')\ar[r]^-\cup&\cR(Y,\sE\otimes\sE'\otimes\sE'')
			}\end{align*}
		where $\theta$ is the quasi-isomorphism mapping $\alpha\otimes\beta$ to $(-1)^{ji}\beta\otimes\alpha$ if $\alpha$ and $\beta$ are cochains of degree $i$ and $j$, respectively,
	\item\label{item: funct} Functoriality: For a finite extension $k'$ of $k$, 
	a fine log scheme $Y'$ over $k'^0$, and a morphism $f\colon Y'\rightarrow Y$ over $k^0$
	we have a commutative diagram
		\[\xymatrix{
		\cR(Y,\sE)\otimes^L \cR(Y,\sE')\ar[r]^-\cup\ar[d]_-{f^\ast\otimes f^\ast}&\cR(Y,\sE\otimes\sE')\ar[d]^-{f^\ast}\\
		\cR(Y',f^\ast\sE)\otimes^L \cR(Y',f^\ast\sE')\ar[r]^-\cup&\cR(Y',f^\ast\sE\otimes f^\ast\sE').
		}\]
		\item\label{item: HK cup} The cup product $\cup$ commutes with the morphisms displayed in the following commutative diagram:
		\begin{equation}\label{eq: diag rig HK}\xymatrix{
		&R\Gamma_\rig(Y/W^\varnothing,\sE).\ar[ld]\ar[d]\ar[rd]&\\
		R\Gamma_\rig(Y/W^0,\sE)&R\Gamma_\HK^\rig(Y,\sE)\ar[l]\ar[r]^-{\Psi_{\pi,\log}}&R\Gamma_\rig(Y/V^\sharp,\sE)_\pi
		}\end{equation}
	\end{enumerate} 
\end{proposition}

\begin{proof}
	The cup product is defined by the formula \eqref{eq: def cup} via the identification of Proposition \ref{prop: M iso R}.
	Its well-definedness as a morphism in the derived category of vector spaces follows from the equation \eqref{eq: iota cup}.
	The compatibility with monodromy and Frobenius operators are discussed in Remark \ref{rem: monodromy cup} and Remark \ref{rem: Frobenius cup}, respectively.
	The properties of \ref{item: comm ass} is proved in Proposition \ref{prop: comm cup} and \ref{prop: ass cup}, the property \ref{item: funct} is given in Proposition \ref{prop: cup functorial}, and \ref{item: HK cup} is discussed in Remark \ref{rem: HK map cup}.
\end{proof}

When $Y$ is a proper strictly semistable log scheme over $k^0$ with horizontal divisor $D$, let $\cR_c(Y,\sE):=\cR(Y,\sE(\cM_D))$.
For $i\in\bbN$, we denote by $\cH^i(Y,\sE)$ and $\cH^i_c(Y,\sE)$ the $i$-th cohomology groups of $\cR(Y,\sE)$ and $\cR_c(Y,\sE)$, respectively.

\begin{corollary}\label{cor: functorial cup}
Let $Y$ be a proper strictly semistable log scheme over $k^0$ and $\sE\in\cI(Y)$.
Then the natural morphism $\sE\otimes\sE^\vee(\cM_D)\rightarrow\sO_{Y/W^\varnothing}(\cM_D)$ induces a morphism 
	\begin{equation}\label{eq: pairing dual}
	\cup\colon \cR(Y,\sE) \otimes^L \cR_c(Y,\sE^\vee)\rightarrow \cR_c(Y)
	\end{equation}
in $\cD$.
	This is compatible with the morphisms in the commutative diagrams \eqref{eq: diag rig HK} and the upper half of \eqref{eq: diag crig}: 
	In particular the following diagram commutes
	\[\xymatrix{
	R\Gamma_\HK(Y,\sE)\otimes^L R\Gamma_{\HK,c}(Y,\sE^\vee)\ar[r]^-\cup\ar[d]_-{\Psi_{\pi,\log}\otimes\Psi_{\pi,\log}}
	&R\Gamma_{\HK,c}(Y)\ar[d]^-{\Psi_{\pi,\log}}\\
	R\Gamma_\rig(Y/V^\sharp,\sE)\otimes^L R\Gamma_{\rig,c}(Y/V^\sharp,\sE^\vee)\ar[r]^-\cup&R\Gamma_{\rig,c}(Y/V^\sharp).}\]
	Moreover, let $f\colon Y'\rightarrow Y$ be a morphism of proper strictly semistable log schemes over $k^0$ with horizontal divisors $D'$ and $D$.
	\begin{enumerate}
	\item\label{item: pull cup} If $D'=f^\ast D$, then we have
		\[f^\ast\alpha\cup f^\ast\beta=f^\ast(\alpha\cup\beta)\]
		for any $\alpha\in \cH^i(Y,\sE)$ and $\beta\in \cH^j_c(Y,\sE^\vee)$, where $f^\ast$ for compact support cohomology is defined in Proposition \ref{prop: functoriality} \ref{item: contravariant}.
	\item\label{item: push cup} If $f$ is the identity as a morphism of the underlying schemes and $D$ is a union of horizontal components of $Y'$, 
	then we have
		\[f_\ast(f^\ast\alpha\cup\beta)=\alpha\cup f_\ast\beta\]
		for any $\alpha\in \cH^i(Y,\sE)$ and $\beta\in \cH^j_c(Y',f^\ast\sE')$, where $f_\ast$ is defined in Proposition \ref{prop: functoriality} \ref{item: covariant}.
	\end{enumerate}
\end{corollary}

\begin{proof}
	The assertions except for \ref{item: push cup} are a special case of Proposition \ref{prop: cup product HK}.
	For \ref{item: push cup}, let $\cZ'_m=\coprod_{\underline{\lambda}\in\Lambda^{m+1}}\cZ'_{\underline{\lambda}}$ and $\cZ_m=\coprod_{\underline{\lambda}\in\Lambda^{m+1}}\cZ_{\underline{\lambda}}$ as in the proof of Proposition \ref{prop: functoriality}.
	Note that $\cZ_{\underline{\lambda},\bbQ}=\cZ'_{\underline{\lambda},\bbQ}$ and $\sE_{\cZ_{\underline{\lambda}}}=f^\ast\sE_{\cZ'_{\underline{\lambda}}}$.
	Then $f^\ast$ and $f_\ast$ are induced by the natural injections
	\begin{align*}
	&\sE_{\cZ_{\underline{\lambda}}}\otimes\omega^\bullet_{\cZ_{\underline{\lambda}}/W^\varnothing,\bbQ}[u]\hookrightarrow\sE_{\cZ_{\underline{\lambda}}}\otimes\omega^\bullet_{\cZ'_{\underline{\lambda}}/W^\varnothing,\bbQ}[u],\\
	&\sE_{\cZ'_{\underline{\lambda}}}\otimes\cO_{\cZ_{\underline{\lambda},\bbQ}}(\cM_{D'})\otimes\omega^\bullet_{\cZ'_{\underline{\lambda}}/W^\varnothing,\bbQ}[u]\hookrightarrow \sE_{\cZ_{\underline{\lambda}}}\otimes\cO_{\cZ_{\underline{\lambda},\bbQ}}(\cM_D)\otimes\omega^\bullet_{\cZ_{\underline{\lambda}}/W^\varnothing,\bbQ}[u],
	\end{align*}
	respectively.
	Thus \ref{item: push cup} easily follows from the definition of the cup product.
\end{proof}

An indication that the theory of compact supports we propose in this paper is reasonable, 
is that we can recover Berthelot's rigid cohomology with compact support in appropriate cases.
To begin with, we consider the case of a smooth scheme with a simple normal crossing divisor.
In the context of compactly supported cohomology this situation was investigated in detail in \cite{NakkajimaShiho2008}.
We translate it into the technical framework developed in this paper. 

\begin{definition}\label{def: compact support for smooth scheme with sncd}
Let $Y$ be a smooth proper scheme over $k$, with a simple normal crossing divisor $D\subset Y$ in the sense that the irreducible components of $D$ are smooth over the base $k$.
Denote by $Y^\infty$ the log scheme over $k^\varnothing$  whose underlying scheme is $Y$ and whose log structure is associated to the divisor $D$.
Let $\cM_D$ be the monogenic log substructure generated by the generator of the ideal of $D$ in $Y$.

For the widening $(k^\varnothing, W^\varnothing,\iota)$ and any $\sE\in\Isoc^\dagger(Y^\infty/W^\varnothing)$ (resp.\,$(\sE,\Phi)\in F\Isoc^\dagger(Y^\infty/W^\varnothing)$), we define the \textit{log rigid cohomology with compact support} of $Y^\infty$ over $W^\varnothing$ with coefficients in $\sE$ (resp.\,$(\sE,\Phi)$) to be
		\begin{align*}
		R\Gamma_{\rig,c}(Y^\infty/W^\varnothing,\sE)&=R\Gamma_{\rig,c}(Y^\infty/(k^\varnothing,W^\varnothing,\iota),\sE):= R\Gamma_{\rig,\cM_D}(Y^\infty/(k^\varnothing,W^\varnothing,\iota),\sE)\\
		R\Gamma_{\rig,c}(Y^\infty/W^\varnothing,(\sE,\Phi)) &= R\Gamma_{\rig,c}(Y^\infty/(k^\varnothing,W^\varnothing,\iota),(\sE,\Phi)) := R\Gamma_{\rig,\cM_D}(Y^\infty/(k^\varnothing,W^\varnothing,\iota),(\sE,\Phi)).
		\end{align*}
\end{definition}

Let $U:=Y\backslash D$ be the open dense subscheme of $Y$ which is the complement of $D$.
Assume that $Y$ is projective. 
We want to show that $R\Gamma_{\rig,c}(Y^\infty/W^\varnothing)$ computes the compactly supported rigid cohomology of $U$ introduced by Berthelot in \cite[\S3]{Berthelot1986} which we denote here by $R\Gamma_{\mathrm{Ber},c}(U^\varnothing/W^\varnothing)$.

\begin{remark}
The notation $(-)^\varnothing$ indicates that we may regard any scheme over $k$ as a log scheme over $k^\varnothing$ endowed with the trivial log structure.  
For $U:=Y\backslash D$ the trivial log structure on $U^\varnothing$ coincides with the pull-back log structure coming from $Y^\infty$. 
\end{remark}

\begin{proposition}\label{prop: comparison rig Ber c}
Let $Y$ be a projective smooth scheme over $k$.
Let $D\subset Y$ be a  simple normal crossing divisor and set $U:=Y\setminus D$.
Then there exists a quasi-isomorphism 
	$$
	R\Gamma_{\rig,c}(Y^\infty/W^\varnothing) \xrightarrow{\cong} R\Gamma_{\mathrm{Ber},c}(U^\varnothing/W^\varnothing).
	$$
\end{proposition}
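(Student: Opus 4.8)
The plan is to combine the strata description of compactly supported log rigid cohomology furnished by Lemma \ref{lem: simplicial resolutions} and Proposition \ref{prop: D} with the standard Gysin/descent description of Berthelot's compactly supported rigid cohomology. First I would note that for $Y^\infty$ the entire log structure is horizontal, namely $\cN_{Y^\infty}=\cM_D$, with no vertical (semistable) part; hence $Y^\flat=Y^\varnothing$ and every stratum $D^{(r),\flat}=D^{(r),\varnothing}$ carries the trivial log structure, the $D^{(r)}$ being smooth and proper over $k$. The local computation underlying Lemma \ref{lem: simplicial resolutions} only involves the horizontal factor $\cV_{n'}$, so it applies verbatim with the semistable factor $\cS(n)$ suppressed and base $W^\varnothing$, yielding a quasi-isomorphism $\cO_{\cZ_{m,\bbQ}}(\cM_D)\otimes\omega^\star_{\cZ_m/W^\varnothing,\bbQ}\to\iota^{(\bullet)}_{m,*}\omega^\star_{\cD^{(\bullet),\varnothing}_m/W^\varnothing,\bbQ}$ and, exactly as in Proposition \ref{prop: D}, an identification
\begin{equation*}
R\Gamma_{\rig,c}(Y^\infty/W^\varnothing)\xrightarrow{\sim}R\Gamma_\rig(D^{(\star),\varnothing}/W^\varnothing),
\end{equation*}
where the right-hand side is the total complex of the cosimplicial system $r\mapsto R\Gamma_\rig(D^{(r),\varnothing}/W^\varnothing)$ with $D^{(0)}=Y$ and restriction differentials.

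Next I would identify each term $R\Gamma_\rig(D^{(r),\varnothing}/W^\varnothing)$ with Berthelot's classical rigid cohomology of the smooth proper $k$-scheme $D^{(r)}$: since all log structures involved as well as the base $W^\varnothing$ are trivial, log rigid cohomology over $W^\varnothing$ reduces to convergent, equivalently rigid, cohomology (cf.\ \cite{ErtlYamada}). Thus the left-hand side above becomes the total complex of the classical rigid cohomology groups of the strata of $D$, starting from $Y$ in Čech-degree $0$.

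On Berthelot's side I would use the localization triangle
\begin{equation*}
R\Gamma_{\Ber,c}(U^\varnothing/W^\varnothing)\to R\Gamma_{\Ber,c}(Y/W^\varnothing)\to R\Gamma_{\Ber,c}(D/W^\varnothing)\xrightarrow{+1},
\end{equation*}
in which $Y$ and $D$ are proper, so that compactly supported cohomology agrees with ordinary rigid cohomology and $R\Gamma_{\Ber,c}(Y/W^\varnothing)=R\Gamma_\rig(Y^\varnothing/W^\varnothing)$. The singular divisor $D=\bigcup_jD_j$ admits a proper covering by its smooth components, and cohomological descent expresses $R\Gamma_\rig(D/W^\varnothing)$ as the Čech total complex $[R\Gamma_\rig(D^{(1),\varnothing})\to R\Gamma_\rig(D^{(2),\varnothing})\to\cdots]$. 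Feeding this into the triangle identifies $R\Gamma_{\Ber,c}(U^\varnothing/W^\varnothing)$ with the shifted cone of $R\Gamma_\rig(Y^\varnothing)\to R\Gamma_\rig(D^{(\geq 1),\varnothing})$, that is, with the same total complex $R\Gamma_\rig(D^{(\star),\varnothing}/W^\varnothing)$ obtained above; this comparison is carried out in detail in \cite{NakkajimaShiho2008}, which I would cite for the precise form of the residue maps.

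The main obstacle is the compatibility of the differentials: one must verify that the restriction maps in the strata complex coming from $\cM_D$ match, up to the standard signs and Tate twists, the Gysin/residue maps produced by Berthelot's localization triangle together with the descent for the singular $D$. Making the cohomological descent for $D$ precise in the rigid setting (or quoting it from \cite{NakkajimaShiho2008}), and pinning down the sign and normalization conventions so that the two total complexes are identified on the nose rather than merely abstractly quasi-isomorphic, is where the real work lies; the reduction steps themselves are formal once the analogue of Proposition \ref{prop: D} is in place.
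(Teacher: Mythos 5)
Your overall strategy coincides with the paper's on the Berthelot side: the proof there also starts from the excision triangle of \cite[(3.1)(iii)]{Berthelot1986} and the identification $R\Gamma_{\Ber,c}=R\Gamma_\rig$ for the proper schemes $Y$ and $D$ from \cite[(3.1)(i)]{Berthelot1986}, reducing everything to the claim that
\[
R\Gamma_{\rig,c}(Y^\infty/W^\varnothing)\rightarrow R\Gamma_{\rig}(Y^\varnothing/W^\varnothing)\rightarrow R\Gamma_{\rig}(D^\varnothing/W^\varnothing)\rightarrow
\]
is a distinguished triangle. Where you diverge is in how this triangle is produced. You run the log side through the simplicial local-embedding machinery of Proposition \ref{prop: D}, obtain a total complex of strata cohomologies with $Y$ in \v{C}ech degree $0$, independently rewrite Berthelot's side as a cone over the descent complex for $D$, and then must match two abstractly constructed total complexes --- the compatibility of differentials that you yourself flag as ``where the real work lies.'' The paper avoids this matching problem entirely: projectivity of $Y$ is used to choose a \emph{single} global homeomorphic exact closed immersion $Y^\infty\hookrightarrow\cZ$ into a smooth weak formal scheme with normal crossing divisor $\cD$, so that $R\Gamma_{\rig,c}(Y^\infty/W^\varnothing)=R\Gamma(\cZ_\bbQ,\cO_{\cZ_\bbQ}(\cM_D)\otimes\omega^\star_{\cZ/W^\varnothing,\bbQ})$ and $R\Gamma_\rig(Y^\varnothing/W^\varnothing)=R\Gamma(\cZ_\bbQ,\Omega^\star_{\cZ_\bbQ})$ sit in an honest short exact sequence of complexes of sheaves on one space; the quotient is then resolved, sheaf by sheaf, by the pushforwards $\iota_{m,*}\Omega^n_{\cD_{(m),\bbQ}}$ of the strata (a local computation as in Lemma \ref{lem: simplicial resolutions}, following \cite[Example 7.23]{PetersSteenbrink2007}), and descent for the covering of $D$ by its smooth components finishes the identification of the quotient with $R\Gamma_\rig(D^\varnothing/W^\varnothing)$. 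This makes the triangle and all its maps canonical at the sheaf level, so no sign or normalization check ever arises. Note also that the obstacle is smaller than you suggest: since $Y$ and $D$ are proper, the second map in Berthelot's triangle is plain restriction along $D\hookrightarrow Y$ --- there are no Gysin maps and no Tate twists in this compactly supported triangle --- so the only thing to verify in your arrangement is commutativity of restriction maps with the descent identification. Your argument is therefore repairable with a modest amount of extra work, but the paper's choice of a global embedding (this is precisely where projectivity enters) is what lets it bypass that step.
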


\begin{proof}
According to \cite[(3.1)(iii)]{Berthelot1986}
	 there is a distinguished triangle
	\[R\Gamma_{\Ber,c}(U^\varnothing/W^\varnothing)\rightarrow R\Gamma_{\Ber,c}(Y^\varnothing/W^\varnothing)\rightarrow R\Gamma_{\Ber,c}(D^\varnothing/W^\varnothing)\rightarrow\]
	and  as $Y$ and $D$ are proper, we have \cite[(3.1)(i)]{Berthelot1986}
	\begin{align*}
	R\Gamma_{\Ber,c}(Y^\varnothing/W^\varnothing)=R\Gamma_\rig(Y^\varnothing/W^\varnothing),&&R\Gamma_{\Ber,c}(D^\varnothing/W^\varnothing)=R\Gamma_\rig(D^\varnothing/W^\varnothing).
	\end{align*}
	Thus it suffices to show that
	\[R\Gamma_{\rig,c}(Y^\infty/W^\varnothing)\rightarrow R\Gamma_{\rig}(Y^\varnothing/W^\varnothing)\rightarrow R\Gamma_{\rig}(D^\varnothing/W^\varnothing)\rightarrow\]
	is a distinguished triangle.
	
	Since $Y$ is projective, there exists an integer $r\geqslant 0$ and a closed immersion $Y\hookrightarrow\bbP^r_W$.
    Let $Y^\infty\hookrightarrow\cZ$ be the weak completion of this immersion.
    We define a simple normal crossing divisor on $\cZ$ by lifting equations of $D$.
    Then $\cZ$ endowed with the log structure associated to $\cD$ is strongly log smooth over $W^\varnothing$.

	With Definition \ref{def: compact support for smooth scheme with sncd} we have
	\begin{align*}
	R\Gamma_{\rig,c}(Y^\infty/W^\varnothing) = R\Gamma(\cZ_\bbQ,\cO_{\cZ_\bbQ}(\cM_D)\otimes \omega^\star_{\cZ/W^\varnothing,\bbQ}),&&
	R\Gamma_\rig(Y^\varnothing/W^\varnothing)=R\Gamma(\cZ_\bbQ,\Omega^\star_{\cZ_\bbQ}).
	\end{align*}
	Therefore it suffices to show that
	\[R\Gamma(\cZ_\bbQ,\Omega^\star_{\cZ_\bbQ}/(\cO_{\cZ_\bbQ}(\cM_D)\otimes \omega^\star_{\cZ/W^\varnothing,\bbQ}))\cong R\Gamma_\rig(D^\varnothing/W^\varnothing),\]
	and this follows in the same way as \cite[Ex.\,7.23]{PetersSteenbrink2007}.
	Indeed, let $\cD^{(m)}$ (resp.\,$D^{(m)}$) be the disjoint union of $(m+1)$-fold intersections of the irreducible components of $\cD$ (resp.\,$D$), and $\iota_m\colon\cD^{(m)}\hookrightarrow\cZ$ be the closed immersion.
	Then one can see by  a similar local computation as in the proof of Proposition \ref{prop: D} that the sequence
	\[0\rightarrow \cO_{\cZ_\bbQ}(\cM_D)\otimes \omega^n_{\cZ/W^\varnothing,\bbQ} \rightarrow \Omega^n_{\cZ/W^\varnothing,\bbQ} \rightarrow \iota_{0,*}\Omega^n_{\cD^{(0)}_{\bbQ}} \rightarrow \iota_{1,*}\Omega^n_{\cD^{(1)}_{\bbQ}} \rightarrow \iota_{2,*}\Omega^n_{\cD^{(2)}_{\bbQ}}\rightarrow\cdots\]
	is exact for each $n\geqslant 0$.
	Thus we see that
	\[R\Gamma(\cZ_\bbQ,\Omega^\star_{\cZ_\bbQ} / (\cO_{\cZ_\bbQ}(\cM_D)\otimes \omega^\star_{\cZ/W^\varnothing,\bbQ}))\cong R\Gamma(\cD^{(\bullet)}_{\bbQ},\Omega^\star_{\cD^{(\bullet)}_{\bbQ}}) = R\Gamma_\rig(D^{(\bullet),\varnothing}/W^\varnothing) \cong R\Gamma_\rig(D^\varnothing/W^\varnothing),\]
	and this finishes the proof.
\end{proof}

%
\section{Crystalline cohomology with compact support revisited}\label{sec: crys}
%

From now on, a log scheme is considered with respect to \'{e}tale topology. 
In this section we will  provide an interpretation of Yamashita--Yasuda's construction of crystalline cohomology with compact support  \cite{YamashitaYasuda2014},
which is closer to the rigid construction above. 

For the benefit of the reader and to lay the groundwork for the statements concerning coefficients necessary in the context of this paper, we recall the basics from \cite[\S6]{ErtlYamada} which is a slight generalisation of the definitions in \cite{Shiho2008} and as such is very useful for comparison with log convergent cohomology.

Let $T\hookrightarrow\cT$ be an exact closed immersion (not necessarily homeomorphic) of a fine log scheme $T$ over $k$ into a Noetherian fine formal log scheme $\cT$ over $W$.
Assume that $\cT=\Spf A$ is affine.
Let $\cI\subset A$ be the ideal of $T$ in $\cT$ and take an ideal of definition $\cJ$ of $\cT$.
Let $A_{\PD}$ be the $\cJ$-adic completion of the $\PD$-envelope of $A$ with respect to $\cI$ over $W$ (which is endowed with the canonical $\PD$-structure on $pW$).
We denote by $\gamma$ the $\PD$-structure on $A_{\PD}$.
Set $\cT_{\PD}:=\Spf A_{\PD}$ equipped with the pull-back log structure of that on $\cT$.
Denote by $\cT_{\PD,n}$ the closed log subscheme of $\cT_{\PD}$ defined by the ideal $\cJ^nA_{\PD}$.
For a fine log scheme $Y$ over $T$ the log crystalline site $(Y/\cT_{\PD,n})_{\cris}$ is defined as usual. 
To define its ``limit'' we follow the definitions in \cite[Def.\,1.2 and 1.4]{Shiho2008}.

\begin{definition}\label{def: crystalline site}
Let $Y$ be a fine log scheme over $T$.
	\begin{enumerate}
	\item The log crystalline site $(Y/\cT_{\PD})_\cris$ has as objects quadruples $(U,L,i,\delta)$ where $U$ is a strictly \'{e}tale fine log scheme  over $Y$, $L$ is a fine log scheme over $\cT_{\PD,n}$ for some $n$, $i\colon U\hookrightarrow L$ is an exact closed immersion over $\cT_{\PD}$, and $\delta$ is a $\PD$-structure on the ideal of $U$ in $L$ which is compatible with $\gamma$.
	Morphisms in $(Y/\cT_{\PD})_\cris$ are defined in the usual way.
	Covering families are induced by the \'{e}tale topology on $L$.
	
	\item A sheaf $\sF$ on $(Y/\cT_{\PD})_\cris$ is equivalent to a datum of sheaves (with respect to the \'etale topology) $\sF_L$ on $L$ for each  $(U,L,i,\delta)\in(Y/\cT_{\PD})_\cris$ and appropriate transition maps. 
	Denote by $\sO_{Y/\cT_{\PD}}$ the structure sheaf of $(Y/\cT_{\PD})_\cris$. 
		An $\sO_{Y/\cT_{\PD}}$-module $\sE$ is a \textit{crystal} on $Y$ over $\cT_{\PD}$ if for any $f\colon (U',L',i',\delta')\rightarrow(U,L,i,\delta)$ in $(Y/\cT_{\PD})_\cris$ the natural homomorphism $f^*\sE_L\rightarrow \sE_{L'}$ is an isomorphism.
		
	\item	The category $\mathrm{Crys}(Y/\cT_{\PD})_\bbQ$ of isocrystals on $Y$ over $\cT_{\PD}$ is defined as follows:
		An object of $\mathrm{Crys}(Y/\cT_{\PD})_\bbQ$ is a crystal $\sE$ on $Y$ over $\cT_{\PD}$ such that $\sE_L$ is of finite presentation for any $(U,L,i,\delta)\in(Y/\cT_{\PD})_\cris$, and morphisms in $\mathrm{Crys}(Y/\cT_{\PD})_{\bbQ}$ are defined by
		\[\Hom_{\mathrm{Crys}(Y/\cT_{\PD})_\bbQ}(\sE,\sE'):=\Hom_{\cO_{Y/\cT_{\mathrm{PD}}}}(\sE,\sE')\otimes_\bbZ\bbQ.\]
		
	\item Assume that $\cT_{\PD}$ is endowed with a lift $\sigma: \cT_{\PD}\rightarrow \cT_{\PD}$ of the absolute Frobenius on $\cT_{\PD}$ modulo $p$. 
	Together with the absolute Frobenius $F_Y$ of $Y$ it induces a morphism of topoi $\sigma^\ast_\cris: \widetilde{(Y/\cT_{\PD})}_\cris \rightarrow \widetilde{(Y/\cT_{\PD})}_\cris$.
	In this situation, an $F$-crystal on $(Y/\cT_{\PD})$ is a crystal $\sE$ together with  a $\sO_{Y/\cT_{\PD}}$-linear map
	$$\Phi: \sigma^\ast_\cris\sE\rightarrow \sE$$
	such that there exists an integer $i\geqslant 0$ and an $\sO_{Y/\cT_{\PD}}$-linear map $V:\sE\rightarrow \sigma^\ast_\cris\sE$ with $V\circ \Phi=p^i$.
	\end{enumerate}
\end{definition}

\begin{remark}
	Let $\cT_n$ be the closed subscheme of $\cT$ defined by $\cJ^n$.
	According to \cite[Rem.\,6.2]{ErtlYamada} $\cT_{\PD,n}$ coincides with the log $\PD$-envelope of $T\hookrightarrow\cT_n$ over $W$ if $n$ is large enough.
\end{remark}

Let $Y$ be a fine log scheme over $T$ and $\sE$ a crystal on $(Y/\cT_{\PD})_{\cris}$.
The crystalline cohomology $R\Gamma_{\cris}(Y/\cT_{\PD},\sE)$ is the cohomology of $\sE$ on the crystalline site $(Y/\cT_{\PD})_{\cris}$ and similarly for the finite versions $R\Gamma_{\cris}(Y/\cT_{\PD,n},\sE)$.
Analogous to the non-logarithmic version in \cite[7.19 Prop.]{BerthelotOgus} we have
	\begin{equation}\label{equ: holim}
	R\Gamma_{\cris}(Y/\cT_{\PD},\sE) \cong \holim_n R\Gamma_{\cris}(Y/\cT_{\PD,n},\sE)
	\end{equation}
if $\sE$ is locally 
quasi-coherent in the sense that for every $(U,L,i,\delta)\in(Y/\cT_{\PD})_\cris$ the $\sO_L$-module $\sE_L$ is quasi-coherent \cite[Tag \href{https://stacks.math.columbia.edu/tag/07IS}{07IS}]{stacks}.

\begin{definition}\label{def: crys compact}
Let $Y$ be a fine  log scheme over $T$. 
Assume a monogenic log substructure $\cM\subset \cN_Y$ is given.
Let $(U,L,i,\delta)$ be  an object  of the  log crystalline site $(Y/ \cT_{\PD})_{\cris}$. 
One has $\cN_U/\cO_U^\times \cong \cN_L/ \cO_L^\times$ on $U_{\et}\cong L_{\et}$.
Hence the local generator $a_U$ of $\cM$ on $U$ lifts to a local section $a_L$ of $\cN_L$, 
which generates a monogenic log substructure of $\cN_L$. 
Let $\cO_L(\cM)$ be the locally free $\cO_L$-module generated by $a_L$. 
More precisely, to $a_L$ we associate an $\cO_L$-module, $e_{a_L}\cO_L$, 
where $e_{a_L}$ is a free generator. 
By what we said above, this module is independent of the choice of $a_L$ up to canonical isomorphisms.

Define the sheaf $\sO_{Y/\cT_{\PD}}(\cM)$ on the log crystalline site $(Y/ \cT_{\PD})_{\cris}$ by setting 
	$$
	\Gamma\left((U,L,i,\delta), \sO_{Y/\cT_{\PD}}(\cM)\right) :=  \Gamma(L,\cO_L(\cM)).
	$$
Similar to the rigid case, this is a crystal on $(Y/ \cT_{\PD})_{\cris}$ by definition. 
Moreover, let $\alpha_L:\cN_L\rightarrow \cO_L$ be the structure morphism. 
It induces a canonical $\cO_L$-linear homomorphism $\cO_L(\cM)\rightarrow \cO_L$ by mapping $e_{a_L}\mapsto \alpha_L(a_L)$. 
By the crystalline property, we obtain a canonical $\sO_{Y/\cT_{\PD}}$-linear morphism 
$$
\sO_{Y/\cT_{\PD}}(\cM) \rightarrow \sO_{Y/\cT_{\PD}}
$$
of crystals.
\end{definition}

The following definition will lay the groundwork to define compactly supported log crystalline cohomology. We loosely call it ``log crystalline cohomology supported towards $\cM$''.

\begin{definition}
Let $Y$ be a fine log scheme over $T$.
Assume a monogenic log substructure $\cM\subset \cN_Y$ is given.
For a crystal $\sE$  on $(Y/\cT_{\PD})_{\cris}$ set $\sE(\cM):= \sE\otimes \sO_{Y/\cT_{\PD}}(\cM)$ 
and define
$$R\Gamma_{\cris,\cM}(Y/\cT_{\PD}, \sE) :=  R\Gamma_{\cris}(Y/\cT_{\PD},\sE(\cM)).$$
If $\sE= \sO_{Y/\cT_{\PD}}$, $\sE$ is omitted in the notation, and we simply write $R\Gamma_{\cris,\cM}(Y/\cT_{\PD})$. 
\end{definition}

As a first application, we discuss log crystalline cohomology with compact support for a strictly semistable log scheme over $k^0$ \cite[\S4.2.2]{YamashitaYasuda2014} via the log crystalline site $(Y/W^0)_\cris$, which is obtained from Definition \ref{def: crystalline site} by setting $(T\hookrightarrow \cT)=(k^0\rightarrow W^0)$, with the ideals $(p)=\cJ=\cI$.
Because $W$ is already $p$-adically complete and the ideal  $(p)$ has divided powers we have $\cT_{\PD}=W^0$ as well.
Note that in this case the log scheme $Y$ is as nice as one can wish for: 
it is given over $k^0$, fine and of Cartier type over $T=k^0$ (or equivalently universally saturated \cite[(4.8)]{Kato1989}).

\begin{definition}
Let $Y$ be a proper strictly semistable log scheme over $k^0$ with horizontal divisor $D$ as in Definition \ref{def: st over k}, and $\sE$ a crystal on $(Y/W^0)_{\cris}$. 
Denote by $\cM_D\subset \cN_Y$ the monogenic log substructure associated to $D$ as defined in Definition \ref{def: st over k}.
The \textit{compactly supported log crystalline cohomology} of $Y$ over $W^0$ with coefficients in $\sE$ is
	$$
	R\Gamma_{\cris,c}(Y/W^0,\sE) :=   R\Gamma_{\cris,\cM_D}(Y/W^0,\sE).
	$$
If $\sE=\sO_{Y/W^0}$, we simply write $R\Gamma_{\cris,c}(Y/W^0) :=   R\Gamma_{\cris,\cM_D}(Y/W^0)$.
\end{definition}

The definition above is in fact a variant of a special case  of the compactly supported crystalline cohomology introduced in \cite[Def.\,5.4]{Tsuji1999}. 
As a consequence it satisfies Poincaré duality:
\begin{proposition}\label{rem: Poincare cris}
Let $Y$ be a proper strictly semistable log scheme over $k^0$ of pure dimension $n$.  For  a finite locally free crystal $\sE$ on $(Y/W^0)_{\cris}$ denote by  $\sE^\vee:=\sheafhom(\sE,\sO_{Y/W^0})$ its dual.
Then there is a canonical homomorphism
$$\Tr_{Y/k^0}: H^{2n}_{\cris,c}(Y/W^0) \rightarrow W$$
called trace, 
such that the pairing induced by the cup product and the trace morphism
$$H^i_{\cris}(Y/W^0,\sE)\times H^{2n-i}_{\cris,c}(Y/W^0,\sE^\vee) \rightarrow W$$
is perfect.
\end{proposition}
\begin{proof}
Let $D$ be the horizontal divisor of $Y$, and $\cM_D\subset \cN_Y$ the 
monogenic log substructure introduced in Definition \ref{def: monogenic D}. 
For an object $(U,L,i,\delta)$ of the crystalline site $(Y/W^0)_{\cris}$, let $a_U$ be a local generator of $\cM_D$ and $a_L$ a lift to $\cN_L$. 
We observe that in the case at hand, the image of $a_L$ under the structure morphism $\cN_L \rightarrow \cO_L$ is not a zero-divisor. 

Let $\sK_{Y/\cT_{\PD},\cM_D}$ be the ideal sheaf of the crystalline structure sheaf $\sO_{Y/\cT_{\PD}}$ given on a $\PD$-thickening $(U,L,i,\delta)$ as the ideal generated  by the image of a lift $a_L$ of the  local generator $a_U$ of $\cM_D$ under the structure morphism $\cN_L\rightarrow \cO_L$. 
Then for any locally quasi-coherent crystal $\sE$ on the restricted crystalline site \cite[\S6]{Tsuji1999}, one can prove that the sheaf $\sK_{Y/\cT_{\PD},\cM_D}\sE$ is a crystal as in \cite[Lem.\,5.3]{Tsuji1999}.
By the equivalence between the categories of crystals on the usual and restricted crystalline site  \cite[VI.\,Rem.\,2.1.4, Prop.\,1.6.3]{Berthelot1974} one obtains a crystal in the usual sense.
In particular, $\sK_{Y/\cT_{\PD},\cM_D}$ is a crystal. 
By definition it is locally generated by a non-zero-divisor, and therefore free. 
Hence $\sK_{Y/\cT_{\PD},\cM_D}$ can be identified with $\sO_{Y/\cT_{\PD}}(\cM)$. 
More generally, $\sK_{Y/\cT_{\PD},\cM_D}\sE$ for $\sE$ as above coincides with $\sE(\cM_D)$. 

In particular, for a finite locally free crystal $\sE$, 
the cohomology of $R\Gamma_{\cris,c}(Y/W^0,\sE)$ coincides 
with the compactly supported crystalline cohomology of \cite[Def.\,5.4]{Tsuji1999}. 
Denote by  $\sE^\vee:=\sheafhom(\sE,\sO_{Y/W^0})$ its dual.
Then the existence of the trace map which induces a perfect pairing 
as in the statement follows from \cite[Prop.\,5.5 and Thm.\,5.6]{Tsuji1999}.
\end{proof}

Let $D^{(\bullet),\flat}$ be defined as in the previous section:
Denote by $Y^\flat$ the log scheme whose underlying scheme is that of $Y$ and whose log structure is generated by its irreducible components $\Upsilon_Y$.
For a subset  $J\subset \Upsilon_D$ of irreducible components of $D$ let $D_J^\flat$ be the intersection $\bigcap_{j\in J}D_j$ endowed with the pull back log structure from $Y^\flat$. 
For $i\in\bbN$, respectively $0$, set
\begin{align}\label{equ : D^flat}
D^{(i),\flat}:=  \coprod_{J\subset \Upsilon_D, \vert J\vert = i}D_J^\flat,&&D^{(0),\flat}:=  Y^\flat.
\end{align}

\begin{lemma}\label{lem: cris D}
There is a quasi-isomorphism
	\begin{equation}\label{equ : iso cris}
	\eta: R\Gamma_{\cris,c}(Y/W^0)   \xrightarrow{\sim}     R\Gamma_{\cris}(D^{(\bullet),\flat}/W^0)
	\end{equation}
which induces a spectral sequence
$$
E_1^{i,j}=  H^j_{\cris}(D^{(i),\flat}/ W^0)  \Longrightarrow  H^{i+j}_{\cris,c}(Y/W^0).
$$
\end{lemma}
\begin{proof}
This is \cite[Thm.\,9.17]{YamashitaYasuda2014}.
It is the crystalline version of the fourth quasi-isomorphism of Proposition \ref{prop: D}. 
Both can be shown by the same formal argument  only taking care of 
the technical differences between the rigid and crystalline setup.
\end{proof}

Next we discuss the action of Frobenius on the cohomology $R\Gamma_{\cris,c}(Y/W^0)$ for a proper strictly semistable log scheme $Y$ over $k^0$.

\begin{lemma}\label{lem: frob}
There is an $\sO_{Y/W^0}$-linear homomorphism 
	$$
	\sigma^\ast_\cris\sO_{Y/W^0}(\cM_D) \rightarrow \sO_{Y/W^0}(\cM_D)
	$$
which is compatible with the canonical morphism $\sigma^\ast_\cris\sO_{Y/W^0} \rightarrow \sO_{Y/W^0}$. 
\end{lemma}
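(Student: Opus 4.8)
The plan is to transcribe into the crystalline setting the rigid construction underlying \eqref{eq: pseudo Frobenius}. The first step is to identify the source of the desired morphism. Since $\sigma^\ast_\cris$ is the morphism of topoi induced by the absolute Frobenius $F_Y$ on $Y$ together with the lift $\sigma$ on $W^0$, and since the assignment $\cM\mapsto\sO_{Y/W^0}(\cM)$ of Definition \ref{def: crys compact} is natural in the monogenic substructure, one gets a canonical identification
\[\sigma^\ast_\cris\sO_{Y/W^0}(\cM_D)=\sO_{Y/W^0}(F_Y^*\cM_D),\]
exactly as in the rigid case. Here $F_Y^*\cM_D\subset\cN_Y$ is the monogenic substructure locally generated by the $p$-th power of a local generator of $\cM_D$.

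Next I would write down the map one $\PD$-thickening at a time. Fix $(U,L,i,\delta)\in(Y/W^0)_\cris$ and a local generator $a$ of $\cM_D$ on $U$; as in Definition \ref{def: crys compact} it lifts to a section $a_L\in\cN_L$, so that $a_L^p$ lifts the generator $a^p$ of $F_Y^*\cM_D$. Writing $\alpha\colon\cN_L\to\cO_L$ for the structure morphism and $e_{a_L}$, $e_{a_L^p}$ for the distinguished generators of $\cO_L(\cM_D)$ and $\cO_L(F_Y^*\cM_D)$, I would define the $\cO_L$-linear map
\[\cO_L(F_Y^*\cM_D)\rightarrow\cO_L(\cM_D),\qquad e_{a_L^p}\mapsto\alpha(a_L)^{p-1}e_{a_L},\]
in analogy with the formula $e_{a^p}\mapsto a^{p-1}e_a$ used to construct the rigid morphism \eqref{eq: pseudo Frobenius}.

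The crux is to verify that this local prescription is independent of the generator, so that it glues. If $b=xa$ is a second generator, with $x\in\cO_U^\times$ lifted to $\cO_L^\times$, then under the canonical identifications one has $e_{b_L}=xe_{a_L}$ and $e_{b_L^p}=x^pe_{a_L^p}$, while $\alpha(b_L)^{p-1}e_{b_L}=(x\alpha(a_L))^{p-1}(xe_{a_L})=x^p\alpha(a_L)^{p-1}e_{a_L}$; hence the prescription attached to $b$ sends $e_{a_L^p}=x^{-p}e_{b_L^p}$ to $\alpha(a_L)^{p-1}e_{a_L}$, agreeing with the one attached to $a$. Thus the local maps glue over $L$ and, being canonical, are compatible with the transition morphisms of $(Y/W^0)_\cris$; they therefore assemble into a morphism of crystals $\sigma^\ast_\cris\sO_{Y/W^0}(\cM_D)\to\sO_{Y/W^0}(\cM_D)$. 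The asserted compatibility with the canonical morphism $\sO_{Y/W^0}(\cM_D)\to\sO_{Y/W^0}$, which sends $e_{a_L}\mapsto\alpha(a_L)$, is then the local identity $e_{a_L^p}\mapsto\alpha(a_L)^{p-1}e_{a_L}\mapsto\alpha(a_L)^p=\alpha(a_L^p)$, matching the canonical morphism attached to $F_Y^*\cM_D$.

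I expect the main obstacle to be making the identification of the first step fully precise inside the crystalline topos. Unlike the rigid situation, where the whole construction lives on a single ambient dagger space $\cZ_\bbQ$, here there is no global overconvergent lift and one must argue thickening by thickening, tracking how the fixed Frobenius lift $\sigma$ on $W^0$ and the $\PD$-structure $\delta$ interact with the chosen lifts $a_L$. Because the defining formula is canonical --- independent both of the generator and, up to the identifications above, of the lift $a_L$ --- the gluing and the crystal-compatibility are then formal, and the remaining checks reduce to the elementary computation displayed above.
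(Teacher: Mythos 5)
Your proof is correct and follows essentially the same route as the paper: the paper's own argument also identifies $\sigma^\ast_\cris\sO_{Y/W^0}(\cM_D)\cong\sO_{Y/W^0}(F_Y^\ast\cM_D)$ and then invokes uniqueness up to units of local generators to transcribe the rigid formula $e_{a^p}\mapsto a^{p-1}e_a$ of \eqref{eq: pseudo Frobenius} into the crystalline setting. Your write-up merely makes explicit the local formula, the independence-of-generator check, and the compatibility computation $e_{a_L^p}\mapsto\alpha(a_L)^{p-1}e_{a_L}\mapsto\alpha(a_L)^p$, all of which the paper leaves implicit in the phrase ``by construction''.
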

\begin{proof}
Similar to the rigid case, we have
	$$
	\sigma^\ast_\cris\sO_{Y/W^0}(\cM_D) = \sO_{Y/W^0}(F_Y^\ast \cM_D).
	$$
Since $F_Y^\ast \cM_D$ is locally generated by the $p$-th power of a local generator of $\cM_D$, we may write 
\begin{align*}
\cO_L(F^\ast_Y\cM_D)=e_{a_L^p}\cO_L,&& \cO_L(\cM_D)=e_{a_L}\cO_L
\end{align*}
for $(U,L,i,\delta)\in (Y/W^0)_{\cris}$, by taking a generator as in Definition \ref{def: crys compact}.
Then the morphism
$$
\cO_L(F^\ast_Y\cM_D) \rightarrow \cO_L(\cM_D); e_{a_L^p} \mapsto \alpha(a_L)^{p-1}e_{a_L}
$$
is independent of the generator $a_L$. 
In other words, by uniqueness up to units of the local generators of $\cM_D$ and $F_Y^\ast \cM_D$ respectively, we obtain the desired $\sO_{Y/W^0}$-linear homomorphism 
$\sigma^\ast_\cris \sO_{Y/W^0}(\cM_D) \rightarrow \sO_{Y/W^0}(\cM_D)$
which is by construction compatible with the canonical morphism $\sigma^\ast_\cris \sO_{Y/W^0} \rightarrow \sO_{Y/W^0}$.
\end{proof}

\begin{corollary}
Let $(\sE,\Phi)$ be an $F$-crystal on $(Y/W^0)_{\cris}$.
There is an $\sO_{Y/W^0}$-linear homomorphism
 	\begin{equation}\label{equ: frob}
 	\Phi: \sigma^\ast_\cris \sE(\cM_D)=\sigma^\ast_\cris\sE\otimes \sigma^\ast_\cris \sO_{Y/W^0}(\cM_D) \rightarrow \sE\otimes \sO_{Y/W^0}(\cM_D) = \sE(\cM_D)
 	\end{equation}
compatible with the canonical morphism $\sigma^\ast_\cris \sO_{Y/W^0} \rightarrow \sO_{Y/W^0}$. 
\end{corollary}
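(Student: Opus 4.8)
The plan is to obtain the map simply by tensoring the Frobenius structure of $(\sE,\Phi)$ with the morphism furnished by Lemma \ref{lem: frob}. Since $\sigma^\ast_\cris$ is compatible with tensor products of crystals, we have the identification $\sigma^\ast_\cris\sE(\cM_D)=\sigma^\ast_\cris\sE\otimes\sigma^\ast_\cris\sO_{Y/W^0}(\cM_D)$ already recorded in the statement. The Frobenius structure provides an $\sO_{Y/W^0}$-linear map $\Phi\colon\sigma^\ast_\cris\sE\rightarrow\sE$, while Lemma \ref{lem: frob} provides $\sigma^\ast_\cris\sO_{Y/W^0}(\cM_D)\rightarrow\sO_{Y/W^0}(\cM_D)$. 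Tensoring these over $\sO_{Y/W^0}$ yields the desired homomorphism
\[\sigma^\ast_\cris\sE\otimes\sigma^\ast_\cris\sO_{Y/W^0}(\cM_D)\rightarrow\sE\otimes\sO_{Y/W^0}(\cM_D).\]

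For the compatibility claim, I would tensor the compatibility asserted in Lemma \ref{lem: frob}---namely that $\sigma^\ast_\cris\sO_{Y/W^0}(\cM_D)\rightarrow\sO_{Y/W^0}(\cM_D)$ is compatible with the canonical morphism $\sigma^\ast_\cris\sO_{Y/W^0}\rightarrow\sO_{Y/W^0}$---with the map $\Phi$ on $\sE$. The point is that $\Phi$ is $\sO_{Y/W^0}$-linear and, by definition of an $F$-crystal, compatible with the canonical Frobenius structure on $\sO_{Y/W^0}$; hence the square relating the two tensor products through $\sigma^\ast_\cris\sO_{Y/W^0}\rightarrow\sO_{Y/W^0}$ commutes by functoriality of the tensor product.

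There is essentially no obstacle here: all the substance is contained in Lemma \ref{lem: frob} (and ultimately in the uniqueness up to units of the local generator of $\cM_D$), and this corollary merely assembles that lemma with the given Frobenius structure. The only thing left to verify is the routine commutativity recording the compatibility, which follows formally, so the argument reduces to the two displayed maps above.
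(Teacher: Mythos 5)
Your proposal is correct and matches the paper's (implicit) argument: the paper states this as an unproved corollary of Lemma \ref{lem: frob}, precisely because the map is obtained by tensoring the Frobenius structure $\Phi\colon\sigma^\ast_\cris\sE\rightarrow\sE$ with the homomorphism of that lemma, and the compatibility follows formally from the $\sO_{Y/W^0}$-linearity of both maps and the compatibility already recorded in the lemma. Nothing is missing.
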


\begin{definition}
Let $Y$ be a proper strictly semistable log scheme over $k^0$ and $(\sE,\Phi)$ an $F$-crystal on $(Y/W^0)_{\cris}$. 
The Frobenius action 
$$\varphi: R\Gamma_{\cris,c}(Y/W^0,\sE)\rightarrow R\Gamma_{\cris,c}(Y/W^0,\sE)$$
 is induced by the absolute Frobenius $F_Y$ on $Y$ and the Frobenius $\sigma$ on $W^0$ and the morphism (\ref{equ: frob}).
 More precisely, $\varphi$ is given as the composition
 \begin{align*}
R\Gamma_{\cris}(Y/W^0,\sE(\cM_D)) &\xrightarrow{\sigma} R\Gamma_{\cris}(Y^{(\sigma)}/W^0,\sE(\cM_D)^{(\sigma)}) \xrightarrow{F_{Y/k^0}^\ast} \\
&\rightarrow  R\Gamma_{\cris}(Y/W^0,\sigma^\ast_\cris\sE(\cM_D)) \xrightarrow{\Phi}  R\Gamma_{\cris}(Y/W^0,\sE(\cM_D)).
\end{align*}
\end{definition}

\begin{lemma}\label{lemma Frob W}
Let $Y$ be a proper strictly semistable log scheme over $k^0$. 
Then the Frobenius action induces a rational quasi-isomorphism
$$
\varphi: R\Gamma_{\cris,c}(Y/W^0)_{\bbQ} \xrightarrow{\sim} R\Gamma_{\cris,c}(Y/W^0)_{\bbQ}.
$$
\end{lemma}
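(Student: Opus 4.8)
The plan is to transport the question, via the Frobenius-equivariant quasi-isomorphism $\eta$ of \eqref{equ : iso cris}, to the ordinary log crystalline cohomology of the strata $D^{(i),\flat}$, where bijectivity of Frobenius is classical.

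First I would check that $\eta$ is compatible with Frobenius. The operator $\varphi$ on $R\Gamma_{\cris,c}(Y/W^0)$ is induced by $F_Y$, $\sigma$ and the homomorphism \eqref{equ: frob}, which for $\sE=\sO_{Y/W^0}$ is the morphism $\sigma^\ast_\cris\sO_{Y/W^0}(\cM_D)\to\sO_{Y/W^0}(\cM_D)$ of Lemma \ref{lem: frob}; meanwhile each stratum $D^{(i),\flat}$ carries the ordinary Frobenius induced by $F_{D^{(i),\flat}}$ and $\sigma$. Since $\eta$ is built from the same local generators of $\cM_D$ that enter the definition of \eqref{equ: frob} (compare the crystalline analogue of Lemma \ref{lem: simplicial resolutions}), the morphism \eqref{equ: frob} restricts on each stratum to the standard Frobenius. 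Hence $\eta$ intertwines the two Frobenius actions, and $\varphi$ acts on the spectral sequence $E_1^{i,j}=H^j_\cris(D^{(i),\flat}/W^0)\Longrightarrow H^{i+j}_{\cris,c}(Y/W^0)$ by the ordinary Frobenius on each term $H^j_\cris(D^{(i),\flat}/W^0)$.

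It then suffices to prove that $\varphi$ is a rational quasi-isomorphism on $R\Gamma_\cris(D^{(i),\flat}/W^0)$ for each $i$. Each $D^{(i),\flat}$ is a proper log scheme over $k^0$ that is log smooth and of Cartier type: it is again strictly semistable over $k^0$, carrying only the vertical log structure inherited from $Y^\flat$, as is visible from the local charts of Definition \ref{def: st over k}. By Hyodo--Kato \cite{HyodoKato1994} the $\sigma$-semilinear Frobenius on $H^j_\cris(D^{(i),\flat}/W^0)_\bbQ$ is therefore bijective for all $j$. Consequently the map induced by $\varphi$ on $E_1$ is an isomorphism after $\otimes\bbQ$, and since the spectral sequence is bounded and convergent, $\varphi$ is an isomorphism on the abutment $H^{i+j}_{\cris,c}(Y/W^0)_\bbQ$; equivalently, $\eta_\bbQ$ identifies $R\Gamma_{\cris,c}(Y/W^0)_\bbQ$ with the total complex of a bicomplex on which $\varphi$ is a termwise quasi-isomorphism, hence a quasi-isomorphism. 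The only genuine input is this bijectivity of Frobenius on the ordinary cohomology of the proper log smooth Cartier-type strata; the main points requiring care are verifying the Cartier-type hypothesis for each $D^{(i),\flat}$ and the Frobenius-equivariance of $\eta$, both of which reduce to the explicit local description of strictly semistable log schemes. Alternatively one could invoke the Poincar\'e duality of Remark \ref{rem: Poincare cris} together with bijectivity of Frobenius on $R\Gamma_\cris(Y/W^0)_\bbQ$, but this requires the Frobenius-compatibility of the duality pairing and so is less self-contained at this stage.
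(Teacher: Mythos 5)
Your proposal is correct and follows essentially the same route as the paper: the paper's proof cites \cite[Lem.\ 10.8]{YamashitaYasuda2014} and notes that the statement follows from the bijectivity of Frobenius in the non-compactly supported case \cite[Prop.\ 2.24]{HyodoKato1994} via the quasi-isomorphism \eqref{equ : iso cris}, which is exactly your reduction to the strata $D^{(i),\flat}$. Your additional verifications (Frobenius-equivariance of $\eta$ and the fact that each $D^{(i),\flat}$ is again proper strictly semistable over $k^0$ with empty horizontal divisor, hence log smooth of Cartier type) correctly fill in the details the paper leaves to the citations.
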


\begin{proof}
This was stated and proved in \cite[Lem.\,9.20]{YamashitaYasuda2014}. 
It can be deduced easily from the bijectivity in the non-compactly supported case \cite[Prop.\,2.24]{HyodoKato1994}  via the quasi-isomorphism (\ref{equ : iso cris}).
\end{proof}

In order to provide the Hyodo--Kato map we are led to study in more detail the situation of a strictly semistable log scheme over the complete discrete valuation ring $V$.

Consider the  exact closed immersion $(T\hookrightarrow \cT)= (V_1^\sharp \hookrightarrow V^\sharp)$ of log schemes induced by reduction modulo $p$ with ideals $\cI=\cJ=(p)$, 
and its PD-envelope $\cT_{\PD}$, which coincides with $V^\sharp$. 
For a fine log scheme $X$ over $V^\sharp$, we will consider the log crystalline site $(X_1/V^\sharp)_{\cris}$ obtained from Definition \ref{def: crystalline site}.

As in \cite[Def.\,6.3]{ErtlYamada}, we will consider a crystalline site over an affine formal log scheme on which intermediate objects for the Hyodo--Kato map are defined. 

\begin{definition}
Consider the exact closed immersion of fine formal log schemes 
$(T\hookrightarrow \cT) = (V_1^\sharp \hookrightarrow \cS)$  
given by the ideal $\cI=(p,f)$ where $f=s^e+ \sum_{i=0}^{e-1}a_is^i\in W[s]$  is the Eisenstein polynomial of a uniformiser $\pi$ of $V$, and $e$ the ramification index of $V$. 
Note that this immersion depends on $\pi$.
By the formalism described at the beginning of the section, 
we obtain a ring $R_{\PD}$   by completing with respect to the ideal $\cJ=\cI=(p,f)$ the $\PD$-envelope of $(V_1^\sharp \hookrightarrow \cS)$
for the ideal $\cI=(p,f)$. 
Since $(p,s)^e\subset (p,f)\subset (p,s)$, it is clear that $R_{\PD}$ is also $(p,s)$-adically complete. 
Its  $\PD$-ideal is given as the closure of $(p, \frac{f^n}{n!}\;\vert\;  n\geqslant 1)  =( p,\frac{s^{en}}{n!}\;\vert\; n\geqslant 1)$.
There is a lifting of Frobenius $\sigma$ to $R_{\PD}$ induced by $s\mapsto s^p$ extending the canonical Frobenius on $W$ which is a $\PD$-morphism.

Denote by $\cS_{\PD}$  the formal scheme $\Spf R_{\PD}$ with the log structure generated by $s$, and by  $\cS_{\PD,n}$ its reduction modulo $p^n$.
\end{definition}

\begin{remark}
According to \cite[Rem.\,6.4 and Lem.\,6.5]{ErtlYamada} the log scheme $\cS_{\PD}$ coincides with the log scheme $\cS'_{\PD}$ 
where the underlying scheme scheme is the $p$-adic completion of the PD-envelope (over $W$ with the canonical PD-structure) of the exact closed immersion of fine formal log schemes 
$(T\hookrightarrow \cT)=(V_1^\sharp \hookrightarrow \Spf \widehat{W[s]})$, 
with the log structure generated by $s$.
This means in particular that  $\cS_{\PD}$ is $p$-adic. 
While the construction via $(V_1^\sharp \hookrightarrow \Spf \widehat{W[s]})$ is more classical, we chose the approach via $(V_1^\sharp \hookrightarrow \cS)$ to emphasis the relation with the rigid approach where $\cS$ is used instead of $\Spf \widehat{W[s]}$. 
\end{remark}

\begin{remark}\label{rem: breve{S}}
We will be interested in the crystalline cohomology for log schemes over $V^\sharp$, that is we consider the $\PD$-thickening $V^\sharp_1\hookrightarrow \cS_{\PD}$. 
Later we will also consider the unramified case of the above construction as a special case. 
More precisely, we consider the exact closed immersion of fine formal log schemes 
$(T\hookrightarrow \cT)=(k^0\hookrightarrow \cS)$ given by the ideal $\cI=(p,s)$, 
and denote by $\breve{\cS}_{\PD}$ the log scheme 
obtained by completing 
the PD-envelope of $(T\hookrightarrow \cT)=(k^0\hookrightarrow \cS)$ 
with respect to the ideal $\cJ=(p)$. 
As before, we denote by $\breve{R}_{\PD}$ the associated PD-rings and by $\sigma$ the Frobenius lifting induced by $s\mapsto s^p$. 
There is a natural morphism of fine formal log schemes 
$$\breve{\cS}_{\PD} \rightarrow \cS_{\PD}$$
given by inclusion on the level of rings.
\end{remark}

Let $\pi$ be a uniformiser of $V$. 
There is a comutative diagram of formal log schemes
\begin{equation}\label{diag: comm}
\xymatrix{
&k^0 \ar[dl]_{i_0} \ar[dr]^{i_\pi} \ar[d]^\tau&\\
W^0  \ar[r]_{j_0}& \cS_{\PD}& V^\sharp  \ar[l]^{j_\pi}.
}
\end{equation}
where the horizontal maps are defined via  $s \mapsto  0$ and $s \mapsto  \pi$, respectively,  
$i_0$ is the canonical embedding, 
$\tau$ is the composition $j_0\circ i_0$, 
and $i_\pi$ be the unique morphism such that $\tau=j_\pi\circ i_\pi$.  

\begin{definition}
\begin{enumerate}
\item 
For a uniformiser $\pi\in V$ and integers $n \geqslant 1$ and $m \geqslant 0$, let $V_\pi^\sharp(n,m)  \rightarrow  V^\sharp$ be the morphism of fine log schemes induced by the diagram
	\[
	\xymatrix{
	\bbN^n\oplus\bbN^m\ar[d]_-\alpha & \bbN\ar[d]^-{\alpha_\pi}\ar[l]_-\beta\\
	V[\bbN^n\oplus\bbN^m]/(\pi-\beta(1)) & V\ar[l],
	}
	\]
where $\alpha_\pi$ is defined by $1\mapsto\pi$, 
$\beta$ is the composition of the diagonal map $\bbN  \rightarrow \bbN^n$ and the canonical injection $\bbN^n  \rightarrow \bbN^n\oplus\bbN^m$, 
and $\alpha$ is the composition of the natural inclusion $\bbN^n\oplus\bbN^m \rightarrow V[\bbN^n\oplus\bbN^m]$ with the quotient map $V[\bbN^n\oplus\bbN^m] \rightarrow V[\bbN^n\oplus\bbN^m]/(\pi-\beta(1)) $.
			
\item A log scheme $X$ over $V^\sharp$ is called {\it strictly semistable}, 
if Zariski locally on $X$ there exists a strict log smooth morphism $X \rightarrow V^\sharp_\pi(n,m)$ over $V^\sharp$.
Note that this definition is independent of $\pi$.
Using the local description above, we call the closed subset of $X$
\[
C:=\{x\in X\mid \text{$\exists a\in\cN_{X,x}$ $\forall b\in\cN_{X,x}$  $ab$ is not contained in the image of $\cN_{V^\sharp,f(x)} \xrightarrow{f^\sharp}\cN_{X,x}$ }\},
\]
where $f^\sharp$ is induced by the structure morphism $f:X\rightarrow V^\sharp$,
with the reduced structure the \textit{horizontal divisor} of $X$.
The horizontal divisor can be empty as we allow the case $m= 0$.
Let $\cM_C \subset \cN_X$ be the monogenic log substructure locally generated by 
the equation of $C$. 
In other words, it is locally generated by the image of $(1,\ldots,1)\in\bbN^m$ under the homomorphism $\bbN^m_X\rightarrow\cN_X$.
\end{enumerate}
\end{definition}

Let $X$ be a proper  strictly semistable log scheme over $V^\sharp$ with horizontal divisor $C$.
For a choice of uniformiser, $Y_\pi:= X\times_{V^\sharp,i_\pi}k^0$ is  a proper strictly semistable log scheme over $k^0$ with horizontal divisor $D_\pi:= C\times_{V^\sharp,i_\pi}k^0$.
In particular, $X$ is a fine log smooth log scheme over $V^\sharp$ such that $Y_\pi$ is of Cartier type over $k^0$.
We consider the log crystalline sites $(Y_\pi/W^0)_{\cris}$, $(X_1/V^\sharp)_{\cris}$,  
and $(X_1/\cS_{\PD})_{\cris}$.

\begin{definition}\label{def: crys cs}
The compactly supported crystalline Hyodo--Kato cohomology of $X$, as well as the compactly supported log crystalline cohomology  of $X$ over $V^\sharp$ and over $\cS_{\PD}$ is defined as
\begin{align*}
&R\Gamma_{\HK,c}^{\cris}(X)_\pi := R\Gamma_{\cris,c}(Y_\pi/W^0) =   R\Gamma_{\cris,\cM_{D_\pi}}(Y_\pi/W^0),&\\
& R\Gamma_{\cris,c}(X/V^\sharp) :=   R\Gamma_{\cris,c}(X_1/V^\sharp):=  R\Gamma_{\cris,\cM_C}(X_1/V^\sharp),&\\
&R\Gamma_{\cris,c}(X/\cS_{\PD})_\pi  := R\Gamma_{\cris,c}(X_1/\cS_{\PD})_\pi:=    R\Gamma_{\cris,\cM_C}(X_1/\cS_{\PD}).&
\end{align*}
As explained above, the first and the last two complexes are defined via the morphisms $j_\pi$,
which is the reason they depend on $\pi$. 
\end{definition}

The following statement is very useful when carrying over properties from the non-compactly supported to the compactly supported versions of log crystalline cohomology.

\begin{lemma}
Let $D_\pi^{(\bullet),\flat}$ be defined as above (\ref{equ : D^flat}), and $C^{(\bullet),\flat}$ similarly.
There are quasi-isomorphisms
\begin{eqnarray}\label{equ : iso cryst}
\nonumber \eta: R\Gamma_{\HK,c}^{\cris}(X)_\pi  & \xrightarrow{\sim} &    R\Gamma^{\cris}_{\HK}(C^{(\bullet),\flat})_\pi,\\
\eta: R\Gamma_{\cris,c}(X/V^\sharp)  & \xrightarrow{\sim} &     R\Gamma_{\cris}(C^{(\bullet),\flat}/V^\sharp) ,\\
\nonumber \eta: R\Gamma_{\cris,c}(X/\cS_{\PD})_\pi  & 
\xrightarrow{\sim} &   R\Gamma_{\cris}(C^{(\bullet),\flat}/\cS_{\PD})_\pi,
\end{eqnarray}
which induce spectral sequences
\begin{eqnarray}\label{equ : spec cryst}
\nonumber E_1^{i,j}=  H^{\cris,j}_{\HK}(C^{(i),\flat})_\pi & \Longrightarrow&  H^{i+j,\cris}_{\HK,c}(X)_\pi,\\
E_1^{i,j}=  H^j_{\cris}(C^{(i),\flat}/ V^\sharp) & \Longrightarrow&  H^{i+j}_{\cris,c}(X/V^\sharp),\\
\nonumber E_1^{i,j}=  H^j_{\cris}(C^{(i),\flat}/ \cS_{\PD})_\pi &
  \Longrightarrow &  H^{i+j}_{\cris,c}(X/\cS_{PD})_\pi.
\end{eqnarray}
\end{lemma}
\begin{proof}
This is proved in \cite[Thm.\,9.17]{YamashitaYasuda2014}  
(see also the explanation in the proof of 
\cite[Thm.\,10.2]{YamashitaYasuda2014}). 
Similar to Lemma \ref{lem: cris D}, it is the crystalline analogue of Proposition \ref{prop: D} and follows essentially from the same argument by replacing rigid techniques with crystalline ones.
\end{proof}

For the definition of the crystalline Hyodo--Kato map, or more precisely for its uniqueness, the Frobenius  on $R\Gamma_{\HK,c}^{\cris}(X)_\pi$ and on 
$R\Gamma_{\cris,c}(X/\cS_{\PD})_\pi$ plays a crucial role. 
We already discussed the former earlier in this section.

\begin{lemma}
There is an $\sO_{X_1/\cS_{\PD}}$-linear homomorphism
	\begin{align}\label{equ: frob S}
	 \Phi:\sigma^\ast_\cris\sO_{X_1/\cS_{\PD}}(\cM_C) \rightarrow \sO_{X_1/\cS_{\PD}}(\cM_C) 
	\end{align}
compatible with the canonical morphism 
$\sigma^\ast_\cris\sO_{X_1/\cS_{\PD}} \rightarrow \sO_{X_1/\cS_{\PD}}$. 
\end{lemma}
\begin{proof}
The same proof as for Lemma \ref{lem: frob} also works in this situation.
\end{proof}

\begin{definition}\label{def: frobenius crys S}
Let $X$ be a proper strictly semistable log scheme over $V^\sharp$. 
The Frobenius action  $\varphi$ on 
$R\Gamma_{\cris,c}(X/\cS_{\PD})_\pi$
is induced by the absolute Frobenius $F_{X_1}$ on $X_1$, the Frobenius $\sigma$ on  
$\cS_{\PD}$, and  
(\ref{equ: frob S}).
 More precisely, $\varphi$ is  given as  composition
 \begin{align*}
R\Gamma_{\cris}(X_1/\cS_{\PD},\sO_{X_1/\cS_{\PD}}(\cM_C)) &\xrightarrow{\sigma} R\Gamma_{\cris}(X_1^{(\sigma)}/\cS_{\PD},\sO_{X_1/\cS_{\PD}}(\cM_C)^{(\sigma)}) \xrightarrow{F_{X_1/V_1^\sharp}^\ast} \\
&\rightarrow  R\Gamma_{\cris}(X_1/\cS_{\PD},\sigma^\ast_\cris\sO_{X_1/\cS_{\PD}}(\cM_C)) \xrightarrow{\Phi}  R\Gamma_{\cris}(X_1/\cS_{\PD},\sO_{X_1/\cS_{\PD}}(\cM_C)).
\end{align*}
\end{definition}

\begin{lemma}\label{lemma: frob S}
Let $X$ be a proper strictly semistable log scheme over $V^\sharp$. 
Then the linearisation
\begin{align*}
\varphi:R_{\PD} {}_{\;\sigma} \otimes_{R_{\PD}}^L R\Gamma_{\cris,c}(X/\cS_{\PD})_\pi  \xrightarrow{\sim} R\Gamma_{\cris,c}(X/\cS_{\PD})_\pi 
\end{align*}
is a quasi-isomorphism.
\end{lemma}

\begin{proof}
This is well-known in the non-compactly supported case (see for example the proof of \cite[Lem.\,5.3]{HyodoKato1994}), 
and hence (\ref{equ : iso cryst}) implies that the first morphism in the statement is a quasi-isomorphism (compare the proof of \cite[Thm.\,10.2]{YamashitaYasuda2014}).
\end{proof}

\begin{proposition}\label{prop: cris sections}
Let $X$ be a proper strictly semistable log scheme over $V^\sharp$. 
Consider the morphisms
$$
R\Gamma^{\cris}_{\HK,c}(X)_{\pi,\bbQ} \xleftarrow{j_0^\ast} 
R\Gamma_{\cris,c}(X/\cS_{\PD})_{\pi,\bbQ} \xrightarrow{j^\ast_\pi}
R\Gamma_{\cris,c}(X/V^\sharp)_{\bbQ}
$$
induced by the horizontal morphisms in the diagram (\ref{diag: comm}). 
The left-pointing morphism $j_0^\ast$ has in the derived category
a unique functorial $F$-linear section
\begin{align*}
s_\pi: R\Gamma^{\cris}_{\HK,c}(X)_{\pi,\bbQ} \rightarrow R\Gamma_{\cris,c}(X/\cS_{\PD})_{\pi,\bbQ}
\end{align*}
which is compatible with Frobenius. 
\end{proposition}
\begin{proof}
To show the existence of the section, we follow \cite[Lem.\,5.2]{HyodoKato1994}.  
Let $Y_\pi:=X\times_{V^\sharp,j_\pi}k^0$, 
and for the horizontal divisor $C$ of $X$
set $D_\pi= C\times_{V^\sharp,j_\pi}k^0$.
In the following we will consider the compactly supported crystalline cohomology
\begin{align*}
R\Gamma_{\cris,c}(Y_\pi/ \breve{\cS}_{\PD}):= R\Gamma_{\cris,\cM_{D_\pi}}(Y_\pi/\breve{\cS}_{\PD}), 
\end{align*}
with $\breve{\cS}_{\PD}$ as defined in Remark \ref{rem: breve{S}}.
Take now $r\geqslant 0$, such that $\pi^{p^r}\in pV$. 
There is a commutative diagram of log schemes
$$
\xymatrix{
X_1 \ar[r] \ar[d]& Y_\pi\ar@{^{(}->}[r] \ar[d] \ar@{}[rd]|\bigbox&X_1\ar[d]\\
V_1^\sharp \ar[r] \ar[d] & k^0 \ar@{^{(}->}[r] \ar[d] & V^\sharp_1\ar[d] \\
\cS_{\PD} \ar[r]^g & \breve{\cS}_{\PD} \ar[r]&\cS_{\PD} 
}
$$
where the morphism $g$ is given by $s\mapsto s^{p^r}$ and $a\mapsto \sigma^r(a)$ for $a\in W$, 
the composites of the horizontal arrows are the $r$\textsuperscript{th} 
iteration of the Frobenius,
and the upper right square is Cartesian. 
Thus by \cite[Prop.\,2.23]{HyodoKato1994} and \cite[1.16 Thm.\,(i)]{Beilinson2013} one obtains a quasi-isomorphism
\begin{equation}\label{equ HK2}
R_{\PD}{}_{\;\sigma^r}\otimes_{R_{\PD}}^L R\Gamma_{\cris,c}(X_1/\cS_{\PD})_{\pi,\bbQ} \xrightarrow{\sim} R_{\PD}{}_{\;g}\otimes_{\breve{R}_{\PD}}^L R\Gamma_{\cris,c}(Y_\pi/ \breve{\cS}_{\PD})_{\bbQ}.
\end{equation}
Furthermore, by the Lemmas \ref{lemma: frob S} and \ref{lemma Frob W} there are quasi-isomorphisms
\begin{align}
\label{equ HK3}
R_{\PD}{}_{\;\sigma^r}\otimes^L_{R_{\PD}}  R\Gamma_{\cris,c}(X_1/\cS_{\PD})_{\pi,\bbQ} 
&\xrightarrow[\sim]{1\otimes\varphi^r} R\Gamma_{\cris,c}(X_1/\cS_{\PD})_{\pi,\bbQ},
\\
\label{equ HK4} 
R_{\PD} {}_{\;\sigma^r}\otimes^L_{W} R\Gamma_{\cris,c}(Y_\pi/ W^0)_{\bbQ}
&\xrightarrow[\sim]{1\otimes\varphi^r}
R_{\PD}\otimes^L_WR\Gamma_{\cris,c}(Y_\pi/ W^0)_{\bbQ}.
\end{align}
Finally, by Lemma \ref{lem: HK 4.23 generalisation} below, 
there is a quasi-isomorphism
\begin{align}\label{equ HK1}
R_{\PD}{}_{\;\sigma^r}\otimes^L_{W} R\Gamma_{\cris,c}(Y_\pi/ W^0)_{\bbQ}  
\xrightarrow{\sim} R_{\PD}{}_{\;g}\otimes^L_{\breve{R}_{\PD}} R\Gamma_{\cris,c}(Y_\pi/ \breve{\cS}_{\PD})_{\bbQ} 
\end{align}
Clearly all of the morphisms considered above are compatible with Frobenius. 
Putting all of this together, 
we obtain a zigzags of natural morphism
\begin{align*}
&R_{\PD}\otimes^L_W R\Gamma_{\cris,c}(Y_\pi/ W^0)_{\bbQ} 
\xleftarrow[\sim]{\text{(\ref{equ HK4})}}
R_{\PD}{}_{\;\sigma^r}\otimes^L_{W} R\Gamma_{\cris,c}(Y_\pi/ W^0)_{\bbQ}\rightarrow\\
&\xrightarrow[\sim]{\text{(\ref{equ HK1})}}R_{\PD} {}_{\;g}\otimes^L_{\breve{R}_{\PD}} R\Gamma_{\cris,c}(Y_\pi/ \breve{\cS}_{\PD})_{\bbQ} \xleftarrow[\sim]{\text{(\ref{equ HK2})}}
R_{\PD}{}_{\;\sigma^r}\otimes^L_{R_{\PD}} R\Gamma_{\cris,c}(X_1/\cS_{\PD})_{\pi,\bbQ}\rightarrow\\
&\xrightarrow[\sim]{\text{(\ref{equ HK3})}}R\Gamma_{\cris,c}(X_1/\cS_{\PD})_{\pi,\bbQ}.
\end{align*}
Thus we can define a functorial $F$-linear section of $j_0^\ast$ 
by
$$
R\Gamma^{\cris}_{\HK,c}(X)_{\pi,\bbQ} \xrightarrow{s_\pi}  
R_{\PD}\otimes^L_W R\Gamma^{\cris}_{\HK,c}(X)_{\pi,\bbQ} \cong
R\Gamma_{\cris,c}(X/\cS_{\PD})_{\pi,\bbQ}.
$$
By construction, it is compatible with Frobenius.

We now address the uniqueness of the section $s_\pi$. 
According to \cite[Lem.\,4.4.10]{Tsuji1999-a} the $E_2$-sheet of the third spectral sequence in (\ref{equ : spec cryst}) provides finitely generated free
$\bbQ\otimes R_{\PD}$-modules.  
Hence,
the $H^n_{\cris,c}(X/\cS_{\PD})_{\bbQ}$, $n\in\bbN$ are finitely generated free $\bbQ\otimes R_{\PD}$-modules. 
It follows from \cite[Lem.\,4.4.11]{Tsuji1999-a} 
that the section  $s_\pi$ are unique on the level of cohomology groups, which implies uniqueness in the derived category by the same argument as in the proof of \cite[Prop.-Def.\,6.8]{ErtlYamada}.
\end{proof}

\begin{lemma}\label{lem: HK 4.23 generalisation}
Let $Y$ be a proper strictly semistable log scheme over $k^0$, 
and consider the canonical morphism 
$j_0^\ast:R\Gamma_{\cris,c}(Y/\breve{\cS}_{\PD}) \rightarrow R\Gamma_{\cris,c}(Y/W^0)$ 
given by $s\mapsto 0$. 
Then there exists a unique quasi-isomorphism 
$$
\breve{R}_{\PD}\otimes^L_W R\Gamma_{\cris,c}(Y/W^0)_{\bbQ} \xrightarrow[\sim]{s_0}R\Gamma_{\cris,c}(Y/\breve{\cS}_{\PD})_{\bbQ}
$$
such that the composition  $j_0^\ast\circ s_0$ coincides with the morphism induced  by $\breve{R}_{\PD} \rightarrow W$, 
which compatible with the Frobenius.
\end{lemma}
\begin{proof}
We modify the arguments in the proof of \cite[Prop.\,4.13]{HyodoKato1994} to adapt them to our situation. 
We have seen in Lemma \ref{lemma Frob W} that $\varphi$ is a quasi-isomorphism 
on $R\Gamma_{\cris,c}(Y/W^0)_{\bbQ}$.
More precisely, on $R\Gamma_{\cris,c}(Y/W^0)$ there exists a morphism $\psi$ 
and $r\in\bbN$, such that $\varphi\psi=\psi\varphi=p^r$. 
Indeed, according to \cite[Prop.\,2.24]{HyodoKato1994} 
for each simmplicial index $i$ of $D^{(\bullet),\flat}$ as defined 
in (\ref{equ : D^flat}), the natural number  
$r_i=\max\{\mathrm{rank}_x\omega^1_{D^{(i),\flat}/k^0}\,|\,x\in D^{(i),\flat}\}$ 
satisfies $\psi\varphi=\varphi\psi=p^{r_i}$ on 
$R\Gamma_{\cris,c}(D^{(i),\flat}/W^0)_{\bbQ}$. 
But the $r_i$ are bounded by 
$r=\max\{\mathrm{rank}_x\omega^1_{Y/k^0}\,|\,x\in Y\}$. 

Let $I$ be the kernel of $\breve{R}_{\PD} \rightarrow W$.
It is generated by divided powers $\frac{s^j}{j!}$ of $s$, 
and satisfies $\sigma^c(I) \subset p^c!\cdot I$ for any $c\geqslant 1$. 
Indeed, we have
$$
\sigma^c\left(\frac{s^j}{j!}\right) = \frac{s^{p^cj}}{j!} 
= \frac{(p^cj)!}{j!}\cdot \frac{s^{p^cj}}{(p^cj)!}
$$
for any $j\geqslant 1$ and $\frac{(p^cj)!}{j!}\in p^c!\cdot \bbZ$. 
We denote by 
\begin{align*}
&\varphi: \Phi(I\otimes^L_{\breve{R}_{\PD}}R\Gamma_{\cris,c}(Y/\breve{\cS}_{\PD})) \rightarrow I\otimes^L_{\breve{R}_{\PD}}R\Gamma_{\cris,c}(Y/\breve{\cS}_{\PD})
\end{align*}
where $\Phi(-)$ is the exact functor of tensoring with $W{}_{\;\sigma}\otimes^L_W -$, 
the morphisms given by $\sigma$ on  $I$, 
and by $\varphi$ on  $R\Gamma_{\cris,c}(Y/\breve{\cS}_{\PD})$ (defined as in Definition \ref{def: frobenius crys S}). 
Similarly, for finite levels, that is modulo $p^m$, we have
\begin{align*}
&\varphi: \Phi_m(I_m\otimes^L_{\breve{R}_{\PD,m}}R\Gamma_{\cris,c}(Y/\breve{\cS}_{\PD,m})) \rightarrow I\otimes^L_{\breve{R}_{\PD,m}}R\Gamma_{\cris,c}(Y/\breve{\cS}_{\PD,m}), 
\end{align*}
with the exact functor $\Phi_m(-)=W_m{}_{\;\sigma}\otimes^L_{W_m} -$.
Working with finite coefficients, 
it is clear that the set of morphisms of the form
\begin{align*}
R\Gamma_{\cris,c}(Y/W^0_m ) \rightarrow \cK_m,
\end{align*}
where $\cK$ is either one of
\begin{align*}
I\otimes^L_{\breve{R}_{\PD}}R\Gamma_{\cris,c}(Y/\breve{\cS}_{\PD}) &&\text{or}&&
I\otimes^L_{\breve{R}_{\PD}}R\Gamma_{\cris,c}(Y/\breve{\cS}_{\PD})[1],
\end{align*} 
and $\cK_m$ its reduction modulo $p^m$
is annihilated by a high enough power of $p$ (depending on $m$). 

Take now  $c\in\bbN$ such that $p^{rc+1}| p^c!$ and let 
$h: R\Gamma_{\cris,c}(Y/W^0 ) \rightarrow \cK$ be a morphism compatible with Frobenius in the sense that
$h\varphi=\varphi\Phi(h)$. 
For a fixed $m$ consider the reduction of this morphism modulo $p^m$, that is
$h_m: R\Gamma_{\cris,c}(Y/W_m^0 ) \rightarrow \cK_m$.
Then we have
$$
p^{rc}h_m= h_m\varphi^c\psi^c= \varphi^c\Phi^c(h_m)\psi^c= 
(p^c)!\frac{\varphi^c}{(p^c)!}\Phi^c(h_m)\psi^c =
p^{rc} a \frac{\varphi^c}{(p^c)!}\Phi^c(h_m)\psi^c
$$
for some $a\in p\bbZ$, 
where we have used that $\varphi^c$ on 
$\Phi^c_m(\cK_m)$ 
is divisible by $p^c!$. 
Iterating this process, we obtain equalities
$$
p^{rc}h_m = p^{rc} a \frac{\varphi^c}{(p^c)!}\Phi^c(h_m)\psi^c
= p^{rc}a^2 \frac{\varphi^c}{(p^c)!} \Phi^c\left(\frac{\varphi^c}{(p^c)!}\right)\varphi^{2c}(h_m) \psi^{2c} = \cdots = p^{rc} a^m \tilde{h}_m=0
$$
producing at each step a new morphism $\tilde{h}_m:R\Gamma_{\cris,c}(Y/W^0_m ) \rightarrow \cK_m$ compatible with Frobenius, 
until the power of $a$ is high enough so that the morphism becomes trivial. 
This shows that  $h_m$ is annihilated by $p^{rc}$. 
Since $p^{rc}$ is independent of $m$,  it follows, that 
$h$ itself is annihilated by $p^{rc}$. 
Inverting $p$, we conclude that any morphism 
$h: R\Gamma_{\cris,c}(Y/W^0 )_{\bbQ} \rightarrow \cK_{\bbQ}$ which is compatible with the Frobenius
is necessarily trivial. 

Consider now the exact triangle in the derived category of $\varphi$-modules
$$
I\otimes^L_{\breve{\cS}_{\PD}}R\Gamma_{\cris,c}(Y/\breve{\cS}_{\PD})_{\bbQ} 
\rightarrow
R\Gamma_{\cris,c}(Y/\breve{\cS}_{\PD})_{\bbQ} 
\xrightarrow{j_0^\ast} 
R\Gamma_{\cris,c}(Y/W^0)_{\bbQ} \rightarrow+
$$
By what we have seen above, its boundary morphism is trivial, 
and hence it splits.
In particular, we obtain a section of the morphisms $j_0^\ast$
$$
\xymatrix{
R\Gamma_{\cris,c}(Y/W^0)_{\bbQ}  \ar@<-.5ex>[r]_{s_0} &
R\Gamma_{\cris,c}(Y/\breve{\cS}_{\PD})_{\bbQ} \ar@<-.5ex>[l]_{j_0^\ast}
}
$$
which is compatible with the Frobenius. 
To see that this section induces a quasi-isomorphism
after tensoring with $\breve{R}_{\PD}\otimes^L_W-$ respectively, 
we observe, that the construction explained above is compatible with 
the one in \cite[Prop.\,4.13]{HyodoKato1994} in the non-compactly supported case. 
Hence using the fact that for the base $\breve{\cS}_{\PD}$ we also have a quasi-isomorphism of the form 
(\ref{equ : iso cryst}) because $\breve{\cS}_{\PD}$ constitutes just the unramified case of $\cS_{\PD}$, 
we obtain a commutative diagram 
$$
\xymatrix{
\breve{R}_{\PD}\otimes^L_W R\Gamma_{\cris,c}(Y/W^0)_{\bbQ} \ar[r]^-{s_0} \ar[d]^-\eta_-{\sim} & 
R\Gamma_{\cris,c}(Y/\breve{\cS}_{\PD})_{\bbQ} \ar[d]^-\eta_-{\sim}\\
\breve{R}_{\PD}\otimes^L_W R\Gamma_{\cris}(D^{(\bullet),\flat}/W^0)_{\bbQ} \ar[r]^-{s_0}_-{\sim} &
R\Gamma_{\cris}(D^{(\bullet),\flat}/\breve{\cS}_{\PD})_{\bbQ}
}
$$
where the bottom morphism is a quasi-isomorphism, because for every simplicial index $i$, 
$s_0:\breve{R}_{\PD}\otimes^L_W R\Gamma_{\cris}(D^{(i),\flat}/W^0)_{\bbQ}
\xrightarrow{\sim} R\Gamma_{\cris}(D^{(i),\flat}/\breve{\cS}'_{\PD})_{\bbQ}$ 
is a quasi-isomorphism by \cite[Prop.\,4.13]{HyodoKato1994}. 
Therefore the upper morphism in the diagram is a quasi-isomorphism as desired.
\end{proof}

\begin{proposition-definition}\label{prop-def: cris HK}
Let $X$ be a proper strictly semistable log scheme over $V^\sharp$ with horizontal divisor $C$ and $\pi$ a uniformiser. 
We set 
$$
\Psi_\pi^\cris: =  j_\pi^\ast\circ s_\pi \colon  R\Gamma_{\HK,c}^\cris(X)_{\pi,\bbQ} \rightarrow   R\Gamma_{\cris,c}( X/V^\sharp)_{\bbQ}.
$$
It induces a $K$-linear functorial quasi-isomorphism $\Psi_{\pi,K}^\cris :=   \Psi_{\pi}^\cris \otimes 1\colon   R\Gamma_{\HK,c}^\cris(X)_\pi \otimes_{W}K  \xrightarrow {\sim}  R\Gamma_{\cris,c}( X/V^\sharp)_{\bbQ}$.
\end{proposition-definition}
\begin{proof}
As the constructions in Proposition \ref{prop: cris sections} followed 
the methods of \cite[\S5]{HyodoKato1994} in the non-compactily supported case, 
the obtained section $s_\pi$ and hence $\Psi_\pi^\cris$ is compatible with 
the quasi-isomorphisms (\ref{equ : iso cryst}). 
Therefore, we obtain a commutative diagram
$$
\xymatrix{
R\Gamma_{\HK,c}^{\cris}(X)_{\pi,\bbQ} \ar[d]_{\sim}^\eta \ar@<-.5ex>[r]_{s_\pi}
& R\Gamma_{\cris,c}(X/\cS_{\PD})_{\pi,\bbQ}
\ar[d]_{\sim}^\eta \ar[r]^{j_{\pi}^\ast} \ar@<-.5ex>[l]_{j_0^\ast}
&R\Gamma_{\cris,c}(X/V^\sharp)_{\bbQ}\ar[d]_{\sim}^\eta\\
R\Gamma_{\HK}^\cris(C^{(\bullet),\flat})_{\pi,\bbQ}\ar@<-.5ex>[r]_{s_\pi}
&R\Gamma_{\cris}(C^{(\bullet),\flat}/\cS_{\PD})_{\pi,\bbQ}
\ar[r]^{j_{\pi}^\ast} \ar@<-.5ex>[l]_{j_0^\ast} 
&R\Gamma_{\cris}(C^{(\bullet),\flat}/V^\sharp)_{\bbQ}
}
$$

For every $i\geqslant 0$, 
the composition 
$\Psi_\pi^{\cris} =j_\pi^\ast\circ s_\pi: R\Gamma_{\HK}^\cris(C^{(i),\flat})_{\pi,\bbQ} \rightarrow R\Gamma_{\cris}(C^{(i),\flat}/V^\sharp)_\bbQ$
induces a $K$-linear functorial quasi-isomorphism 
$\Psi_{\pi,K}^\cris :=   \Psi_{\pi}^\cris \otimes 1\colon   R\Gamma_{\HK}^\cris(C^{(i),\flat})_\pi \otimes^L_{W}K  \xrightarrow {\sim}  R\Gamma_{\cris}(C^{(i),\flat}/V^\sharp)_{\bbQ}$
according to \cite[(5.4)]{HyodoKato1994}.
This implies the statement via the above commutative diagram.
\end{proof}

%
\section{Comparison of compactly supported log crystalline and log rigid cohomology}\label{sec: comparison}
%

Continuing the conventions of the previous section, a log structure on a (formal) log scheme is a sheaf on the \'{e}tale topology.
We say that a fine (formal) log scheme is \textit{of Zariski type} if Zariski locally it admits a chart.
Note that by \cite[Cor.\,1.1.11]{Shiho2002}, giving a fine log scheme (with respect to the \'{e}tale topology) of Zariski type is equiavalent to giving a fine log scheme with respect to the Zariski topology.
Through this identification, we apply the constructions and results in Section \ref{sec: isoc}-\ref{sec: c rig} to a fine (formal) log scheme of Zariski type.

The goal of this section is to compare the rigid construction of cohomology with compact support with the crystalline construction.
The comparison passes through log convergent cohomology. 
Thus we first study the relation between log overconvergent isocrystals in \cite{Yamada2020} and Shiho's log convergent isocrystals in \cite{Shiho2000}, \cite{Shiho2008}.
To any weak formal log scheme $\cT$, by taking the completion with respect to the ideal of definition we may associate a formal log scheme $\widehat{\cT}$. 

Let $(T,\cT,\iota)$ be a widening.
For a fine log scheme $Y$ over $T$ of Zariski type, we define a site
	\[\Conv(Y/\wh\cT)=\Conv(Y/(T,\wh\cT,\wh\iota))\]
and a category
	\[\Isoc(Y/\wh\cT)=\Isoc(Y/(T,\wh\cT,\wh\iota))\]
by the same procedure as $\OC(Y/\cT)$ and $\Isoc^\dagger(Y/\cT)$ but using formal log schemes and rigid spaces instead of weak formal log schemes and dagger spaces.

Our first task is to understand in the case that $\wh\cT$ is $p$-adic the relation between $\Isoc(Y/\widehat{\cT})$ and the category of log convergent isocrystals defined by Shiho.
When $\wh\cT$ is $p$-adic, we denote by $(Y/\wh\cT)_{\conv,\zar}$ the log convergent site with respect to the Zariski topology in \cite[Def.\,2.4]{Shiho2008}.
To any sheaf $E$ on $(Y/\wh\cT)_{\conv,\zar}$ and any object $(Z,\cZ,i,h,\theta)\in (Y/\wh\cT)_{\conv,\zar}$, we may associate a sheaf $E_\cZ$ on $\cZ$.
Recall that $E$ is said to be a \textit{locally free isocrystal} \cite[Def.\,2.6]{Shiho2008} if each $E_\cZ$ is a locally free isocoherent sheaf \cite[Def.\,1.2]{Shiho2008} and for any morphism $f\colon (Z',\cZ',i',h',\theta')\rightarrow (Z,\cZ,i,h,\theta)$ the induced morphism $f^*E_\cZ\rightarrow E_{\cZ'}$ is an isomorphism.
We denote by $I_{\conv,\zar}^{\mathrm{lf}}(Y/\wh\cT)$ the category of locally free isocrystals.

The site $(Y/\wh\cT)_{\conv,\zar}$ consists of quintuples $(Z,\cZ,i,h,\theta)$ where $\cZ$ are $p$-adic and flat over $W$, and its topology is given by the Zariski topology on $\cZ$.
In contrast, the site $\Conv(Y/\wh\cT)$ consists of quintuples $(Z,\cZ,i,h,\theta)$ where $\cZ$ is not necessarily $p$-adic, and its topology is induced from the topology of the rigid spaces $\cZ_\bbQ$.
Clearly there exists a canonical continuous functor
	\begin{equation}\label{eq: continuous functor}
	\chi\colon(Y/\wh\cT)_{\conv,\zar}\rightarrow\Conv(Y/\wh\cT).
	\end{equation}

\begin{proposition}\label{prop: isocrystals}
	The functor \eqref{eq: continuous functor} induces an equivalence of categories
	\begin{equation}\label{eq: conv isoc}
	\chi_*\colon \Isoc(Y/\wh\cT)\rightarrow I^{\mathrm{lf}}_{\conv,\zar}(Y/\wh\cT).
	\end{equation}
\end{proposition}

\begin{proof}
	For an object $\sE\in\Isoc(Y/\wh\cT)$, we define a sheaf $\chi_*\sE$ on $(Y/\wh\cT)_{\conv,\zar}$ by
	\[\Gamma((Z,\cZ,i,h,\theta),\chi_*\sE):=\Gamma((Z,\cZ,i,h,\theta),\sE).\]
	For $(Z,\cZ,i,h,\theta)\in (Y/\wh\cT)_{\conv,\zar}$, let $(\chi_*\sE)_\cZ$ be the associated sheaf on $\cZ$.
	If $\cZ$ is affine, $(\chi_*\sE)_\cZ$ corresponds to a finite projective module $M:=\Gamma(\chi(Z,\cZ,i,h,\theta),\sE)$ over $A:=\Gamma(\cZ_\bbQ,\cO_{\cZ_\bbQ})$ in the sense that we have $\Gamma(\cU,(\chi_*\sE)_\cZ)\cong M\otimes_A\Gamma(\cU_\bbQ,\cO_{\cU_\bbQ})$ for any open subset $\cU\subset\cZ$.
	Thus $(\chi_*\sE)_\cZ$ is isocoherent for general $(Z,\cZ,i,h,\theta)$ by \cite[(1.2.1) and (1.2.5)]{Ogus1984}, and we obtain the functor \eqref{eq: conv isoc}.
	
	To see that $\chi_*$ is an equivalence, 
	we construct a quasi-inverse 
	by associating an object $\sE\in\Isoc(Y/\wh\cT)$ to an object $E\in I^{\mathrm{lf}}_{\conv,\zar}(Y/\wh\cT)$ as follows:
	
	For an object $(Z,\cZ,i,h,\theta)\in\Conv(Y/\wh\cT)$, let $\{(Z[n],\cZ[n],i[n],h[n],\theta[n])\}_n$ be the family of universal enlargements defined similarly to Definition \ref{def: univ enl}.
	Then $\cZ[n]$ are $p$-adic and flat over $W$, and we have $\cZ_\bbQ=\bigcup_n\cZ[n]_{\bbQ}$.
	We define a presheaf $\sE$ on $\Conv(Y/\wh\cT)$ by
		\[\Gamma((Z,\cZ,i,h,\theta),\sE):=\varprojlim_n\Gamma((Z[n],\cZ[n],i[n],h[n],\theta[n]),E).\]
	We prove that $\sE$ is in fact a sheaf.
	Let $\{f_\lambda\colon(Z_\lambda,\cZ_\lambda,i_\lambda,h_\lambda,\theta_\lambda)\rightarrow(Z,\cZ,i,h,\theta)\}_\lambda$ be a covering family in $\Conv(Y/\wh\cT)$.
	Then the induced morphisms between universal enlargements give covering families
		\[\{f_{\lambda}[n]\colon (Z_{\lambda}[n],\cZ_{\lambda}[n],i_{\lambda}[n],h_{\lambda}[n],\theta_{\lambda}[n])\rightarrow(Z[n],\cZ[n],i[n],h[n],\theta[n])\}_\lambda.\]
	It suffices to check the sheaf condition of $\sE$ for the covering family $\{f_{\lambda}[n]\}_\lambda$ for each $n$.
	Fix $n\geqslant 1$.
	There exists an admissible blow-up $\wt\cZ\rightarrow\cZ[n]$ and open subsets $\cU_\lambda\subset\wt\cZ$ for all $\lambda$ such that each $f_{\lambda}[n]$ factors through a morphism $\cZ_{\lambda}[n]\rightarrow\cU_\lambda$ which induces an isomorphism $\cZ_\lambda[n]_\bbQ\xrightarrow{\cong}\cU_{\lambda,\bbQ}$.
	Note that the $\cU_\lambda$'s cover $\wt\cZ$ since $\wt\cZ$ is $p$-adic and flat over $W$. 
	Now we obtain a commutative diagram
		\begin{equation}\label{eq: blowup}
		\xymatrix{
		(Z_{\lambda}[n],\cZ_{\lambda}[n],i_{\lambda}[n],h_{\lambda}[n],\theta_{\lambda}[n])\ar[d]\ar[r]^-{f_{\lambda}[n]}&
		(Z[n],\cZ[n],i[n],h[n],\theta[n])\\
		(U_\lambda,\cU_\lambda,\wt i_\lambda,\wt h_\lambda,\wt\theta_\lambda)\ar[r]&
		(\wt Z,\wt\cZ,\wt i,\wt h,\wt\theta)\ar[u]
		}\end{equation}
	by setting
	$\wt Z:=\wt\cZ\times_{\cZ[n]}Z[n]$ and $U_\lambda:=\cU_\lambda\times_{\wt\cZ}\wt Z$ with obvious morphisms $\wt i,\wt h,\wt\theta,\wt i_\lambda,\wt h_\lambda,\wt\theta_\lambda$.
	Since the vertical morphisms in \eqref{eq: blowup} induce isomorphisms
		\begin{align*}
		\Gamma((U_\lambda,\cU_\lambda,\wt i_\lambda,\wt h_\lambda,\wt\theta_\lambda),E)&\xrightarrow{\cong}\Gamma((Z_{\lambda}[n],\cZ_{\lambda}[n],i_{\lambda}[n],h_{\lambda}[n],\theta_{\lambda}[n]),E)\\
		&=\Gamma((Z_{\lambda}[n],\cZ_{\lambda}[n],i_{\lambda}[n],h_{\lambda}[n],\theta_{\lambda}[n]),\sE),\end{align*}
		\begin{align*}
		\Gamma((Z[n],\cZ[n],i[n],h[n],\theta[n]),E)&\xrightarrow{\cong}\Gamma((\wt Z,\wt\cZ,\wt i,\wt h,\wt\theta),E)\\
		&=\Gamma((\wt Z,\wt\cZ,\wt i,\wt h,\wt\theta),\sE),
		\end{align*}
	the sheaf condition of $\sE$ for $\{f_{\lambda}[n]\}_\lambda$ follows from the sheaf condition of $E$ for $\{(U_\lambda,\cU_\lambda,\wt i_\lambda,\wt h_\lambda,\wt\theta_\lambda)\rightarrow(\wt Z,\wt\cZ,\wt i,\wt h,\wt\theta)\}_\lambda$.
	
	Moreover for any morphism $g\colon (Z',\cZ',i',h',\theta')\rightarrow(Z,\cZ,i,h,\theta)$, by taking universal enlargements we obtain a family of morphisms
		\[g[n]\colon(Z'[n],\cZ'[n],i'[n],h'[n],\theta'[n])\rightarrow(Z[n],\cZ[n],i[n],h[n],\theta[n]).\]
	The isomorphisms $E_{\cZ'[n]}\cong g[n]^*E_{\cZ[n]}$ induce an isomorphism $\sE_{\cZ'}\cong g^*\sE_\cZ$.
	Thus we see that $\sE$ is an object of $\Isoc(Y/\wh\cT)$.
\end{proof}

Completion induces a canonical functor
	\begin{equation}\label{eq: cpln}
	\Isoc^\dagger(Y/\cT)\rightarrow\Isoc(Y/\wh\cT).
	\end{equation}
For $\sE\in\Isoc^\dagger(Y/\cT)$, we denote by $\wh\sE\in \Isoc(Y/\wh\cT)$ the image of $\sE$ under this functor.
By applying the same construction as in the case of $R\Gamma_\rig(Y/\cT,\sE)$ for $\sE\in\Isoc^\dagger(Y/\cT)$ to $\wh\sE\in\Isoc(Y/\wh\cT)$, we may define a complex of $\Gamma(\wh\cT,\cO_{\wh\cT})$-modules, which we denote by $R\Gamma_\conv(Y/\wh\cT,\sE)$ and call the \textit{log convergent cohomology}.
The log convergent cohomology was originally defined by Shiho (\cite[Def.\,2.2.12]{Shiho2002}, \cite[Def.\,4.1]{Shiho2008}).
Moreover, we may define $R\Gamma_{\conv,\cM}(Y/\wh\cT,\wh\sE)$ for a monogenic log substructure $\cM\subset\cN_Y$ and $R\Gamma_{\conv,c}(Y/\wh\cT,\wh\sE)$ if $Y$ is proper strictly semistable, in the same way as Definition \ref{def: coh for monogenic} and Definition \ref{def: coh with compact supp}. 

By definition there exists a canonical morphism
	\begin{equation}\label{eq: rig an}
	R\Gamma_\rig(Y/\cT,\sE)\rightarrow R\Gamma_\conv(Y/\wh\cT,\wh\sE).
	\end{equation}
When $Y$ is a proper strictly semistable log scheme over $k^0$, then we have a canonical morphism
	\begin{equation}\label{eq: rig an c}
	R\Gamma_{\rig,c}(Y/\cT,\sE)\rightarrow R\Gamma_{\conv,c}(Y/\wh\cT,\wh\sE).
	\end{equation}

\begin{proposition}\label{prop : rig con iso}
	Let $Y$ be a proper strictly semistable log scheme over $k^0$.
	Let $(T,\cT,\iota)$ be $(k^0,W^0,i_0)$ or $(k^0,V^\sharp,i_\pi)$, and $\sE\in \Isoc^\dagger(Y/\cT)^\unip$.
	Then the morphisms 
	\eqref{eq: rig an} and \eqref{eq: rig an c} defined above are quasi-isomorphisms.
\end{proposition}

\begin{proof}
	Since $\sE$ is unipotent, we may assume that $\sE=\sO_{Y/\cT}$.
	The statement for 
	\eqref{eq: rig an} has been proved in \cite[Thm.\,5.3]{GrosseKlonne2005} and \cite[Lem.\,5.3]{ErtlYamada1}.
	The statement for 
	\eqref{eq: rig an c} is reduced to that for 
	\eqref{eq: rig an} via the quasi-isomorphisms in Proposition \ref{prop: D} and their convergent analogues.
\end{proof}

\begin{proposition}\label{prop: an vs cris}
	Let $T$ be an affine fine log scheme over $k$, $\cT$ an affine formal log scheme (not necessarily $p$-adic) over $W$, and $T\hookrightarrow\cT$ a homeomorphic exact closed immersion with ideal $\cI$. 
	Denote by $\cT_{\mathrm{PD}}$ the $\cI$-adic completion of the PD-envelope of $T\hookrightarrow\cT$ over $\Spf W$ with the canonical PD-structure.
	Let $Y$ be a fine log scheme over $T$ of Zariski type, and let $\sE\in\Isoc(Y/\cT)$.

	Suppose that there exists a family of coherent sheaves $E_\cZ$ on $\cZ$ for all $(Z,\cZ,i,h,\theta)\in\Conv(Y/\cT)$ together with isomorphisms $\tau_f\colon f^*E_\cZ\cong E_{\cZ'}$ for all $f\colon(Z',\cZ',i',h',\theta')\rightarrow(Z,\cZ,i,h,\theta)$ in $\Conv(Y/\cT)$ satisfying $\tau_{f\circ g}=\tau_g\circ g^*\tau_f$ which induces $\sE$ in the following sense:
	\begin{enumerate}
	\item\label{item: an vs crys} For any object $(Z,\cZ,i,h,\theta)\in\Conv(Y/\cT)$ where $\cZ$ is $p$-adic, there exists an isomorphism
		\[\Gamma(\cZ_\bbQ,\sE_\cZ)\cong\Gamma(\cZ,E_{\cZ})\otimes_\bbZ\bbQ.\]
	\item For any morphism $f\colon(Z',\cZ',i',h',\theta')\rightarrow(Z,\cZ,i,h,\theta)$ where $\cZ$ and $\cZ'$ are $p$-adic, the isomorphism $f^*\sE_\cZ\cong \sE_{\cZ'}$ is induced by $\tau_f$ and the isomorphisms in \ref{item: an vs crys}.
	\end{enumerate}
	Then $\sE$ naturally induces an object $\sE_\cris\in \mathrm{Crys}(Y/\cT_{\PD})_\bbQ$, and there exists a canonical morphism
	\begin{equation}\label{eq: morp}
	R\Gamma_\conv(Y/\cT,\sE)\rightarrow R\Gamma_\cris(Y/\cT_{\PD},\sE_\cris).
	\end{equation}
\end{proposition}

\begin{proof}
	We first consider the case that there exists an adic log smooth morphism $\cZ\rightarrow\cT$ admitting a chart and a closed immersion $Y\hookrightarrow\cZ$.
	For $i\geqslant 0$, let $\cZ(i)$ be the $(i+1)$-fold fibre product of $\cZ$ over $\cT$.
	Let $D(i)$ be the $\cI$-adic completion of the log PD-envelope of $Y\hookrightarrow\cZ(i)$ over $\Spf W$ with the canonical PD-structure.
	(Note that this is the same as the log PD-envelope of $Y\hookrightarrow\cZ(i)\times_\cT\cT_{\PD}$ over $\cT_{\PD}$.)
	Let $Y\hookrightarrow\cZ(i)^{\mathrm{ex}}$ be the exactification of $Y\hookrightarrow\cZ(i)$ (cf.\,\cite[Prop.-Def.\,2.10]{Shiho2008}).
	
	To relate $D(i)$ and $\cZ(i)^{\mathrm{ex}}$ for any fixed $i\geqslant 0$, we choose a factorization $Y\hookrightarrow\cX_i\rightarrow \cZ(i)$ into an exact closed immersion and an adic log \'{e}tale morphism.
	Then $D(i)$ is the (non-logarithmic) PD-envelope of $Y\hookrightarrow\cX_i$ over $W$ equipped with the pull-back log structure, hence given by adding to $\cO_{\cX_i}$ divided powers of local generators of the ideal of $Y\hookrightarrow\cX_i$. 
	On the other hand, $\cZ(i)^{\mathrm{ex}}$ is the completion of $\cX_i$ along $Y$. 
	Thus there exists a natural morphism $g(i)\colon D(i)\rightarrow\cZ(i)^{\mathrm{ex}}$, which is independent of the choice of the factorization $\cX_i$ and clearly compatible with the different natural projections 
	$p_{j_1\ldots j_{i+1}}: D(i+1)\rightarrow D(i)$ and 
	$q_{j_1\ldots j_{i+1}}: \cZ(i+1)^{\mathrm{ex}}\rightarrow \cZ(i)^{\mathrm{ex}}$, 
	$1\leqslant j_1<\cdots<j_{i+1}\leqslant i+2$.
     
	Using the datum of the family of isomorphisms on coherent sheaves in the hypothesis one obtains for $E:=g(0)^*E_{\cZ(0)^{\mathrm{ex}}}$ the isomorphism 
    \[\epsilon\colon p_2^*E=g(1)^*q_2^*E_{\cZ(0)^{\mathrm{ex}}}\xrightarrow[\tau_{q_2}]{\cong} g(1)^*E_{\cZ(1)^{\mathrm{ex}}}\xleftarrow[\tau_{q_1}^{-1}]{\cong} g(1)^*q_1^*E_{\cZ(0)^{\mathrm{ex}}}=p_1^*E.\]
    As it satisfies and $p_{12}^*(\epsilon)\circ p_{23}^*(\epsilon)=p_{13}^*(\epsilon)$ and $\Delta^*(\epsilon)=\id$ with the diagonal morphism $\Delta\colon D(0)\rightarrow D(1)$,
     it gives an HPD-stratification on $D(0)$, that is a projective system $\{E_n\}_n$ of coherent sheaves on $D(0)_n$ (here the subscript means modulo $\cI^n$) together with an isomorphism $\epsilon$ as above (compare \cite[Def.\,1.21]{Shiho2008}).
     Moreover, by \cite[\S 6]{Kato1989} it induces a projective system of log crystals on $Y$ over $\cT_{\PD,n}$, and hence equivalently a log crystal $E_\cris$ on $Y$ over $\cT_{\PD}$.

	The above construction of $E_\cris$ depends (at least integrally) upon the family $\{E_\cZ,\tau_f\}$. 
	To see that rationally it is independent of such a choice, note that, for each $i\geqslant 0$, $g(i)$ factors through $\cZ(i)^{\mathrm{ex}}[n]$ for large enough $n$, where $[n]$ denotes the convergent analogue of the construction in Definition \ref{def: univ enl}.
	If we take another family $\{E'_\cZ,\tau'_f\}$ satisfying the same conditions, then we have a canonical isomorphism $E'_{\cZ(i)^{\mathrm{ex}}[n]}\otimes\bbQ\cong E_{\cZ(i)^{\mathrm{ex}}[n]}\otimes\bbQ$ as isocoherent sheaves, since $\cZ(i)^{\mathrm{ex}}[n]$ is $p$-adic.
	As a consequence, $\sE_\cris:=E_\cris\otimes\bbQ$ is independent of the choice of the family $\{E_\cZ,\tau_f\}$ as an object of $\mathrm{Crys}(Y/\cT_\PD)_\bbQ$.
	
	Finally the morphism \eqref{eq: morp} is given as the pull-back via $g(0)$ between the de~Rham complexes of log connections corresponding to $\sE$ and $\sE_\cris$.
	
	In the general case, take an open covering $\{Z_\lambda\}_{\lambda\in\Lambda}$ of $Y$ with closed immersions $Z_\lambda\hookrightarrow\cZ_\lambda$ into formal log schemes which are adic and log smooth over $\cT$.
	For $m\geqslant 0$ let $Z^{(m)}$ and $\cZ^{(m)}$ be the $(m+1)$-fold fibre product of $\coprod_{\lambda\in\Lambda}Z_\lambda$ over $Y$ and $\coprod_{\lambda\in\Lambda}\cZ_\lambda$ over $\cT$, respectively.
	Applying the arguments as above to $Z^{(m)}\hookrightarrow\cZ^{(m)}$, we obtain a crystal $\sE_{\cris}^{(m)}$ on $Z^{(m)}$ over $\cT_\PD$ for each $m$.
	By construction there exists a canonical isomorphism $\mathrm{pr}_2^*\sE_\cris^{(0)}\cong \sE_\cris^{(1)}\cong \mathrm{pr}_1^*\sE_\cris^{(0)}$ where $\mathrm{pr}_i\colon Z^{(1)}\rightarrow Z^{(0)}$ is the natural projection, and it satisfies the cocycle condition.
	This descent datum defines an object $\sE_\cris\in\mathrm{Crys}(Y/\cT_\PD)_\bbQ$.
	
	The morphism  \eqref{eq: morp} in the general case is immediately induced from the above case by the \v{C}ech construction.
\end{proof}	
	
This construction was given in \cite[Prop.\,2.35, Thm.\,2.36]{Shiho2008} if $\cT$ is $p$-adic, under the identification of Proposition \ref{prop: isocrystals}.
In that case the morphism \eqref{eq: morp} is a quasi-isomorphism if $Y$ is log smooth over $T$.
However the case when $\cT$ is not $p$-adic is not covered in \cite{Shiho2008}.
More precisely, the authors do not know whether 
the equivalence between isocrystals and HPD-isostratifications 
\cite[Prop.\,1.22]{Shiho2008} which is crucially used in the proof of \cite[Prop.\,2.35]{Shiho2008} can be extended to the case that the base formal log scheme is not $p$-adic.
Composing with \eqref{eq: rig an} and \eqref{eq: rig an c} for the homeomorphic exact closed immersions 
$k^0\hookrightarrow W^0$, $V_1^{\sharp}\hookrightarrow V^\sharp$, and $V_1^\sharp\hookrightarrow \widehat{\cS}$ we obtain the following corollary.

\begin{corollary}\label{cor: rig cris}
	 If $Y$ is a proper strictly semistable log scheme over $k^0$, then there exist canonical quasi-isomorphisms
		\begin{align*}
		&R\Gamma_\rig(Y/W^0)\xrightarrow{\cong}R\Gamma_\cris(Y/W^0)_\bbQ,\\
		&R\Gamma_{\rig,c}(Y/W^0)\xrightarrow{\cong}R\Gamma_{\cris,c}(Y/W^0)_\bbQ.
		\end{align*}
	
	 If $X$ is a proper strictly semistable log scheme over $V^\sharp$, then there exist canonical quasi-isomorphisms
		\begin{align*}
		&R\Gamma_\rig(X_1/V^\sharp)\xrightarrow{\cong}R\Gamma_\cris(X_1/V^\sharp)_\bbQ,\\
		&R\Gamma_{\rig,c}(X_1/V^\sharp)\xrightarrow{\cong}R\Gamma_{\cris,c}(X_1/V^\sharp)_\bbQ,
		\end{align*}
	and canonical morphisms
		\begin{align*}
		&R\Gamma_\rig(X_1/\cS)\rightarrow R\Gamma_\cris(X_1/\cS_{\PD})_\bbQ,\\
		&R\Gamma_{\rig,c}(X_1/\cS)\rightarrow R\Gamma_{\cris,c}(X_1/\cS_{\PD})_\bbQ,
		\end{align*}
	where $X_1$ is $X$ modulo $p$.
	
	All of the above comparison morphisms are compatible with cup products. 
\end{corollary}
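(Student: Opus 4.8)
The plan is to obtain every one of the six comparison morphisms as the composite of the rigid-to-convergent morphism \eqref{eq: rig an} (resp.\ \eqref{eq: rig an c} in the compactly supported case) with the convergent-to-crystalline morphism \eqref{eq: morp} furnished by Proposition \ref{prop: an vs cris}, and then to prove that the first four composites are quasi-isomorphisms by treating the two factors separately. First I would fix the integral data fed into Proposition \ref{prop: an vs cris}: for the non-compactly supported maps take $\{E_\cZ\}=\{\cO_\cZ\}$, and for the compactly supported maps take $\{E_\cZ\}=\{\cO_\cZ(\cM_D)\}$ (resp.\ $\{\cO_\cZ(\cM_C)\}$ over $V^\sharp$). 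One checks that the resulting crystal $\sE_\cris$ coincides with $\sO_{Y/\cT_{\PD}}$ (resp.\ $\sO_{Y/\cT_{\PD}}(\cM_D)$), since both are assembled from lifts of the local generator of the monogenic substructure. This produces all six morphisms at once; for the two $\cS$-morphisms no further claim is made, and they exist precisely because Proposition \ref{prop: an vs cris} does not require $\cT$ to be $p$-adic, whereas $\cS$ and $\cS_{\PD}$ are only $(p,f)$-adic.

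For the non-compact quasi-isomorphisms over $W^0$ and $V^\sharp$, I would note that these bases are $p$-adic and that the immersions $k^0\hookrightarrow W^0$, $V_1^\sharp\hookrightarrow V^\sharp$ have $\PD$-envelope equal to the base itself. Hence \eqref{eq: morp} is the morphism of \cite[Prop.\ 2.35, Thm.\ 2.36]{Shiho2008}, which is a quasi-isomorphism because the base is $p$-adic and $Y$ (resp.\ $X_1$) is log smooth over $k^0$ (resp.\ $V_1^\sharp$). The morphism \eqref{eq: rig an} is a quasi-isomorphism by Proposition \ref{prop : rig con iso} applied with $\sE=\sO$, which is unipotent. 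Composing gives the first and third quasi-isomorphisms of the corollary.

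For the compact support quasi-isomorphisms over $W^0$ and $V^\sharp$, I would reduce to the non-compact case just established by passing to the horizontal divisor strata. The compactly supported comparison morphism fits into a square whose vertical edges are the decompositions of Proposition \ref{prop: D} on the rigid side, identifying $R\Gamma_{\rig,c}(Y/W^0)$ with the total complex of $R\Gamma_\rig(D^{(\star),\flat}/W^0)$, and the quasi-isomorphism \eqref{equ : iso cris} of \cite[Prop.\ 10.6]{YamashitaYasuda2014} on the crystalline side, identifying $R\Gamma_{\cris,c}(Y/W^0)$ with $R\Gamma_\cris(D^{(\star),\flat}/W^0)$ (and similarly over $V^\sharp$ via \eqref{equ : iso cryst}). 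The bottom edge of this square is then, stratum by stratum, the non-compact comparison morphism for the proper strictly semistable (in particular log smooth) log scheme $D^{(r),\flat}$ over $k^0$ with coefficient $\sO$, which is a quasi-isomorphism by the previous step; assembling over the simplicial index $\star$ shows the top edge is a quasi-isomorphism. Finally, compatibility with cup products for all six morphisms holds because \eqref{eq: rig an}, \eqref{eq: rig an c} and \eqref{eq: morp} are induced by morphisms of (log) de Rham complexes respecting the wedge multiplication, and the crystalline product is computed through the same complexes; the product $\sO(\cM_D)\otimes\sO\to\sO(\cM_D)$ relevant for compact supports is realised integrally by $\cO_\cZ(\cM_D)$ and is therefore preserved under the decomposition.

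The hard part will be the reduction step of the previous paragraph, namely verifying that the square commutes not merely on cohomology but at the level of the full co-simplicial systems indexed by $D^{(\star),\flat}$, so that the stratum-wise quasi-isomorphisms genuinely glue to a quasi-isomorphism of total complexes. Concretely, I expect the obstacle to be checking that \eqref{eq: morp} is functorial for the closed immersions $D^{(r),\flat}\hookrightarrow Y$ and that it carries the integral model $\cO_\cZ(\cM_D)$, which cuts out the horizontal strata on the rigid side, to the crystalline ideal sheaf $\sK_{Y/\cT_{\PD},\cM_D}$ of Remark \ref{rem: crys YY} on the crystalline side. Once this identification of the two descriptions of compact support is in place, the square commutes and the quasi-isomorphism follows term by term from the non-compact case.
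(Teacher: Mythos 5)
Your construction of the six morphisms is exactly the paper's: both factor through log convergent cohomology, composing \eqref{eq: rig an} (resp.\ \eqref{eq: rig an c}) with the morphism \eqref{eq: morp} of Proposition \ref{prop: an vs cris}, fed with the integral families $\{\cO_\cZ\}$ and $\{\cO_\cZ(\cM_D)\}$ (resp.\ $\{\cO_\cZ(\cM_C)\}$), and your treatment of the non-compact quasi-isomorphisms over the $p$-adic bases $W^0$ and $V^\sharp$ (Proposition \ref{prop : rig con iso} for the rigid-to-convergent factor, \cite[Prop.\ 2.35, Thm.\ 2.36]{Shiho2008} for the convergent-to-crystalline factor) is identical. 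Where you genuinely diverge is the compactly supported case. The paper needs no stratification argument there: Proposition \ref{prop : rig con iso} already asserts that \eqref{eq: rig conv c} is a quasi-isomorphism (the reduction to $D^{(\star),\flat}$ via Proposition \ref{prop: D} and its convergent analogue happens once, inside that proposition's proof), and Shiho's comparison theorem holds with coefficients in an arbitrary isocrystal, so it applies directly to the rank-one convergent isocrystal associated to $\cO_\cZ(\cM_D)$, whose crystalline realisation is $\sO_{Y/\cT_{\PD}}(\cM_D)$ as in Remark \ref{rem: crys YY}. You instead reduce the full rigid-to-crystalline comparison with compact supports to the trivial-coefficient comparison stratum by stratum, taking Proposition \ref{prop: D} and \eqref{equ : iso cris} (resp.\ \eqref{equ : iso cryst}) as the vertical edges of a square, which obliges you to verify commutativity at the level of the co-simplicial systems --- the step you rightly flag as the hard part. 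That verification is feasible (choose the embedding data as in Lemma \ref{lem: simplicial resolutions}, so that the divisors $\cD_{m,J}$ lift the strata compatibly, and use the identification of the crystal attached to $\{\cO_\cZ(\cM_D)\}$ with $\sO_{Y/\cT_{\PD}}(\cM_D)$), and it buys you two things: only the trivial-coefficient case of Shiho's theorem is needed, and you establish en route the compatibility with the stratifications that the paper only records afterwards, in the remark following the corollary. But it is strictly avoidable, and the paper's coefficient-level argument is the shorter path to the stated quasi-isomorphisms.
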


\begin{remark}
Let $Y$ be a proper strictly semistable log scheme over $k^0$ with horizontal divisor $D$. 
The second quasi-isomorphism in Corollary \ref{cor: rig cris} is compatible with the fourth quasi-isomorphism of Proposition \ref{prop: D}, its convergent analogue, and its crystalline analogue (\ref{equ : iso cris}). 
In particular, we obtain a commutative diagram
	$$
	\xymatrix{R\Gamma_{\rig,c}(Y/W^0) \ar[d]^\sim \ar[r] & R\Gamma_{\conv,c}(Y/W^0) \ar[d]^\sim\ar[r] & R\Gamma_{\cris,c}(Y/W^0)_{\bbQ} \ar[d]^\sim\\
	 R\Gamma_{\rig}(D^{(\bullet),\flat/W^0}) \ar[r]^\sim & R\Gamma_{\conv}(D^{(\bullet),\flat/W^0}) \ar[r]^\sim & R\Gamma_{\cris}(D^{(\bullet),\flat/W^0})_{\bbQ} }
	 $$
\end{remark}

\begin{remark}
The statement of Corollary \ref{cor: rig cris} easily extends to log overconvergent isocrystals $\sE\in \Isoc^\dagger(Y/W^0)$, $\sE\in\Isoc^\dagger(X_1/V^\sharp)$ and $\sE\in\Isoc^\dagger(X_1/\cS)$ respectively, if in the third case we assume the existence of a family $\{E_\cZ\}$ satisfying the conditions of Proposition \ref{prop: an vs cris}. 
\end{remark}

Let $X$ be a proper strictly semistable log scheme over $V^\sharp$ with horizontal divisor $C$.
For a choice of uniformiser $\pi$ of $V$, let $Y_\pi:=X\times_{V^\sharp,j_\pi}k^0$.
We will compare
$\Psi_\pi^\cris$ and $\Psi_{\pi,\log_\pi}$, where $\log_\pi$ is the branch of the $p$-adic logarithm such that $\log_\pi(\pi)= 0$.

\begin{proposition}\label{prop: HK cris -rig}
Let $X$ be a proper  strictly semistable log scheme over $V^\sharp$.
For a  uniformiser $\pi$ of $V$ set $Y_\pi:= X\times_{V^\sharp,i_\pi}k^0$.
There is a commutative diagram
	$$
	\xymatrix{
	R\Gamma_{\HK,c}^{\rig}(Y_\pi) \ar[d]^-\sim \ar[r]^-{\Psi_{\pi,\log_\pi}} & R\Gamma_{\rig,c}(Y_\pi/V^\sharp)_\pi \ar[d]^-\sim\\
	R\Gamma_{\HK,c}^{\cris}(X)_{\pi,\bbQ} \ar[r]^-{\Psi_\pi^{\cris}} & R\Gamma_{\cris,c}(X/V^\sharp)_{\bbQ}}
	$$
\end{proposition}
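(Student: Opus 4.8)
The plan is to reduce the square to the non-compactly supported comparison between $\Psi_{\pi,\log_\pi}$ and $\Psi_\pi^\cris$ already established in \cite{ErtlYamada}, by passing from $Y_\pi$ and $X$ to the simplicial scheme of horizontal strata, on which no horizontal divisor survives. Concretely, both compactly supported theories are computed from these strata: on the rigid side by the quasi-isomorphism $\eta$ of Proposition \ref{prop: D} (applied to $Y_\pi$, whose horizontal divisor is $D_\pi = C\times_{V^\sharp,i_\pi}k^0$), and on the crystalline side by the quasi-isomorphism $\eta$ of \eqref{equ : iso cryst}. Writing $C^{(i),\flat}$ for the pieces of the horizontal divisor of $X$ --- each a proper strictly semistable log scheme over $V^\sharp$ with empty horizontal divisor, whose reduction along $i_\pi$ is $D_\pi^{(i),\flat}$ --- the target square becomes, after applying $\eta$ at all four corners, the simplicial assembly over the index $\star$ of the comparison squares for the individual $C^{(i),\flat}$.

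What must be verified is that each of the four edges of the original square is compatible with $\eta$. Both families of $\eta$'s are induced termwise from the resolutions of Lemma \ref{lem: simplicial resolutions} and its crystalline analogue, so they respect the Kim--Hain structure carrying $u$, the monodromy operator $N$, and the Frobenius $\varphi$; for the two vertical comparison isomorphisms the compatibility is exactly the statement recorded in the remark following Corollary \ref{cor: rig cris}. For the top edge, the rigid Hyodo--Kato map acts termwise on the Kim--Hain complex by $u^{[i]}\mapsto(-\log_\pi\pi)^i/i!$, which equals $1$ for $i=0$ and $0$ otherwise because $\log_\pi(\pi)=0$; since this prescription is literally the same on every stratum it commutes with the coface and codegeneracy maps, hence with $\eta$. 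For the bottom edge, the compactly supported crystalline Hyodo--Kato map is by construction $\eta^{-1}\circ s_\pi\circ\eta$ followed by $j_\pi^*$ (Definition \ref{def: cris HK}), with $s_\pi$ the Frobenius-compatible section formed termwise on the $R\Gamma_\cris(C^{(i),\flat}/\cS'_{\PD})_{\pi,\bbQ}$, so its compatibility with $\eta$, with the factorisation through $\cS'_{\PD}$, and with $j_\pi^*$ is built into the definition through the functoriality of $s_\pi$.

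Once the reduction is in place, the commutativity for each fixed $C^{(i),\flat}$ is precisely the non-compactly supported comparison of \cite{ErtlYamada}: the rigid Hyodo--Kato map with $\log_\pi(\pi)=0$ agrees with $\Psi_\pi^\cris$ under the rigid--crystalline comparison isomorphisms of Corollary \ref{cor: rig cris}. Assembling these squares over $\star$ and invoking that all maps respect the simplicial structure gives the commutativity of the original square. The main obstacle I anticipate is the bottom edge: one must confirm that the Frobenius-compatible section $s_\pi$ on $R\Gamma_{\HK,c}^\cris(X)_\pi$ is genuinely induced --- compatibly with $\eta$ --- from the termwise sections on the strata, which rests on the functoriality and uniqueness of $s_\pi$ from \cite[Prop.\ 4.4.6, Lem.\ 4.4.11]{Tsuji1999-a} applied at each simplicial level and shown to commute with the coboundary maps among the $C^{(i),\flat}$.
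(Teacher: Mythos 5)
Your strata reduction gets the right ingredients on the table, and most of the compatibilities you check do hold: the top edge and the map $R\Gamma^{\rig}_{\HK,c}\to R\Gamma_{\rig,c}(\,\cdot\,/W^0)$ are termwise maps of complexes compatible with the resolutions of Lemma \ref{lem: simplicial resolutions}, and the bottom edge is compatible with $\eta$ \emph{by definition}, since Definition \ref{def: cris HK} constructs the compactly supported $s_\pi$ (and hence $\Psi^\cris_\pi$) through $\eta$ in the first place --- so your self-identified ``main obstacle'' is not where the difficulty lies. The genuine gap is the last step, ``assembling these squares over $\star$''. The levelwise squares you obtain from the non-compactly supported comparison of \cite{ErtlYamada} commute only in the derived category: Tsuji's section $s_\pi$ is a derived-category map characterised by a uniqueness property, and it comes with no preferred homotopies, let alone homotopies compatible with the coface and codegeneracy maps among the $C^{(i),\flat}$. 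Two maps between totalizations of finite cosimplicial complexes that respect the stupid filtration and agree on its graded pieces in the derived category can still differ by filtration-shifting components; levelwise commutativity therefore does not imply commutativity of the totalized square. This is exactly why the paper proves Corollary \ref{cor: compare rigid} (quasi-isomorphism and finiteness statements, which \emph{are} detected on graded pieces via the spectral sequences \eqref{eq: spectral seq}) by the strata reduction you propose, but does not prove the present proposition this way.

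The paper instead argues at the total level, where Frobenius rigidity is available: it assembles both Hyodo--Kato maps into one diagram passing through $R\Gamma_{\rig,c}(X_1/\cS)_\pi$ and $R\Gamma_{\cris,c}(X_1/\cS_{\PD})_{\pi,\bbQ}$, checks that all functorial squares and triangles commute by construction (the branch $\log_\pi(\pi)=0$ entering only to identify the top row with $\Psi_{\pi,\log_\pi}$ via diagram \eqref{eq: diag crig}), and then observes that the two resulting maps $R\Gamma_{\cris,c}(Y_\pi/W^0)_\bbQ\to R\Gamma_{\cris,c}(X_1/\cS_{\PD})_{\pi,\bbQ}$ --- one obtained from the rigid route via $\psi$, the other being $\xi^*\circ s_\pi$ --- are both sections of $j_0^*$ compatible with the Frobenius actions. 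The uniqueness of such a section, \cite[Lem.\ 6.11]{ErtlYamada}, applicable because $H^n_{\cris,c}(X/\cS_{\PD})_{\pi,\bbQ}$ is finite free over $\bbQ\otimes R_{\PD}$, then forces the two routes around your square to agree after applying $j_\pi^*$. If you wish to keep the strata-first outline, you must replace your assembly step by this kind of uniqueness argument applied to the totalized complexes; as it stands, that step is unjustified.
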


\begin{proof}
Recall that by definition $R\Gamma_{\HK,c}^{\cris}(X)_{\pi}= R\Gamma_{\cris,c}(Y_\pi/W^0)$ and consider the diagram
	$$
	\xymatrix{
	R\Gamma_{\HK,c}^{\rig}(Y_\pi) \ar[d]^\sim \ar[r]^{\psi} & R\Gamma_{\rig,c}(Y_\pi/\cS) \ar[r]^{j^\ast_\pi}\ar[ld]_-{j_0^{\ast}}& R\Gamma_{\rig,c}(Y_\pi/V^\sharp)_\pi  \\
	R\Gamma_{\rig,c}(Y_\pi/W^0)\ar[d]^\sim &R\Gamma_{\rig,c}(X_1/\cS)_\pi \ar[u]_-\sim\ar[d] \ar[r]^{j^\ast_\pi} \ar[l]_{j_0^\ast} & R\Gamma_{\rig,c}(X_1/V^\sharp)\ar[u]_-\sim\ar[d]^\sim \\
	R\Gamma_{\cris,c}(Y_\pi/W^0)_{\bbQ} \ar@<-0.5mm>[r]_{s_\pi}& R\Gamma_{\cris,c}(X_1/\cS_{\PD})_{\pi,\bbQ} \ar[r]^-{j^\ast_\pi} \ar@<-0.5mm>[l]_{j_0^\ast} & R\Gamma_{\cris,c}(X_1/V^\sharp)_\bbQ
	}$$
Note that, since $Y_\pi\hookrightarrow X_1$ is a homeomorphic exact closed immersion, their log rigid cohomologies coincide by definition.
The vertical morphisms from rigid to crystalline pass through log convergent cohomology as explained in Corollary \ref{cor: rig cris}.
It follows immediately from the definitions that all triangles and squares in the above diagram commute.

Our choice of branch of the $p$-adic logarithm  implies that the lower right triangle in (\ref{eq: diag crig}) commutes, and hence the composition of the upper horizontal maps gives $\Psi_{\pi,\log_\pi}: R\Gamma_{\HK,c}^{\rig}(Y_\pi) \rightarrow R\Gamma_{\rig,c}(Y_\pi/V^\sharp)_\pi$.
The composition of the bottom map is $\Psi_\pi^\cris: R\Gamma_{\HK,c}^{\cris}(X)_{\pi,\bbQ} \rightarrow R\Gamma_{\cris,c}(X/V^\sharp)_{\bbQ}$ by construction. 

By commutativity of the diagram, the composition
	\begin{align*}
	&R\Gamma_{\cris,c}(Y_\pi/W^0)_\bbQ\xleftarrow{\cong}R\Gamma^\rig_{\HK,c}(Y_\pi)\xrightarrow{\psi}R\Gamma_{\rig,c}(Y_\pi/\cS)\xleftarrow{\cong}R\Gamma_{\rig,c}(X_1/\cS)_\pi\rightarrow R\Gamma_{\cris,c}(X_1/\cS_{\PD})_{\pi,\bbQ}
	\end{align*}
coincides with the section
$$
R\Gamma_{\cris,c}(Y_\pi/W^0)_\bbQ\xrightarrow{s_\pi} R\Gamma_{\cris,c}(X_1/\cS_{\PD})_{\pi,\bbQ}
$$
of $j_0^{\ast}\colon R\Gamma_{\cris,c}(X_1/\cS_{\PD})_{\pi,\bbQ}\rightarrow R\Gamma_{\cris,c}(Y_\pi/W^0)_\bbQ$ and compatible with the Frobenius actions.
Hence the statement follows by uniqueness of such a section 
as explained in the proof of Proposition \ref{prop: cris sections}.
\end{proof}

%
\section{Poincar\'{e} duality}\label{sec: dual}
%

Let $Y$ be a proper strictly semistable scheme over $k^0$ with horizontal divisor $D$ and $(\sE,\Phi)\in F\Isoc^\dagger(Y/W^\varnothing)$.
As in \eqref{eq: pairing dual} we have a pairing for the rigid Hyodo--Kato cohomology which is compatible with Frobenius and monodromy, and similarly pairings for log rigid cohomology which over $W^0$ and $W^\varnothing$ are compatible with Frobenius:
	\begin{eqnarray}
	\label{eq: pairing}\nonumber && R\Gamma^\rig_\HK(Y,(\sE,\Phi))  \otimes R\Gamma^\rig_{\HK,c}(Y,(\sE^\vee,\Phi^\vee))  \rightarrow  R\Gamma^\rig_{\HK,c}(Y),\\
	\nonumber && R\Gamma_\rig(Y/W^0,(\sE,\Phi))  \otimes R\Gamma_{\rig,c}(Y/W^0,(\sE^\vee,\Phi^\vee))  \rightarrow  R\Gamma_{\rig,c}(Y/W^0),\\
	\nonumber &&R\Gamma_\rig(Y/W^\varnothing,(\sE,\Phi))  \otimes R\Gamma_{\rig,c}(Y/W^\varnothing,(\sE^\vee,\Phi^\vee))  \rightarrow  R\Gamma_{\rig,c}(Y/W^\varnothing),\\
	\nonumber && R\Gamma_\rig(Y/V^\sharp,\sE)_\pi  \otimes R\Gamma_{\rig,c}(Y/V^\sharp,\sE^\vee)_\pi  \rightarrow  R\Gamma_{\rig,c}(Y/V^\sharp)_\pi.	
	\end{eqnarray}

Because of  the comparison isomorphisms in Corollary \ref{cor: rig cris}, Poincar\'{e} duality for log crystalline cohomology over $W^0$ due to Tsuji explained in Proposition \ref{rem: Poincare cris} carries over to log rigid cohomology over $W^0$.

The morphisms in the diagram \eqref{eq: diag crig} (and its non-compactly supported version) are by construction compatible with the pairings.
Note that the compatibility of the Hyodo--Kato map $\Psi_{\pi,\log}$ with the pairing can be seen by
\[\Psi_{\pi,\log}(u^{[i]}\wedge u^{[j]})=\frac{(i+j)!}{i!j!}\frac{(-\log(\pi))^{i+j}}{(i+j)!}=\frac{(-\log(\pi))^i}{i!}\frac{(-\log(\pi))^j}{j!}=\Psi_{\pi,\log}(u^{[i]})\wedge\Psi_{\pi,\log}(u^{[j]}).\]
Therefore when $\sE$ is unipotent Poincar\'{e} duality for log rigid cohomology over $W^0$ implies those for rigid Hyodo--Kato cohomology and log rigid cohomology over $V^\sharp$, by Corollary \ref{cor: compare rigid}.

Furthermore, by definition of the usual and the compactly supported rigid Hyodo--Kato cohomology, $R\Gamma_\rig(Y/W^\varnothing,(\sE,\Phi))$ and $R\Gamma_{\rig,c}(Y/W^\varnothing,(\sE^\vee,\Phi^\vee))$ compute the cone of the respective monodromy operator on $R\Gamma^\rig_\HK(Y,(\sE,\Phi))$ and $R\Gamma^\rig_{\HK,c}(Y,(\sE^\vee,\Phi^\vee))$. 
Consequently, we obtain Poincar\'{e} duality for log rigid cohomology over $W^\varnothing$, but the degree shifts by one.

To summarise, we obtain the following theorem.

\begin{theorem}\label{thm: dual}
	Let $d:= \dim Y$.
	Let $(\sE,\Phi)\in F\Isoc^\dagger(Y/W^\varnothing)^\unip$, and denote the dual $F$-isocrystal by $(\sE^\vee,\Phi^\vee):=\sheafhom((\sE,\Phi),\sO_{Y/W^\varnothing})$.
	Then there exist canonical isomorphisms
	\begin{align}
	\nonumber & R\Gamma_\rig(Y/W^\varnothing,(\sE,\Phi)) \xrightarrow{\cong}R\Gamma_{\rig,c}(Y/W^\varnothing,(\sE^\vee,\Phi^\vee))^*[-2d-1](-d-1)&&\text{in }D^b(\Mod_F^\fin(\varphi)),\\
	\nonumber\label{eq: duality derived}&R\Gamma^\rig_\HK(Y,(\sE,\Phi)) \xrightarrow{\cong}R\Gamma^\rig_{\HK,c}(Y,(\sE^\vee,\Phi^\vee))^*[-2d](-d)&&\text{in }D^b(\Mod_F^\fin(\varphi,N)),\\
	\nonumber &R\Gamma_\rig(Y/W^0,(\sE,\Phi)) \xrightarrow{\cong}R\Gamma_{\rig,c}(Y/W^0,(\sE^\vee,\Phi^\vee))^*[-2d](-d)&&\text{in }D^b(\Mod_F^\fin(\varphi)),\\
	\nonumber &R\Gamma_\rig(Y/V^\sharp,\sE)_\pi \xrightarrow{\cong}R\Gamma_{\rig,c}(Y/V^\sharp,\sE^\vee)_\pi^*[-2d]&&\text{in }D^b(\Mod^\fin_K),
	\end{align}
	where $*$ denotes the (derived) internal Hom $R\underline{\Hom}(-,F)$ (or $R\underline{\Hom}(-,K)$).
	On the level of cohomology groups, we have isomorphisms
	\begin{align}
	\nonumber &H^i_\rig(Y/W^\varnothing,(\sE,\Phi)) \xrightarrow{\cong}H^{2d-i+1}_{\rig,c}(Y/W^\varnothing,(\sE^\vee,\Phi^\vee))^*(-d-1)&&\text{in }\Mod_F^\fin(\varphi),\\
	\nonumber&H^{\rig,i}_\HK(Y,(\sE,\Phi)) \xrightarrow{\cong}H^{\rig,2d-i}_{\HK,c}(Y,(\sE^\vee,\Phi^\vee))^*(-d)&&\text{in }\Mod_F^\fin(\varphi,N),\\
	\nonumber & H^i_\rig(Y/W^0,(\sE,\Phi)) \xrightarrow{\cong}H^{2d-i}_{\rig,c}(Y/W^0,(\sE^\vee,\Phi^\vee))^*(-d)&&\text{in }\Mod_F^\fin(\varphi),\\
	\nonumber & H^i_\rig(Y/V^\sharp,\sE)_\pi \xrightarrow{\cong}H^{2d-i}_{\rig,c}(Y/V^\sharp,\sE^\vee)_\pi^*&&\text{in }\Mod_K^\fin,
	\end{align}
	where for an integer $r\in\bbZ$ we denote by $(r)$ the Tate twist defined by multiplying $\varphi$ by $p^{-r}$.
	The Hyodo--Kato maps are compatible with Poincar\'{e} duality in the sense that $\Psi_{\pi,\log,K}^*=\Psi_{\pi,\log,K}^{-1}$.
\end{theorem}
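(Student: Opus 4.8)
The plan is to treat Tsuji's Poincar\'{e} duality for log crystalline cohomology over $W^0$ as the sole external input and to propagate it around the diagram \eqref{eq: diag crig}, exploiting that every comparison morphism appearing there, as well as the crystalline comparison of Corollary \ref{cor: rig cris}, is compatible with the cup-product pairings \eqref{eq: pairing}. In this way the whole theorem reduces to transporting one perfect pairing along quasi-isomorphisms and then performing a single cone construction.

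First I would establish the duality over $W^0$. Remark \ref{rem: Poincare cris} records Tsuji's perfect pairing $H^i_\cris(Y/W^0,\sE)\times H^{2d-i}_{\cris,c}(Y/W^0,\sE^\vee)\to W$ together with the trace $\Tr_{Y/k^0}\colon H^{2d}_{\cris,c}(Y/W^0)\to W$. Since the comparison isomorphisms of Corollary \ref{cor: rig cris} respect cup products, this transports verbatim to log rigid cohomology over $W^0$. The only bookkeeping is the Frobenius weight: the trace is $\varphi$-equivariant up to the factor $p^{d}$, which is precisely the Tate twist $(-d)$, so the resulting isomorphism lives in $\Mod_F^\fin(\varphi)$, the finiteness being guaranteed by Corollary \ref{cor: compare rigid}(iv).

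Next I would move to rigid Hyodo--Kato cohomology and then to $V^\sharp$. The quasi-isomorphism $R\Gamma^\rig_{\HK,c}(Y,\sE)\xrightarrow{\cong}R\Gamma_{\rig,c}(Y/W^0,\sE)$ of Corollary \ref{cor: compare rigid}(i), together with its analogue without compact support, carries the pairing data, so the $W^0$-duality descends to a Hyodo--Kato duality. The genuinely new ingredient is the monodromy operator: the Leibniz rule $N(u^{[i]}\wedge u^{[j]})=N(u^{[i]})\wedge u^{[j]}+u^{[i]}\wedge N(u^{[j]})$ shows that the pairing is a morphism in $\Mod_F(\varphi,N)$, where the dual object carries $N$ as its negative transpose, so the duality upgrades to $\Mod_F^\fin(\varphi,N)$. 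Tensoring with $K$ and invoking that $\Psi_{\pi,\log,K}$ is a quasi-isomorphism (Corollary \ref{cor: compare rigid}(iii)) compatible with the pairing --- verified by $\Psi_{\pi,\log}(u^{[i]}\wedge u^{[j]})=\Psi_{\pi,\log}(u^{[i]})\wedge\Psi_{\pi,\log}(u^{[j]})$ --- then yields both the duality over $V^\sharp$ in $\Mod_K$ and, simultaneously, the isometry identity $\Psi_{\pi,\log,K}^*=\Psi_{\pi,\log,K}^{-1}$.

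Finally I would deduce the $W^\varnothing$-case. By Corollary \ref{cor: compare rigid}, the complexes $R\Gamma_\rig(Y/W^\varnothing,(\sE,\Phi))$ and $R\Gamma_{\rig,c}(Y/W^\varnothing,(\sE^\vee,\Phi^\vee))$ are the cones of $N$ on $R\Gamma^\rig_\HK$ and $R\Gamma^\rig_{\HK,c}$ respectively. Reading $N$ as a morphism $M\to M(-1)$ --- forced by $N\varphi=p\varphi N$ --- a cone-duality lemma converts the Hyodo--Kato duality into a duality of cones, now in $\Mod_F^\fin(\varphi)$ since the monodromy has been absorbed, and produces both the degree shift by one and the extra Tate twist $(-d-1)$. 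The passage from the derived statements to the stated isomorphisms of cohomology groups is then formal: over a field $R\underline{\Hom}(-,F)$ is exact, so $H^i(C^*)=H^{-i}(C)^*$, and the finiteness together with the fullness of $D^b(\Mod_F^\fin(\varphi,N))$ noted after Corollary \ref{cor: compare rigid} keeps everything inside the finite subcategories. The hard part will be exactly this last step: matching the cone-duality shift with the required shift by one while correctly tracking the Tate twist that $N$ introduces, and verifying that dualization interacts with the cone so that the Frobenius structure on the resulting $W^\varnothing$-isomorphism is the asserted one.
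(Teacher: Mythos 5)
Your proposal is correct and follows essentially the same route as the paper: Tsuji's crystalline duality over $W^0$ transported to log rigid cohomology via Corollary \ref{cor: rig cris}, propagated to Hyodo--Kato and $V^\sharp$ cohomology through the pairing-compatible morphisms of diagram \eqref{eq: diag crig} and Corollary \ref{cor: compare rigid}, and finally the $W^\varnothing$ case obtained from the cone-of-monodromy description with the resulting shift by one. Your explicit bookkeeping of the Tate twists (the $\varphi$-equivariance of the trace up to $p^d$, and $N$ read as a map $M\to M(-1)$ forcing the extra twist $(-d-1)$) is detail the paper leaves implicit, but it is the same argument.
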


\begin{remark}
Poincar\'e duality as stated above implies that the compactly supported log rigid cohomology defined in this paper is independent of the compactification (as long as it is a simple normal crossing compactification).

More precisely, let $Y$ be a proper strictly semistable  log scheme with horizontal divisor $D$ and $U=Y\backslash D$.
The statement of \cite[Prop.\,3.30]{ErtlYamada} together with Poincar\'e duality, implies the existence of isomorphisms of cohomology groups
\begin{align*}
	H^i_\rig(U/W^\varnothing) &\cong H^{2d-i+1}_{\rig,c}(Y/W^\varnothing)^*(-d-1),\\
	H^{\rig,i}_\HK(U) &\cong H^{\rig,2d-i}_{\HK,c}(Y)^*(-d),\\
	 H^i_\rig(U/W^0)& \cong H^{2d-i}_{\rig,c}(Y/W^0)^*(-d),\\
	H^i_\rig(U/V^\sharp)_\pi &\cong H^{2d-i}_{\rig,c}(Y/V^\sharp)_\pi^*.
	\end{align*}
As this is true for any simple normal crossing compactification $Y$ of $U$, the independence follows.
\end{remark}

 Finally we remark that the push-forward of compactly supported cohomology with respect to an open immersion given in Proposition \ref{prop: functoriality} is the dual of the pull-back of non-compactly supported cohomology.
We thank David Loeffler for pointing out that such a result might hold. 
 
\begin{proposition}
	Let $f\colon Y'\rightarrow Y$ be a morphism between proper strictly semistable log schemes over $k^0$ as in Proposition \ref{prop: functoriality}, i.e.\,$f$ is the identity as a morphism of underlying schemes, and the horizontal divisor $D$ of $Y$ is a union of the horizontal components of the horizontal divisor $D'$ of $Y'$.
	Let $\sE\in\Isoc^\dagger(Y/W^\varnothing)$.
	Then
		\[f^*\colon R\Gamma_\HK^\rig(Y,\sE)\rightarrow R\Gamma_\HK^\rig(Y',f^*\sE)\]
	and
		\[f_*\colon R\Gamma^\rig_{\HK,c}(Y',f^*\sE^\vee)\rightarrow R\Gamma^\rig_{\HK,c}(Y,\sE^\vee)\]
	are dual to each other via Poincar\'{e} duality in Theorem \ref{thm: dual}.
	
	We also have similar statements for log rigid cohomology over $W^\varnothing$, $W^0$, and $V^\sharp$.
\end{proposition}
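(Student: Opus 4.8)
The plan is to reduce the duality of $f^*$ and $f_*$ to a projection formula together with a compatibility of the trace maps, and to check each on the explicit complexes. Recall that the Poincar\'e duality of Theorem~\ref{thm: dual} is induced by the cup product pairing \eqref{eq: pairing dual} followed by a trace isomorphism $\Tr_Y\colon H^{\rig,2d}_{\HK,c}(Y)\xrightarrow{\cong}F(-d)$, and similarly for $Y'$; write $\langle-,-\rangle_Y$ and $\langle-,-\rangle_{Y'}$ for the resulting perfect pairings. The assertion that $f^*$ and $f_*$ are dual is then precisely the identity
\[\langle a,f_*b'\rangle_Y=\langle f^*a,b'\rangle_{Y'}\qquad(a\in R\Gamma^\rig_\HK(Y,\sE),\ b'\in R\Gamma^\rig_{\HK,c}(Y',f^*\sE^\vee)).\]
I would split this into (a) a projection formula $f_*(f^*a\cup b')=a\cup f_*b'$ in $R\Gamma^\rig_{\HK,c}(Y)$, and (b) the trace compatibility $\Tr_Y\circ f_*=\Tr_{Y'}$ in top degree; granting both, $\langle a,f_*b'\rangle_Y=\Tr_Y(a\cup f_*b')=\Tr_Y\bigl(f_*(f^*a\cup b')\bigr)=\Tr_{Y'}(f^*a\cup b')=\langle f^*a,b'\rangle_{Y'}$.

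For (a) I would work on a single local embedding datum, as in the proof of Proposition~\ref{prop: functoriality}\ref{item: covariant}: take one for $Y'$ with associated simplicial object $\cZ'_\bullet$ and let $\cZ_\bullet$ be the same underlying weak formal schemes carrying the smaller log structure $i'_{\bullet,\star}f^*\cN_Y$ that computes $Y$. Then $R\Gamma^\rig_\HK(Y,\sE)$ and $R\Gamma^\rig_\HK(Y',f^*\sE)$ are the Kim--Hain complexes of $\sE$ on $\cZ_\bullet$ and $\cZ'_\bullet$, the map $f^*$ is the inclusion of log de Rham complexes induced by $\omega^\bullet_{\cZ_\bullet/W^\varnothing,\bbQ}\hookrightarrow\omega^\bullet_{\cZ'_\bullet/W^\varnothing,\bbQ}$, and the compactly supported complexes are the twists by the modules $\cO_{\cZ_\bullet}(\cM_D)$ and $\cO_{\cZ'_\bullet}(\cM_{D'})$ of \eqref{eq: basis4}. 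The essential point is that via $f^*$ the Kim--Hain CDGA on $\cZ'_\bullet$ becomes a module over that on $\cZ_\bullet$, that both cup products are instances of this module action (defined by the same map $\gamma$ as in the construction of \eqref{eq: pairing general form}), and that the explicit morphism defining $f_*$ on the twisted complexes is a morphism of $\omega^\bullet_{\cZ_\bullet/W^\varnothing,\bbQ}[u]$-modules. The projection formula is exactly this module-linearity, which follows from the local formulas for $f^*$ and $f_*$ together with the evaluation $\sE\otimes\sE^\vee\to\sO_{Y/W^\varnothing}$.

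For (b) I would use the stratum description of Proposition~\ref{prop: D}, under which $R\Gamma^\rig_{\HK,c}(Y)\simeq R\Gamma^\rig_\HK(D^{(\star),\flat})$, and $f_*$ becomes the projection $R\Gamma^\rig_\HK({D'}^{(\star),\flat})\to R\Gamma^\rig_\HK(D^{(\star),\flat})$ onto the summands indexed by $J\subset\Upsilon_D\subset\Upsilon_{D'}$. Since each $D^{(i),\flat}$ is proper of dimension $d-i$, its Hyodo--Kato cohomology vanishes above degree $2(d-i)$, so $H^{\rig,2d-i}_\HK(D^{(i),\flat})=0$ for $i\geq1$; thus in the spectral sequence \eqref{eq: spectral seq} the edge map identifies $H^{\rig,2d}_{\HK,c}(Y)\cong H^{\rig,2d}_\HK(Y^\flat)$, and likewise for $Y'$. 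As $Y$ and $Y'$ share the same underlying scheme and the same vertical log structure we have $Y^\flat=Y'^\flat$, and $f_*$ restricts to the identity on the $i=0$ stratum; since both traces are read off from the common group $H^{\rig,2d}_\HK(Y^\flat)$, they agree. All maps and pairings being compatible with $N$ (and with $\varphi$ when a Frobenius structure is present), the identity holds in $\Mod_F(N)$ (resp.\ $\Mod_F(\varphi,N)$), and the analogous statements over $W^\varnothing$, $W^0$ and $V^\sharp$ follow by transporting the argument through the comparison quasi-isomorphisms of \eqref{eq: diag crig} and Corollary~\ref{cor: compare rigid}, the degree shift over $W^\varnothing$ arising from the cone of $N$ as in Theorem~\ref{thm: dual}.

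I expect the main obstacle to be step (a): one must verify that the explicit map defining $f_*$ in Proposition~\ref{prop: functoriality}\ref{item: covariant} genuinely respects the module structure, so that wedging against a class pulled back by $f^*$ commutes with $f_*$. This is a local computation on the common $\cZ_\bullet$, but it requires care with signs in the de Rham differential and with the divided-power variables $u^{[i]}$ of the Kim--Hain complex. The trace compatibility in (b), by contrast, is essentially forced by the dimension count once one knows the traces are compatible with the stratum spectral sequence.
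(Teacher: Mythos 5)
Your proposal follows essentially the same route as the paper: the heart of both arguments is the cochain-level projection formula $f_*(f^*\alpha\wedge\beta)=\alpha\wedge f_*\beta$, verified on the shared local embedding data of Proposition \ref{prop: functoriality}, where $f^*$ and $f_*$ are literally inclusions of subcomplexes (the non-compact complex for $Y$ sits inside that for $Y'$, and the compactly supported complex for $Y'$ sits inside that for $Y$), so the formula holds on the nose. The paper stops there (``this formula implies the assertion''), leaving the trace compatibility $\Tr_Y\circ f_*=\Tr_{Y'}$ implicit, so your step (b) --- checking it via the stratum spectral sequence \eqref{eq: spectral seq} and the dimension count on $D^{(i),\flat}$ --- is a more explicit rendering of the same argument rather than a departure from it.
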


\begin{proof}
	This follows immediately from Corollary \ref{cor: functorial cup}\ref{item: push cup}.
\end{proof}

\appendix
\section{Extension groups of $(\varphi,N)$-modules}
Let $F$ be a finite unramified extension of $\bbQ_p$.
As defined in Definition \ref{def: modules}, we denote by $\Mod_F(\varphi,N)$ the category of $(\varphi,N)$-modules over $F$ and by $\Mod_F^\fin(\varphi,N)$ the full subcategory of finite $(\varphi,N)$-modules.
The purpose of this section is to prove the following theorem:

\begin{theorem}\label{thm: ext compare}
		Let $D^b_{\fin}(\Mod_F(\varphi,N))$ be the full subcategory of $D^b(\Mod_F(\varphi,N))$ consisting of objects $M^\bullet$ such that $H^i(M^\bullet)$ is finite for all $i\in\bbZ$.
	Then the natural functor $D^b(\Mod_F^\fin(\varphi,N))\rightarrow D^b_{\fin}(\Mod_F(\varphi,N))$ is an equivalence of categories.
\end{theorem}

\begin{remark}
The theorem implies that the functor 
$D^b(\Mod_F^\fin(\varphi,N))\rightarrow D^b(\Mod_F(\varphi,N))$ is fully faithful.
This fact was used in the proof of \cite[Lem.\,2.5]{DegliseNiziol} \textit{without proof} in order to compute the extension groups in 
$\Mod_F^\fin(\varphi,N)$ by those in $\Mod_F(\varphi,N)$.
Note that an object of $\Mod_F(\varphi,N)$ (e.g.\,the object $M_{\varphi,N}$ used in the proof of \cite[Lem.\,2.5]{DegliseNiziol}) is in general \textit{not} the union of its finite subobjects.
For this reason, this gap seems not to be trivial to the authors.
\end{remark}

Before starting the proof of Theorem \ref{thm: ext compare}, we prepare some terminologies concerning $(\varphi,N)$-modules.
Let $R:=F[\varphi]$ be the set of polynomials in a variable $\varphi$ with the usual addition and the (non-commutative) product given by $\varphi a=\sigma(a)\varphi$ for $a\in F$. 
Clearly, the category of left $R$-modules is equivalent to the category of $\varphi$-modules. 
Thus by forgetting the monodromy operator, a $(\varphi,N)$-module can be seen as a left $R$-module.
We say that an object $M\in\Mod_F(\varphi,N)$ is $R$-\textit{torsion} (resp.\,$R$-\textit{torsion free}) if it is torsion (resp.\,torsion free) as a left $R$-module.
Similarly, we define $R$-torsion and non-$R$-torsion elements of $M$ by regarding $M$ as a left $R$-module.
To deal with phenomena related to the monodromy operator, it will also be useful to consider the isomorphism $\tau\colon R\xrightarrow{\cong}R$ defined by $\tau(a)=a$ for any $a\in F$ and $\tau(\varphi)=p\varphi$.

For $M\in\Mod_F(\varphi,N)$ and $x\in M$, we define
	\begin{align*}
	&\ord_N(x):=\begin{cases}\min\{i\in\bbN\mid N^ix=0\}&\text{if $N^ix=0$ for some $i$,}\\ \infty&\text{otherwise,}\end{cases}\\
	&\ord_{\varphi,N}(x):=\min_{f\in R\setminus\{0\}}\ord_N(fx).
	\end{align*}
	
\begin{lemma}\label{lem: order}
\begin{enumerate}
\item\label{item: order 1} We have $\ord_{\varphi,N}(x)\leq\ord_N(x)$ for any $x\in M$.
\item\label{item: order 2} Let $i\geqslant 0$. We have $i<\ord_{N,\varphi}(x)$ if and only if $N^ix$ is $R$-torsion free. If these conditions hold, $N^ifx$ is $R$-torsion free for any $f\in R\setminus\{0\}$.
\end{enumerate}
\end{lemma}

\begin{proof}
Noting that $\tau$ is bijective on $R$ and that $Nf=\tau(f)N$ for $f\in R$, the statements follow immediately from the definition.
\end{proof}

We will compare step by step the derived categories of the following full subcategories:
\begin{align*}
\Mod_F(\varphi,N)&\\
\cup\hspace{24pt}&\\
\Mod_F^\nil(\varphi,N)&:=\{M\in\Mod_F(\varphi,N)\mid \text{$\ord_N(x)<\infty$ for any $x\in M$}\}\\
\cup\hspace{24pt}&\\
\Mod_F^\tor(\varphi,N)&:=\{M\in\Mod_F^\nil(\varphi,N)\mid \text{$M$ is $R$-torsion}\}\\
\cup\hspace{24pt}&\\
\Mod_F^\fin(\varphi,N)&=\{M\in\Mod_F^\fin(\varphi,N)\mid \text{$\mathrm{dim}M<\infty$ and $\varphi$ is bijective}\}.
\end{align*}

As before, for $!,?\in\{\emptyset,\nil,\tor,\fin\}$, we denote by $D^b_{!}(\Mod_F^?(\varphi,N))$ the full subcategory of $D^b(\Mod_F^?(\varphi,N))$ consisting of objects $M^\bullet$ such that $H^i(M^\bullet)\in\Mod_F^!(\varphi,N)$ for all $i\in\bbZ$. 

\begin{lemma}\label{lem: usual vs nil}
	The natural functor $D^b(\Mod_F^\nil(\varphi,N))\rightarrow D^b_\nil(\Mod_F(\varphi,N))$ is an equivalence of categories.
\end{lemma}

\begin{proof}
	We first note that the proof of \cite[Lem.\,2.5]{DegliseNiziol} actually shows that the extension groups in $\Mod_F(\varphi,N)$ are computed as the cohomology groups of certain double complexes given in the statement of \textit{loc.cit.}
     	
	For an $F$-vector space $M$, we set $M'_{\varphi,N}:=\bigoplus_{i\geqslant 0}\prod_{j\geqslant 0}M_{i,j}$ with $M_{i,j}:=M$, endowed with the $F$-action and the $(\varphi,N)$-module structure induced by
	\begin{align*}
	&a(m_{i,j})_{i,j}:=(\sigma^j(a)m_{i,j})_{i,j} \text{ for $a\in F$,}&
	&\varphi(m_{i,j})_{i,j}:=(m_{i,j+1})_{i,j},&&N(m_{i,j})_{i,j}:=(p^{-j}m_{i+1,j})_{i,j}.
	\end{align*}
			
	Then $M'_{\varphi,N}$ is an object of $\Mod_F^\nil(\varphi,N)$, and the association $M\mapsto M'_{\varphi,N}$ gives a right adjoint of the forgetful functor.
	Now we can show that the extension groups in $\Mod_F^\nil(\varphi,N)$ are also computed by double complexes of the same form as in \cite[Lem.\,2.5]{DegliseNiziol} by replacing $M_{\varphi,N}$ in the proof of \textit{loc.cit.} by $M'_{\varphi,N}$.
	
	Consequently we have canonical isomorphisms 
	$$\Ext^n_{\Mod_F^\nil(\varphi,N)}(L,M)\xrightarrow{\cong}\Ext^n_{\Mod_F(\varphi,N)}(L,M)$$ for all $L,M\in\Mod_F^\nil(\varphi,N)$ and $n\in\bbZ$ and the lemma follows from \cite[Lem.\,1.4]{Beilinson1987}.
\end{proof}

For the other comparisons, we use the following theorem.

\begin{theorem}\label{thm: thick}
	Let $\cA$ be an abelian category, $\cA'\subset\cA$ a thick full subcategory, and $D_{\cA'}^b(\cA)\subset D^b(\cA)$ the full subcategory consisting of objects $X$ such that $H^i(X)\in\cA'$ for all $i\in\bbZ$.
	The natural functor $D^b(\cA')\subset D^b_{\cA'}(\cA)$ is an equivalence of categories if one of the following conditions holds:
	\begin{enumerate}
	\item\label{item: thick 1}for any monomorphism $A\rightarrow B$ in $\cA$ with $A\in\cA'$, there exists a morphism $B\rightarrow C$ in $\cA$ with $C\in\cA'$ such that the composite $A\rightarrow C$ is a monomorphism,
	\item\label{item: thick 2}for any epimorphism $B\rightarrow A$ in $\cA$ with $A\in\cA'$, there exists a morphism $C\rightarrow B$ in $\cA$ with $C\in\cA'$ such that the composite $C\rightarrow A$ is an epimorphism.
	\end{enumerate}
\end{theorem}

\begin{proof}
	This follows immediately from \cite[Thm.\,13.2.8]{KashiwaraSchapira} and its dual.
\end{proof}

To compare the derived categories of $\Mod_F^\nil(\varphi,N)$ and $\Mod_F^\tor(\varphi,N)$, we will need the following lemma:

\begin{lemma}\label{lem: linearly independent}
	Let $M\in\Mod_F^\nil(\varphi,N)$, $x\in M$, and $f\in R\setminus\{0\}$.
	Take $f\in R\setminus\{0\}$ such that $\alpha:=\ord_{\varphi,N}(x)=\ord_N(fx)$.
	Then the set $\{N^ifx\}_{i=0}^{\alpha-1}$ is linearly independent over $R$.
\end{lemma}

\begin{proof}
	If $\alpha=0$, the statement is trivial because the set $\{fx,Nfx,\ldots,N^{\alpha-1}fx\}$ is empty.
	Suppose that $\alpha>0$ and $\sum_{i=0}^{\alpha-1}g_iN^ifx=0$ for $g_i\in R$.
	Assume that $g_i\neq 0$ for some $i$, and let $i_0$ be the smallest one in such $i$'s.
	Then we have
	\[0=N^{\alpha-i_0-1}\sum_{i=0}^{\alpha-1}g_iN^ifx=\sum_{i=0}^{\alpha-1}N^{\alpha+i-i_0-1}\tau^{-i}(g_i)fx=N^{\alpha-1}\tau^{-i_0}(g_{i_0})fx.\]
	But this contradicts the minimality of $\alpha$.
	Thus we obtain $g_i=0$ for any $i$.
\end{proof}

\begin{lemma}\label{lem: nil vs tor}
	The natural functor $D^b(\Mod_F^\tor(\varphi,N))\rightarrow D^b_\tor(\Mod_F^\nil(\varphi,N))$ is an equivalence of categories.
\end{lemma}

\begin{proof}
	Let $A\hookrightarrow B$ be an injection in $\Mod_F^\nil(\varphi,N)$ and suppose that $A$ is $R$-torsion.
	Let $\Theta$ be the set of all quotients $C$ of $B$ in $\Mod_F^\nil(\varphi,N)$ such that the induced map $A\rightarrow C$ is injective.
	We define a partial order on $\Theta$ by setting $C_1<C_2$ if $C_2$ is a quotient of $C_1$.
	For any totally ordered subset $X\subset\Theta$, the map $A\rightarrow\varinjlim_{C\in X}C$ is still injective.
	Thus by Zorn's lemma there exists a maximal element $C_0\in\Theta$ with respect to this partial order.
	By Theorem \ref{thm: thick}\ref{item: thick 1}, it suffices to show that $C_0$ is $R$-torsion.
	
	Assume that there exists a non-$R$-torsion element $x\in C_0$. 
	Take $f\in R\setminus\{0\}$ such that $\alpha:=\ord_{\varphi,N}(x)=\ord_N(fx)$ and endow $\bigoplus_{i=0}^{\alpha-1}R$ with the monodromy operator defined by $N(g_i)_i:=(\tau(g_{i-1}))_i$.
	It is easy to see that
	\[\bigoplus_{i=0}^{\alpha-1}R\rightarrow C_0;\ (g_i)_i\mapsto\sum_{i=0}^{\alpha-1}g_iN^ifx\]
	defines a morphism in $\Mod_F^\nil(\varphi,N)$.
	Moreover this is injective by Lemma \ref{lem: linearly independent}.
	By Lemma \ref{lem: order} $N^ifx$ is $R$-torsion free for each $0\leq i\leq \alpha-1$.
	As $A$ is $R$-torsion, it follows that $A\rightarrow C_0/\bigoplus_{i=0}^{\alpha-1}R$ is injective.
	Since $\alpha>0$ again by Lemma \ref{lem: order}, this contradicts the maximality of $C_0$.
\end{proof}

\begin{lemma}\label{lem: tor vs fp}
	The natural functor $D^b(\Mod_F^\fin(\varphi,N))\rightarrow D^b_\fin(\Mod_F^\tor(\varphi,N))$ is an equivalence of categories.
\end{lemma}

\begin{proof}
	Let $B\rightarrow A$ be a surjection in $\Mod_F^\tor(\varphi,N)$ and suppose that $A$ is finite.
	Choose elements $x_1,\ldots,x_d\in B$ such that their images in $A$ form an $F$-basis, and let $C'\subset B$ be the $F$-subspace spanned by elements of the form $N^i\varphi^jx_k$ for all integers $i,j\geqslant 0$  and $k=1,\ldots,d$.
	Then $C'$ is stable under $\varphi$ and $N$, and moreover finite dimensional, since $B\in\Mod_F^\tor(\varphi,N)$.
	By construction, the induced map $C'\rightarrow A$ is surjective.
	
	Note that $\varphi$ on $C'$ is not necessarily bijective.
	However, as it is finite dimensional, there exists an integer $r\geqslant 0$ such that $\varphi^{r+m}(C')=\varphi^r(C')$ for any $m\geqslant 0$.
	Then $C:=\varphi^r(C')\subset C'$ is a finite subobject.
	For $k=1,\ldots,d$, let $y_k:=\varphi^{-r}\circ\varphi^r(x_k)$, 
	where $\varphi^{-r}$ denotes the inverse of $\varphi^r$ on $C$.
	Then we see that the composite $C\hookrightarrow C'\rightarrow A$ is surjective, since $y_k$ maps to $x_k$.
	Now our assertion follows by Theorem \ref{thm: thick}\ref{item: thick 2}.
\end{proof}
\begin{proof}[Proof of Theorem \ref{thm: ext compare}]
We obtain Theorem \ref{thm: ext compare} by combining Lemmas \ref{lem: usual vs nil}, \ref{lem: nil vs tor}, and \ref{lem: tor vs fp}.
\end{proof}

\section{The cup product on Hyodo--Kato cohomology}\label{app: cup product}

The purpose of this section is to give a precise definition of the cup product on the log rigid cohomology and on the Hyodo--Kato cohomology. 
Note that because of the use of simplicial objects and homotopy limits, some technical combinatorial arguments are required.

We first introduce complexes representing log rigid cohomology and Hyodo--Kato cohomology which allow a definition of the cup product.
Let $(T,\cT,\iota)$ be a widening, $Y$ a fine log scheme over $T$, and $\sE$ an object of $\Isoc^\dagger(Y/\cT)$.
To describe the log rigid cohomology in terms of log differential forms, take a local embedding datum $(Z_\lambda,\cZ_\lambda,i_\lambda,h_\lambda,\theta_\lambda)_{\lambda\in\Lambda}$ for $Y$ over $(T,\cT,\iota)$ such that $Z_\lambda$ and $\cZ_\lambda$ are affine for any $\lambda\in\Lambda$.
We define objects $(Z_{\underline{\lambda}},\cZ_{\underline{\lambda}},i_{\underline{\lambda}},h_{\underline{\lambda}},\theta_{\underline{\lambda}})$ of the log overconvergent site $\OC(Y/\cT)$ for $m\in\bbN$ and $\underline{\lambda}\in\Lambda^{m+1}$ as in \eqref{eq: product in OC}.
For $k\geq 1$ and $\mathbf{n}=(n_1,\ldots,n_k)\in\bbN^k$, we define $\cZ_{\underline{\lambda}}[\mathbf{n}]:=\cZ_{\underline{\lambda}}[\displaystyle\min_{1\leq r\leq k} n_r]$ (see Definition \ref{def: univ enl}).
Note that each $\cZ_{\underline{\lambda}}[\mathbf{n}]_\bbQ$ is an affinoid dagger space, 
and that we have $\cZ_{\underline{\lambda},\bbQ}=\bigcup_{n\in\bbN}\cZ_{\underline{\lambda}}[n]_\bbQ$ and $\cZ_{\underline{\lambda}}[\mathbf{n}]_\bbQ=\bigcap_{r=1}^k\cZ_{\underline{\lambda}}[n_r]_\bbQ$.

For $k\geq 1$ and $i\geq 0$, define
\[M_{\rig,k}^i(Y/\cT,\sE):=\bigoplus_{I\subset\{1,\ldots,k\}}\bigoplus_{m\in\bbN}\prod_{\underline{\lambda}\in\Lambda^{m+1}}\prod_{\mathbf{n}\in\bbN^k}\Gamma(\cZ_{\underline{\lambda}}[\mathbf{n}]_\bbQ,\sE_{\cZ_{\underline{\lambda}}[\mathbf{n}]}\otimes\omega^{i-m-\lvert I\rvert}_{\cZ_{\underline{\lambda}}[\mathbf{n}]/\cT,\bbQ}).\]
For $k\geq 1$, $I\subset\{1,\ldots,k\}$ and $r\in I$, we set
\begin{align*}
I_r^-:=\{s\mid s\in I, \ s\leq r\},&&I_r^+:=\{s-r\mid s\in I,\ s>r\},
\end{align*}
and let $1_r\in\bbN^k$ be the element whose $r$-th entry is $1$ and whose other entries are $0$.
For $m\in\bbN$, $\underline{\lambda}=(\lambda_0,\ldots,\lambda_m)\in\Lambda^{m+1}$ and $\nu=0,\ldots,m$, we let
\[\underline{\lambda}(\nu):=(\lambda_0,\ldots,\lambda_{\nu-1},\lambda_{\nu+1},\ldots,\lambda_m).\]

For an element $\alpha\in M^i_{\rig,k}(Y/\cT,\sE)$, we denote by $\alpha_{I,\underline{\lambda},\mathbf{n}}$ the entry of $\alpha$ at $I\subset\{1,\ldots,k\}$, $\underline{\lambda}\in\Lambda^{m+1}$, and $\mathbf{n}\in\bbN^k$, and define an element $D\alpha\in M_{k,\rig}^{i+1}(Y/\cT,\sE)$ by
\[(D\alpha)_{I,\underline{\lambda},\mathbf{n}}:=\nabla\alpha_{I,\underline{\lambda},\mathbf{n}}+\sum_{\nu=0}^m(-1)^{i-m-\lvert I\rvert+\nu+1}p_\nu^\ast\alpha_{I,\underline{\lambda}(\nu),\mathbf{n}}+\sum_{r\in I}(-1)^{i+\lvert I_r^+\rvert}(\alpha_{I\setminus\{r\},\underline{\lambda},\mathbf{n}}-\alpha_{I\setminus\{r\},\underline{\lambda},\mathbf{n}+1_r}|_{\cZ_{\underline{\lambda}}[\mathbf{n}]_\bbQ}),\]
where $\nabla$ is the log connection associated to $\sE$ and $p_\nu\colon \cZ_{\underline{\lambda}}[\mathbf{n}]_\bbQ\rightarrow\cZ_{\underline{\lambda}(\nu)}[\mathbf{n}]_\bbQ$ denotes the natural projection.
Then one can see that $D\circ D=0$ by direct computation or by the interpretation as the total complex explained in Remark \ref{rem: tot homotopy limit} below, namely $M_{\rig,k}^\bullet(Y/\cT,\sE)$ forms a complex.

Similarly, for a fine log scheme $Y$ over $k^0$ and $\sE\in\Isoc^\dagger(Y/W^\varnothing)$, we take a local embedding datum $(Z_\lambda,\cZ_\lambda,i_\lambda,h_\lambda,\theta_\lambda)_{\lambda\in\Lambda}$ for $Y$ over $(k^0,\cS,\tau)$ such that $Z_\lambda$ and $\cZ_\lambda$ are affine for any $\lambda\in\Lambda$, and define objects $(Z_{\underline{\lambda}},\cZ_{\underline{\lambda}},i_{\underline{\lambda}},h_{\underline{\lambda}},\theta_{\underline{\lambda}})$ of $\OC(Y/\cS)$ for $m\in\bbN$ and $\underline{\lambda}\in\Lambda^{m+1}$ as before.
Then we define a complex $M_{\HK,k}^\bullet(Y,\sE)$ by
\[M_{\HK,k}^i(Y,\sE):=\bigoplus_{I\subset\{1,\ldots,k\}}\bigoplus_{m\in\bbN}\prod_{\underline{\lambda}\in\Lambda^{m+1}}\prod_{\mathbf{n}\in\bbN^k}\Gamma(\cZ_{\underline{\lambda}}[\mathbf{n}]_\bbQ,\sE_{\cZ_{\underline{\lambda}}[\mathbf{n}]}\otimes\omega^{i-m-\lvert I\rvert}_{\cZ_{\underline{\lambda}}[\mathbf{n}]/W^\varnothing,\bbQ}[u])\]
with the differential maps $D$ defined in the same way as above.

\begin{remark}\label{rem: tot homotopy limit}
	The complex $M_{\rig,k}^\bullet(Y/\cT,\sE)$ can be interpreted as the total complex of the $(k+2)$-ple complex in the shape of the $k$-dimensional cube  whose vertices are the double complex $\prod_{\mathbf{n}\in\bbN^k}\Gamma(\cZ_\bullet[\mathbf{n}]_\bbQ,\sE_{\cZ_\bullet[\mathbf{n}]}\otimes\omega^{\star}_{\cZ_\bullet[\mathbf{n}]/\cT,\bbQ})$ with differentials defined as follows:
	The first and second differentials are those on $\prod_{\mathbf{n}\in\bbN^k}\Gamma(\cZ_\bullet[\mathbf{n}]_\bbQ,\sE_{\cZ_\bullet[\mathbf{n}]}\otimes\omega^{\star}_{\cZ_\bullet[\mathbf{n}]/\cT,\bbQ})$ induced by $\nabla$ and the face maps of $\cZ_\bullet[\mathbf{n}]$, respectively.
	The $(r+2)$-th differentials $\eth_r$ are induced by the maps
	\[\prod_{\mathbf{n}\in\bbN^k}\Gamma(\cZ_m[\mathbf{n}]_\bbQ,\sE_{\cZ_m[\mathbf{n}]}\otimes\omega^i_{\cZ_m[\mathbf{n}]/\cT,\bbQ})
	\rightarrow
	\prod_{\mathbf{n}\in\bbN^k}\Gamma(\cZ_m[\mathbf{n}]_\bbQ,\sE_{\cZ_m[\mathbf{n}]}\otimes\omega^i_{\cZ_m[\mathbf{n}]/\cT,\bbQ})\]
	defined by
	\begin{align*}
	\eth_r((\alpha_{\mathbf{n}})_{\mathbf{n}}):=(\alpha_{\mathbf{n}}-\alpha_{\mathbf{n}+1_r}|_{\cZ_m[\mathbf{n}]_\bbQ})_{\mathbf{n}}.
	\end{align*}
	For example, the cases for $k=1,2$ are described as follows:
\begin{align*}
&M_{\rig,1}^\bullet=\Tot\left[\prod_{n\in\bbN}\Gamma(\cZ_\bullet[n]_\bbQ,\sE_{\cZ_{\bullet}[n]}\otimes\omega^{\star}_{\cZ_\bullet[n]/\cT,\bbQ})
	\xrightarrow{\eth_1}\prod_{n\in\bbN}\Gamma(\cZ_\bullet[n]_\bbQ,\sE_{\cZ_\bullet[n]}\otimes\omega^{\star}_{\cZ_\bullet[n]/\cT,\bbQ})\right],\\
&M_{\rig,2}^\bullet=\Tot\left[
\begin{aligned}
\xymatrix{
\displaystyle\prod_{\mathbf{n}\in\bbN^2}\Gamma(\cZ_\bullet[\mathbf{n}]_\bbQ,\sE_{\cZ_\bullet[\mathbf{n}]}\otimes\omega^{\star}_{\cZ_\bullet[\mathbf{n}]/\cT,\bbQ})\ar[r]^-{\eth_1}
&\displaystyle\prod_{\mathbf{n}\in\bbN^2}\Gamma(\cZ_\bullet[\mathbf{n}]_\bbQ,\sE_{\cZ_\bullet[\mathbf{n}]}\otimes\omega^{\star}_{\cZ_\bullet[\mathbf{n}]/\cT,\bbQ})\\
\displaystyle\prod_{\mathbf{n}\in\bbN^2}\Gamma(\cZ_\bullet[\mathbf{n}]_\bbQ,\sE_{\cZ_\bullet[\mathbf{n}]}\otimes\omega^{\star}_{\cZ_\bullet[\mathbf{n}]/\cT,\bbQ})\ar[r]^-{\eth_1}\ar[u]^-{\eth_2}
&\displaystyle\prod_{\mathbf{n}\in\bbN^2}\Gamma(\cZ_\bullet[\mathbf{n}]_\bbQ,\sE_{\cZ_\bullet[\mathbf{n}]}\otimes\omega^{\star}_{\cZ_\bullet[\mathbf{n}]/\cT,\bbQ})\ar[u]^-{\eth_2}
}
\end{aligned}
\right].
\end{align*}

Similarly, $M_{k,\HK}^\bullet(Y,\sE)$ is interpreted as the total complex of the $(k+2)$-ple complex in the shape of the $k$-dimensional cube whose vertices are the double complex $\prod_{\mathbf{n}\in\bbN^k}\Gamma(\cZ_\bullet[\mathbf{n}]_\bbQ,\sE_{\cZ_\bullet[\mathbf{n}]}\otimes\omega^{\star}_{\cZ_\bullet[\mathbf{n}]/W^\varnothing,\bbQ}[u])$ with differentials defined by the same rule as above.
\end{remark}

\begin{remark}
We point out that for the definition of the cup product on the various rigid complexes, only the cases $k=1,2,3$ are required. 
However, we deal with general $k$ in order to express them in a unified notation.
\end{remark}

For a fine log scheme $Y$ over $k^0$, let $(\cI(Y),\cR(Y,-),M_k^\bullet(Y,-),\cD)$ be one of the following:
\begin{itemize}
\item $\cI(Y)=\Isoc^\dagger(Y/\cT)$, $\cR(Y,\sE)=R\Gamma_\rig(Y/\cT)$, $M_k^\bullet(Y,\sE)=M_{k,\rig}(Y/\cT,\sE)$ for $\sE\in\cI(Y)$, and $\cD=D^+(\Mod_F)$ where $\cT=W^\varnothing$ or $W^0$,
\item $\cI(Y)=\Isoc^\dagger(Y/V^\sharp)_\pi$, $\cR(Y,\sE)=R\Gamma_\rig(Y/V^\sharp)_\pi$, $M_k^\bullet(Y,\sE)=M_{k,\rig}(Y/V^\sharp,\sE)$ for $\sE\in\cI(Y)$, and $\cD=D^+(\Mod_K)$, 
\item $\cI(Y)=\Isoc^\dagger(Y/W^\varnothing)$, $\cR(Y,\sE)=R\Gamma^\rig_\HK(Y,\sE)$, $M_k^\bullet(Y,\sE)=M_{\HK,k}^\bullet(Y,\sE)$ for $\sE\in\cI(Y)$, and $\cD=D^+(\Mod_F(N))$.
\end{itemize}

For an order preserving injection $\iota\colon\{1,\ldots,k\}\hookrightarrow\{1,\ldots,\ell\}$ and $i\in\bbN$, we define a map $f_\iota\colon M_k^i(Y,\sE)\rightarrow M_\ell^i(Y,\sE)$ by
\[(f_\iota\alpha)_{I,\underline{\lambda},\mathbf{n}}:=\begin{cases}\alpha_{\iota^{-1}(I),\underline{\lambda},\iota^\ast\mathbf{n}}|_{\cZ_{\underline{\lambda}}[\mathbf{n}]_\bbQ}
&\text{if $I\subset\iota(\{1,\ldots,k\})$},\\
0&\text{otherwise},
\end{cases}
\]
where we set $\iota^\ast\mathbf{n}:=(n_{\iota(1)},\ldots,n_{\iota(k)})$ for any $\mathbf{n}\in\bbN^\ell$.
In the rest of the paper, we will omit the notation $|_{\cZ_{\underline{\lambda}}[\mathbf{n}]_\bbQ}$ when there is no afraid of confusion.
Then for $I\subset\iota(\{1,\ldots,k\})$ we have
\begin{align*}
(f_\iota D\alpha)_{I,\underline{\lambda},\mathbf{n}}=&
\nabla\alpha_{\iota^{-1}(I),\underline{\lambda},\iota^\ast\mathbf{n}}
+\sum_{\nu=0}^m(-1)^{i-m-\lvert I\rvert+\nu+1}p_\nu^\ast\alpha_{\iota^{-1}(I),\underline{\lambda}(\nu),\iota^\ast\mathbf{n}}\\
&+\sum_{s\in\iota^{-1}(I)}(-1)^{i+\lvert\iota^{-1}(I)_s^+\rvert}(\alpha_{\iota^{-1}(I)\setminus\{s\},\underline{\lambda},\iota^\ast\mathbf{n}}-\alpha_{\iota^{-1}(I)\setminus\{s\},\underline{\lambda},\iota^\ast\mathbf{n}+1_s})\\
=&
\nabla\alpha_{\iota^{-1}(I),\underline{\lambda},\iota^\ast\mathbf{n}}
+\sum_{\nu=0}^m(-1)^{i-m-\lvert I\rvert+\nu+1}p_\nu^\ast\alpha_{\iota^{-1}(I),\underline{\lambda}(\nu),\iota^\ast\mathbf{n}}\\
&+\sum_{r\in I}(-1)^{i+\lvert I_r^+\rvert}(\alpha_{\iota^{-1}(I\setminus\{r\}),\underline{\lambda},\iota^\ast\mathbf{n}}-\alpha_{\iota^{-1}(I\setminus\{r\}),\underline{\lambda},\iota^\ast(\mathbf{n}+1_r)})\\
=&(D f_\iota\alpha)_{I,\underline{\lambda},\mathbf{n}}.
\end{align*}

Thus $f_\iota$ gives indeed a morphism of complexes $M_k^\bullet(Y,\sE)\rightarrow M_\ell^\bullet(Y,\sE)$.

\begin{lemma}\label{lem: inclusion}
	Let $\iota\colon\{1,\ldots,k-1\}\hookrightarrow\{1,\ldots,k\}$ be an order preserving injection.
	Then the morphism $f_\iota\colon M_{k-1}^\bullet(\sE)\rightarrow M_k^\bullet(\sE)$ is a quasi-isomorphism.
\end{lemma}

\begin{proof}
	We only prove the assertion for the case $M_k^\bullet(Y,\sE)=M_{\HK,k}^\bullet(Y,\sE)$, since the same proof works for the case $M_k^\bullet(Y,\sE)=M_{\rig,k}^\bullet(Y/\cT,\sE)$.
	Let $s$ be the unique element of $\{1,\ldots,k\}\setminus \iota(\{1,\ldots,k-1\})$.
	Recall that $M_k^\bullet(Y,\sE)$ can be interpreted as the total complex of a $k$-dimensional cube consisting of double complexes, as in Remark \ref{rem: tot homotopy limit}.
	In particular, $M_k^\bullet(Y,\sE)$ is written as the mapping cone of the morphism induced by the $(s+2)$-th differential maps between the total complexes of $(k-1)$-dimensional cubes.
	Namely we have an isomorphism $M_k^\bullet(Y,\sE)\cong\Cone(A^\bullet\xrightarrow{\eth_s} B^\bullet)[-1]$ with
	\begin{align*}
	&A^i=\bigoplus_{\substack{I\subset\{1,\ldots,k\}\\ I\not\ni s}}\bigoplus_{m\in\bbN}\prod_{\mathbf{n}\in\bbN^k}\prod_{\underline{\lambda}\in\Lambda^{m+1}}\Gamma(\cZ_{\underline{\lambda}}[\mathbf{n}]_\bbQ,\sE_{\cZ_{\underline{\lambda}}[\mathbf{n}]}\otimes\omega^{i-m-\lvert I\rvert}_{\cZ_{\underline{\lambda}}[\mathbf{n}]/W^\varnothing,\bbQ}[u]),\\
	&B^i=\bigoplus_{\substack{I\subset\{1,\ldots,k\}\\ I\ni s}}\bigoplus_{m\in\bbN}\prod_{\mathbf{n}\in\bbN^k}\prod_{\underline{\lambda}\in\Lambda^{m+1}}\Gamma(\cZ_{\underline{\lambda}}[\mathbf{n}]_\bbQ,\sE_{\cZ_{\underline{\lambda}}[\mathbf{n}]}\otimes\omega^{i-m-\lvert I\rvert}_{\cZ_{\underline{\lambda}}[\mathbf{n}]/W^\varnothing,\bbQ}[u]),\\
	&\eth_s\alpha_{I,\underline{\lambda},\mathbf{n}}=\alpha_{I\setminus\{s\},\underline{\lambda},\mathbf{n}}-\alpha_{I\setminus\{s\},\underline{\lambda},\mathbf{n}+1_s},
	\end{align*}
	and the morphism $M_{k-1}^\bullet(Y,\sE)\rightarrow\Cone(A^\bullet\xrightarrow{\eth_s} B^\bullet)[-1]$ induced by the morphism $g_\iota\colon M_{k-1}^\bullet(Y,\sE)\rightarrow A^\bullet$ with $(g_\iota\alpha)_{I,\underline{\lambda},\mathbf{n}}= \alpha_{\iota^{-1}(I),\underline{\lambda},\iota^*\mathbf{n}}$
	corresponds to $f_\iota$.
	Thus it suffices to show that
	\[0\rightarrow M_{k-1}^i(Y,\sE)\xrightarrow{g_\iota}A^i\xrightarrow{\eth_s}B^i\rightarrow 0\]
	is exact for any $i\in\bbN$.
	Without loss of generality, we may suppose that $s=k$. 
	Then the exactness of the above sequence follows if we show that
	\begin{align*}
	0\rightarrow \Gamma(\cZ_{\underline{\lambda}}[\mathbf{n}]_\bbQ,\sE_{\cZ_{\underline{\lambda}}[\mathbf{n}]}\otimes\omega^j_{\cZ_{\underline{\lambda}}[\mathbf{n}]/W^\varnothing,\bbQ}[u])\xrightarrow{g}
	\prod_{n\in\bbN}\Gamma(\cZ_{\underline{\lambda}}[(\mathbf{n},n)]_\bbQ,\sE_{\cZ_{\underline{\lambda}}[(\mathbf{n},n)]}\otimes\omega^j_{\cZ_{\underline{\lambda}}[(\mathbf{n},n)]/W^\varnothing,\bbQ}[u])\\
	\xrightarrow{\eth}\prod_{n\in\bbN}\Gamma(\cZ_{\underline{\lambda}}[(\mathbf{n},n)]_\bbQ,\sE_{\cZ_{\underline{\lambda}}[(\mathbf{n},n)]}\otimes\omega^j_{\cZ_{\underline{\lambda}}[(\mathbf{n},n)]/W^\varnothing,\bbQ}[u])\rightarrow 0
	\end{align*}
	is exact for any $j\in\bbN$, $\underline{\lambda}\in\Lambda^{m+1}$ and $\mathbf{n}=(n_1,\ldots,n_{k-1})\in\bbN^{k-1}$, where we set $(\mathbf{n},n_k):=(n_1,\ldots,n_{k-1},n)$, and the maps $g$ and $\eth$ are defined by
	\begin{align*}
	g(\alpha)=(\alpha|_{\cZ_{\underline{\lambda}}[(\mathbf{n},n)]_\bbQ})_n,&&\eth((\beta_n)_n)=(\beta_n-\beta_{n+1}|_{\cZ_{\underline{\lambda}}[(\mathbf{n},n)]_\bbQ})_n.
	\end{align*}
	However this is clear since we have $\cZ_{\underline{\lambda}}[(\mathbf{n},n)]_\bbQ=\cZ_{\underline{\lambda}}[\mathbf{n}]_\bbQ$ for any $n\geq \displaystyle\min_{1\leq r\leq k-1}n_r$.
\end{proof}

\begin{lemma}\label{lem: homotopy of inclusions}
	For any $r,s\in\{1,\ldots,k\}$, let $\iota_r$ and $\iota_s$ be the inclusions $\{1\}\hookrightarrow\{1,\ldots,k\}$ sending $1$ to $r$ and $s$, respectively.
	Then the morphisms $f_{\iota_r}$ and $f_{\iota_s}$ are homotopic to each other as morphisms $M_1^\bullet(Y,\sE)\rightarrow M_k^\bullet(Y,\sE)$.
\end{lemma}

\begin{proof}
	For $i\in\bbN$, we define a map $h\colon M_1^i(Y,\sE)\rightarrow M_k^{i-1}(Y,\sE)$ by setting
	\[(h\alpha)_{I,\underline{\lambda},\mathbf{n}}:=\begin{cases}
	(-1)^i\displaystyle\sum_{a=n_r}^{n_s-1}\alpha_{\{1\},\underline{\lambda},a}&\text{if $I=\emptyset$ and $n_r<n_s$},\\
	(-1)^{i+1}\displaystyle\sum_{a=n_s}^{n_r-1}\alpha_{\{1\},\underline{\lambda},a}&\text{if $I=\emptyset$ and $n_r>n_s$},\\
	0&\text{otherwise}
	\end{cases}
	\]
	for any $\alpha\in M_1^i(Y,\sE)$.
	Then we have
	\begin{align*}
	((Dh+hD)\alpha)_{I,\underline{\lambda},\mathbf{n}}&=\begin{cases}
	-\alpha_{\emptyset,\underline{\lambda},n_r}+\alpha_{\emptyset,\underline{\lambda},n_s}&\text{if $I=\emptyset$},\\
	-\alpha_{\{1\},\underline{\lambda},n_r}&\text{if $I=\{r\}$},\\
	\alpha_{\{1\},\underline{\lambda},n_s}&\text{if $I=\{s\}$},\\
	0&\text{otherwise}
	\end{cases}\\
	&=((f_{\iota_s}-f_{\iota_r})\alpha)_{I,\underline{\lambda},\mathbf{n}}.
	\end{align*}
	Thus $h$ gives a homotopy between $f_{\iota_r}$ and $f_{\iota_s}$.
\end{proof}

The following proposition allows the definition of a cup product on  $\cR(Y,\sE)$ via the complexes $M_k^\bullet(Y,\sE)$.

\begin{proposition}\label{prop: M iso R}
	For an order preserving injection $\iota\colon\{1,\ldots,k\}\hookrightarrow\{1,\ldots,\ell\}$ the morphism $f_\iota\colon M_k^\bullet(Y,\sE)\rightarrow M_\ell^\bullet(Y,\sE)$ is a quasi-isomorphism, and independent of $\iota$ as a morphism in the derived category.
	Moreover, there exists a canonical quasi-isomorphism $\cR(Y,\sE)\cong M_k^\bullet(Y,\sE)$ for any $k\geq 1$.
\end{proposition}

\begin{proof}
	That $\iota$ is a quasi-isomorphism follows by repeatedly applying 
	Lemma \ref{lem: inclusion}.
	Let $r:=\iota(1)$. 
	Then in the derived category $f_\iota$ is written as the composition
	\[M_k^\bullet(Y,\sE)\xrightarrow[f_{\iota_1}^{-1}]{\cong}M_1^\bullet(Y,\sE)\xrightarrow[f_{\iota_r}]{\cong}M_\ell^\bullet(Y,\sE).\]
	Thus it is independent of the choice of $\iota$ by Lemma \ref{lem: homotopy of inclusions}.
	Finally we have
	\begin{align*}
	&\cR(Y,\sE)\cong M_{1}^\bullet(Y,\sE)
	\xrightarrow[f_\iota]{\cong} M_k^\bullet(Y,\sE),
	\end{align*}
	where left isomorphism is given by \cite[Remark 2.39 and Proposition 4.5]{Yamada2020}, and the right isomorphism is independent of the choice of $\iota\colon\{1\}\hookrightarrow\{1,\ldots,k\}$ as proved above.
\end{proof}

Let $\sigma$ be a permutation of $\{1,\ldots,k\}$.
For $I\subset\{1,\ldots,k\}$, we denote by $\wt\sigma_I$ the composition
\[\{1,\ldots,\lvert I\rvert\}\cong \sigma^{-1}(I)\xrightarrow[\sigma]{\cong} I\cong\{1,\ldots,\lvert I\rvert\}\]
where the first and third maps are the unique order preserving bijections.
Let $\delta(\sigma,I):=\mathrm{sgn}(\wt\sigma_I)$.

\begin{lemma}\label{lem: delta}
Let $\sigma$ be a permutation of $\{1,\ldots,k\}$.
Let $I\subset\{1,\ldots,k\}$ and $r\in I$.
	We set $J:=\sigma^{-1}(I)$ and $s:=\sigma^{-1}(r)$.
	Then have
	\[\delta(\sigma,I\setminus\{r\})=(-1)^{\lvert J_s^+\rvert-\lvert I_r^+\rvert}\delta(\sigma,I).\]
\end{lemma}

\begin{proof}
	Let $\sigma'$ be another permutation and set $K:=\sigma'^{-1}(J)$ and $t:=\sigma'^{-1}(s)$.
	If the assertion holds for $\sigma$ and $\sigma'$, then we have
	\begin{align*}
	\delta(\sigma\sigma',I\setminus\{r\})&=\delta(\sigma,I\setminus\{r\})\delta(\sigma',\sigma^{-1}(I\setminus\{r\}))=(-1)^{\lvert J_s^+\rvert-\lvert I_r^+\rvert}\delta(\sigma,I)\cdot (-1)^{\lvert K_t^+\rvert-\lvert J_s^+\rvert}\delta(\sigma',\sigma^{-1}(I))\\
	&=(-1)^{\lvert K_t^+\rvert-\lvert I_r^+\rvert}\delta(\sigma\sigma',I),
	\end{align*}
	hence the assertion holds also for $\sigma\sigma'$.
	Thus we may reduce to the case that $\sigma$ is a transposition of $a,b\in\{1,\ldots,k\}$, and the assertion follows from direct computation.
	Indeed, when $r\neq a,b$ we have $\delta(\sigma,I)=\delta(\sigma,I\setminus\{r\})=-1$ and $\lvert J_s^+\rvert=\lvert I_r^+\rvert$.
	When $r=a<b$, we have $\delta(\sigma,I)=-1$, $\delta(\sigma,I\setminus\{r\})=(-1)^{b-a+1}$, and $\lvert I_r^+\rvert=\lvert J_s^+\rvert+b-a$.
	The case for $a<b=r$ is also computed similarly.
\end{proof}

For a permutation $\sigma$ of $\{1,\ldots,k\}$, we denote again by $\sigma$ the isomorphism $M_k^\bullet(Y,\sE)\xrightarrow{\cong}M_k^\bullet(Y,\sE)$ defined by
\[(\sigma\alpha)_{I,\underline{\lambda},\mathbf{n}}:=\delta(\sigma,I)\alpha_{\sigma^{-1}(I),\underline{\lambda},\sigma^\ast\mathbf{n}},\]
where we set $\sigma^*\mathbf{n}=(n_{\sigma(1)},\ldots,n_{\sigma(k)})$ as before.
This is indeed a morphism of complexes, by the computation
\begin{align*}
(\sigma D\alpha)_{I,\underline{\lambda},\mathbf{n}}=&\delta(\sigma,I)\nabla\alpha_{\sigma^{-1}(I),\underline{\lambda},\sigma\ast\mathbf{n}}+\sum_{\nu=0}^m(-1)^{i-m-\lvert I\rvert+\nu+1}\delta(\sigma,I)p_\nu^\ast\alpha_{\sigma^{-1}(I),\underline{\lambda}(\nu),\sigma^\ast\mathbf{n}}\\
&+\sum_{s\in\sigma^{-1}(I)}(-1)^{i+\lvert\sigma^{-1}(I)_s^+\rvert}\delta(\sigma,I)(\alpha_{\sigma^{-1}(I)\setminus\{s\},\underline{\lambda},\sigma^\ast\mathbf{n}}-\alpha_{\sigma^{-1}(I)\setminus\{s\},\underline{\lambda},\sigma^\ast\mathbf{n}+1_s})\\
=&\delta(\sigma,I)\nabla\alpha_{\sigma^{-1}(I),\underline{\lambda},\sigma\ast\mathbf{n}}+\sum_{\nu=0}^m(-1)^{i-m-\lvert I\rvert+\nu+1}\delta(\sigma,I)p_\nu^\ast\alpha_{\sigma^{-1}(I),\underline{\lambda}(\nu),\sigma^\ast\mathbf{n}}\\
&+\sum_{r\in I}(-1)^{i+\lvert I_r^+\rvert}\delta(\sigma,I\setminus\{r\})(\alpha_{\sigma^{-1}(I\setminus\{r\}),\underline{\lambda},\sigma^\ast\mathbf{n}}-\alpha_{\sigma^{-1}(I\setminus\{r\}),\underline{\lambda},\sigma^\ast(\mathbf{n}+1_r)})\\
=&(D\sigma\alpha)_{I,\underline{\lambda},\mathbf{n}}
\end{align*}
for any $\alpha\in M_k^i(Y,\sE)$, where the second equality follows by Lemma \ref{lem: delta}.

We define an involution $\tau$ on $M_k^\bullet(Y,\sE)$ by setting
\[(\tau\alpha)_{I,\underline{\lambda},\mathbf{n}}:=(-1)^{\frac{m(m+1)}2}\alpha_{I,(\lambda_m,\ldots,\lambda_0),\mathbf{n}}.\]

\begin{lemma}\label{lem: sigma tau}
	The endmorphisms on $M_k^\bullet(Y,\sE)$ in the derived category induced by $\sigma$ and $\tau$ are the identity.
\end{lemma}

\begin{proof}
Since $\delta(\sigma,I)=1$ when $\lvert I\rvert=0$ or $1$, we see that $\sigma\circ f_{\iota_1}=f_{\iota_{\sigma(1)}}$.
Therefore it follows by Lemma \ref{lem: homotopy of inclusions} that $\sigma$ in the derived category coincides with the identity.

That $\tau$ is a morphism of complexes and homotopic to the identity follows by computations similarly to \cite[\href{https://stacks.math.columbia.edu/tag/01FP}{Tag 01FP}]{stacks}.
\end{proof}

Now we are ready to discuss the cup product on the log rigid and Hyodo--Kato cohomology.
Let $\sE,\sE'\in\Isoc^\dagger(Y/\cT)$.
For integers $i,j,k,\ell,m,\mu$ with $i,j\geq 0$, $k,\ell\geq 1$, $0\leq \mu\leq m$ and a subset $I\subset\{1,\ldots,k+\ell\}$, let
\begin{equation}\label{eq: sign}\epsilon(i,j,k,\ell,m,\mu,I):=(-1)^{(\lvert I_k^-\rvert+\mu)(\lvert I_k^+\rvert+j)+\mu(m+1)}.\end{equation}
Note that $\epsilon(i,j,k,\ell,m,\mu,I)$ is independent of $i$, but we use this notation to facilitate the arguments in Remark \ref{rem: uniqueness of epsilon} later.

For $i,j\geq 0$, $k,\ell\geq 1$ and elements $\alpha\in M_k^i(\sE)$ and $\beta\in M_\ell^j(\sE')$, we define an element $\alpha\cup\beta\in M_{k+\ell}^{i+j}(\sE\otimes\sE')$ by
\begin{equation}\label{eq: def cup}(\alpha\cup\beta)_{I,\underline{\lambda},\mathbf{n}}:=\sum_{\mu=0}^m\epsilon(i,j,k,\ell,m,\mu,I)q_{\leq\mu}^{\ast}\alpha_{I_k^-,\underline{\lambda}^{\leq\mu},\mathbf{n}^{\leq k}}\wedge q_{\geq\mu}^{\ast}\beta_{I_k^+,\underline{\lambda}^{\geq\mu},\mathbf{n}^{>k}}\end{equation}
where we set
\begin{align*}
&\underline{\lambda}^{\leq\mu}:=(\lambda_0,\ldots,\lambda_\mu),&&\underline{\lambda}^{\geq\mu}:=(\lambda_\mu,\ldots,\lambda_m),\\
&\mathbf{n}^{\leq k}:=(n_1,\ldots,n_k),&&\mathbf{n}^{>k}:=(n_{k+1},\ldots,n_{k+\ell})
\end{align*}
for $\underline{\lambda}=(\lambda_0,\ldots,\lambda_m)\in\Lambda^{m+1}$ and $\mathbf{n}=(n_1,\ldots,n_{k+\ell})\in\bbN^{k+\ell}$, and let $q_{\leq\mu}\colon\cZ_{\underline{\lambda}[\mathbf{n}]_\bbQ}\rightarrow\cZ_{\underline{\lambda}^{\leq\mu}[\mathbf{n}]_\bbQ}$ and $q_{\geq\mu}\colon\cZ_{\underline{\lambda}[\mathbf{n}]_\bbQ}\rightarrow\cZ_{\underline{\lambda}^{\geq\mu}[\mathbf{n}]_\bbQ}$ denote the canonical projections.
The wedge product $\wedge$ is induced by tensor products of sections of $\sE$ and $\sE'$ and the product structure on $\omega^\bullet_{\cZ_{\underline{\lambda}}[\mathbf{n}]/\cT,\bbQ}$ or $\omega^\bullet_{\cZ_{\underline{\lambda}}[\mathbf{n}]/W^\varnothing,\bbQ}[u]$.

We have equalities
\begin{small}
\begin{align}
\label{eq: D cup calculation 1}&\hspace{-8pt}(D(\alpha\cup\beta))_{I,\underline{\lambda},\mathbf{n}}=\\
\nonumber&\sum_{\mu=0}^m\epsilon(i,j,k,\ell,m,\mu,I)q_{\leq\mu}^\ast\nabla\alpha_{I_k^-,\underline{\lambda}^{\leq\mu},\mathbf{n}^{\leq k}}\wedge q_{\geq \mu}^\ast\beta_{I_k^+,\underline{\lambda}^{\geq\mu},\mathbf{n}^{>k}}\\
\nonumber&+\sum_{\mu=0}^m(-1)^{i-\mu-\lvert I_k^-\rvert}\epsilon(i,j,k,\ell,m,\mu,I)q_{\leq\mu}^\ast\alpha_{I_k^-,\underline{\lambda}^{\leq\mu},\mathbf{n}^{\leq k}}\wedge q_{\geq \mu}^\ast\nabla\beta_{I_k^+,\underline{\lambda}^{\geq\mu},\mathbf{n}^{>k}}\\
\nonumber&+\sum_{\mu=0}^m\sum_{\nu=0}^{\mu-1}(-1)^{i+j-m+\nu-\lvert I\rvert+1}\epsilon(i,j,k,\ell,m-1,\mu-1,I)p_\nu^\ast q_{\leq\mu}^\ast\alpha_{I_k^-,\underline{\lambda}^{\leq\mu}(\nu),\mathbf{n}^{\leq k}}\wedge q_{\geq \mu}^\ast\beta_{I_k^+,\underline{\lambda}^{\geq\mu},\mathbf{n}^{>k}}\\
\nonumber&+\sum_{\mu=0}^m\sum_{\nu=1}^{m-\mu}(-1)^{i+j-m+\mu+\nu-\lvert I\rvert+1}\epsilon(i,j,k,\ell,m-1,\mu,I)q_{\leq\mu}^\ast\alpha_{I_k^-,\underline{\lambda}^{\leq\mu},\mathbf{n}^{\leq k}}\wedge p_\nu^\ast q_{\geq \mu}^\ast\beta_{I_k^+,\underline{\lambda}^{\geq\mu}(\nu),\mathbf{n}^{>k}}\\
\nonumber&+\sum_{\mu=0}^m\sum_{r\in I_k^-}(-1)^{i+j+\lvert I_r^+\rvert}\epsilon(i,j,k,\ell,m,\mu,I\setminus\{r\})
\times q_{\leq\mu}^\ast\left(\alpha_{I_k^-\setminus\{r\},\underline{\lambda}^{\leq\mu},\mathbf{n}^{\leq k}}-\alpha_{I_k^-\setminus\{r\},\underline{\lambda}^{\leq\mu},\mathbf{n}^{\leq k}+1_r}\right)\wedge q_{\geq\mu}^\ast\beta_{I_k^+,\underline{\lambda}^{\geq\mu},\mathbf{n}^{>k}}\\
\nonumber&+\sum_{\mu=0}^m\sum_{r\in I_k^+}(-1)^{i+j+\lvert I_r^+\rvert}\epsilon(i,j,k,\ell,m,\mu,I\setminus\{k+r\}) 
\times q_{\leq\mu}^\ast\alpha_{I_k^-,\underline{\lambda}^{\leq\mu},\mathbf{n}^{\leq k}}\wedge q_{\geq\mu}^\ast\left(\beta_{I_k^+\setminus\{r\},\underline{\lambda}^{\geq\mu},\mathbf{n}^{>k}}-\beta_{I_k^+\setminus\{r\},\underline{\lambda}^{\geq\mu},\mathbf{n}^{>k}+1_r}\right),
\end{align}
\end{small}

\begin{small}
\begin{align}
\label{eq: D cup calculation 2}&\hspace{-8pt}(D\alpha\cup\beta+(-1)^i\alpha\cup D\beta)_{I,\underline{\lambda},\mathbf{n}}=\\
\nonumber&\sum_{\mu=0}^m\epsilon(i+1,j,k,\ell,m,\mu,I)q_{\leq\mu}^\ast\nabla\alpha_{I_k^-,\underline{\lambda}^{\leq\mu},\mathbf{n}^{\leq k}}\wedge q_{\geq \mu}^\ast\beta_{I_k^+,\underline{\lambda}^{\geq\mu},\mathbf{n}^{>k}}\\
\nonumber&+\sum_{\mu=0}^m(-1)^i\epsilon(i,j+1,k,\ell,m,\mu,I)q_{\leq\mu}^\ast\alpha_{I_k^-,\underline{\lambda}^{\leq\mu},\mathbf{n}^{\leq k}}\wedge q_{\geq \mu}^\ast\nabla\beta_{I_k^+,\underline{\lambda}^{\geq\mu},\mathbf{n}^{>k}}\\
\nonumber&+\sum_{\mu=0}^m\sum_{\nu=0}^{\mu-1}(-1)^{i-\mu+\nu-\lvert I_k^-\rvert+1}\epsilon(i+1,j,k,\ell,m,\mu,I)p_\nu^\ast q_{\leq\mu}^\ast\alpha_{I_k^-,\underline{\lambda}^{\leq\mu}(\nu),\mathbf{n}^{\leq k}}\wedge q_{\geq \mu}^\ast\beta_{I_k^+,\underline{\lambda}^{\geq\mu},\mathbf{n}^{>k}}\\
\nonumber&+\sum_{\mu=0}^m\sum_{\nu=1}^{m-\mu}(-1)^{i+j-m+\mu+\nu-\lvert I_k^+\rvert+1}\epsilon(i,j+1,k,\ell,m,\mu,I)q_{\leq\mu}^\ast\alpha_{I_k^-,\underline{\lambda}^{\leq\mu},\mathbf{n}^{\leq k}}\wedge p_\nu^\ast q_{\geq \mu}^\ast\beta_{I_k^+,\underline{\lambda}^{\geq\mu}(\nu),\mathbf{n}^{>k}}\\
\nonumber&+\sum_{\mu=0}^m\sum_{r\in I_k^-}(-1)^{i+\lvert I_k^-\rvert-\lvert I_r^-\rvert}\epsilon(i+1,j,k,\ell,m,\mu,I\setminus\{r\})\\
\nonumber&\hspace{60pt}
\times q_{\leq\mu}^\ast\left(\alpha_{I_k^-\setminus\{r\},\underline{\lambda}^{\leq\mu},\mathbf{n}^{\leq k}}-\alpha_{I_k^-\setminus\{r\},\underline{\lambda}^{\leq\mu},\mathbf{n}^{\leq k}+1_r}\right)\wedge q_{\geq\mu}^\ast\beta_{I_k^+,\underline{\lambda}^{\geq\mu},\mathbf{n}^{>k}}\\
\nonumber&+\sum_{\mu=0}^m\sum_{r\in I_k^+}(-1)^{i+j+\lvert I_r^+\rvert}\epsilon(i,j+1,k,\ell,m,\mu,I\setminus\{k+r\})\\
\nonumber&\hspace{60pt}
\times q_{\leq\mu}^\ast\alpha_{I_k^-,\underline{\lambda}^{\leq\mu},\mathbf{n}^{\leq k}}\wedge q_{\geq\mu}^\ast\left(\beta_{I_k^+\setminus\{r\},\underline{\lambda}^{\geq\mu},\mathbf{n}^{>k}}-\beta_{I_k^+\setminus\{r\},\underline{\lambda}^{\geq\mu},\mathbf{n}^{>k}+1_r}\right)\\
\nonumber&+\sum_{\mu=1}^m\left((-1)^{i-\lvert I_k^-\rvert+1}\epsilon(i+1,j,k,\ell,m,\mu,I)+(-1)^{i+j-m+\mu-\lvert I_k^+\rvert}\epsilon(i,j+1,k,\ell,m,\mu-1,I)\right)\\
\nonumber&\hspace{60pt}
\times p_\mu^\ast q_{\leq\mu}^\ast\alpha_{I_k^-,\underline{\lambda}_{\leq\mu}(\mu),\mathbf{n}^{\leq k}}\wedge q_{\geq\mu}^\ast\beta_{I_k^+,\underline{\lambda}^{\geq\mu},\mathbf{n}^{>k}}.
\end{align}
\end{small}

By the definition of $\epsilon(i,j,k,\ell,m,\mu,I)$, we see that they are equal to each other.
Thus we have
\begin{equation}\label{eq: D cup}
D(\alpha\cup\beta)=D\alpha\cup\beta+(-1)^i\alpha\cup D\beta,
\end{equation}
in other words, $\cup$ defines a morphism of complexes $M_k^\bullet(Y,\sE)\otimes M_\ell^\bullet(Y,\sE')\rightarrow M_{k+\ell}^\bullet(Y,\sE\otimes\sE')$.

For order preserving injections $\iota\colon\{1,\ldots,k\}\hookrightarrow\{1,\ldots,k'\}$ and $\rho\colon\{1,\ldots,\ell\}\hookrightarrow\{1,\ldots,\ell'\}$, define a map $\iota\ast\rho\colon \{1,\ldots,k+\ell\}\hookrightarrow\{1,\ldots,k'+\ell'\}$ by putting $\iota\ast\rho(r):=\iota(r)$ for $r\leq k$ and $\iota\ast\rho(r):=k'+\rho(r-k)$ for $r>k$.
Then we have
\begin{align*}
&(f_{\iota\ast\rho}(\alpha\cup\beta))_{I,\underline{\lambda},\mathbf{n}}\\
&=\begin{cases}
\displaystyle\sum_{\mu=0}^m\epsilon(i,j,k,\ell,m,\mu,(\iota\ast\rho)^{-1}(I))\alpha_{\iota^{-1}(I_{k'}^-),\underline{\lambda}^{\leq\mu},\iota^\ast\mathbf{n}^{\leq k'}}\wedge\beta_{\rho^{-1}(I_{k'}^+),\underline{\lambda}^{\geq\mu},\rho^\ast\mathbf{n}^{>k'}}&\text{if $I\subset(\iota\ast\rho)(\{1,\ldots,k+\ell\})$},\\
0&\text{otherwise},
\end{cases}
\end{align*}
\begin{align*}
(&f_\iota(\alpha)\cup f_\rho(\beta))_{I,\underline{\lambda},\mathbf{n}}\\
&=\begin{cases}
\displaystyle\sum_{\mu=0}^m\epsilon(i,j,k',\ell',m,\mu,I)\alpha_{\iota^{-1}(I_{k'}^-),\underline{\lambda}^{\leq\mu},\iota^\ast\mathbf{n}^{\leq k'}}\wedge\beta_{\rho^{-1}(I_{k'}^+),\underline{\lambda}^{\geq\mu},\rho^\ast\mathbf{n}^{>k'}}&\text{if $I\subset(\iota\ast\rho)(\{1,\ldots,k+\ell\})$},\\
0&\text{otherwise}.
\end{cases}
\end{align*}
One can see that they are equal to each other again by the definition of $\epsilon(i,j,k,\ell,m,\mu,I)$, hence we have
\begin{equation}\label{eq: iota cup}
f_{\iota\ast\rho}(\alpha\cup\beta)=f_\iota(\alpha)\cup f_\rho(\beta).
\end{equation}
Consequently, according to Proposition \ref{prop: M iso R} $\cup$ is well-defined as a morphism
\begin{align*}
\cup\colon \cR(Y,\sE)\otimes^L \cR(Y,\sE')\rightarrow \cR(Y,\sE\otimes\sE'),
\end{align*}
which induces homomorphisms
\[\cup\colon \cH^i(Y,\sE)\otimes \cH^j(Y,\sE')\rightarrow \cH^{i+j}(Y,\sE\otimes\sE')\]
where $\cH^i(Y,\sE)$ denotes the $i$-th cohomology group of $\cR(Y,\sE)$. 

In what follows, we show properties of the cup product, such as (anti) commutativity, associativity and compatibility with base change.

\begin{proposition}\label{prop: comm cup}
	There is a commutative diagram
	\[\xymatrix{
	\cR(Y,\sE)\otimes^L \cR(Y,\sE')\ar[d]_-\theta^-\cong\ar[r]^-\cup&
	\cR(Y,\sE\otimes\sE'),\\
	\cR(Y,\sE')\otimes^L \cR(Y,\sE)\ar[ur]_-\cup&
	}\]
	where $\theta$ maps $\alpha\otimes\beta\mapsto(-1)^{ij}\beta\otimes\alpha$ if $\alpha$ and $\beta$ are cochains of $\cR(Y,\sE)$ and $\cR(Y,\sE')$ in degree $i$ and $j$, respectively.
	This means that we have $[\beta]\cup[\alpha]=(-1)^{ij}[\alpha]\cup[\beta]$ on cohomology groups, if $\alpha$ and $\beta$ are cocycles.
\end{proposition}

\begin{proof}
	One can calculate that
	\begin{align*}
	(\tau(\tau\beta\cup\tau\alpha))_{I,\underline{\lambda},\mathbf{n}}
	=\sum_{\mu=0}^m(-1)^{\mu(m-\mu)}\epsilon(j,i,\ell,k,m,m-\mu,I)q_{\geq\mu}^\ast\beta_{I_\ell^-,\underline{\lambda}^{\geq\mu},\mathbf{n}^{\leq\ell}}\wedge q_{\leq\mu}^\ast\alpha_{I_\ell^+,\underline{\lambda}^{\leq\mu},\mathbf{n}^{>\ell}}.
	\end{align*}
	Let $\sigma$ be the permutation of $\{1,\ldots,k+\ell\}$ defined by $\sigma(r)=r+k$ for $r\leq \ell$ and $\sigma(r)=r-\ell$ for $r>\ell$.
	Then we have $\delta(\sigma,I)=\lvert I_k^-\rvert\lvert I_k^+\rvert$ and
	\begin{align*}
	(\sigma\tau(\tau\beta\cup\tau\alpha))_{I,\underline{\lambda},\mathbf{n}}
	&=\sum_{\mu=0}^m(-1)^{\lvert I_k^-\rvert (j-m+\mu)+\lvert I_k^+\rvert(i-\mu)+i(m-\mu)+j\mu+ij}\epsilon(j,i,\ell,k,m,m-\mu,\sigma^{-1}(I))\\
	&\hspace{35pt}\times q_{\leq\mu}^\ast\alpha_{I_k^-,\underline{\lambda}^{\leq\mu},\mathbf{n}^{\leq k}}\wedge q_{\geq\mu}^\ast\beta_{I_k^+,\underline{\lambda}^{\geq\mu},\mathbf{n}^{>k}}\\
	&=\sum_{\mu=0}^m(-1)^{ij}\epsilon(i,j,k,\ell,m,\mu,I) q_{\leq\mu}^\ast\alpha_{I_k^-,\underline{\lambda}^{\leq\mu},\mathbf{n}^{\leq k}}\wedge q_{\geq\mu}^\ast\beta_{I_k^+,\underline{\lambda}^{\geq\mu},\mathbf{n}^{>k}}\\
	&=((-1)^{ij}\alpha\cup\beta)_{I,\underline{\lambda},\mathbf{n}}.
	\end{align*}
	Thus we have $\sigma\tau(\tau\beta\cup\tau\alpha)=(-1)^{ij}\alpha\cup\beta$, and this shows the assertion by Lemma \ref{lem: sigma tau}.
\end{proof}

\begin{proposition}\label{prop: ass cup}
	There is a commutative diagram
	\[\xymatrix{
	\cR(Y,\sE)\otimes^L \cR(Y,\sE')\otimes^L \cR(Y,\sE'')\ar[r]^-{\cup\otimes\id}\ar[d]_-{\id\otimes\cup} &\cR(Y,\sE\otimes\sE')\otimes^L \cR(Y,\sE'')\ar[d]^-\cup\\
	\cR(Y,\sE)\otimes^L \cR(Y,\sE'\otimes\sE'')\ar[r]^-\cup& \cR(Y,\sE\otimes\sE'\otimes\sE'').
	}\]
\end{proposition}

\begin{proof}
	Let $i,j,g\geq 0$, $k,\ell,h\geq 1$ and $\alpha\in M_k^i(Y,\sE)$, $\beta\in M_\ell^j(Y,\sE')$, and $\gamma\in M_h^g(Y,\sE'')$.
	Then one can calculate that
	\begin{align*}
	&((\alpha\cup\beta)\cup\gamma)_{I,\underline{\lambda},\mathbf{n}}\\
	&=\sum_{\mu=0}^m\sum_{\eta=0}^\mu\epsilon(i+j,g,k+\ell,h,m,\mu,I)\epsilon(i,j,k,\ell,\mu,\eta,I_{k+\ell}^-)\\
	&\hspace{30pt}\times\alpha_{I_k^-,\underline{\lambda}^{\leq\eta},\mathbf{n}^{\leq k}}\wedge\beta_{(I_{k+\ell}^-)_k^+,(\underline{\lambda}^{\leq\mu})^{\geq\eta},(\mathbf{n}^{\leq k+\ell})^{>k}}\wedge\gamma_{I_{k+\ell}^+,\underline{\lambda}^{\geq\mu},\mathbf{n}^{>k+\ell}},\\
	&(\alpha\cup(\beta\cup\gamma))_{I,\underline{\lambda},\mathbf{n}}\\
	&=\sum_{\mu=0}^m\sum_{\eta=0}^\mu\epsilon(i,j+g,k,\ell+h,m,\eta,I)\epsilon(j,g,\ell,h,m-\eta,\mu-\eta,I_k^+)\\
	&\hspace{30pt}\times\alpha_{I_k^-,\underline{\lambda}^{\leq\eta},\mathbf{n}^{\leq k}}\wedge\beta_{(I_{k+\ell}^-)_k^+,(\underline{\lambda}^{\leq\mu})^{\geq\eta},(\mathbf{n}^{\leq k+\ell})^{>k}}\wedge\gamma_{I_{k+\ell}^+,\underline{\lambda}^{\geq\mu},\mathbf{n}^{>k+\ell}}.
	\end{align*}
	By definition of $\epsilon(i,j,k,\ell,m,\mu,I)$ we may conclude that $(\alpha\cup\beta)\cup\gamma=\alpha\cup(\beta\cup\gamma)$ in the level of cochains of $M_{k+\ell+h}^\bullet(Y,\sE\otimes\sE'\otimes\sE'')$.
\end{proof}

\begin{proposition}\label{prop: cup functorial}
	Let $k'$ be a finite extension of $k$, $Y'$ a fine log scheme over $k'^0$, and $f\colon Y'\rightarrow Y$ a morphism over $k^0$.
	Then for $\sE,\sE'\in\cI(Y)$, we have a commutative diagram
	\[\xymatrix{
	\cR(Y,\sE)\otimes^L \cR(Y,\sE')\ar[r]^-{\cup}\ar[d]_-{f^\ast\otimes f^\ast}
	&\cR(Y,\sE\otimes\sE')\ar[d]^-{f^\ast}\\
	\cR(Y',f^\ast\sE)\otimes^L \cR(Y',f^\ast\sE')\ar[r]^-{\cup}
	&\cR(Y',f^\ast\sE\otimes f^\ast\sE').}\]
\end{proposition}

\begin{proof}
	We only prove the case that $\cR(Y,\sE)=R\Gamma^\rig_\HK(Y,\sE)$, since the other cases can be proved similarly.
	We define a widening $(k'^0,\cS',\tau')$ similarly to $(k^0,\cS,\tau)$.
	Let $(Z_\lambda,\cZ_\lambda,i_\lambda,h_\lambda,\theta_\lambda)_{\lambda\in\Lambda}$ and $(Z'_\xi,\cZ'_\xi,i'_\xi,h'_\xi,\theta'_\xi)_{\xi\in\Xi}$ be local embedding data for $Y$ and $Y'$ over $(k^0,\cS,\tau)$ and $(k'^0,\cS',\tau')$, respectively.
	For $m_1,m_2\in\bbN$, $\underline{\lambda}\in\Lambda^{m_1+1}$ and $\underline{\xi}\in\Xi^{m_2+1}$, let $Z'_{\underline{\lambda},\underline{\xi}}:=Z_{\underline{\lambda}}\times_YZ'_{\underline{\xi}}$ and let $i'_{\underline{\lambda},\underline{\xi}}\colon Z'_{\underline{\lambda},\underline{\xi}}\hookrightarrow \cZ'_{\underline{\lambda},\underline{\xi}}$ be the exactification of $(i_{\underline{\lambda}},i'_{\underline{\xi}})\colon Z'_{\underline{\lambda},\underline{\xi}}\hookrightarrow\cZ_{\underline{\lambda}}\times_\cS\cZ'_{\underline{\xi}}$.
	Then together with the natural morphisms $h_{\underline{\lambda},\underline{\xi}}\colon \cZ'_{\underline{\lambda},\underline{\xi}}\rightarrow\cS'$ and $\theta_{\underline{\lambda},\underline{\xi}}\colon Z'_{\underline{\lambda},\underline{\xi}}\rightarrow Y'$ they form an object $(Z_{\underline{\lambda},\underline{\xi}},\cZ_{\underline{\lambda},\underline{\xi}},i_{\underline{\lambda},\underline{\xi}},h_{\underline{\lambda},\underline{\xi}},\theta_{\underline{\lambda},\underline{\xi}})$ in $\OC(Y/\cS')$.
Moreover
\[(Z_{m_1,m_2},\cZ_{m_1,m_2},i_{m_1,m_2},h_{m_1,m_2},\theta_{m_1,m_2}):=\coprod_{\underline{\lambda}\in\Lambda^{m_1+1}}\coprod_{\underline{\xi}\in\Xi^{m_2+1}}(Z_{\underline{\lambda},\underline{\xi}},\cZ_{\underline{\lambda},\underline{\xi}},i_{\underline{\lambda},\underline{\xi}},h_{\underline{\lambda},\underline{\xi}},\theta_{\underline{\lambda},\underline{\xi}})\]
forms a bisimplicial object, and the morphism $R\Gamma^\rig_\HK(Y,\sE)\rightarrow R\Gamma^\rig_\HK(Y',f^\ast\sE)$ is defined to be the composition
\begin{equation}\label{eq: bisimplicial holim}
\xymatrix{
\holim_{m_1}\hocolim_\ell R\Gamma(\cZ_{m_1,\bbQ},\sE_{\cZ_{m_1}}\otimes \omega^\star_{\cZ_{m_1}/W^\varnothing,\bbQ}[u]_\ell)\ar[d]\\
\holim_{m_1,m_2}\hocolim_\ell R\Gamma(\cZ'_{m_1,m_2,\bbQ},(f^\ast\sE)_{\cZ'_{m_1,m_2}}\otimes \omega^\star_{\cZ'_{m_1,m_2}/W^\varnothing,\bbQ}[u]_\ell)\\
\holim_{m_2}\hocolim_\ell R\Gamma(\cZ'_{m_2,\bbQ},(f^\ast\sE)_{\cZ'_{m_2}}\otimes \omega^\star_{\cZ'_{m_2}/W^\varnothing,\bbQ}[u]_\ell).\ar[u]^-\cong
}\end{equation}
For $k\geq 1$, we define a complex $L_{\HK,k}^\bullet(Y',f^\ast\sE)$ by
\[L_{\HK,k}^i(Y',f^\ast\sE):=\bigoplus_{I\subset\{1,\ldots,k\}}\bigoplus_{(m_1,m_2)\in\bbN^2}\prod_{\underline{\lambda}\in\Lambda^{m_1+1}}\prod_{\underline{\xi}\in\Xi^{m_2+1}}\prod_{\mathbf{n}\in\bbN^k}\Gamma(\cZ_{\underline{\lambda},\underline{\xi}}[\mathbf{n}]_\bbQ,(f^\ast\sE)_{\cZ_{\underline{\lambda},\underline{\xi}}[\mathbf{n}]}\otimes\omega^{i-m_1-m_2-\lvert I\rvert}_{\cZ_{\underline{\lambda},\underline{\xi}}[\mathbf{n}]/W'^\varnothing,\bbQ}[u])\]
with differentials $D$ given by
\begin{align*}
(D\alpha)_{I,\underline{\lambda},\underline{\xi},\mathbf{n}}
:=&\nabla\alpha_{I,\underline{\lambda},\underline{\xi},\mathbf{n}}
+\sum_{\mu_1}^{m_1}(-1)^{i-m_1-m_2+\mu_1-\lvert I\rvert+1}\alpha_{I,\underline{\lambda}(\mu_1),\underline{\xi},\mathbf{n}}
+\sum_{\mu_2}^{m_2}(-1)^{i-m_2+\mu_2-\lvert I\rvert+1}\alpha_{I,\underline{\lambda},\underline{\xi}(\mu_2),\mathbf{n}}\\
&+\sum_{r\in I}(-1)^{i+\lvert I_r^+\rvert}(\alpha_{I,\underline{\lambda},\underline{\xi},\mathbf{n}}-\alpha_{I,\underline{\lambda},\underline{\xi},\mathbf{n}+1_r}|_{\cZ_{\underline{\lambda},\underline{\xi}}[\mathbf{n}]_\bbQ})
\end{align*}
for $\alpha\in L_{\HK,k}^i(Y',f^\ast\sE)$.
Similarly to the case for $M_{\HK,k}^\bullet(Y,\sE)$, one can confirm that $L_{\HK,k}^\bullet(Y',f^\ast\sE)$ is in fact a complex and represents $\holim_{m_1,m_2}\hocolim_\ell R\Gamma(\cZ'_{m_1,m_2,\bbQ},(f^\ast\sE)_{\cZ'_{m_1,m_2}}\otimes \omega^\star_{\cZ'_{m_1,m_2}/W^\varnothing,\bbQ}[u]_\ell)$.
Then the diagram \eqref{eq: bisimplicial holim} is represented by the diagram
\[M_{\HK,k}^\bullet(Y,\sE)\xrightarrow{\psi} L_{\HK,k}^\bullet(Y',f^\ast\sE)\xleftarrow[\cong]{\psi'}M_{\HK,k}^\bullet(Y',f^\ast\sE).\]
Here $\psi$ and $\psi'$ are defined by
\begin{align*}
	(\psi\alpha)_{I,\underline{\lambda},\underline{\xi},\mathbf{n}}:=\begin{cases}
	\mathrm{pr}_1^\ast\alpha_{I,\underline{\lambda},\mathbf{n}}&\text{if $m_2=0$}\\
	0&\text{otherwise,}
	\end{cases}
	&&(\psi'\beta)_{I,\underline{\lambda},\underline{\xi},\mathbf{n}}:=\begin{cases}
	\mathrm{pr}_2^\ast\beta_{I,\underline{\xi},\mathbf{n}}&\text{if $m_1=0$}\\
	0&\text{otherwise,}
	\end{cases}
\end{align*}
	where $\mathrm{pr}_1\colon\cZ_{\underline{\lambda},\underline{\xi}}[\mathbf{n}]_\bbQ\rightarrow\cZ_{\underline{\lambda}}[\mathbf{n}]_\bbQ$ and $\mathrm{pr}_2^\ast\colon\cZ_{\underline{\lambda},\underline{\xi}}[\mathbf{n}]_\bbQ\rightarrow \cZ_{\underline{\xi}}[\mathbf{n}]_\bbQ$ are the canonical projections.
	
	To prove the assertion, it suffices to construct a morphism
	\[\cup\colon L_{\HK,k}^\bullet(Y',f^\ast\sE)\otimes L_{\HK,\ell}^\bullet(Y',f^\ast\sE')\rightarrow L_{\HK,k+\ell}^\bullet(Y',f^\ast(\sE\otimes\sE'))\]
	which is compatible with $\cup$ on $M_{\HK,k}^\bullet$ via $\psi$ and $\psi'$, respectively.
	This is in fact possible by setting
	\begin{align*}
	(\alpha\cup\beta)_{I,\underline{\lambda},\underline{\xi},\mathbf{n}}
	:=\sum_{\mu_1=0}^{m_1}\sum_{\mu_2=0}^{m_2}\wt\delta(i,j,k,\ell,m_1,\mu_1,m_2,\mu_2,I)\alpha_{I_k^-,\underline{\lambda}^{\leq\mu_1},\underline{\xi}^{\leq\mu_2},\mathbf{n}^{\leq k}}\wedge\beta_{I_k^+,\underline{\lambda}^{\geq\mu_1},\underline{\xi}^{\geq\mu_2},\mathbf{n}^{>k}}
	\end{align*}
	with
	\[\wt\delta(i,j,k,\ell,m_1,\mu_1,m_2,\mu_2,I):=(-1)^{(\lvert I_k^-\rvert+\mu_1+\mu_2)(\lvert I_k^+\rvert +j)+(\mu_1+m_2+1)(\mu_2+m_1+m_2+1)+(m_1+1)(m_2+1)}\]
	for $\alpha\in L_{\HK,k}^i(Y',f^\ast\sE)$ and $\beta\in L_{\HK,\ell}^j(Y',f^\ast\sE')$.
	That $\cup$ commutes with differentials can be confirmed by similar calculations to the case for $M_{\HK,k}^\bullet$.
\end{proof}

\begin{remark}\label{rem: uniqueness of epsilon}
	The definition \eqref{eq: sign} of $\epsilon(i,j,k,\ell,m,\mu,I)$ is the unique choice so that the definition \eqref{eq: def cup} satisfies the following properties:
	\begin{itemize}
	\item The equalities \eqref{eq: D cup} and \eqref{eq: iota cup} holds,
	\item The cup product \eqref{eq: def cup} is functorial as stated in Proposition \ref{prop: cup functorial},
	\item Suppose that $Y$ is embeddable, i.e.\, there exists a homeomorphic exact closed immersion $Y\hookrightarrow\cZ$ into a strongly log smooth weak formal log scheme over $\cT=W^\varnothing$, $W^0$, $V^\sharp$ for the case of log rigid cohomology or over $\cS$ for the case of Hyodo--Kato cohomology.
		Then for any $n\in\bbN$ there is a canonical morphism
		\begin{align}
		\label{eq: embeddable cup}&R\Gamma_\rig(Y/\cT,\sE)\rightarrow \Gamma(\cZ[n]_\bbQ,\sE_{\cZ[n]}\otimes\omega^\bullet_{\cZ/\cT,\bbQ}),\ 
		\text{or}\\
		\nonumber&R\Gamma^\rig_\HK(Y,\sE)\rightarrow \Gamma(\cZ[n]_\bbQ,\sE_{\cZ[n]}\otimes\omega^\bullet_{\cZ/W^\varnothing,\bbQ}).
		\end{align}
		The cup product on the left hand side defined by \eqref{eq: def cup} commutes with the usual product on the right hand side induced by the tensor product of coefficient sheaves and the wedge product of log differential forms.
	\end{itemize}
	
	Indeed, the equation \eqref{eq: D cup} determines the dependence of $\epsilon(i,j,k,\ell,m,\mu,I)$ on $i$, $j$, $k$, $m$, $\mu$, and $I$.
	Namely, by comparing coefficients of \eqref{eq: D cup calculation 1} and \eqref{eq: D cup calculation 2} one deduces 
	\begin{align*}
	&\epsilon(i+1,j,k,\ell,m,\mu,I)=\epsilon(i,j,k,\ell,m,\mu,I),\\
	&\epsilon(i,j+1,k,\ell,m,\mu,I)=(-1)^{\mu+\lvert I_k^-\rvert}\epsilon(i,j,k,\ell,m,\mu,I),\ \text{etc.}
	\end{align*}
	
	Moreover the equation \eqref{eq: iota cup} for the case $\iota=\id$ and $I=\emptyset$ gives the relation
	\[\epsilon(i,j,k,\ell,m,\mu,\emptyset)=\epsilon(i,j,k,\ell',m,\mu,\emptyset).\]
	Thus $\epsilon(i,j,k,\ell,m,\mu,I)$	for every $i$, $j$, $k$, $\ell$, $m$, $\mu$, and $I$ is determined from $\epsilon(0,0,1,1,0,0,\emptyset)$.
	Finally, the value of $\epsilon(0,0,1,1,0,0,\emptyset)$ is determined by the functoriality and the compatibility with \eqref{eq: embeddable cup}.
\end{remark}

\begin{remark}\label{rem: monodromy cup}
The cup product on Hyodo--Kato cohomology commutes with the monodromy operator.
Indeed, consider weak formal log schemes $\cZ_{\underline{\lambda}}[\mathbf{n}]$ as before.
For any $\omega\in\Gamma(\cZ_{\underline{\lambda}}[\mathbf{n}]_\bbQ,\omega^i_{\cZ_{\underline{\lambda}}[\mathbf{n}]/W^\varnothing,\bbQ})$, $\eta\in\Gamma(\cZ_{\underline{\lambda}}[\mathbf{n}]_\bbQ,\omega^j_{\cZ_{\underline{\lambda}}[\mathbf{n}]/W^\varnothing,\bbQ})$, and integers $r,s\in\bbN$, we have
\begin{align*}
N(\omega u^{[r]})\wedge\eta u^{[s]}+\omega u^{[r]}\wedge N(\eta u^{[s]})
&=\omega u^{[r-1]}\wedge \eta u^{[s]}+\omega u^{[r]}\wedge \eta u^{[s-1]}\\
&=\left(\frac{(r+s-1)!}{(r-1)!s!}+\frac{(r+s-1)!}{r!(s-1)!}\right)\omega\wedge\eta u^{[r+s-1]}\\
&=\frac{(r+s)!}{r!s!}\omega^\wedge\eta u^{[r+s-1]}\\
&=N\left(\frac{(r+s)!}{r!s!}\omega\wedge\eta u^{[r+s]}\right)\\
&=N(\omega u^{[r]}\wedge \eta u^{[s]}).
\end{align*}
This with the definition of the cup product \eqref{eq: def cup} shows that $\cup$ gives a morphism in $D^+(\Mod_F(N))$.
\end{remark}

\begin{remark}\label{rem: Frobenius cup}
For the log rigid cohomology over $W^\varnothing$ or over $W^0$, or the Hyodo--Kato cohomology, we may consider Frobenius structures on isocrystals.
In those cases, the cup product commutes with the Frobenius operators.
Indeed, the Frobenius operator on $R\Gamma_\rig(Y/\cT,(\sE,\Phi))$ for $\cT=W^\varnothing$ or $W^0$ is defined as the composition
\[R\Gamma_\rig(Y/\cT,\sE)\xrightarrow{F_Y^\ast}R\Gamma_\rig(Y/\cT,F_Y^\ast\sE)\xrightarrow{\Phi}R\Gamma_\rig(Y/\cT,\sE),\]
where $F_Y$ denotes the absolute Frobenius on $Y$. 
These two morphisms commute with the cup product by Proposition \ref{prop: cup functorial} and by definition \eqref{eq: def cup}, respectively.
A similar statement holds for the cup product on Hyodo--Kato cohomology.
In summary, we have morphisms
\begin{align*}
&\cup\colon R\Gamma_\rig(Y/\cT,(\sE,\Phi))\otimes^L R\Gamma_\rig(Y,(\sE',\Phi'))\rightarrow R\Gamma_\rig(Y/\cT,(\sE,\Phi)\otimes(\sE',\Phi'))&&\text{in $D^+(\Mod_F(\varphi))$},\\
&\cup\colon R\Gamma_\HK^\rig(Y,(\sE,\Phi))\otimes^L R\Gamma_\HK^\rig(Y,(\sE',\Phi'))\rightarrow R\Gamma_\HK^\rig(Y,(\sE,\Phi)\otimes(\sE',\Phi'))&&\text{in $D^+(\Mod_F(\varphi,N))$}.
\end{align*}
\end{remark}

\begin{remark}\label{rem: HK map cup}
The cup product commutes with the morphisms displayed in the following diagram
\[\xymatrix{
&R\Gamma_\rig(Y/W^\varnothing,\sE)\ar[ld]\ar[d]\ar[rd]&\\
	R\Gamma_\rig(Y/W^0,\sE)&R\Gamma_\HK^\rig(Y,\sE)\ar[l]\ar[r]^-{\Psi_{\pi,\log}}&R\Gamma_\rig(Y/V^\sharp,\sE)_\pi.	
}\]
Indeed, for $\omega\in\Gamma(\cZ_{\underline{\lambda}}[\mathbf{n}]_\bbQ,\omega^i_{\cZ_{\underline{\lambda}}[\mathbf{n}]/W^\varnothing,\bbQ})$ and $\eta\in\Gamma(\cZ_{\underline{\lambda}}[\mathbf{n}]_\bbQ,\omega^j_{\cZ_{\underline{\lambda}}[\mathbf{n}]/W^\varnothing,\bbQ})$, we denote by $\overline{\omega}$ and $\overline{\eta}$ the images of them on $\cX_{\underline{\lambda}}[\mathbf{n}]_\bbQ$ where $\cX_{\underline{\lambda}}:=\cZ_{\underline{\lambda}}\times_{\cS,j_\pi} V^\sharp$.
Then we have
\begin{align*}
\Psi_{\pi,\log}(\omega u^{[r]})\wedge\Psi_{\pi,\log}(\eta u^{[s]})
&=\frac{(-\log\pi)^r}{r!}\overline{\omega}\wedge \frac{(-\log \pi)^s}{s!}\overline{\eta}
=\frac{(-\log \pi)^{r+s}}{r!s!}\overline{\omega}\wedge\overline{\eta}\\
&=\frac{(r+s)!}{r!s!}\frac{(-\log\pi)^{r+s}}{(r+s)!}\overline{\omega}\wedge\overline{\eta}
=\Psi_{\pi,\log}\left(\frac{(-\log \pi)^{r+s}}{r!s!}\omega\wedge\eta u^{[r+s]}\right)\\
&=\Psi_{\pi,\log}(\omega u^{[r]}\wedge \eta u^{[s]}).
\end{align*}
This shows that the cup product commutes with the Hyodo--Kato map $\Psi_{\pi,\log}$ for any choice of uniformiser $\pi$ and branch of the $p$-adic logarithm $\log$.
Commutativity with the other morphisms is clear.
\end{remark}

\end{document}